\numberwithin{equation}{section}
  \def\mC{\mathcal{C}}   
 \def\mE{\mathcal{E}}    
  \def\mO{\mathcal{O}}  \def\mQ{\mathcal{Q}}
      \def\Re{{\rm Re}}
 \def\Im{{\rm Im}}
 \DeclareMathOperator{\Ker}{Ker}  
  \DeclareMathOperator{\rk}{rk}  \DeclareMathOperator{\supp}{supp}
 \DeclareMathOperator{\tr}{Tr}
 \newtheorem{thm}{Theorem}[section] \newtheorem{lemma}[thm]{Lemma} \newtheorem{conj}[thm]{Conjecture} \newtheorem{prop}[thm]{Proposition}
  \theoremstyle{definition} \newtheorem{rem}[thm]{Remark} \theoremstyle{definition}
 \newtheorem{defn}[thm]{Definition} %
 \theoremstyle{remark} %
 \newcommand{\be}{\begin{eqnarray}}
 \newcommand{\comment}[1]{}
\begin{document}

 \title{Gluing formula of real analytic torsion forms and Adiabatic limit}
 \date{\today}
 % Information for first author
 \author{Jialin Zhu}
 \address{Chern Institute of Mathematics, Nankai University, Tianjin 300071, P.R. China}
 \email{jialinzhu@nankai.edu.cn}

\begin{abstract} In this article we use the adiabatic method to prove the gluing formula of real analytic torsion forms for a flat vector bundle on
 a smooth fibration under the assumption that the fiberwise twisted cohomology groups associated to the fibration of
 the cutting hypersurface are vanished.
In this paper we assume that the metrics have product structures near the cutting hypersurface.
\end{abstract}

\maketitle
\tableofcontents
\setcounter{section}{-1}
\section{Introduction}\label{s1}

Real analytic torsion is a spectral invariant of a compact Riemannian manifold equipped with a flat Hermitian vector bundle, that was introduced by Ray-Singer \cite{RaySing71}. Ray and Singer conjectured that for unitarily flat vector bundles, this invariant coincides with Reidemeister torsion, a topological invariant \cite{Milnor66}. This conjecture was established by Cheeger \cite{Chg} and M\"{u}ller \cite{Mu78}, and extended by M\"{u}ller
\cite{Mu93} for unimodular flat vector bundles, and by Bismut-Zhang \cite{BZ92} to arbitrary flat vector bundles.

In \cite{BL}, Bismut and Lott introduced what we now call Bismut-Lott analytic torsion form for a smooth fibration with a flat vector bundle as a natural higher degree generalization of the Ray-Singer analytic torsion. One of the significant facts is that the real analytic torsion form enters in a differential form version of a $C^{\infty}-$analog of the Riemann-Roch-Grothendieck theorem for holomorphic submersions.
 Bismut and Lott also showed that
under some appropriate conditions the Bismut-Lott torsion form is closed and its de Rham cohomology class is independent of the choices
of the geometric data in its definition (cf. \cite[Cor. 3.25]{BL}), thus it's a smooth invariant of the fibration with a flat vector bundle.

 Inspired by the work of Bismut and Lott,
Igusa
\cite{Igusa02} constructed a higher version of Reidemeister-Franz torsion by using the parameterized Morse theory.
 The reader refers to the books of Igusa \cite{Igusa02} and \cite{Igu05} for more information about the higher Igusa-Klein
torsion (IK-torsion). A second version of higher Reidemeister-Franz torsion (DWW-torsion) was defined by Dwyer, Weiss and Williams \cite{DwWeiWill03} in the homotopy theoretical approach.
Bismut and Goette \cite{BGo} obtained a family version of the Bismut-Zhang Theorem under the assumption that there exists a fiberwise
Morse function for the fibration in question. Goette \cite{Goette01}, \cite{Goette03} did more work towards the precise relation on BL-torsion and IK-torsion. The survey
\cite{Goette08} of Goette gives an overview about these higher torsion invariants for families. The
reader can refer to \cite{BGo}, \cite{Bunke00} for the equivariant BL-torsion forms and to \cite{BisMaZh} for the recent works on the analytic torsion forms.

In Igusa's axiomatization of higher torsion invariants (cf. \cite[$\S$3]{Igu08}), he summarized two axioms: Additivity Axiom and Transfer Axiom,
 to characterize the higher torsions, up to an universal cohomology class depending only on the underlaying manifold. In \cite[$\S$5]{Igu08}, Igusa established the additivity formula and the
transfer formula for IK-torsion. Roughly speaking, the additivity
formula of IK-torsion corresponds to the gluing formula of BL-torsion, and the transfer formula of IK-torsion corresponds to
the functoriality of BL-torsion with respect to the composition of two submersions, which
has been established by Ma \cite{Ma02}.
The main results of Igusa in \cite{Igu08} were first developed and announced during
the conference \cite{Conf03} on the higher torsion invariants in G\"{o}ttingen in September 2003.
To study the gluing problem of BL-torsion was proposed as an
open problem during this conference in order to clarify the relation between BL-torsion and IK-torsion.
Once we have established the gluing formula for BL-torsion, then it will imply basically that there exist a constant
$c$ and a cohomology class $R\in H^{*}(S)$ such than $\tau_{\rm IK}=c\tau_{\rm BL}+R$,
when they are well-defined as cohomology classes. This is the main motivation of this paper.

L\"{u}ck \cite{Luck93} established the gluing formula for the Ray-Singer analytic torsion for unitary flat vector bundles when the Riemannian metric has product structure near the boundary by using the results in \cite{LoRo91}.
 There are also other works on the gluing problem of the
 analytic torsion (cf. \cite{Has},  \cite{Vishik95}). Finally,
Br\"{u}ning and Ma \cite{BruMa06} established the anomaly formula for the analytic torsion on manifolds with boundary,
then they \cite{BruMa12} proved the gluing formula for the analytic torsion for any flat vector bundles and without any assumptions of product structures near the boundary.

In this paper, we will consider the gluing problem of Bismut-Lott torsion form. Let $\pi:M\rightarrow S$ be a smooth fibration over a compact manifold $S$.
We suppose that $X$ is a compact hypersurface in $M$ such that $M=M_{1}\cup_{X} M_{2}$ and $M_{1}, M_{2}$ are
 manifolds respectively with the common boundary $X$. We also assume that
\begin{align}\begin{aligned}\label{e.777}
Z_{1}\rightarrow M_{1}\overset{\pi}\rightarrow S, \quad Z_{2}\rightarrow M_{2}\overset{\pi}\rightarrow S \quad \text{and } Y\rightarrow X\overset{\pi}\rightarrow S
\end{aligned}\end{align}
are all smooth fibrations with fiber $Z_{1, b}$, $Z_{2, b}$ and $Y_{b}$ at $b\in S$ such that $Z_{b}=Z_{1, b}\cup_{Y_{b}}Z_{2, b}$.
In other words, the fibrations $M_{1}$ and $M_{2}$ can be glued into $M$ along $X$.

Let $TZ$ be the vertical tangent bundle of $M$ with a vertical Riemannian metric $g^{TZ}$. Let $T^{H}M$ be a horizontal tangent bundle of $M$ such that $TM=T^{H}M\bigoplus TZ$. Let $U_{\varepsilon}\simeq X\times (-\varepsilon, \varepsilon)$ be a product neighborhood of $X$ in $M$, and $\psi_{\varepsilon}:X\times (-\varepsilon, \varepsilon)\rightarrow X$ be the projection on the first factor.
We \textbf{assume} that $T^{H}M$ and $g^{TZ}$ have product structures on $U_{\varepsilon}$, i.e.,
\begin{align}\begin{aligned}\label{e.7593}
(T^{H}M)|_{X}\subset TX,\quad\big(T^{H}M\big)|_{U_{\varepsilon}}=\psi_{\varepsilon}^{*}\big((T^{H}M)|_{X}\big),
\end{aligned}\end{align}
\begin{align}\begin{aligned}\label{e.360}
g^{TZ}|_{(x', x_{m})}=g^{TY}(x')+dx^{2}_{m}, \quad (x', x_{m})\in X\times (-\varepsilon, \varepsilon).
\end{aligned}\end{align}
Then $T^{H}X:=(T^{H}M)|_{X}$ gives a horizontal bundle of fibration $X$, such that $TX=T^{H}X\oplus TY$.

 Let $F$ be a flat vector bundle over $M$ with flat connection $\nabla^{F}$, i.e., $(\nabla^{F})^{2}=0$. We trivialize $F$ along $x_{m}$-direction, by using the parallel transport with respect to $\nabla^{F}$, then we have
\begin{align}\begin{aligned}\label{e.7601}
\quad (F, \nabla^{F})|_{X \times (-\varepsilon, \varepsilon)}= \psi^{*}_{\varepsilon}(F|_{X}, \nabla^{F}|_{X}).
\end{aligned}\end{align}
Let $h^{F}$ be a Hermitian metric on $F$. We \textbf{assume} that under the identification (\ref{e.7601}), we have
\begin{align}\begin{aligned}\label{e.361}
h^{F}|_{U_{\varepsilon}}= \psi^{*}_{\varepsilon}(h^{F}|_{X}).
\end{aligned}\end{align}
If $h^{F}$ is flat, i.e., $\nabla^{F}h^{F}=0$, then (\ref{e.361}) is a consequence of the flatness of $h^{F}$.
In all of this paper, we \textbf{assume} that the triple $(T^{H}M, g^{TZ}, h^{F})$ has the product structures on $X_{[-\varepsilon,\varepsilon]}$ (cf. \cite{Zhu13}), i.e.,
\begin{align}\begin{aligned}\label{e.7611}
\text{ (\ref{e.7593}), (\ref{e.360}) and (\ref{e.361}) hold.}
\end{aligned}\end{align}

Let $\mathscr{T}_{\rm abs}(T^{H}M_{1}, g^{TZ_{1}}, h^{F})$ (resp. $\mathscr{T}_{\rm rel}(T^{H}M_{2}, g^{TZ_{2}}, h^{F})$) be the Bismut-Lott torsion form with absolute (resp. relative) boundary conditions that we introduced in \cite{Zhu13}.
Let $H^{p}(Z_{1}, F)$ (resp. $H^{p}(Z_{2}, Y, F)$) denote the flat vector bundle on $S$ with its canonical flat connection $\nabla^{H^{p}(Z_{1}, F)}$ (resp. $\nabla^{H^{p}(Z_{2}, Y, F)}$), whose fiber is isomorphic to the absolute (resp. absolute) cohomology group
$H^{p}(Z_{1, b}, F)$ (resp. $H^{p}(Z_{2,b},Y, F)$) at $b\in S$ (cf. \cite[$\S$ 1.3]{Zhu13}). Then we have a long exact sequence $(\mathscr{H}, \delta)$ of flat vector bundles (cf. \cite[(0.16)]{BruMa12}):
\begin{align}\begin{aligned}\label{e.4340}
\cdots \longrightarrow H^{p}(Z, F)\overset{\delta}{\longrightarrow} H^{p}(Z_{1}, F) \overset{\delta}{\longrightarrow}
H^{p+1}(Z_{2}, Y, F) \overset{\delta}{\longrightarrow}\cdots.
\end{aligned}\end{align}
 We denote the $L^{2}-$metric on $\mathscr{H}$ by $h^{\mathscr{H}}_{L^{2}}$ induced by Hodge theory
and the canonical flat connection by $\nabla^{\mathscr{H}}$.
Then we associate a torsion form $T_{f}(A^{\mathscr{H}}, h_{L^{2}}^{\mathscr{H}})$
to the triple $(\mathscr{H},  A^{\mathscr{H}}:=\delta+\nabla^{\mathscr{H}},h^{\mathscr{H}}_{L^{2}})$
 for $f(x)=xe^{x^{2}}$ (cf. \cite[Def. 2.20]{BL}).

Let $Q^{S}$ be the vector space of real even forms on $S$ and $Q^{S, 0}$ be the vector space of real exact even forms on $S$.
Let $\chi(Y)$ be the Euler characteristic of $Y$.

We formulate a conjecture
about the general gluing formula of analytic torsion forms in order to answer the open problem proposed in the conference  \cite{Conf03}
on higher torsion
invariants at G\"{o}ttingen 2003.
\begin{conj}\label{c.1}
With the assumption of product structures (\ref{e.7611}), the following identity holds in $Q^{S}/Q^{S, 0}$
\begin{align}\begin{aligned}\label{e.474}
\mathscr{T}(T^{H}M, g^{TZ}, h^{F})-\mathscr{T}_{\rm abs}(T^{H}M_{1}, g^{TZ_{1}}, h^{F})&-\mathscr{T}_{\rm rel}(T^{H}M_{2}, g^{TZ_{2}}, h^{F})\\
&=\frac{\log 2}{2}\rk(F)\chi(Y)+T_{f}(A^{\mathscr{H}}, h_{L^{2}}^{\mathscr{H}}).
\end{aligned}\end{align}
\end{conj}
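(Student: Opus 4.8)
The plan is to prove \eqref{e.474} by an adiabatic limit: stretch the product collar $X\times(-\varepsilon,\varepsilon)$ to an arbitrarily long cylinder and analyze the resulting family of fibrations as the neck length tends to infinity, reducing the identity to an explicit computation on the model cylinder $Y\times\R$ together with a small-eigenvalue analysis that produces the torsion $T_{f}(A^{\mathscr{H}},h_{L^{2}}^{\mathscr{H}})$. First I would combine the anomaly formula for the torsion forms $\mathscr{T}_{\rm abs}$, $\mathscr{T}_{\rm rel}$ with boundary conditions from \cite{Zhu13}, the Bismut--Lott anomaly formula \cite[Cor. 3.25]{BL} for $\mathscr{T}(T^{H}M,g^{TZ},h^{F})$, and the anomaly formula for $T_{f}$, to check that modulo $Q^{S,0}$ the validity of \eqref{e.474} is unchanged if $X\times(-\varepsilon,\varepsilon)$ is replaced by $X\times(-R,R)$ for arbitrary $R>0$. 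Concretely, for $R>0$ set $Z_{R}=Z_{1}\cup_{Y}(Y\times[-R,R])\cup_{Y}Z_{2}$, giving stretched fibrations $M_{R}$, $M_{1,R}$ (absolute boundary condition on the new boundary) and $M_{2,R}$ (relative boundary condition), all still satisfying \eqref{e.7611}; it then suffices to compute the $R\to\infty$ limit of $\mathscr{T}(M_{R})-\mathscr{T}_{\rm abs}(M_{1,R})-\mathscr{T}_{\rm rel}(M_{2,R})$ in $Q^{S}/Q^{S,0}$.

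I would then split the renormalized transgression integral $\mathscr{T}=-\int_{0}^{\infty}(\cdots)\,\frac{dt}{t}$ defining each torsion form at a cut $t\sim R^{2}$. For the small-time part, finite propagation speed for the Bismut superconnection heat operator forces the fiberwise heat operator on $M_{R}$ to decouple, up to errors of size $O(e^{-cR^{2}/t})$ in every differential-form degree on $S$, into those of $M_{1,R}$ with absolute, $M_{2,R}$ with relative boundary conditions, and a contribution from the full model cylinder $Y\times\R$; the $M_{1}$- and $M_{2}$-pieces cancel against the corresponding terms of $\mathscr{T}_{\rm abs}(M_{1,R})$ and $\mathscr{T}_{\rm rel}(M_{2,R})$, leaving only the cylinder term. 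Separating variables by the fiberwise Hodge decomposition on $Y$ and using the hypothesis $H^{*}(Y_{b},F)=0$ — which removes the zero modes and gives a uniform spectral gap on each summand — this term is evaluated by the one-dimensional model computation on $\R$ against two half-lines with absolute and relative conditions, and yields precisely $\frac{\log 2}{2}\rk(F)\chi(Y)$ (which is $0$ in the case at hand, but I would track it for the general statement).

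For the large-time part I would run the small-eigenvalue analysis. The uniform spectral gap of the model cylinder provided by $H^{*}(Y_{b},F)=0$ ensures that on $M_{R}$ the fiberwise Hodge Laplacian on $p$-forms has no nonzero eigenvalue tending to $0$ as $R\to\infty$, so the behavior of the renormalized integral past the cut is governed entirely by the fiberwise harmonic forms, the remaining spectrum contributing $o(1)$ as $R\to\infty$. Tracking the $L^{2}$-metric and the action of the Bismut superconnection on the harmonic forms through the limit $R\to\infty$, together with the Mayer--Vietoris isomorphisms forced by $H^{*}(Y_{b},F)=0$, reassembles these data into the torsion form $T_{f}(A^{\mathscr{H}},h_{L^{2}}^{\mathscr{H}})$ of the long exact sequence \eqref{e.4340}, via the Bismut--Lott construction \cite[Def. 2.20]{BL}. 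Assembling the small- and large-time contributions gives \eqref{e.474}. For $S$ a point this is essentially the Br\"{u}ning--Ma gluing formula \cite{BruMa12}; alternatively, an adiabatic limit of the total fibration $M\to S$ could be used to deduce the form-level identity from the function-level one.

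The main difficulty is the uniform control, in $R$ and in all form degrees on $S$, of the Bismut superconnection heat operator and its resolvent on the stretched fibrations: one needs estimates strong enough to justify the small/large-time splitting and to interchange $R\to\infty$ with the renormalized $\int_{0}^{\infty}\frac{dt}{t}$, to show that the cross terms between $M_{1}$-, $M_{2}$- and cylinder-localized modes are negligible in each degree, and to prove the convergence of the harmonic-form data together with its metric and superconnection. This is the families refinement of the analytic estimates of Bismut--Zhang \cite{BZ92} and Br\"{u}ning--Ma \cite{BruMa12}, made more delicate by the non-self-adjointness of the superconnection and by the need to retain every degree on $S$; the product structure \eqref{e.7611} and the vanishing $H^{*}(Y_{b},F)=0$ are the key hypotheses that bring these estimates under control.
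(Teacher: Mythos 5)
Your proposal does not prove Conjecture \ref{c.1}: it proves Theorem \ref{t.13}. The difference is decisive, and the hypothesis $H^{\bullet}(Y_{b},F)=0$ that you invoke repeatedly is precisely the assumption the Conjecture drops and the Theorem retains.

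Concretely, you use $H^{\bullet}(Y_{b},F)=0$ at three load-bearing points: (i) to get a uniform spectral gap on the fiber $Y$ so the small-time cylinder contribution decays and the interior cancellation goes through; (ii) to ensure that the Hodge Laplacian on $Z_{R}$ has no small eigenvalues tending to $0$ as $R\to\infty$, so that the large-time integral past $t\sim R^{2}$ is controlled by a fixed finite-dimensional space of harmonic forms; and (iii) to split the long exact sequence $\mathscr{H}_{R}$ into short exact sequences (\ref{e.86}) via the Mayer--Vietoris argument. Under this hypothesis $\chi(Y)\,\rk F=\chi(Y,F)=0$, so the constant $\tfrac{\log 2}{2}\rk(F)\chi(Y)$ vanishes identically and you are proving (\ref{e.354}), not (\ref{e.474}). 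Your parenthetical promise to ``track it for the general statement'' is not an argument: in the general case $\Ker D^{Y}\neq 0$, the model cylinder produces bound states with eigenvalues that decay only polynomially in $R$ (not exponentially), the splitting (\ref{e.86}) fails, the spectral projection $P_{R}^{[0,\,e^{-cR}]}$ is no longer an isomorphism onto $\Ker(D^{Z_{R}})^{2}$, and the small-time cancellation of Lemma \ref{l.10} is no longer exact --- the residual contribution from the zero-mode sector of $D^{Y}$ is exactly where the $\log 2$ comes from, and extracting it inside the renormalized $\int_{0}^{\infty}\tfrac{dt}{t}$ requires a scattering-type small-eigenvalue analysis that the proposal does not supply. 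These are the ``difficulties in analysis'' that force the paper to restrict to (\ref{e.356}); a proof of Conjecture \ref{c.1} must confront them head-on rather than set them aside.

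One additional caveat: the closing remark that one could ``deduce the form-level identity from the function-level one'' by an adiabatic limit of $M\to S$ is not sound. The higher-degree components of $\mathscr{T}(T^{H}M,g^{TZ},h^{F})$ are genuinely new invariants; they do not arise as derivatives or rescalings of the degree-zero Ray--Singer torsion, and no such bootstrap is known. If you meant something along the lines of Ma's functoriality theorem for compositions of submersions, that is a different statement and does not yield a gluing formula.
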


The $0-$degree component of (\ref{e.474}) is exactly the gluing formula of Br\"{u}ning and Ma \cite[(0.22)]{BruMa12} in the case with product structures. In \cite{Zhu13}, we have proved this formula under the assumption that there exists a fiberwise Morse function on the fibration.

A way to prove the gluing formula (\ref{e.474}) is through the adiabatic limit method which have been used in the
research of the gluing problem of $\eta-$invariant by Douglas and Wojciechowski \cite{DouWoj91} (cf. also \cite{PaWo06}). The general case of the gluing problem of
$\eta-$invariant was solved by Bunke \cite{bunke95} by adiabatic method. Formally speaking, the adiabatic method is a limiting process that one stretches
the original manifold along the normal direction of certain hypersurface into two manifolds with cylinder ends of infinite length. Because of some difficulties in analysis,
we still need a topological condition that the fiberwise cohomology groups of the boundary
fibration $\pi:X\rightarrow S$ are
vanishing, i.e.,
\begin{align}\begin{aligned}\label{e.356}
H^{\bullet}(Y_{b}, F)=0, \quad \text{ for all } b \in S.\end{aligned}\end{align}
The assumption (\ref{e.356}) is equivalent to the vanishing of the kernel of fiberwise Dirac operator $D^{Y}$, i.e.,
$\Ker \big(D^{Y_{b}}\big)=0$, for all $b\in S$.
Now we state the main theorem of this paper.

\begin{thm}\label{t.13}The following identity holds in $Q^{S}/Q^{S, 0}$ under the condition (\ref{e.356})
\begin{align}\begin{aligned}\label{e.354}
\mathscr{T}(T^{H}M, g^{TZ}, h^{F})&-\mathscr{T}_{\rm abs}(T^{H}M_{1}, g^{TZ_{1}}, h^{F})\\
&\qquad-\mathscr{T}_{\rm rel}(T^{H}M_{2}, g^{TZ_{2}}, h^{F})=T_{f}(A^{\mathscr{H}}, h_{L^{2}}^{\mathscr{H}}).
\end{aligned}\end{align}
\end{thm}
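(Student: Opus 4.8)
The plan is to prove (\ref{e.354}) by the adiabatic limit method: one stretches the fibration along $X$ to obtain a one-parameter family of fibrations with an arbitrarily long isometric product neck and compares the associated Bismut superconnection heat kernels as the neck length tends to infinity, in the spirit of the treatment of the gluing problem for $\eta$-invariants in \cite{DouWoj91, bunke95} and for the Ray--Singer metric in \cite{BruMa12}.

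\textbf{Step 1 (metric independence; long neck).} First I would show, using the anomaly formula of \cite{BL} together with its analogues for the absolute and relative torsion forms \cite{Zhu13, BruMa06}, that the difference of the two sides of (\ref{e.354}) defines a class in $Q^{S}/Q^{S, 0}$ that is independent of the choice of $(g^{TZ}, h^{F})$ compatible with (\ref{e.7611}): the local Euler-form contributions to the three anomalies cancel by Chern--Gauss--Bonnet with boundary (the metrics being product near $X$), while the secondary $\widetilde{f}$-type contributions attached to the flat cohomology bundles $H^{\bullet}(Z, F)$, $H^{\bullet}(Z_1, F)$, $H^{\bullet}(Z_2, Y, F)$ combine, via the exact sequence (\ref{e.4340}), into exactly the variation of $T_{f}(A^{\mathscr{H}}, h_{L^{2}}^{\mathscr{H}})$ and cancel against it. It therefore suffices to prove (\ref{e.354}) after replacing $g^{TZ}$ by a metric that contains an isometric product region $Y \times [-R, R]$ of length $2R$ near $X$ --- equivalently, to work on the stretched fibration $\pi_R\colon M_R \to S$ with fibre $Z_{R, b} = Z_{1, b} \cup_{Y_b} (Y_b \times [-R, R]) \cup_{Y_b} Z_{2, b}$ and on $M_{i, R} = M_i \cup_X (X \times [0, R])$ --- and then to let $R \to \infty$, using that the quantity in question is $R$-independent in $Q^{S}/Q^{S, 0}$.

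\textbf{Step 2 (splitting of the transgression integral; uniform gap).} Next I would write each torsion form as the renormalized integral $-\int_0^\infty [\,\cdots\,]\,\frac{dt}{t}$ of the Bismut--Lott transgression form built from the rescaled Bismut superconnection and the number operator $N_Z$, and treat the regimes of small, intermediate and large $t$ separately, coupling the cut-offs to $R$. By finite propagation speed and a Duhamel gluing of parametrices, the heat operator on $M_R$ is, up to an error that is $O(e^{-cR})$ in every $\cC^{\ell}$-norm on $S$ and for $t$ throughout an exponentially long range, a gluing of the heat operators of the absolute problem on $M_{1, \infty} = M_1 \cup_X (X \times [0, \infty))$, of the relative problem on $M_{2, \infty}$, and of the translation-invariant model on $Y \times \R$. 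The crucial input is the hypothesis $\Ker D^{Y_b} = 0$: by Fourier analysis in the axial variable the model operator on $Y \times \R$ has a uniform spectral gap above $0$, hence the fiberwise de Rham Laplacians of $M_{1, \infty}$, $M_{2, \infty}$ and of $M_R$ are bounded below --- uniformly in $b \in S$ and in $R$ --- off a finite-rank subbundle of exponentially small eigenvalues. Consequently the short- and intermediate-time contributions to the left-hand side of (\ref{e.354}), computed on $M_R$, tend to $0$: the product neck contributes nothing (its de Rham complex is acyclic by K\"{u}nneth and (\ref{e.356}), and its Euler form and transgression vanish), and the remaining local terms of $M_R$ match those of $M_{1, R}$ and $M_{2, R}$.

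\textbf{Step 3 (adiabatic limit of the small eigenvalues; conclusion).} It remains to identify the long-time contribution, that of the exponentially small eigenvalues, which forces the coordinated rescaling $t \mapsto e^{2cR} s$. On $M_{i, \infty}$ the condition $\Ker D^{Y_b} = 0$ excludes continuous spectrum near $0$, so the $L^2$-kernel of the fiberwise de Rham operator is the finite-rank flat bundle $H^{\bullet}(Z_1, F)$ (resp.\ $H^{\bullet}(Z_2, Y, F)$) with its $L^2$-metric, and its long-time contribution is the torsion $T_{f}$ of a flat bundle with vanishing differential, hence is $0$ in $Q^{S}/Q^{S, 0}$. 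On $M_R$ one forms the finite-rank complex of eigenforms with eigenvalue below the gap, with the restricted de Rham differential and induced $L^2$-metric; a fiberwise, parameter-uniform version of the Mayer--Vietoris small-eigenvalue analysis of Cheeger and M\"{u}ller \cite{Chg, Mu78}, together with the behavior of $T_{f}$ under short exact sequences of flat bundles \cite{BL, Ma02}, shows that, after the rescaling and after removing the (vanishing) contributions of the harmonic forms of $M_{1, \infty}$ and $M_{2, \infty}$, the long-time part of $\mathscr{T}(M_R) - \mathscr{T}_{\rm abs}(M_{1, R}) - \mathscr{T}_{\rm rel}(M_{2, R})$ converges to $T_{f}(A^{\mathscr{H}}, h_{L^{2}}^{\mathscr{H}})$, the sequence $(\mathscr{H}, \delta)$ with the metric $h_{L^{2}}^{\mathscr{H}}$ being precisely the adiabatic limit of the near-zero eigenform complex together with its $L^2$-structure. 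Combining Steps 2 and 3, the difference of the two sides of (\ref{e.354}) tends to $0$ in $Q^{S}/Q^{S, 0}$; being $R$-independent, it vanishes, which proves the theorem. (The defect $\frac{\log 2}{2}\rk(F)\chi(Y)$ of Conjecture~\ref{c.1} does not appear here, because (\ref{e.356}) forces $\rk(F)\chi(Y) = \sum_i (-1)^i \dim H^i(Y_b, F) = 0$.)

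\textbf{Main obstacle.} The serious difficulty lies in Steps 2 and 3: establishing heat-kernel estimates on $M_R$ that are uniform simultaneously in $R \to \infty$, in $t$ over an exponentially long interval, and in the base point $b \in S$ with full control of horizontal derivatives (one manipulates differential forms on $S$, not scalars), and then carrying out the adiabatic identification of the near-zero eigenform complex with the Mayer--Vietoris sequence $(\mathscr{H}, \delta)$ at the level of the superconnection transgression forms rather than merely of cohomology. The uniform spectral gap supplied by $\Ker D^{Y_b} = 0$ is exactly what makes this possible and is used repeatedly; without it one meets continuous spectrum at $0$ on the model cylinder and, as in Conjecture~\ref{c.1}, an extra $\log 2$ defect.
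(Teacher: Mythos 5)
Your overall strategy---stretch the fibration along $X$, use the anomaly formulas to reduce to the stretched manifold, separate time scales, and exploit the uniform spectral gap supplied by $\Ker D^{Y_b}=0$---is the same adiabatic-limit strategy the paper follows. Your Steps~1 and~2 (the anomaly bookkeeping producing the $R$-independent combination, the Duhamel/finite propagation speed argument for the small-time regime) match the paper in spirit. But Step~3 contains a concrete error of bookkeeping that, if carried through, gives a wrong answer.

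You assert that, after rescaling, ``the long-time part of $\mathscr{T}(M_R)-\mathscr{T}_{\rm abs}(M_{1,R})-\mathscr{T}_{\rm rel}(M_{2,R})$ converges to $T_f(A^{\mathscr{H}},h_{L^2}^{\mathscr{H}})$,'' the Mayer--Vietoris sequence being the adiabatic limit of the near-zero eigenform complex. That is not what happens. Under assumption~(\ref{e.356}) the paper proves (Theorem~\ref{t.10}, Propositions~\ref{p.4}, \ref{p.13}, \ref{p.7}) that the fiberwise Laplacians on $Z_R$, $Z_{1,R}$, $Z_{2,R}$ have spectrum contained in $\{0\}\cup[c,\infty)$ \emph{uniformly} in $R$ and $b$: there simply are no exponentially small nonzero eigenvalues. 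Consequently there is no coordinated rescaling $t\mapsto e^{2cR}s$, and the large-time contribution $L(R)$ to $\mathscr{T}(M_R)-\mathscr{T}_{\rm abs}(M_{1,R})-\mathscr{T}_{\rm rel}(M_{2,R})$ tends to $0$, not to $T_f(A^{\mathscr{H}},h_{L^2}^{\mathscr{H}})$: the integrand for $t\geq R^{2-\varepsilon}$ is $O(e^{-ct})$ from the gap plus the zero-mode constants $\tfrac12\chi'(Z,F)-\tfrac12\chi'_{\rm abs}(Z_1,F)-\tfrac12\chi'_{\rm rel}(Z_2,F)$, which cancel by the rank identity $h^{(p)}=h_1^{(p)}+h_2^{(p)}$ from the split short exact sequences~(\ref{e.86}) (this cancellation is what Lemma~\ref{l.56} encodes). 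If you feed ``long-time part $\to T_f$'' into your Step~1 identity $\mathscr{T}(M_R)-\cdots-T_f(A^{\mathscr{H}},h_{L^2,R}^{\mathscr{H}})=\text{const}$, you obtain LHS of~(\ref{e.354}) $=2\,T_f(A^{\mathscr{H}},h_{L^2}^{\mathscr{H}})$, off by a factor of $2$.

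The piece you are missing is the third limit: the $R$-dependent finite-dimensional torsion $T_f(A^{\mathscr{H}_R},h_{L^2}^{\mathscr{H}_R})$ itself tends to $0$. This is Theorem~\ref{t.9} and its key Lemma~\ref{l.16}: the long exact sequence~(\ref{e.86}) splits canonically into short exact sequences which become $L^2$-orthogonal up to $O(e^{-cR})$, so the metric $h_{L^2}^{\mathscr{H}_R}$ converges to a split metric and the torsion of a split acyclic complex is zero in $Q^S/Q^{S,0}$. It is this vanishing, together with $S(R)\to 0$ and $L(R)\to 0$, which makes the $R$-independent combination equal to $0$ and proves~(\ref{e.354}); the quantity $T_f(A^{\mathscr{H}},h_{L^2}^{\mathscr{H}})$ on the right of~(\ref{e.354}) emerges not from any long-time asymptotics on $M_R$ but from evaluating the $R$-independent anomaly combination at $R=0$. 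Proving this asymptotic orthogonal splitting (comparison of $\Ker D^{Z_R}$ with $\Ker D^{Z_{1,R}}\oplus\Ker D^{Z_{2,R}}$ via cut-offs, the heat-kernel comparison of Lemma~\ref{l.43}, and control of the de Rham maps in Section~\ref{ss3.9}--\ref{ss3.10}) is in fact one of the more delicate parts of the argument and your proposal gives no indication of it.
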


In order to prove Theorem \ref{t.13}, we use some smooth diffeomorphisms (see Lemma \ref{l.1}) to stretch the original fibrations $M$, $M_{1}$ and $M_{2}$
linearly along the normal direction of the cutting hypersurface $X$ to obtain the stretched fibrations denoted by $M_{R}$, $M_{1, R}$ and $M_{2, R}$.
We represent the relation between $M_{R}$, $M_{1, R}$ and $M_{2, R}$ as in Figure \ref{fig.4}.\\

\begin{figure}
\input{fibration_R.pstex_t}
\caption[Figure]{\label{fig.4}}
\end{figure}\index{Figure \ref{fig.4}}

We have an exact sequences of flat vector bundles on $S$ parameterized by $R\geq 0$
\begin{align}\begin{aligned}\label{e.366}
\mathscr{H}_{R}:\quad\cdots \rightarrow H^{p}(Z_{R}, F)\rightarrow H^{p}(Z_{1, R}, F_{R})\rightarrow H^{p+1}(Z_{2, R}, Y, F_{R})\rightarrow\cdots.\end{aligned}\end{align}
 Thanks to the anomaly formulas (cf. \cite[Thm. 3.24]{BL}, Theorem \ref{t.5}), we have an important identity holding in $Q^{S}/Q^{S, 0}$
in the process of adiabatic limit
\begin{align}\begin{aligned}\label{e.367}
&\mathscr{T}(T^{H}M, g^{TZ}, h^{F})-\mathscr{T}_{\rm abs}(T^{H}M_{1}, g^{TZ_{1}}, h^{F})\\
&\qquad\qquad\qquad\qquad-\mathscr{T}_{\rm rel}(T^{H}M_{2}, g^{TZ_{2}}, h^{F})-T_{f}(A^{\mathscr{H}}, h_{L^{2}}^{\mathscr{H}})\\
&=\mathscr{T}(T^{H}M_{R}, g^{TZ_{R}}, h^{F_{R}})-\mathscr{T}_{\rm abs}(T^{H}M_{1, R}, g^{TZ_{1, R}}, h^{F_{R}})
\\
&\qquad\qquad\qquad\qquad-\mathscr{T}_{\rm rel}(T^{H}M_{2, R}, g^{TZ_{2, R}}, h^{F_{R}})-T_{f}(A^{\mathscr{H}_{R}}, h_{L^{2}}^{\mathscr{H}_{R}}).
\end{aligned}\end{align}

To prove Theorem \ref{t.13}, we only need to compute the limit of the right side of (\ref{e.367}) when $R\rightarrow \infty$, which will be
 divided into three parts to treat that are the small time contribution $S(R)$ (see (\ref{e.92})), the large time contribution $L(R)$ (see (\ref{e.93})) and the torsion form
$T_{f}(A^{\mathscr{H}}, h_{L^{2}}^{\mathscr{H}_{R}})$.

 For the adiabatic limit of small time contribution $S(R)$,
which will be shown to be vanished, we make use of an ideal of Atiyah,
 Patodi and Singer in \cite{APS}, while the main tools are the Duhamel's principle and the finite propagation speed property for
the wave equation. Under the assumption (\ref{e.356}), there exists a uniform spectral
gap, for all $R>0$ large enough, of the fiberwise Dirac operators $D^{Z_{R}}$, $D^{Z_{1, R}}$ and $D^{Z_{2, R}}$  bounded away from $0$. This spectral
gap permits us to show that the adiabatic limit of large time contribution $L(R)$ also vanishes. Finally, we show that the exact sequence
$\mathscr{H}_{R}$ of flat vector bundles in (\ref{e.366}) is asymptotically split when $R\rightarrow \infty$, as a consequence of
 which the
adiabatic limit of $T_{f}(A^{\mathscr{H}}, h_{L^{2}}^{\mathscr{H}_{R}})$ vanishes.

 The whole paper is organized as follows. In Section \ref{s.4}, we introduce some preliminaries for the gluing problem of Bismut-Lott torsion form and describe the adiabatic method approach to solve it by using the anomaly formulas. In Section \ref{s.3}, we state our main theorem and treat the small time contribution in the process of adiabatic limit.
In Section \ref{s.2}, we treat the large time contribution in the process of adiabatic limit. In Section \ref{s.1}, we deal with the limit of the torsion forms $T_{f}(A^{\mathscr{H}_{R}}, h_{L^{2}}^{\mathscr{H}_{R}})$,
when $R\rightarrow \infty$.

\textbf{Acknowledgments.}
This paper is the second part of the author's Ph.D. thesis at Universit{\'e} Paris Diderot-Paris 7.
 He would like to thank his Ph.D. advisor Professor Xiaonan Ma for giving him patient instruction and constant encouragement
during the process of completing this thesis.

\section{Anomaly formula and gluing problem of Bismut-Lott torsion forms}\label{s.4}

In this section, we will introduce the geometric background for the gluing problem of Bismut-Lott torsion forms and describe the adiabatic approach to solve this problem.

This section is organized as follows.
In Section \ref{ss1.1}, we introduce some geometric objects for a fibration with boundary.
In Section \ref{ss1.4}, we recall the Bismut-Lott superconnection.
In Section \ref{ss1.7}, we establish the anomaly formulas of Bismut-Lott torsion forms in the case with boundary.
In Section \ref{ss1.6}, we establish some technical tools on the estimates of heat kernels on the stretched fibration $M_{R}$ with boundary for $R\geq 0$.
In Section \ref{ss1.8}, we formulate the gluing problem of Bismut-Lott torsion forms in detail. In Section \ref{ss1.9}, we construct
a diffeomorphism $\phi_{R}:M\rightarrow M_{R}$. In Section \ref{ss1.10}, we make use of the anomaly formulas and the diffeomorphism $\phi_{R}$ to establish the important identity
(\ref{e.367}) in the process of adiabatic limit.

\subsection{Smooth fibration with boundaries and fibration with cylinder end}\label{ss1.1}

 Let $S$ be a compact smooth manifold of dimension $n$.
 Let $TS$ be the tangent bundle of $S$ and $T^{*}S$ be the cotangent bundle. For a vector bundle $F$ on $S$, let $\Omega^{j}(S,F)$ be the space of $F$-valued smooth differential $j-$forms on $S$, $\Omega(S,F)=\bigoplus_{j=0}^{n}\Omega^{j}(S,F)$  and $\Omega^{\bullet}(S)=\Omega^{\bullet}(S,\mathbb{R})$.

 Let $E=E_{+}\oplus E_{-}$ be a $\mathbb{Z}_{2}$-graded complex vector bundle over $S$ with a flat connection $\nabla^{E}=\nabla^{E_{+}}\oplus\nabla^{E_{-}}$, i.e., the curvature $(\nabla^{E_{\pm}})^{2}$ is zero. By definition, a Hermitian metric $h^{E}$ on $\mathbb{Z}_{2}$-graded bundle $E$ is a Hermitian metric such that $E_{+}$ and $E_{-}$ are orthogonal.

Let $(\nabla^{E})^{*}$ be the adjoint of $\nabla^{E}$ with respect to $h^{E}$. Let
\begin{align}\begin{aligned}\label{e.499}
\omega(E, h^{E})=(\nabla^{E})^{*}-\nabla^{E}=(h^{E})^{-1}\nabla^{E}h^{E}\in \Omega^{1}(S, \text{End}(E)).
\end{aligned}\end{align}\index{$\omega(E, h^{E})$}
Let $\varphi:\Omega(S)\rightarrow \Omega(S)$ be the linear map such that for all $\beta\in \Omega^{k}(S)$,
\begin{align}\begin{aligned}\label{e.500}
\varphi\beta=(2i\pi)^{-k/2}\beta.
\end{aligned}\end{align}

In this paper, we always set
\begin{align}\begin{aligned}\label{e.762}
f(a)=a \exp(a^{2}),
\end{aligned}\end{align}
 which is a homomorphic odd function over $\mathbb{C}$.
\begin{defn}\label{d.23}
Put
\begin{align}\begin{aligned}\label{e.501}
f(\nabla^{E}, h^{E})=(2i\pi)^{1/2}\varphi\tr_{s}\left[f(\frac{\omega(E,h^{E})}{2})\right]\in \Omega(S),
\end{aligned}\end{align}
where $\tr_{s}[\cdot,\cdot]:=\tr|_{E_{+}}-\tr|_{E_{-}}$ denotes the supertrace (cf. \cite{BGV92}). It is a real, odd and closed form and its de Rham cohomology class does not depend on the choice of $h^{E}$ (cf. \cite[Theorems 1.8, 1.11]{BL}).
\end{defn}

\begin{defn}\label{d.8}
Let $h^{'E}$ be another Hermitian metric on $E$. As \cite[Def. 1.12]{BL}, we define
\begin{align}\begin{aligned}\label{e.51}
\widetilde{f}(\nabla^{E}, h^{E}, h^{'E})=\int_{0}^{1}\varphi\tr_{s}\left[\frac{1}{2}(h^{E}_{l})^{-1}\frac{\partial h^{E}_{l}}{\partial l}f'\big(\frac{\omega(E, h^{E}_{l})}{2}\big)\right]dl\in Q^{S}/Q^{S, 0},
\end{aligned}\end{align}
where $h^{E}_{l}, l\in[0, 1]$ is a smooth path of metrics on $E$ such that $h^{E}_{0}=h^{E}$ and $h^{E}_{1}=h^{'E}$.
\end{defn}

Then from \cite[Thm. 1.11]{BL}, we get
\begin{align}\begin{aligned}\label{e.52}
d\widetilde{f}(\nabla^{E}, h^{E}, h^{'E})=f(\nabla^{E}, h^{'E})-f(\nabla^{E}, h^{E}).
\end{aligned}\end{align}
Moreover,
the class $\widetilde{f}(\nabla^{E}, h^{E}, h^{'E})\in Q^{S}/Q^{S, 0}$ does not depend on the choice of the path  $h^{E}_{l}$.

  Let
 $\pi:M\rightarrow S$ be a smooth fibration with boundary $X:=\partial M$, and its standard fiber $Z$ is a compact manifold. We assume that the boundary $X$ of $M$ is a smooth fibration denoted
by $\pi_{\partial}:X\rightarrow S$ with fiber $Y$ such that $Y=\partial Z$.
\begin{defn}
Let $X$ be a compact manifold and $I$ (not be a point) be an interval of $\mathbb{R}$, we set $X_{I}:=X\times I$, for example $X_{[-R,R]}=X\times[-R,R]$, $X_{\mathbb{R}}=X\times(-\infty,+\infty)$.
\end{defn}

Let $X_{[-\varepsilon,0]}$ be a product neighborhood of $X$, and we identify $\partial M$ with $X\times \{0\}$.
Let $TM$ be the tangent bundle of $M$. Let $TZ$ be the vertical subbundle of $TM$. Let $T^{H}M$ be the horizontal subbundle of $TM$ verifying the assumption of product structure (\ref{e.7593}) on $X_{[-\varepsilon,0]}$, then we have $TM=TZ\oplus T^{H}M$.
 Let $TY$ be the vertical tangent bundle of the fibration $X$, then it's a subbundle of $TZ$ restricted on $X$.
Let $N$ be the normal bundle of $X\subset M$, i.e., $N:=TM/TX$, then by our assumption
we have $TZ/TY\cong N$. We note that in our case $N$ is a trivial oriented line bundle on $X$ (cf. \cite[p.54, p.66]{BottTu}).

Let $g^{TZ}$ be a metric on $TZ$ verifying the assumption of product structure (\ref{e.360}) on $X_{[-\varepsilon,0]}$, let $g^{T Y}$ be the metric on $TY$ induced by $g^{TZ}$.
Using the metric $g^{TZ}$, we identify $N$ with the orthogonal complement of $TY$ in $TZ$, thus we have $TZ|_{X}= TY\oplus N$.

Let $(F, \nabla^{F})$ be a flat complex vector bundle on $M$ with a flat connection $\nabla ^{F}$, i.e., $(\nabla ^{F})^{2}=0$. Let $h^{F}$ be
a Hermitian metric on $F$. We trivialize $F$ by $\nabla^{F}$ as in (\ref{e.7601}) on $X_{[-\varepsilon,0]}$ and assume that $h^{F}$ verifies the assumption of product structure (\ref{e.361}) on $X_{[-\varepsilon,0]}$.

\subsection{Bismut-Lott superconnection form}\label{ss1.4}

Let $\Omega^{\bullet}(Z,F|_{Z})$ be the infinite-dimensional $\mathbb{Z}-$graded vector bundle over $S$ whose fiber
is $\Omega^{\bullet}(Z_{b},F|_{Z_{b}})$  at $b\in S$. That is
\begin{align}\begin{aligned}\label{e.540}
\Omega^{\bullet}(M,F)=\Omega^{\bullet}(S,\Omega^{\bullet}(Z,F|_{Z})).
\end{aligned}\end{align}

 Let $o(TZ)$ be the orientation bundle of $TZ$ (cf. \cite[p.88]{BottTu}),
 which is a flat real line bundle on $M$.
Let $dv_{Z}$ be the Riemannian volume form on fibers $Z$ associated to $g^{TZ}$, which is a section of $\Lambda^{m}(T^{*}Z)\otimes o(TZ)$ over $M$.
 The metrics $g^{TZ}$ and $h^{F}$ induce a Hermitian metric on $\Omega^{\bullet}(Z,F|_{Z})$ such that
for $s, s'\in \Omega^{\bullet}(Z_{b},F|_{Z_{b}})$, $b\in S$,
\begin{align}\begin{aligned}\label{e.547}
\langle s, s'\rangle_{h^{\Omega^{\bullet}(Z,F|_{Z})}}(b):=\int_{Z_{b}}\langle s, s' \rangle_{g^{\Lambda(T^{*}Z)\otimes F}}(x)dv_{Z_{b}}(x).
\end{aligned}\end{align}

 Let $P^{TZ}$ denote the projection from $TM=T^{H}M\oplus TZ$ to $TZ$. For $U\in TS$, let
 $U^{H}$ be the horizontal lift of $U$ in $T^{H}M$, so that $\pi_{*}U^{H}=U$.

\begin{defn}\label{d.32}
For $s\in C^{\infty}(S, \Omega^{\bullet}(Z,F|_{Z}))$ and $U\in TS$, the Lie derivative $L_{U^{H}}$ acts on $C^{\infty}(S, \Omega^{\bullet}(Z,F|_{Z}))$. Then
$
\nabla^{\Omega^{\bullet}(Z,F|_{Z})}_{U}s:=L_{U^{H}}s
$
defines a connection on $\Omega^{\bullet}(Z,F|_{Z})$ preserving the $\mathbb{Z}-$grading.
\end{defn}

 Let $d^{Z}$ be the exterior differentiation along fibers $(Z, F,\nabla^{F})$.
If $U_{1}, U_{2} \in TS$, we put
\begin{align}\begin{aligned}\label{e.542}
T(U_{1}, U_{2})=-P^{TZ}[U^{H}_{1}, U^{H}_{2}]\in C^{\infty}(M, TZ),
\end{aligned}\end{align}
then $T$ is a tensor, i.e., $T\in C^{\infty}(M, \pi^{*}\Lambda^{2}(T^{*}S)\otimes TZ)$. Let $i_{T}$ be the interior multiplication by $T$ in the vertical direction.

The flat connection $\nabla^{F}$ extends naturally
 to be an exterior differential operator $d^{M}$ acting on $\Omega^{\bullet}(M, F)$,
then it defines a flat superconnection of total degree 1 on $\Omega^{\bullet}(Z,F|_{Z})$.
By \cite[Prop. 3.4]{BL}, we have the following identity
\begin{align}\begin{aligned}\label{e.544}
d^{M}=d^{Z}+\nabla^{\Omega^{\bullet}(Z,F|_{Z})}+i_{T}.
\end{aligned}\end{align}

Let $(\nabla^{\Omega^{\bullet}(Z,F|_{Z})})^{*}, \, (d^{M})^{*}, \, (i_{T})^{*}$, $(d^{Z})^{*}$ be the formal adjoints
 of $\nabla^{\Omega^{\bullet}(Z,F|_{Z})}, \, d^{M}, \, i_{T}$, $d^{Z}$ with respect to the Hermitian metric $h^{\Omega^{\bullet}(Z,F|_{Z})}$ in (\ref{e.547}).
Set
\begin{align}\begin{aligned}\label{e.548}
&D^{Z}=d^{Z}+(d^{Z})^{*}, \quad \nabla^{\Omega^{\bullet}(Z,F|_{Z}), u}=\frac{1}{2}(\nabla^{\Omega^{\bullet}(Z,F|_{Z})}+(\nabla^{\Omega^{\bullet}(Z,F|_{Z})})^{*}).
\end{aligned}\end{align}
Then the Hodge Laplacian associated to $g^{TZ}$ and $h^{F}$ along the fibers $Z$ is
\begin{align}\begin{aligned}\label{e.485}
(D^{Z})^{2}=d^{Z}(d^{Z})^{*}+(d^{Z})^{*}d^{Z}:\Omega^{\bullet}(Z, F|_{Z})\rightarrow \Omega^{\bullet}(Z, F|_{Z}).
\end{aligned}\end{align}

Let $N$\index{$N$} be the number operator on $\Omega^{\bullet}(Z, F|_{Z})$, i.e., it acts by multiplication by $k$ on $\Omega^{k}(Z, F|_{Z})$.
For $t>0$, we set
\begin{align}\begin{aligned}\label{e.549}
&C'_{t}=t^{N/2}d^{M}t^{-N/2}, \quad C''_{t}=t^{-N/2}(d^{M})^{*}t^{N/2}, \\
&C_{t}=\frac{1}{2}(C'_{t}+C''_{t}), \quad\quad D_{t}=\frac{1}{2}(C''_{t}-C'_{t}).
\end{aligned}\end{align}
Then $C''_{t}$ is the adjoint of $C'_{t}$ with respect to $h^{\Omega^{\bullet}(Z, F|_{Z})}$. We note that $C_{t}$ is a superconnection and $D_{t}$ is an odd element of $\Omega(S, \text{End}(\Omega^{\bullet}(Z, F|_{Z})))$. Moreover, we have
\begin{align}\begin{aligned}\label{e.550}
C^{2}_{t}=-D^{2}_{t}.
\end{aligned}\end{align}

Let $g^{TS}$ be a Riemannian metric on $TS$, then $g^{TM}=\pi^{*}g^{TS}\oplus g^{TZ}$ defines a Riemannian metric on
 $TM=T^{H}M\oplus TZ$. Let $\nabla^{TM}$ denote the Levi-Civita connection on $TM$. Then
\begin{align}\begin{aligned}\label{e.551}
\nabla^{TZ}=P^{TZ}\nabla^{TM}
\end{aligned}\end{align}
defines a connection on $TZ$, which is independent of the choice of $g^{TS}$ (cf. \cite[Def. 1.6, Thm. 1.9]{Bismut86}).

For $X\in TZ$, let $X^{*}\in T^{*}Z$ be the dual of $X$ by the metric $g^{TZ}$. Set
\begin{align}\begin{aligned}\label{e.553}
c(X)=X^{*}\wedge- \, i_{X}, \quad \widehat{c}(X)=X^{*}\wedge+\, i_{X},
\end{aligned}\end{align}
where $i_{\cdot}$ denotes the interior multiplication.

By \cite[Prop. 3.9]{BL}, we get
\begin{align}\begin{aligned}\label{e.556}
C_{t}=\frac{\sqrt{t}}{2}D^{Z}+\nabla^{\Omega^{\bullet}(Z, F|_{Z}), u}-\frac{1}{2\sqrt{t}}c(T),
\end{aligned}\end{align}
which is essentially the same as the Bismut superconnection (cf. \cite[$\S$III.a)]{Bismut86}).

For any $ t>0$, the operator $D_{t}$ in (\ref{e.549}) is a first-order fiberwise-elliptic differential operator, then $f(D_{t})$ is a fiberwise trace class operator. For $t>0$, we put:
\begin{align}\begin{aligned}\label{e.568}
&f(C'_{t}, h^{\Omega^{\bullet}(Z,F|_{Z})})=(2i\pi)^{1/2}\varphi \tr_{s}[f(D_{t})]\in \Omega (S), \\
&f^{\wedge}(C'_{t}, h^{\Omega^{\bullet}(Z,F|_{Z})}):=\varphi \tr_{s}\left[\frac{N}{2}f'(D_{t})\right]=\varphi \tr_{s}\left[\frac{N}{2}(1+2D_{t}^{2})e^{D_{t}^{2}}\right].\end{aligned}\end{align}

\subsection{Analytic torsion forms of boundary case and anomaly formulas}\label{ss1.7}
Let $H(Z,F)$ (resp. $H(Z,Y,F)$) be the flat vector bundle of fiberwise absolute (resp. relative) cohomology groups with the canonical connection $\nabla^{H(Z, F)}$ (resp. $\nabla^{H(Z,Y,F)}$)(cf. \cite[$\S$1.3]{Zhu13}).
Let $\mathscr{T}_{{\rm abs }}(T^{H}M, g^{TZ}, h^{F})\in \Omega(S)$ (resp. $\mathscr{T}_{{\rm rel }}(T^{H}M, g^{TZ}, h^{F})$) be the Bismut-Lott torsion forms with absolute (resp. relative) boundary conditions introduced in \cite[Def. 1.19]{Zhu13}.

Now we describe how the torsion forms depend on their arguments. Let $(T^{H}M, g^{TZ}, h^{F})$ and
 $(T'^{H}M, g'^{TZ}, h'^{F}) $ be two triples, such that they satisfy the assumption
 of product structures (\ref{e.7611}) on the same product neighborhood $X_{[-\varepsilon',0]}$ of $X\subset M$.
We will mark the objects associated to the second triple with $a\, '$.

We define a differential form associated to $g^{TZ}$ along the fibers by:
\begin{align}\begin{aligned}\label{1.103}
e(TZ,\nabla^{TZ}):=(-1)^{m/2}{\rm Pf}(R^{TZ})=(-1)^{m/2}\int^{B_{Z}}\exp(R^{TZ}).
\end{aligned}\end{align}
We connect $g^{TZ}$ and $g'^{TZ}$ linearly by a path $g^{TZ}_{s}=sg'^{TZ}+(1-s)g^{TZ}$,
which still satisfy the assumption (\ref{e.7611}) of product structures for each $s\in [0,1]$. Let $\nabla^{TZ}_{s}$ be
the Levi-Civita connection with respect to $g^{TZ}_{s}$ (see (\ref{e.551})) and its curvature is denoted by $R_{s}^{TZ}$.
Then we define in $Q^{M}/Q^{M, 0}$ (cf. \cite[Prop. 3.6]{Zhang})
\begin{align}\begin{aligned}\label{1.106}
\widetilde{e}(TZ,\nabla^{TZ},\nabla'^{TZ})=(-1)^{m/2}\int_{0}^{1}\int^{B_{Z}}\frac{d\nabla^{TZ}_{s}}{ds}\exp(R_{s}^{TZ})ds.
\end{aligned}\end{align}
This differential form $\widetilde{e}(TZ,\nabla^{TZ},\nabla'^{TZ})$ is of degree $\dim(Z)-1$ such that
\begin{align}\begin{aligned}\label{e.46}
d\widetilde{e}(TZ, \nabla^{TZ}, \nabla'^{TZ})=e(TZ, \nabla'^{TZ})-e(TZ, \nabla^{TZ}).
\end{aligned}\end{align}
If $\dim(Z)$ is odd, we take $\widetilde{e}(TZ, \nabla^{TZ}, \nabla'^{TZ})$ to be zero. For the exact definition of the
secondary Euler class in the sense of Chern-Simons, the reader can refer to \cite[Prop. 2.7]{BruMa06}.\\

Now we establish the anomaly formula for Bismut-Lott's torsion form in boundary case.

\begin{thm}\label{t.5}If $(T^{H}M, g^{TZ}, h^{F})$ and $(T'^{H}M, g'^{TZ}, h'^{F}) $ verify the assumption of product structures {\rm (see (\ref{e.7611}))} on the same neighborhood $X_{[-\varepsilon', 0]}$,
 then the following identity holds in $Q^{S}/Q^{S, 0}$ for absolute or relative boundary conditions
\begin{align}\begin{aligned}\label{e.54}
&\mathscr{T}_{\rm abs/rel}(T'^{H}M, g'^{TZ}, h'^{F})-\mathscr{T}_{\rm abs/rel }(T^{H}M, g^{TZ}, h^{F})\\
&=\int_{Z}\widetilde{e}(TZ, \nabla^{TZ}, \nabla'^{TZ})f(\nabla^{F}, h^{F})+\int_{Z}e(TZ, \nabla'^{TZ})\widetilde{f}(\nabla^{F}, h^{F}, h'^{F})\\
&\pm\frac{1}{2}\int_{Y}\widetilde{e}(TY, \nabla^{TY}, \nabla'^{TY})f(\nabla^{F}, h^{F})
\pm\frac{1}{2}\int_{Y}e(TY, \nabla'^{TY})\widetilde{f}(\nabla^{F}, h^{F}, h'^{F})\\
&\qquad\qquad-\widetilde{f}(\nabla^{H_{\rm abs/rel}(Z,F)}, h^{H_{\rm abs/rel}(Z,F)}, h'^{H_{\rm abs/rel}(Z,F)}),
\end{aligned}\end{align}
where we denote $H_{\rm abs}(Z,F)=H(Z,F)$ and $H_{\rm rel}(Z,F)=H(Z,Y,F)$.
\end{thm}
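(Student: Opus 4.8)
The plan is to follow the standard strategy for anomaly formulas of Bismut--Lott torsion forms (cf. \cite[Thm. 3.24]{BL}), adapting it to the presence of a boundary with product structure. First I would reduce to the case where only one of the three data varies, since both sides of (\ref{e.54}) are additive in the obvious sense: treat the variation of $h^{F}$ along $h^{F}_{l}$, and the variation of $(T^{H}M, g^{TZ})$ along $g^{TZ}_{s}$, separately, and combine. For each one-parameter family, let $B_{u}$ denote the total space $M\times [0,1]$ (or a suitable parameter manifold) with the family of Bismut superconnections $C_{t,l}$ (or $C_{t,s}$) built from the varying data, and consider the double transgression: the key is that $f^{\wedge}(C'_{t}, h^{\Omega^{\bullet}(Z,F|_{Z})})$ and $f(C'_{t}, h^{\Omega^{\bullet}(Z,F|_{Z})})$ in (\ref{e.568}) satisfy transgression identities both in $t$ (giving the torsion form) and in the external parameter, so that an interchange of the two exterior differentiations produces the difference of torsion forms on one side and a sum of local/boundary contributions on the other.

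The main steps, in order, would be: (1) Write down, for $0<t<\infty$, the derivative with respect to the external parameter of the forms $f(C'_{t})$ and $f^{\wedge}(C'_{t})$; as on a closed fiber this is a total $t$-derivative plus an exact form, but now there are extra boundary terms supported on $X$ because the boundary conditions (absolute/relative) enter the definition of $(d^{Z})^{*}$. Because of the product structure (\ref{e.7611}) near $X_{[-\varepsilon',0]}$, these boundary terms are themselves governed by the fiberwise operator $D^{Y}$ on the boundary fibration, and the product structure lets me identify them with the corresponding transgression forms for $\pi_{\partial}:X\to S$ (this is the source of the $\int_{Y}$ terms with the $\pm\frac12$). (2) Integrate in $t$ from $0$ to $\infty$. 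The $t\to 0$ asymptotics are controlled by the local index theory of the Bismut superconnection, yielding the Euler form terms $\int_{Z}\widetilde{e}(TZ,\nabla^{TZ},\nabla'^{TZ})f(\nabla^{F},h^{F})$ and $\int_{Z}e(TZ,\nabla'^{TZ})\widetilde{f}(\nabla^{F},h^{F},h'^{F})$; on a manifold with boundary one must be careful that the small-time expansion of $\tr_{s}[\cdot]$ has no half-integer powers contributing, which again is where the product structure near $X$ is used (the boundary makes no contribution to the interior index density, it is pushed into the $\int_{Y}$ terms). (3) The $t\to\infty$ limit produces, via Hodge theory on the fibers with the chosen boundary condition, the term $\widetilde{f}(\nabla^{H_{\rm abs/rel}(Z,F)},\ldots)$ coming from the variation of the $L^{2}$-metric on fiberwise cohomology. (4) Assemble, noting everything is taken modulo $Q^{S,0}$ so that the exact forms arising from the transgression drop out.

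The hard part will be step (1)--(2) at the boundary: on a closed manifold the interchange-of-differentials argument of Bismut--Lott is clean, but here one must track exactly how the absolute/relative boundary conditions interact with the rescalings $t^{\pm N/2}$ and with the external derivative, and show that all boundary contributions collapse, via the product assumption (\ref{e.7611}) and the identification $TZ|_{X}=TY\oplus N$, into precisely the half-weighted boundary transgression forms $\pm\frac12\int_{Y}(\cdots)$. In particular one needs the analogue on a fibration with cylindrical collar of the Brüning--Ma boundary computations \cite{BruMa06}, together with the fact (used already in \cite{Zhu13}) that near $X$ the fiberwise Laplacian decouples as $(D^{Y})^{2}-\partial_{x_m}^{2}$, so that the boundary heat kernel is a product and the local boundary index density is exactly half the Euler density of $TY$ paired with $f(\nabla^{F},h^{F})$. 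Once this boundary bookkeeping is in place, the interior terms are identical to the closed case treated in \cite{BL}, and the proof concludes.
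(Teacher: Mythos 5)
Your proposal is correct in spirit but takes a genuinely different, and substantially heavier, route than the paper. The paper does not re-derive any small-time or large-time heat-kernel asymptotics, nor does it analyze boundary contributions directly in the anomaly computation. It uses the standard pull-back trick: connect the two triples by a smooth path $s\in[0,1]$ (affine for $T^{H}_{s}M$, linear interpolation for $g^{TZ}_{s}$ and $h^{F}_{s}$), form the enlarged fibration $\widetilde{\pi}:M\times[0,1]\to S\times[0,1]$ with the family metric and horizontal distribution, observe that the product structure is preserved uniformly in $s$, and then invoke the already-established transgression formula for boundary torsion forms \cite[Thm.~1.20]{Zhu13} on this bigger base:
\[
d\,\mathscr{T}_{\rm bd}=\int_{\widetilde Z}e(T\widetilde Z,\nabla^{T\widetilde Z})f(\nabla^{\widetilde F},h^{\widetilde F})
+(-1)^{\rm bd}\tfrac{1}{2}\int_{\widetilde Y}e(T\widetilde Y,\nabla^{T\widetilde Y})f(\nabla^{\widetilde F},h^{\widetilde F})
-f(\nabla^{H_{\rm bd}},h^{H_{\rm bd}}).
\]
Comparing the $ds$-components of both sides and integrating in $s$ from $0$ to $1$ immediately yields (\ref{e.54}), since the $ds$-component of $d\mathscr{T}_{\rm bd}$ is $\partial_{s}\mathscr{T}_{\rm bd}$, and the $ds$-components of the right-hand side are precisely the Chern--Simons integrands of $\widetilde e$ and $\widetilde f$. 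Every boundary contribution $\pm\tfrac12\int_Y(\cdots)$ is already packaged inside that transgression formula. Your approach, by contrast, amounts to re-proving \cite[Thm.~1.20]{Zhu13} itself: the double transgression of Bismut--Lott, the decoupling $(D^{Y})^{2}-\partial_{x_m}^{2}$ near the collar, the small-time boundary heat-kernel expansion, and the Hodge-theoretic $t\to\infty$ limit are exactly the content of that theorem, which the paper takes as a black box. Your plan is workable but duplicates a large amount of prior analysis; the pull-back trick over $S\times[0,1]$ is both shorter and avoids having to redo the boundary small-time estimates (the one technically delicate place in your sketch). You also propose an initial reduction to one-parameter variations of a single datum; this is unnecessary once one uses a single path varying all three data simultaneously, though it would be harmless modulo $Q^{S,0}$ because the Chern--Simons classes are path-independent modulo exact forms.
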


\begin{proof} First, a horizontal distribution on $M$ is simply a splitting of the exact sequence
$$
0\rightarrow TZ\rightarrow TM\rightarrow \pi^{*}TS\rightarrow0.
$$
As the space of splitting maps is affine, it follows that any pair of horizontal distributions can be connected by a smooth path of horizontal
distributions. Let $s\in [0,1]$ parameterize a smooth path $\{T^{H}_{s}M\},\, s\in [0,1]$ such that $T^{H}_{0}M=T^{H}M$ and $T^{H}_{1}M=T'^{H}M$.
Similarly, we set for $s\in [0,1]$
\begin{align}\begin{aligned}\label{e.727}
g^{TZ}_{s}=sg'^{TZ}+(1-s)g^{TZ},\quad h^{F}_{s}=sh'^{F}+(1-s)h^{F}.
\end{aligned}\end{align}
Let $\widetilde{\pi}:M_{[0,1]}\rightarrow S_{[0,1]}$ be the obvious projection, induced by $\pi:M\rightarrow S$, with fiber $\widetilde{Z}$.
Let $\widetilde{X}=X\times[0,1]$. Let $\widetilde{F}$ be the lift of $F$ to $M_{[0,1]}$.

Now $T^{H}(M_{[0,1]})|_{(0,s)}=T^{H}_{s}M\times \mathbb{R}$ defines a horizontal subbundle $T^{H}(M_{[0,1]})$ of $T(M_{[0,1]})$,
 and $T\widetilde{Z}$ and $\widetilde{F}$ are naturally equipped with metrics $g^{T\widetilde{Z}}$ and $h^{\widetilde{F}}$.
Since for all $s\in [0,1]$ the metrics $g^{TZ}_{s}$ and $h^{F}_{s}$ also satisfy, respectively, the assumptions (\ref{e.360}), (\ref{e.361}) of product structures
on the same product neighborhood $\widetilde{X}\times[-\varepsilon',0]$, by \cite[Thm. 1.20]{Zhu13},
we get
\begin{align}\begin{aligned}\label{e.728}
d \mathscr{T}_{{\rm bd }}&(T^{H}M, g^{T\widetilde{Z}}, h^{\widetilde{F}})
=\int_{\widetilde{Z}}e(T\widetilde{Z}, \nabla^{T\widetilde{Z}})f(\nabla^{\widetilde{F}}, h^{\widetilde{F}})\\
&+(-1)^{{\rm \mathbf{bd}}}\frac{1}{2}\int_{\widetilde{Y}}e(T\widetilde{Y}, \nabla^{T\widetilde{Y}})f(\nabla^{\widetilde{F}}, h^{\widetilde{F}})-f(\nabla^{H_{{\rm bd }}(\widetilde{Z},\widetilde{F})}, h^{H_{{\rm bd }}(\widetilde{Z},\widetilde{F})}).
\end{aligned}\end{align}
Let $\widetilde{\sigma}:S_{[0,1]}\rightarrow S$ be the projection onto the first factor. Then there is an equality of pairs $
\big(H(\widetilde{Z},\widetilde{F}),\nabla^{H(\widetilde{Z},\widetilde{F})}\big)=\sigma^{*}\big(H(Z,F),\nabla^{H(Z,F)}\big)$.
The restriction of $\mathscr{T}_{{\rm bd }}(T^{H}M, g^{T\widetilde{Z}}, h^{\widetilde{F}})$ to $S\times\{0\}$ (resp. $S\times\{1\}$) coincides with
$\mathscr{T}_{{\rm bd }}(T^{H}M, g^{TZ}, h^{F})$ (resp. $\mathscr{T}_{{\rm bd }}(T^{H}M, g'^{TZ}, h'^{F})$). Comparing the $ds-$terms of the two
sides of equation (\ref{e.728}) and integrating with respect to $s$ yields equation (\ref{e.54}).
\end{proof}

\begin{rem}\label{r.6}
As the proof of anomaly formulas in \cite{RaySing71} for the manifolds with boundary,
we should fix the normal vector of the boundary, when the vertical Riemannian metric $g^{TZ}$ is changed,
in order to have the same boundary conditions.
\end{rem}

\subsection{Off-diagonal estimates and comparison of heat kernels}\label{ss1.6}
Now we introduce a fibration with stretched cylinder end. Let $M$ be a fibration with boundary $X$. For $R\geq 0$, we let
\begin{align}\begin{aligned}\label{e.600}
M_{R}=M\cup_{X} X_{[0, R]},
\end{aligned}\end{align}
to make a new fibration by adding a cylinder end of length $R$ on $X\times (-\varepsilon, 0]$ with fiber
$Z_{R}:=Z\cup_{Y} Y_{[0, R]}$.
 The stretched fibration $M_{R}$ has a cylinder end $X_{[-\varepsilon, R]}$. Then by a change of coordinates
 \begin{align}\begin{aligned}\label{e.614}
 &X_{[-\varepsilon, R]}\longrightarrow X_{[-R-\varepsilon, 0]},\quad
 (x', u)\longmapsto (y', v)=(x', u-R),
\end{aligned}\end{align}
 we will always identify the cylinder end of $M_{R}$ with $X_{[-R-\varepsilon, 0]}$, such that $\partial M=X\times \{0\}$.

Using the product structures (\ref{e.7611}), we extend $T^{H}M$, $g^{TZ}$, $F$, $h^{F}$ and $\nabla^{F}$ naturally from $M$ to $M_{R}$ and denote the corresponding objects by $T^{H}M_{R}$, $g^{TZ_{R}}$, $F_{R}$, $h^{F_{R}}$ and $\nabla^{F_{R}}$. We note that these new objects also have the product structures on $X_{[-R-\varepsilon, 0]}$.

 Let
$\psi_{R+\varepsilon}:X_{[-R-\varepsilon, 0]}\rightarrow X$
 be the projection on the first factor. By this extension, we have
\begin{align}\begin{aligned}\label{e.537}
&g^{TZ_{R}}(x', x_{m})=g^{T Y}(x')\oplus dx^{2}_{m}, \quad (x', x_{m})\in X_{[-R-\varepsilon, 0]},\\
&h^{F_{R}}=\psi_{R+\varepsilon}^{*}\big(h^{F}\big)|_{X}, \quad \nabla^{F_{R}}=\psi_{R+\varepsilon}^{*}\big(\nabla^{F}\big)|_{X}
 \quad \text{ on } X_{[-R-\varepsilon, 0]}.
\end{aligned}\end{align}

In this subsection we work on $(M_{R},F_{R})$.
Recall that $$(D^{Z_{R}})^{2}:=d^{Z_{R}}(d^{Z_{R}})^{*}+(d^{Z_{R}})^{*}d^{Z_{R}}$$
is the fiberwise Hodge-Laplacian acting on
$\Omega(Z_{R},F_{R}|_{Z_{R}})$ with absolute or relative boundary conditions (cf. \cite[(1.52)]{Zhu13}).
For $t>0$, let $e^{-t(D^{Z_{R}})^{2}}$ be its heat operator with a smooth kernel denoted by
$e^{-t(D^{Z_{R}})^{2}}(x, x'),\, x, x'\in Z_{R}$.
Let $d(x, x')$ \index{$d(x, x')$} to
denote the Riemannian distance
 between two points $x, x'$ in $Z_{R}$ with respect to $g^{TZ_{R}}$.

We have the off-diagonal estimates on the heat kernel $e^{-t(D^{Z_{R}})^{2}}(x, x')$.
\begin{lemma}\label{l.42}
There exists $c>0$ such that for any $l\in \mathbb{N}$, there exists $C_{l}>0$ such that for any $R\geq 0$, $t>0$ and $x, x'\in Z_{R}$ with $d(x, x')\geq 1$, we have
\begin{align}\begin{aligned}\label{e.424}
\big|e^{-t(D^{Z_{R}})^{2}}(x, x')\big|_{\mathscr{C}^{l}}\leq C_{l}e^{-c\frac{d^{2}(x, x')}{t}}.
\end{aligned}\end{align}
\end{lemma}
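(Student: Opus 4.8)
The plan is to establish the off-diagonal Gaussian decay by combining the finite propagation speed property of the wave equation with the standard representation of the heat semigroup as a superposition of the wave operators, keeping careful track of the uniformity in the stretching parameter $R$. Because the metric $g^{TZ_R}$, the bundle data $(F_R,\nabla^{F_R},h^{F_R})$, and the boundary conditions all have product structure on the cylinder $X_{[-R-\varepsilon,0]}$, the operator $(D^{Z_R})^2$ decomposes there as a sum of the fiberwise Laplacian on $Y$ and $-\partial^2_{x_m}$; this is exactly what will let all constants be chosen independently of $R$. The key point throughout is that although the manifolds $Z_R$ vary, their geometry (injectivity radius lower bound, curvature bounds, and the geometry near the boundary $Y$) is uniformly controlled, since $Z_R$ is built from the fixed piece $Z$ by gluing on a flat cylinder $Y_{[0,R]}$.

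First I would recall the basic identity: for a suitable even Schwartz function, $e^{-t(D^{Z_R})^2}$ can be written via the Fourier transform as
\begin{align}\begin{aligned}\label{e.wave1}
e^{-t(D^{Z_R})^{2}}=\frac{1}{\sqrt{4\pi t}}\int_{\mathbb{R}}e^{-s^{2}/4t}\cos\big(s D^{Z_R}\big)\,ds,
\end{aligned}\end{align}
interpreting $D^{Z_R}$ here as the self-adjoint operator $d^{Z_R}+(d^{Z_R})^*$ with the chosen (absolute or relative) boundary conditions, whose square is $(D^{Z_R})^2$. Since $D^{Z_R}$ is a first-order operator of unit propagation speed, the Schwartz kernel of $\cos(sD^{Z_R})$ is supported in $\{d(x,x')\le |s|\}$; here one must check that finite propagation speed still holds in the presence of the absolute/relative boundary conditions, which it does because these are local elliptic boundary conditions and the energy estimate for the wave equation is unaffected. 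Consequently, for $d(x,x')\ge 1$ the integral in \eqref{e.wave1} only receives contributions from $|s|\ge d(x,x')$, so
\begin{align}\begin{aligned}\label{e.wave2}
\big|e^{-t(D^{Z_R})^{2}}(x,x')\big|\le \frac{C}{\sqrt{t}}\int_{|s|\ge d(x,x')}e^{-s^{2}/4t}\big\|\cos(sD^{Z_R})\big\|\,ds,
\end{aligned}\end{align}
and after controlling $\|\cos(sD^{Z_R})(x,x')\|$ in the relevant Sobolev/$\mathscr{C}^l$ norms one extracts the factor $e^{-c\,d^{2}(x,x')/t}$ from the Gaussian tail.

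To upgrade the pointwise bound to the $\mathscr{C}^l$ bound claimed in \eqref{e.424}, I would insert powers of $(1+(D^{Z_R})^2)$ into \eqref{e.wave1}, using that $(1+(D^{Z_R})^2)^{-k}$ is smoothing with uniformly (in $R$) controlled kernels by interior and boundary elliptic estimates, and differentiate under the integral sign; the derivatives of $\cos(sD^{Z_R})$ in $s$ bring down powers of $D^{Z_R}$ which are absorbed the same way, and crucially the finite propagation support is unchanged. The uniformity in $R$ is then inherited at every stage from the uniform geometry of $Z_R$ on the cylinder, together with the fact that the ``old'' part $Z\subset Z_R$ is literally fixed. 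The main obstacle I anticipate is precisely this uniformity bookkeeping near the boundary $Y$: one has to verify that the elliptic estimates for $(D^{Z_R})^2$ with absolute or relative boundary conditions hold with constants independent of $R$, which I would handle by a partition of unity separating a fixed collar of $Y$ (where everything is a product and explicit) from the interior, combined with the already-established heat kernel estimates on the fixed model pieces from \cite{Zhu13}. Everything else — the wave-equation representation, finite propagation speed, and the Gaussian tail estimate — is by now standard after Cheeger–Gromov–Taylor and its adaptations to the family/superconnection setting.
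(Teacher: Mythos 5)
Your overall strategy is the same as the paper's: both use the wave-equation representation of $e^{-t(D^{Z_R})^2}$ together with finite propagation speed (valid for the local absolute/relative boundary conditions), and both rely on the product structure on the cylinder to keep constants uniform in $R$. However, there is a concrete gap in the way you propose to extract the Gaussian decay and the $\mathscr{C}^l$ bound.

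The estimate you write in \eqref{e.wave2} is not a valid inequality: the Schwartz kernel of $\cos(sD^{Z_R})$ is a distribution concentrated on the light cone (with delta- and principal-value-type singularities), so $\big\|\cos(sD^{Z_R})(x,x')\big\|$ has no pointwise or $\mathscr{C}^l$ meaning, and pulling out the $L^2$ operator norm (which is $1$) does not bound the kernel. What the Cheeger--Gromov--Taylor method actually does --- and this is exactly what the paper does via the functions $F_u,G_u$ of (\ref{e.437}) with the cutoff $f$ --- is to split $e^{-a^2/2}=F_u(a)+G_u(a)$ on the Fourier side, so that $e^{-t(D^{Z_R})^2}=F_{2t/r^2}(\sqrt{2t}D^{Z_R})+G_{2t/r^2}(\sqrt{2t}D^{Z_R})$. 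Here $F_{2t/r^2}(\sqrt{2t}D^{Z_R})$ has kernel that vanishes \emph{identically} for $d(x,x')\ge r$ by finite propagation, so the wave kernel is never estimated pointwise at all; and $G_{2t/r^2}$ is a Schwartz function whose sup-norm with arbitrarily many polynomial weights is $O(e^{-cr^2/t})$, whence by the spectral theorem $\|H_R^{m_1}G_{2t/r^2}(\sqrt{2t}D^{Z_R})H_R^{m_2}\|_{L^2\to L^2}\le C\,e^{-cr^2/t}$, and then uniform elliptic estimates plus Sobolev embedding in both variables upgrade this to the claimed $\mathscr{C}^l$ bound on the kernel of $G$. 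Setting $r=d(x,x')$ finishes. Your alternative fix of inserting $(1+(D^{Z_R})^2)^{-k}$ to smooth things does not repair \eqref{e.wave2}: the operator $(1+(D^{Z_R})^2)^{-k}$ is nonlocal, so multiplying by it destroys the finite-propagation support that the argument crucially relies on; the correct replacement is precisely the $F/G$ decomposition, where the smallness comes from the rapid decay of $G$ on the spectral side rather than from any pointwise bound on the wave kernel. With this correction your argument becomes the paper's proof.
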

\begin{proof}
Let $f(v)$ be an even smooth cut-off function on $\mathbb{R}$ such that
\begin{align}\begin{aligned}\label{e.435}
f(v):=\left\{
\begin{array}{cc}
 1 , & \text{ for }|v|\leq \frac{1}{2}, \\
 0 , & \text{ for }|v|\geq 1.
\end{array}
\right.
\end{aligned}\end{align}
For $a\in \mathbb{C}$, $u>0$, we denote that (cf. \cite[Def. 1.6.3]{MaMa07})
\begin{align}\begin{aligned}\label{e.437}
&F_{u}(a):=\int_{-\infty}^{+\infty}\cos \left(va\right) e^{-\frac{v^{2}}{2}}f\left(\sqrt{u}v\right)\frac{dv}{\sqrt{2\pi}}, \\
&G_{u}(a):=\int_{-\infty}^{+\infty}\cos \left(va\right)e^{-\frac{v^{2}}{2}}\left(1-f\big(\sqrt{u}v\big)\right)\frac{dv}{\sqrt{2\pi}},
\end{aligned}\end{align}
then we have
\begin{align}\begin{aligned}\label{e.438}
\exp\left( -t(D^{Z_{R}})^{2}\right)=F_{2t/r^{2}}(\sqrt{2t}D^{Z_{R}})+G_{2t/r^{2}}(\sqrt{2t}D^{Z_{R}}).
\end{aligned}\end{align}
Using the finite propagation speed of the wave operator (cf. \cite[Appendix D.2]{MaMa07}), we get
\begin{align}\begin{aligned}\label{e.439}
F_{2t/r^{2}}(\sqrt{2t}D^{Z_{R}})(x, x')=&\int_{-\infty}^{+\infty}\cos\left(\sqrt{2t}vD^{Z_{R}}\right)(x, x')
e^{-\frac{v^{2}}{2}}f\left(\frac{\sqrt{2t}v}{r}\right)\frac{dv}{\sqrt{2\pi}}\\
=&\quad 0, \quad \text{if}\,\,{\rm d}(x, x')\geq r.
\end{aligned}\end{align}

Using integration by parts, for $r\geq 1$ and $t>0$, we get (\ref{e.435})
\begin{align}\begin{aligned}\label{e.440}
\sup_{a\in \mathbb{R}}\left|a^{m}\right|\cdot&\big|G_{2t/r^{2}}(\sqrt{2t}a)\big|\\
=&\sup_{a\in \mathbb{R}}|a^{m}|\left|\int_{-\infty}^{+\infty}\cos(ua)e^{-\frac{u^{2}}{4t}}\left(1-f(\frac{u}{r})\right)\frac{du}{2\sqrt{\pi t}}\right|\\
\leq&\frac{1}{2\sqrt{\pi t}}\int_{|u|\geq \frac{r}{2}}\big|\frac{\partial^{m}}{\partial u^{m}}\big(e^{-\frac{u^{2}}{4t}}(1-f(\frac{u}{r}))\big)\big|du\\
\leq&C_{m}\int_{|u|\geq \frac{r}{2}}e^{-\frac{u^{2}}{4t}}Q_{1}\big(\frac{u}{t}, \frac{1}{r}, \frac{1}{t}\big)du
\leq C_{m}\int_{|u|\geq \frac{r}{2}}e^{-\frac{u^{2}}{4t}}Q_{2}\big(\frac{u}{\sqrt{t}}, \frac{1}{r}\big)du\\
\leq &C_{m}\int_{|u|\geq \frac{r}{2}}e^{-\frac{u^{2}}{8t}}Q_{3}(\frac{1}{r})du\leq C_{m}e^{-c\frac{r^{2}}{t}}.
\end{aligned}\end{align}
where $Q_{1}, Q_{2}$ and $Q_{3}$ are certain polynomials with positive coefficients. (We note that to prove $Q_{1}\big(\frac{u}{t}, \frac{1}{r}, \frac{1}{t}\big)\leq Q_{2}\big(\frac{u}{\sqrt{t}}, \frac{1}{r}\big)$ we have used
the facts that $\frac{1}{t}\leq \frac{2u}{t}$ and $\frac{u}{t}\leq \frac{2u^{2}}{t}\leq 2(\frac{u}{\sqrt{t}})^{2}$.)

 Let $H_{R}:=(D^{Z_{R}})^{2}$, then by the spectral theorem and (\ref{e.440}), for $m_{1}, \, m_{2}\in \mathbb{N}$ there exists $C_{m_{1},m_{2}}>0$ such that for any $t>0$ and $s\in \Omega(Z_{R},F_{R}|_{Z_{R}})$
\begin{align}\begin{aligned}\label{e.441}
\left\| H_{R}^{m_{1}}G_{2t/r^{2}}(\sqrt{2t}D^{Z_{R}})H_{R}^{m_{2}}s\right\|_{L^{2}(Z_{R})}\leq C_{m_{1},m_{2}}e^{-c\frac{r^{2}}{t}}\|s\|_{L^{2}(Z_{R})}.
\end{aligned}\end{align}
Now applying \cite[Thm. A.3.4]{MaMa07}, for $m_{1}, \, m_{2}\in \mathbb{N}$ and $\mathscr{R}$ a differential operator of order $m_{1}$ acting
on  $\Lambda(T^{*}Z_{R})\otimes F_{R}$ over $Z_{R}$, there exists $C>0$ such that for any $t>0$ and  $s\in \Omega(Z_{R},F_{R}|_{Z_{R}})$,
\begin{align}\begin{aligned}\label{e.741}
\left| \mathscr{R}G_{2t/r^{2}}(\sqrt{2t}D^{Z_{R}})H_{R}^{m_{2}}s\right|_{\mathscr{C}^{0}(Z_{R})}\leq Ce^{-c\frac{r^{2}}{t}}\|s\|_{L^{2}(Z_{R})}.
\end{aligned}\end{align}
And we have
\begin{align}\begin{aligned}\label{e.742}
\Big(\mathscr{R}G_{2t/r^{2}}&(\sqrt{2t}D^{Z_{R}})H_{R}^{m_{2}}s\Big)(x)\\
&=\int_{Z_{R}}\big(H^{m_{2}}_{R,x'}\mathscr{R}_{x}
G_{2t/r^{2}}(\sqrt{2t}D^{Z_{R}})(x, x')\big)s(x')dv_{Z_{R}}(x'),
\end{aligned}\end{align}
here $H_{R,x'}$ acts on $(\Lambda(T^{*}Z_{R})\otimes F_{R})^{*}$ by identifying $(\Lambda(T^{*}Z_{R})\otimes F_{R})^{*}$ to $\Lambda(T^{*}Z_{R})\otimes F_{R}$ through the metric. Thus uniformly of $x\in Z_{R}$, we have
\begin{align}\begin{aligned}\label{e.442}
 \left\|H^{m_{2}}_{R, \cdot}\mathscr{R}_{x}G_{2t/r^{2}}(\sqrt{2t}D^{Z_{R}})(x,\cdot)\right\|_{L^{2}(Z_{R})} \leq C_{m_{1},m_{2}}e^{-c\frac{r^{2}}{t}}.
\end{aligned}\end{align}
Let $m_{1}+m_{2}\geq m+l$, by applying Sobolev inequality and elliptic estimates to $x'-$variable, from (\ref{e.442}), we get for $x,x'\in Z_{R}$
\begin{align}\begin{aligned}\label{e.443}
\left| G_{2t/r^{2}}(\sqrt{2t}D^{Z_{R}})(x, x') \right|_{\mathscr{C}^{l}}\leq C_{l}e^{-c\frac{r^{2}}{t}},
\end{aligned}\end{align}
where we note that the constants are uniform with respect to $R\geq 1$ in the Sobolev inequalities and elliptic estimates,
 since in our case all the local geometry data are independent of $R$. Then the inequality (\ref{e.424})
follows from (\ref{e.438}), (\ref{e.439}) and (\ref{e.443}) by setting $r=d(x,x')$. The proof has been completed.
\end{proof}

Let $U_{R}$ be compact subset of $Z_{R}$. Let $(\widetilde{D}^{Z_{R}})^{2}$ be another Hodge-Laplacian such that
\begin{align}\begin{aligned}\label{e.425}
(D^{Z_{R}})^{2}=(\widetilde{D}^{Z_{R}})^{2} \quad \quad \text{ on }U_{R}\subset Z_{R}.
\end{aligned}\end{align}
Then we can compare the two heat kernels $e^{-t(D^{Z_{R}})^{2}}(x, x')$ and $e^{-t(\widetilde{D}^{Z_{R}})^{2}}(x, x')$ on a smaller
 compact subset of $U_{R}$.
\begin{rem}\label{r.11}
In our application of the following lemma, the operator $(\widetilde{D}^{Z_{R}})^{2}$ will be taken as $(D^{Z_{1,R}})^{2}$ (resp. $(D^{Z_{2,R}})^{2}$) in Lemma \ref{l.24}. And $U_{R}$ will be taken into the interior part of $Z_{1,R}$ (resp. $Z_{2,R}$) away from the boundary by a distance depending on $R$. So we can suppose that all the geometric data that we used to define $(\widetilde{D}^{Z_{R}})^{2}$ are locally independent of $R$.
\end{rem}
\begin{lemma}\label{l.43} As in Remark \ref{r.11}, we assume that all the geometric data used to define $(\widetilde{D}^{Z_{R}})^{2}$ are locally independent of $R$. There exists $c>0$, such that for any $l\in \mathbb{N}$, there exists $C_{l}>0$ such that for any $t>0$, $R\geq 0$, $r\geq 1$ and
$x, x'\in K=\{x\in U|\, d(x, \partial U)\geq r> 0\}$
\begin{align}\begin{aligned}\label{e.426}
\big|e^{-t(D^{Z_{R}})^{2}}(x, x')-e^{-t(\widetilde{D}^{Z_{R}})^{2}}(x, x')\big|_{\mathscr{C}^{l}}\leq C_{l}e^{-cr^{2}/t}.
\end{aligned}\end{align}
\end{lemma}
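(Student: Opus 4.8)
The plan is to prove the comparison estimate \eqref{e.426} by the classical Duhamel-principle argument for finite-propagation-speed localization, mimicking the proof of Lemma \ref{l.42} but now exploiting the hypothesis \eqref{e.425} that the two operators agree on $U_{R}$. First I would introduce the difference kernel $D_{t}(x,x'):=e^{-t(D^{Z_{R}})^{2}}(x,x')-e^{-t(\widetilde{D}^{Z_{R}})^{2}}(x,x')$ and write, via Duhamel's formula,
\begin{align}\begin{aligned}\label{e.duhamel}
e^{-t(D^{Z_{R}})^{2}}-e^{-t(\widetilde{D}^{Z_{R}})^{2}}=\int_{0}^{t}e^{-(t-\sigma)(D^{Z_{R}})^{2}}\big((\widetilde{D}^{Z_{R}})^{2}-(D^{Z_{R}})^{2}\big)e^{-\sigma(\widetilde{D}^{Z_{R}})^{2}}\,d\sigma .
\end{aligned}\end{align}
The operator $V:=(\widetilde{D}^{Z_{R}})^{2}-(D^{Z_{R}})^{2}$ is a differential operator of order $\leq 2$ which, by \eqref{e.425}, is supported outside $U_{R}$, hence at distance $\geq r$ from any $x,x'\in K$.

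Next I would localize the two heat semigroups appearing in \eqref{e.duhamel} by the same Fourier/wave-equation device used in Lemma \ref{l.42}: decompose each of $e^{-(t-\sigma)(D^{Z_{R}})^{2}}$ and $e^{-\sigma(\widetilde{D}^{Z_{R}})^{2}}$ into an $F$-part (even cut-off $f$ supported in $|v|\le 1$) and a $G$-part. The $F$-parts have kernels supported within Riemannian distance $r/2$ of the diagonal by the finite propagation speed property (\cite[Appendix D.2]{MaMa07}), so when we pair them against $V$ (which is supported at distance $\geq r$ from $x$ and from $x'$) the $F$-contributions vanish identically; here the hypothesis that the data defining $(\widetilde{D}^{Z_{R}})^{2}$ are locally independent of $R$ guarantees a uniform-in-$R$ finite propagation speed, so the distance threshold does not degenerate. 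What remains is the $G$-part contribution, and for it the estimate \eqref{e.440} gives, for every $m\in\mathbb{N}$, a Gaussian-type decay $\sup_{a}|a^{m}|\,|G_{2(t-\sigma)/r^{2}}(\sqrt{2(t-\sigma)}a)|\leq C_{m}e^{-cr^{2}/(t-\sigma)}$ (and similarly with $\sigma$), which after inserting enough factors of $H_{R}$ (resp.\ $\widetilde H_{R}$) and applying the Sobolev/elliptic machinery of \cite[Thm. A.3.4]{MaMa07} — exactly as in the passage from \eqref{e.441} through \eqref{e.443} — yields $\mathscr{C}^{l}$ control of the corresponding kernel pieces. The $\sigma$-integral of $e^{-cr^{2}/(t-\sigma)}+e^{-cr^{2}/\sigma}$ over $[0,t]$ is bounded by $Ct\,e^{-cr^{2}/(2t)}$, which (absorbing the polynomial $t$ into the exponential at the cost of shrinking $c$) gives the claimed bound $C_{l}e^{-cr^{2}/t}$.

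A technical point worth spelling out is the bookkeeping for the middle operator $V$: in \eqref{e.duhamel} we must split the order-$\le 2$ operator $V$ across the two semigroups (e.g.\ write $V=\sum \mathscr{R}_{1}\mathscr{R}_{2}$ with $\mathscr{R}_{i}$ first order, or absorb powers of the Laplacians), so that each factor of $D^{Z_{R}}$, $\widetilde D^{Z_{R}}$ can be moved onto the adjacent $G$-kernel and estimated by \eqref{e.442}; the coefficients of $V$ are uniformly bounded because, by Remark \ref{r.11}, they are locally independent of $R$. I expect the main obstacle to be precisely this uniformity-in-$R$: one must check that all constants in the finite propagation speed statement, the Sobolev embeddings and the interior elliptic estimates can be taken independent of $R$ on the relevant region, which as in Lemma \ref{l.42} follows from the fact that the cylinder-end geometry is a fixed product $X\times[-R-\varepsilon,0]$ so the local geometry of $Z_{R}$ and of the auxiliary operator is $R$-independent. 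Granting that, the argument is a routine but careful Duhamel-plus-finite-propagation-speed estimate, and the proof is complete.
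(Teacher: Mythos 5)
Your Duhamel route is a genuinely different, and noticeably heavier, path than the paper's. The paper decomposes both heat operators into $F+G$ exactly as in Lemma~\ref{l.42}, and then observes that the two $F$-kernels are \emph{identical} for $x,x'\in K$: by finite propagation speed, $F_{2t/r^{2}}(\sqrt{2t}D^{Z_{R}})(x,x')$ depends only on $D^{Z_{R}}$ restricted to the ball $B^{Z_{R}}(x,r)$, and for $x\in K$ this ball lies in $U_{R}$ where the two operators coincide, so $F_{2t/r^{2}}(\sqrt{2t}D^{Z_{R}})(x,x')=F_{2t/r^{2}}(\sqrt{2t}\widetilde{D}^{Z_{R}})(x,x')$. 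The difference of the heat kernels therefore equals the difference of the two $G$-kernels, each bounded in $\mathscr{C}^{l}$ by $C_{l}e^{-cr^{2}/t}$ as in (\ref{e.443}), and one is done. This dispenses with the Duhamel integral, the perturbation operator $V$, the splitting of an order-$\leq 2$ operator across the two semigroups, and any control of $\text{Vol}(\supp V)$, all of which your sketch would still have to spell out. The paper's approach is also more robust: the Duhamel identity $e^{-tA}-e^{-tB}=-\int_{0}^{t}e^{-(t-\sigma)A}(A-B)e^{-\sigma B}\,d\sigma$ presupposes that $A$ and $B$ act on the same Hilbert space with compatible domains so that $V=A-B$ is an operator, whereas in the intended application (Remark~\ref{r.11}, Lemma~\ref{l.24}) $(\widetilde{D}^{Z_{R}})^{2}$ is the boundary Hodge Laplacian $(D^{Z_{i,R}})^{2}$ on a \emph{submanifold} $Z_{i,R}\subsetneq Z_{R}$, so $V$ is not literally defined and your argument would need to be repackaged; the paper's purely local kernel comparison is indifferent to whether the two operators live on the same ambient space. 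A small slip worth fixing: with the cutoff $f(\sqrt{u}v)$ of (\ref{e.437}) and $u=2t/r^{2}$, the $F$-kernel vanishes for $d(x,x')\geq r$, not $r/2$.
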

\begin{proof}
By (\ref{e.438}), we get
\begin{align}\begin{aligned}\label{e.690}
&\exp\left( -t(D^{Z_{R}})^{2}\right)=F_{2t/r^{2}}(\sqrt{2t}D^{Z_{R}})+G_{2t/r^{2}}(\sqrt{2t}D^{Z_{R}}),\\
&\exp\big( -t(\widetilde{D}^{Z_{R}})^{2}\big)=F_{2t/r^{2}}(\sqrt{2t}\widetilde{D}^{Z_{R}})+G_{2t/r^{2}}(\sqrt{2t}\widetilde{D}^{Z_{R}}).
\end{aligned}\end{align}
Let $B^{Z_{R}}(x,r)$ be the open ball in $Z_{R}$ with center $x$ and radius $r$. Since
 for $x,x'\in Z_{R}$, $F_{2t/r^{2}}(\sqrt{2t}D^{Z_{R}})(x,x')$ (resp. $F_{2t/r^{2}}(\sqrt{2t}\widetilde{D}^{Z_{R}})(x,x')$) only depends on
the restriction of $D^{Z_{R}}$ (resp. $\widetilde{D}^{Z_{R}}$) to $B^{Z_{R}}(x,r)$ (cf. \cite[Appendix D.2]{MaMa07}), we have
\begin{align}\begin{aligned}\label{e.693}
F_{2t/r^{2}}(\sqrt{2t}D^{Z_{R}})(x, x')- F_{2t/r^{2}}(\sqrt{2t}\widetilde{D}^{Z_{R}})(x, x')=0.
\end{aligned}\end{align}
As the proof of (\ref{e.443}), there exists $c>0$, such that for any $l\in \mathbb{N}$ there exists $C_{l}>0$ such that for any $R\geq 1$, $t>0$, $r\geq 1$ and
$x',x \in Z_{R},$
\begin{align}\begin{aligned}\label{e.694}
&\left| G_{2t/r^{2}}(\sqrt{2t}D^{Z_{R}})(x, x') \right|_{\mathscr{C}^{l}}\leq C_{l}e^{-c\frac{r^{2}}{t}},\,
\left| G_{2t/r^{2}}(\sqrt{2t}\widetilde{D}^{Z_{R}})(x, x') \right|_{\mathscr{C}^{l}}\leq C_{l}e^{-c\frac{r^{2}}{t}}.
\end{aligned}\end{align}
By (\ref{e.690}), (\ref{e.693}) and (\ref{e.694}), we get for any $x',x \in K$
\begin{align}\begin{aligned}\label{e.695}
&\big|e^{-t(D^{Z_{R}})^{2}}(x, x')-e^{-t(\widetilde{D}^{Z_{R}})^{2}}(x, x')\big|_{\mathscr{C}^{l}}\\
=&\big|G_{2t/r^{2}}(\sqrt{2t}D^{Z_{R}})(x, x')-G_{2t/r^{2}}(\sqrt{2t}\widetilde{D}^{Z_{R}})(x, x')\big|_{\mathscr{C}^{l}}\leq C_{l}e^{-c\frac{r^{2}}{t}}.
\end{aligned}\end{align}
From (\ref{e.695}), we get (\ref{e.426}). The proof is completed.
\end{proof}

\subsection{The gluing problem of analytic torsion forms}\label{ss1.8}

 Recall that $M$, $M_{1}$ and $M_{2}$ are the fibrations described in (\ref{e.777}).
For $\varepsilon>0$, we \textbf{assume} that $(T^{H}M, g^{TZ}, h^{F})$ verify the assumption of product structures (\ref{e.7611}) on the product neighborhood $X_{[-\varepsilon,\varepsilon]}$ of $X$ in $M$.

\textbf{
From now on, we always apply the absolute boundary conditions to $(M_{1}, X)$ and the relative boundary conditions to $(M_{2}, X)$.}

Let $F^{*}$ be the dual flat vector bundle of $F$. Let $H_{\bullet}(Z, F^{*})=\bigoplus^{m}_{p=0}H_{p}(Z, F^{*})$ (resp.  $H_{\bullet}(Z_{1}, F^{*})$, $H_{\bullet}(Z_{2}, Y, F^{*})$)
denote the singular homology of $Z$ (resp.  $Z_{1}$, $(Z_{2}, Y)$) with coefficients in $F^{*}$, and let $H^{\bullet}(Z, F)=\bigoplus_{p=0}^{m}H^{p}(Z, F)$
(resp.  $H^{\bullet}(Z_{1}, F)$, $H^{\bullet}(Z_{2}, Y, F)$) denote the singular cohomology of $Z$ (resp.  $Z_{1}$, $(Z_{2}, Y)$)
 with coefficients in $F$. Then for $0\leq p\leq m$, we have canonical identifications
\begin{align}\begin{aligned}\label{e.631}
&H_{p}(Z, F^{*})=(H^{p}(Z, F))^{*}, \quad H_{p}(Z_{1}, F^{*})=(H^{p}(Z_{1}, F))^{*}, \\ &H_{p}(Z_{2}, Y, F^{*})=(H^{p}(Z_{2}, Y, F))^{*}.\end{aligned}\end{align}

\begin{defn}\label{d.21}
For $\varepsilon>0$, let $\mathbb{K}_{Z}$ be the smooth triangulation of $Z$, such that it induces smooth sub-triangulations of
 $Y$, $Y_{[-\varepsilon,0]}$, $Y_{[0,\varepsilon]}$, $Z_{1}$, $Z_{2}$ denoted by $\mathbb{K}_{Y}$,
$\mathbb{K}_{Y_{[-\varepsilon,0]}}$, $\mathbb{K}_{Y_{[0,\varepsilon]}}$, $\mathbb{K}_{Z_{1}}$, $\mathbb{K}_{Z_{2}}$.
\end{defn}

The smooth triangulations $\mathbb{K}_{Z}$ (resp. $\mathbb{K}_{Z_{i}} $) consists of a finite set of simplex, $\mathfrak{a}$,
whose orientation is fixed once and for all.
Let $B$ be the finite subset of $Z$ of the barycenters of the simplexes in $\mathbb{K}_{Z}$.
Let $b:\mathbb{K}_{Z}\rightarrow B$ and
 $\sigma:B\rightarrow \mathbb{K}_{Z}$
 denote the obvious one-to-one maps. For $0\leq p\leq m$, $i=1, 2$,
let $\mathbb{K}^{p}_{Z}$ (resp.  $\mathbb{K}^{p}_{Z_{i}}$) be the union of the simplexes in $\mathbb{K}_{Z}$ of dimension $\leq p$, such that for
 $0\leq p\leq m$, $\mathbb{K}^{p}_{Z}\backslash \mathbb{K}^{p-1}_{Z}$ (resp.  $\mathbb{K}^{p}_{Z_{i}}\backslash \mathbb{K}^{p-1}_{Z_{i}}$ ) is
the union of simplexes of dimension $p$. If $\mathfrak{a}\in \mathbb{K}_{Z}$, let $[\mathfrak{a}]$ be the real line generated by $\mathfrak{a}$.
Let $(C_{\bullet}(\mathbb{K}_{Z}, F^{*}), \partial)$ be the complex of simplicial chains in $\mathbb{K}_{Z}$ with values in $F^{*}$. For
$0\leq p\leq m,\,i=1,2$, we define
\begin{align}\begin{aligned}\label{e.630}
&C_{p}(\mathbb{K}_{Z}, F^{*}):=\sum_{\mathfrak{a}\in \mathbb{K}^{p}_{Z}\backslash \mathbb{K}^{p-1}_{Z}}
[\mathfrak{a}]\otimes_{\mathbb{R}}F^{*}_{b(\mathfrak{a})}.
\end{aligned}\end{align}
The boundary operator $\partial$ sends $C_{p}(\mathbb{K}_{Z}, F^{*})$ into $C_{p-1}(\mathbb{K}_{Z}, F^{*})$. Set
\begin{align}\begin{aligned}\label{e.632}
C_{\bullet}\left(\mathbb{K}_{Z_{2}}/\mathbb{K}_{Y}, F^{*}\right):=C_{\bullet}(\mathbb{K}_{Z}, F^{*})/C_{\bullet}(\mathbb{K}_{Z_{1}}, F^{*}).
\end{aligned}\end{align}
The homologies of the complexes
$\big(C_{\bullet}(\mathbb{K}_{Z_{1}}, F^{*}), \partial\big)
\,\text{ and }\,\big(C_{\bullet}\left(\mathbb{K}_{Z_{2}}/\mathbb{K}_{Y}, F^{*}\right), \partial\big)$
 are canonically identified with the
singular homologies, respectively,
$H_{\bullet}(Z_{1}, F^{*})\,\text{ and }\,H_{\bullet}(Z_{2}, Y, F^{*}).$
 Naturally, we have a short exact sequence of chain groups:
\begin{align}\begin{aligned}\label{e.174}
\xymatrix{
0\ar[r]&C_{p}\left(\mathbb{K}_{Z_{1}}, F^{*}\right)\ar[r]^{i_{p}}\ar[d]^{\partial}
&C_{p}\left(\mathbb{K}_{Z}, F^{*}\right) \ar[r]^{j_{p}}\ar[d]^{\partial}
&C_{p}\left(\mathbb{K}_{Z_{2}}/\mathbb{K}_{Y}, F^{*}\right)\ar[r]\ar[d]^{\partial}
&0\, .\\
&&&&
}
\end{aligned}\end{align}

 If $\mathfrak{a}\in \mathbb{K}_{Z}$, let $[\mathfrak{a}]^{*}$ be the line dual to the line $[\mathfrak{a}]$.
Let $\big(C^{\bullet}(\mathbb{K}_{Z}, F), \widetilde{\partial}\big)$ be the complex dual to the complex
$\big(C_{\bullet}(\mathbb{K}_{Z}, F^{*}), \partial\big)$. In particular, for $0\leq p\leq m,\, i=1,2$, we have the identity
\begin{align}\begin{aligned}\label{e.744}
&C^{p}(\mathbb{K}_{Z}, F)=\sum_{\mathfrak{a}\in \mathbb{K}^{p}_{Z}\backslash \mathbb{K}^{p-1}_{Z}}
[\mathfrak{a}]^{*}\otimes_{\mathbb{R}}F_{b(\mathfrak{a})}.
\end{aligned}\end{align}

Let $\big(C^{p}\left(\mathbb{K}_{Z_{1}}, F\right), \widetilde{\partial}\big)$ be the dual complex of
$\big(C_{p}\left(\mathbb{K}_{Z_{1}}, F^{*}\right),\partial\big)$ and
$\big(C^{p}\left(\mathbb{K}_{Z_{2}}/\mathbb{K}_{Y}, F\right), \widetilde{\partial}\big)$ be the dual complex of
 $\big(C_{p}\left(\mathbb{K}_{Z_{2}}/\mathbb{K}_{Y}, F^{*}\right),\partial\big)$.
Naturally, we have the following short exact sequence of cochain groups
\begin{align}\begin{aligned}\label{e.175}
\xymatrix{
&&&&\\
0\ar[r]&C^{p}\left(\mathbb{K}_{Z_{2}}/\mathbb{K}_{Y}, F\right)\ar[r]^{j^{*}_{p}}\ar[u]^{\widetilde{\partial}}
&C^{p}\left(\mathbb{K}_{Z}, F\right) \ar[r]^{i^{*}_{p}}\ar[u]^{\widetilde{\partial}}
&C^{p}\left(\mathbb{K}_{Z_{1}}, F\right)\ar[r]\ar[u]^{\widetilde{\partial}}
&0,
}
\end{aligned}\end{align}
where $i^{*}_{p},\,j^{*}_{p}$ denote the dual maps of $i_{p},\,j_{p}$. The double complex (\ref{e.175}) yields a long exact sequence $(\mathscr{H}, \delta)$ \index{$(\mathscr{H}, \delta)$} of cohomology groups, i.e.,
\begin{align}\begin{aligned}\label{e.434}
\cdots \longrightarrow H^{p}(Z, F)\overset{\delta}{\longrightarrow} H^{p}(Z_{1}, F) \overset{\delta}{\longrightarrow}
H^{p+1}(Z_{2}, Y, F) \overset{\delta}{\longrightarrow}\cdots.
\end{aligned}\end{align}

\begin{defn}\label{d.14}
\textbf{(De Rham map)} Let $ \sigma\in \Omega^{\bullet}(Z, F), \mathfrak{a}\in C_{\bullet}\left(\mathbb{K}_{Z}, F^{*}\right)$,
we define a map $P^{\infty}:\Omega^{\bullet}(Z, F)\rightarrow C^{\bullet}\left(\mathbb{K}_{Z}, F\right)$ by
\begin{align}\begin{aligned}\label{e.176}
P^{\infty}(\sigma)(\mathfrak{a})=\int_{\mathfrak{a}}\sigma \, .
\end{aligned}\end{align}
Similarly, we can define $P^{\infty}_{1}:\Omega^{\bullet}(Z_{1}, F)\rightarrow C^{\bullet}\left(\mathbb{K}_{Z_{1}}, F\right)$ and
$P^{\infty}_{2}:\Omega^{\bullet}(Z_{2}, Y, F)\rightarrow C^{\bullet}\left(\mathbb{K}_{Z_{2}}/\mathbb{K}_{Y}, F\right)$.
\end{defn}\index{$P^{\infty}$, $P^{\infty}_{1}$,$P^{\infty}_{2}$}

The map (\ref{e.176}) induces isomorphisms from the bundle of harmonic forms to the bundle of cohomology group
\begin{align}\begin{aligned}\label{e.177}
&P^{\infty}:\mathscr{H}^{p}\left(Z, F\right)\cong H^{p}(Z, F)\\
(\text{resp.}\,\,&P^{\infty}_{1}:\mathscr{H}^{p}\left(Z_{1}, F\right)\cong H^{p}(Z_{1}, F),\,\,
 P^{\infty}_{2}:\mathscr{H}^{p}\left(Z_{2}, Y, F\right)\cong H^{p}(Z_{2}, Y, F)\,).
\end{aligned}\end{align}

\subsection{The stretching diffeomorphisms}\label{ss1.9}
For $R\geq 0$, let
\begin{align}\begin{aligned}\label{e.625}
M_{1, R}:=M_{1}\cup_{X}X_{[0, R]}, \quad M_{2, R}:=M_{2}\cup_{X}X_{[-R, 0]}.
\end{aligned}\end{align}
Then $M_{1, R}$ has a cylinder end $X_{[-\varepsilon, R]}$ and $M_{2, R}$ has a cylinder end $X_{[-R, \varepsilon]}$. Then by change of coordinates as in (\ref{e.614}),
we will always identify the cylinder end of $M_{1, R}$ with $X_{[-R-\varepsilon, 0]}$
and that of  $M_{2, R}$ with $X_{[0, R+\varepsilon]}$.
So we will identify $X$ with the common boundary $X\times\{0\}$ for both $M_{1,R}$ and $M_{2,R}$.
Set
 \begin{align}\begin{aligned}\label{e.628}
M_{R}=M_{1, R}\cup_{X} M_{2, R},
\end{aligned}\end{align}
then $M_{R}$ has a cylinder part $X_{[-R-\varepsilon, R+\varepsilon]}$ (see Figure \ref{fig.4}).

To apply the adiabatic methods, for $i=1, 2$, we begin to construct diffeomorphisms $\phi_{R}:M\rightarrow M_{R}$
and $\phi_{i, R}:M_{i}\rightarrow M_{i, R}$.\\

\begin{lemma}\label{l.1}
There exist a diffeomorphism $\phi_{R}:M\longrightarrow M_{R}$\index{$\phi_{R}$} such that: \\
{\rm(1)} The diffeomorphism $\phi_{R}$ restricted on the submanifold
$M\setminus X_{(-\frac{7\varepsilon}{8}, \frac{7\varepsilon}{8})} $ is an identity map to
 $M_{R}\setminus X_{(-R+\frac{\varepsilon}{8}, R-\frac{\varepsilon}{8})}$, and
\begin{align}\begin{aligned}\label{e.56}
\phi_{R}:X_{(-\frac{7\varepsilon}{8}, \frac{7\varepsilon}{8})} \rightarrow X_{
(-R+\frac{\varepsilon}{8}, R-\frac{\varepsilon}{8})} \text{ is an one-to-one map.}
\end{aligned}\end{align}
{\rm (2)} The following diagrams are commutative,
\begin{align}\begin{aligned}\label{e.57}
\xymatrix{
M\ar[r]^{\phi_{R}}\ar[d]_{\pi}&M_{R}, \ar[dl]^{\pi_{R}}\\
S&
}\quad
\quad\quad\quad\quad
\xymatrix{
X\times \left(-R, R\right)\ar[r]^{\quad\quad \quad\psi_{R}}& X.\\
 X\times(-\varepsilon, \varepsilon)\ar[ur]_{\psi_{\varepsilon}}\ar[u]^{\phi_{R}}&
}
\end{aligned}\end{align}
\end{lemma}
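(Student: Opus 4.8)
The key observation is that the only thing that changes between $M$ and $M_R$ is the length of the collar neighborhood of $X$: in $M$ it has the shape $X_{(-\varepsilon,\varepsilon)}$ (product structure), while in $M_R$ the corresponding region is $X_{(-R-\varepsilon, R+\varepsilon)}$. Away from this collar, $M$ and $M_R$ are literally the same fibration, so I can take $\phi_R$ to be the identity there. Concretely, I would pick a smooth increasing diffeomorphism $\lambda_R : (-\varepsilon,\varepsilon) \to (-R-\varepsilon, R+\varepsilon)$ (or more carefully $(-7\varepsilon/8, 7\varepsilon/8) \to (-R+\varepsilon/8, R-\varepsilon/8)$, matched with the identity outside) and set $\phi_R(x', u) = (x', \lambda_R(u))$ on the cylinder $X \times (-\varepsilon, \varepsilon) \subset M$, and $\phi_R = \mathrm{id}$ on $M \setminus X_{(-7\varepsilon/8, 7\varepsilon/8)}$.

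\textbf{First I would specify $\lambda_R$ carefully so the two pieces glue smoothly.} The function $\lambda_R$ must equal $u \mapsto u$ for $u$ near $\pm 7\varepsilon/8$ (more precisely, $\lambda_R(u) = u + R$ for $u$ close to $7\varepsilon/8$ and $\lambda_R(u) = u - R$ for $u$ close to $-7\varepsilon/8$, so that it matches the shifted identity coordinates on the extended cylinder of $M_R$; the exact bookkeeping depends on the coordinate identifications in (\ref{e.614}) and (\ref{e.625})), must be a strictly increasing smooth bijection onto the target interval, and must agree with these affine maps in a neighborhood of the endpoints. Such a $\lambda_R$ exists by a standard bump-function construction: take any smooth positive function on $(-7\varepsilon/8, 7\varepsilon/8)$ that integrates to the right total length and equals $1$ near the endpoints, and let $\lambda_R$ be (a shift of) its antiderivative. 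Then $\phi_R$ defined piecewise is smooth because on the overlap of the two regions both formulas reduce to a shift of the identity, and it is a bijection because $\lambda_R$ is. Its inverse is manifestly smooth, so $\phi_R$ is a diffeomorphism, giving part (1) and (\ref{e.56}).

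\textbf{For part (2), commutativity of both diagrams is immediate from the construction.} The left diagram $\pi_R \circ \phi_R = \pi$ holds because $\phi_R$ only moves points within fibers: on the product cylinder $X \times (-\varepsilon,\varepsilon)$, the fibration projection factors through the projection to $X$ (and then $S$), and $\phi_R(x', u) = (x', \lambda_R(u))$ leaves the $X$-coordinate — hence the image in $S$ — unchanged; outside the cylinder $\phi_R$ is the identity. The right diagram $\psi_\varepsilon = \psi_R \circ \phi_R$ (on $X \times (-\varepsilon,\varepsilon)$, viewed inside $X \times (-R,R)$) is exactly the statement that $\phi_R$ preserves the $X$-coordinate: $\psi_R(\phi_R(x',u)) = \psi_R(x', \lambda_R(u)) = x' = \psi_\varepsilon(x', u)$.

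\textbf{I expect the main obstacle to be purely notational:} getting the coordinate identifications consistent across (\ref{e.614}), (\ref{e.600}), (\ref{e.625}), (\ref{e.628}) so that "identity outside the collar" is literally true rather than true up to a shift, and choosing the intervals $(-7\varepsilon/8, 7\varepsilon/8)$ versus $(-R+\varepsilon/8, R-\varepsilon/8)$ so that the affine transition regions have positive width and match up. There is no analytic difficulty here — the content is entirely the elementary fact that a half-open collar can be rescaled by a diffeomorphism fixing a neighborhood of its inner boundary — but the statement needs to be phrased precisely enough to be reusable when later constructing $\phi_{i,R} : M_i \to M_{i,R}$ (for which the identical construction applies to the single collar of each $M_i$) and when transporting geometric data along $\phi_R$ in Section \ref{ss1.10}.
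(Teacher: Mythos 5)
Your plan is essentially the paper's: both define $\phi_R$ as a fiber-preserving product map $(y,x_m)\mapsto(y,\lambda_R(x_m))$ on the collar, equal to the identity outside, with $\lambda_R$ a one-variable stretching diffeomorphism; commutativity of both diagrams in (\ref{e.57}) is then automatic since the $X$-coordinate is preserved. The paper's explicit $\lambda_R$ (written $\phi_R(x_m)$ there) is built from a linear ramp $g_R$ and two cut-off functions $\rho,\chi$, then extended oddly, and the diffeomorphism check is simply $\partial\phi_R/\partial x_m\geq 1$; your antiderivative-of-a-bump version is an equally standard alternative and proves Lemma \ref{l.1} as stated. One refinement worth adding for reusability: the paper's two-cut-off construction also forces $\lambda_R$ to be the identity on $[-\varepsilon/8,\varepsilon/8]$, not merely an affine shift near $\pm 7\varepsilon/8$. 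This extra normalization is invisible in Lemma \ref{l.1}, but it is exactly what makes the correction functions $\lambda_0,\lambda_1,\lambda_2$ in Lemma \ref{l.44} supported in $[-7\varepsilon/8,-\varepsilon/8]\cup[\varepsilon/8,7\varepsilon/8]$, so that the pulled-back metric $g^{TZ}_{R}=\phi_R^*(g^{TZ_R})$ still has the product structure (\ref{e.360}) on a small neighborhood of $X$ --- which is precisely the hypothesis needed in Section \ref{ss1.10} to apply the anomaly formula of Theorem \ref{t.5} to $M_1$ and $M_2$ with their boundary at $X$. To match this, simply require your bump function to equal $1$ near $x_m=0$ as well as near $\pm 7\varepsilon/8$.
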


\begin{proof}

Let $\rho(x_{m}):[0, \varepsilon]\rightarrow [0,1]$ be a cut-off function, which is equal to
$0$ on $[0, \frac{1}{8}\varepsilon]$
and equal
to $1$ on $[\frac{2}{8}\varepsilon, \varepsilon]$.
Let $\chi(x_{m}):[0, \varepsilon]\rightarrow [0,1]$ be a cut-off function, which is equal to
$0$ on $[0, \frac{6}{8}\varepsilon]$
and equal
to $1$ on $[\frac{7}{8}\varepsilon,\varepsilon]$. We put $g_{R}(x_{m})=\frac{4R-\varepsilon}{3\varepsilon}x_{m}-\frac{R-\varepsilon}{6}$ and
then define
\begin{align}\begin{aligned}\label{e.734}
h_{R}(x_{m})=x_{m}\big(1-\rho(x_{m})\big)+\rho(x_{m})g_{R}(x_{m}).
\end{aligned}\end{align}
Then we set
\begin{align}\begin{aligned}\label{e.59}
\phi_{R}(x_{m})=h_{R}(x_{m})\big(1-\chi(x_{m})\big)+\chi(x_{m})(x_{m}-\varepsilon+R).
\end{aligned}\end{align}
It is easy to see that $\phi_{R}$ is a smooth function such that $\phi_{R}(0)=0$ and $\phi_{R}(\varepsilon)=R$. We extend $\phi_{R}$ from
$[0,\varepsilon]$ to $[-\varepsilon,\varepsilon]$ by setting $\phi_{R}(x_{m})=-\phi_{R}(-x_{m})$ for $x_{m}<0$. Then we see that the
extended $\phi_{R}$, denoted with the same notation, is a smooth function on $[-\varepsilon,\varepsilon]$, such that
\begin{align}\begin{aligned}\label{e.735}
\phi_{R}(-\varepsilon)=-R,\,\phi_{R}(0)=0 \quad{\rm and }\quad \phi_{R}(\varepsilon)=R.
\end{aligned}\end{align}

We make a smooth function
 $\phi_{R}:X_{[-\varepsilon, \varepsilon]}\rightarrow X_{[-R, R]}$ such that
\begin{align}\begin{aligned}\label{e.60}
\phi_{R}(y, x_{m}):=\big(y,\phi_{R}(x_{m})\big), \quad \text{for }(y, x_{m})\in X_{[-\varepsilon, \varepsilon]},
\end{aligned}\end{align}
and outside $X_{[-\varepsilon, \varepsilon]}$ in $M$, we define $\phi_{R}$ to be the identity map.
By our construction of $\phi_{R}(x_{m})$, we have for $R>\varepsilon$ and $x_{m}\in [-\varepsilon,\varepsilon]$
\begin{align}\begin{aligned}\label{e.61}
\frac{\partial \phi_{R}(x_{m})}{\partial x_{m}}\geq 1.
\end{aligned}\end{align}
From (\ref{e.735}) and (\ref{e.61}), we deduce that $\phi_{R}$ is a diffeomorphism.
\end{proof}

Let $\phi_{i, R}:M_{i}\longrightarrow M_{i, R}$ be the restriction of $\phi_{R}$ from $M_{R}$ to $M_{i,R}$, for $i=1,2$.\\

By our assumptions of product structures, we have
\begin{align}\begin{aligned}
\left(T^{H}M, F, h^{F}, \nabla^{F}\right)\big|_{X\times [-\varepsilon, \varepsilon]}=\psi^{*}_{\varepsilon}\left(T^{H}M, F, h^{F}, \nabla^{F}\right)\big|_{X}.
\end{aligned}\end{align}
As in Section \ref{ss1.1}, we can extend naturally all geometrical data from $M$, $M_{i}$ to $M_{R}$, $M_{i, R}$ by using the assumptions of product structures near $X$ on $M$, such that
\begin{align}\begin{aligned}\label{e.67}
&\left(T^{H}M_{R}, F_{R}, h^{F_{R}}, \nabla^{F_{R}}\right)\big|_{X\times(-R-\varepsilon, R+\varepsilon)}=\psi^{*}_{R}\left(T^{H}M, F, h^{F}, \nabla^{F}\right)\big|_{X}, \\
&g^{TZ_{R}}(y, x_{m})=g^{TY}(y)+dx^{2}_{m}, \quad \quad (y, x_{m})\in X\times [-R, R].
\end{aligned}\end{align}
We get the geometrical data for $M_{1, R}$ and $M_{2, R}$ by restrictions.

By our construction of $\phi_{R}$ in Lemma \ref{l.1}, for $i=1, 2$ we have
\begin{align}\begin{aligned}\label{e.70}
&\left(T^{H}M, F, h^{F}, \nabla^{F}\right)=\phi^{*}_{R}\left(T^{H}M_{R}, F_{R}, h^{F_{R}}, \nabla^{F_{R}}\right)\\
(\text{resp.}\,\,&\left(T^{H}M_{i}, F, h^{F}, \nabla^{F}\right)=\phi^{*}_{i, R}\left(T^{H}M_{i, R}, F_{R}, h^{F_{R}}, \nabla^{F_{ R}}\right)\,).
\end{aligned}\end{align}

Let
\begin{align}\begin{aligned}\label{e.782}
g^{TZ}_{R}:=\phi^{*}_{R}\left(g^{TZ_{R}}\right)\quad ( \text{resp.} \quad g^{TZ_{i}}_{R}:=\phi^{*}_{i, R}\left(g^{TZ_{i,R}}\right)).
\end{aligned}\end{align}
 Then we get by (\ref{e.70})
\begin{align}\begin{aligned}\label{e.72}
 \mathscr{T}(T^{H}M_{R}, g^{TZ_{R}}, h^{F_{R}})&=\mathscr{T}\left(\phi^{*}_{R}\left(T^{H}M_{R}\right), \phi^{*}_{R}\left(g^{TZ_{R}}\right), \phi^{*}_{R}\left(h^{F_{R}}\right)\right)\\
 &= \mathscr{T}\left(T^{H}M,g^{TZ}_{R} , h^{F}\right),
\end{aligned}\end{align}
and similarly we have by (\ref{e.70})
\begin{align}\begin{aligned}\label{e.73}
 \mathscr{T}_{\rm abs}(T^{H}M_{1, R}, g^{TZ_{1, R}}, h^{F_{ R}})
 = \mathscr{T}_{\rm abs}\left(T^{H}M_{1},g^{TZ_{1}}_{R} , h^{F}\right),\\
  \mathscr{T}_{\rm rel}(T^{H}M_{2, R}, g^{TZ_{2, R}}, h^{F_{ R}})
 = \mathscr{T}_{\rm rel}\left(T^{H}M_{2},g^{TZ_{2}}_{R} , h^{F}\right).
\end{aligned}\end{align}

\begin{rem}\label{r.5}
Through the above arguments, we see that in fact there are two equivalent geometrical settings in studying our problems,
which can be transformed to each other by the stretching diffeomorphisms $\phi_{R}$ (resp.  $\phi_{i, R}$, for $i=1, 2$).
The first one is the setting on the fibration $M$ (resp.  $M_{i}$ ) before the stretch with the following geometrical data:
\begin{align}\begin{aligned}\label{e.427}
 h^{F}, \nabla^{F}, T^{H}M \text{ and } g^{TZ}_{R}\quad\left(\text{resp.  } h^{F}, \nabla^{F}, T^{H}M_{i} \text{ and } g^{TZ_{i}}_{R}\right).
 \end{aligned}\end{align}
The second one is the setting on the stretched fibration $M_{R}$ (resp.  $M_{i, R}$ ) with the data:
\begin{align}\begin{aligned}\label{e.429}
 h^{F_{R}}, \nabla^{F_{R}}, T^{H}M_{R} \text{ and } g^{TZ_{R}}\quad\left(\text{resp.  } h^{F_{R}}, \nabla^{F_{R}}, T^{H}M_{i, R} \text{ and } g^{TZ_{i, R}}\right).\end{aligned}\end{align}
\end{rem}

 Moreover, we can compute $g^{TZ}_{R}$ (resp.  $g^{TZ_{i}}_{R}$) explicitly
on the cylinder part $X_{(-\varepsilon, \varepsilon)}$.

\begin{lemma}\label{l.44}
 There exist smooth even functions $\lambda_{2},\lambda_{1},\lambda_{0}$ on $[-\varepsilon,\varepsilon]$ with supports
in $[-\frac{7}{8}\varepsilon,-\frac{\varepsilon}{8}]\cup[\frac{\varepsilon}{8}, \frac{7}{8}\varepsilon]$
such that for $(y, x_{m})\in X_{(-\varepsilon,\varepsilon)}$
\begin{align}\begin{aligned}\label{e.430}
g^{TZ}_{R}=g^{TZ}(y,x_{m})+\big(\lambda_{0}(x_{m})+\lambda_{1}(x_{m})R+\lambda_{2}(x_{m})R^{2}\big)dx^{2}_{m}.
\end{aligned}\end{align}
Moreover, for $i=1, 2$, we have the same expression for $g^{TZ_{i}}_{R}:=\big(g^{TZ}_{R}\big)|_{Z_{i}}$.
\end{lemma}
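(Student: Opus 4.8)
The plan is to use that, on the cylinder $X_{(-\varepsilon,\varepsilon)}$, the diffeomorphism $\phi_R$ of Lemma \ref{l.1} moves only the normal coordinate: $\phi_R(y,x_m)=(y,\phi_R(x_m))$, and that it carries $X_{[-\varepsilon,\varepsilon]}$ into the flat part $X_{[-R,R]}$ of the cylinder of $M_R$, where by (\ref{e.67}) one has $g^{TZ_R}=g^{TY}(y)+dx_m^{2}$. Since $g^{TY}$ does not depend on $x_m$ there and $y$ is unchanged, the pullback is
\begin{align}\begin{aligned}\label{e.pullbackmetric}
g^{TZ}_{R}=\phi_R^{*}\big(g^{TY}(y)+dx_m^{2}\big)=g^{TY}(y)+\big(\phi_R'(x_m)\big)^{2}dx_m^{2},
\end{aligned}\end{align}
so the whole statement reduces to understanding the one-variable function $\phi_R'(x_m)$.

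First I would record that $\phi_R(x_m)$, as built in (\ref{e.734}) and (\ref{e.59}), depends \emph{affinely} on $R$: writing $g_R(x_m)=R\,a(x_m)+b(x_m)$ with $a(x_m)=\frac{4x_m}{3\varepsilon}-\frac{1}{6}$ and $b(x_m)=\frac{\varepsilon}{6}-\frac{x_m}{3}$ and substituting into (\ref{e.734}), (\ref{e.59}) gives $\phi_R(x_m)=R\,A(x_m)+B(x_m)$ with
\begin{align}\begin{aligned}\label{e.ABexplicit}
A=\rho\,a\,(1-\chi)+\chi,\qquad B=\big(x_m(1-\rho)+\rho\,b\big)(1-\chi)+\chi\,(x_m-\varepsilon),
\end{aligned}\end{align}
which extend to odd smooth functions on $[-\varepsilon,\varepsilon]$ via the odd extension of $\phi_R$. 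Hence $\phi_R'(x_m)=R\,A'(x_m)+B'(x_m)$, so $\big(\phi_R'(x_m)\big)^{2}=R^{2}(A')^{2}+2R\,A'B'+(B')^{2}$, and subtracting $g^{TZ}=g^{TY}(y)+dx_m^{2}$ from (\ref{e.pullbackmetric}) yields (\ref{e.430}) with $\lambda_2=(A')^{2}$, $\lambda_1=2A'B'$, $\lambda_0=(B')^{2}-1$.

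It remains to verify the claimed properties. Smoothness and evenness are immediate: (\ref{e.ABexplicit}) is built from smooth functions, and $A,B$ odd forces $A',B'$ — hence $\lambda_0,\lambda_1,\lambda_2$ — to be even. For the support, on $[0,\frac{\varepsilon}{8}]$ one has $\rho\equiv\chi\equiv 0$, so $A\equiv 0$ and $B(x_m)=x_m$, whence $A'=0,\ B'=1$ and all three $\lambda_j$ vanish; on $[\frac{7}{8}\varepsilon,\varepsilon]$ one has $\rho\equiv\chi\equiv 1$, so $A\equiv 1$ and $B(x_m)=x_m-\varepsilon$, again $A'=0,\ B'=1$ and all $\lambda_j$ vanish; by evenness they also vanish on $[-\frac{\varepsilon}{8},0]$ and $[-\varepsilon,-\frac{7}{8}\varepsilon]$, which gives the asserted support $[-\frac{7}{8}\varepsilon,-\frac{\varepsilon}{8}]\cup[\frac{\varepsilon}{8},\frac{7}{8}\varepsilon]$. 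Finally, since $\phi_{i,R}$ is by definition the restriction of $\phi_R$ to $M_i$ and $g^{TZ_{i,R}}=g^{TZ_R}|_{Z_{i,R}}$, we get $g^{TZ_i}_R=(g^{TZ}_R)|_{Z_i}$, so (\ref{e.430}) restricts verbatim to $Z_i\cap X_{(-\varepsilon,\varepsilon)}$. There is no genuine obstacle here; the only mild care needed is in tracking the two nested cut-offs $\rho$ and $\chi$ and in using the odd extension of $\phi_R$ consistently.
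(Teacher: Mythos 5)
Your argument is correct and is essentially the paper's proof: both pull back the product metric through $\phi_R$, reduce to the one-variable function $\phi_R'(x_m)$, note that $\phi_R'$ is smooth, even, and affine in $R$, and expand the square to read off $\lambda_0,\lambda_1,\lambda_2$ together with their support from the explicit cut-offs. The only cosmetic difference is the parametrization — you write $\phi_R'=B'+RA'$ where the paper writes $\phi_R'=1+\mu_0+\mu_1R$, so $\mu_1=A'$ and $\mu_0=B'-1$ — and in fact your $\lambda_0=(B')^2-1=\mu_0(\mu_0+2)$ quietly corrects a small arithmetic slip in the paper's expression $\lambda_0=2\mu_0(1+\mu_0)$.
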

\begin{proof}
By (\ref{e.59}) and (\ref{e.60}), we get that for $(y, x_{m})\in X_{(-\varepsilon, \varepsilon)}$
\begin{align}\begin{aligned}\label{e.431}
g^{TZ}_{R}(y, x_{m})
=g^{TY}(y)+\big(\frac{\partial \phi_{R}}{\partial x_{m}}\big)^{2}dx^{2}_{m}.
\end{aligned}\end{align}
By our construction of $\phi_{R}(x_{m})$, we see that $\frac{\partial \phi_{R}}{\partial x_{m}}$ is an even smooth function depending
linearly on $R$, moreover we have
$\frac{\partial \phi_{R}}{\partial x_{m}}=1$ for $x_{m}\in [-\varepsilon,-\frac{7\varepsilon}{8}]\cup[-\frac{\varepsilon}{8}, \frac{\varepsilon}{8}]\cup [\frac{7\varepsilon}{8},\varepsilon]$.
Consequently, there exist two smooth even functions $\mu_{1}(x_{m}),\,\mu_{0}(x_{m}):[-\varepsilon, \varepsilon]\rightarrow [0,\infty)$ with support
in $[-\frac{7}{8}\varepsilon,-\frac{\varepsilon}{8}]\cup [\frac{\varepsilon}{8}, \frac{7}{8}\varepsilon]$ such that (see (\ref{e.61}))
\begin{align}\begin{aligned}\label{e.432}
\frac{\partial \phi_{R}}{\partial x_{m}}=1+ \mu_{0}(x_{m})+\mu_{1}(x_{m})\cdot R\geq1.
\end{aligned}\end{align}
Set $\lambda_{2}=\mu^{2}_{1},\lambda_{1}=2\mu_{1}(1+\mu_{0}),\lambda_{0}=2\mu_{0}(1+\mu_{0})$,
then by (\ref{e.431}) and (\ref{e.432}), for $(y, x_{m})\in X_{(-\varepsilon, \varepsilon)}$
\begin{align}\begin{aligned}\label{e.433}
&g^{TZ}_{R}(y, x_{m})=g^{TY}(y)+\big(1+\mu_{0}(x_{m})+\mu_{1}(x_{m})R\big)^{2}dx^{2}_{m}\\
=&g^{TZ}(y, x_{m})+dx^{2}_{m}
+\big(\lambda_{0}(x_{m})+\lambda_{1}(x_{m})R+\lambda_{2}(x_{m})R^{2}\big)dx^{2}_{m}.
\end{aligned}\end{align}
Since $\phi_{R}$ is an identity map on $M\backslash X_{[-\varepsilon, \varepsilon]}$, (\ref{e.430}) follows from (\ref{e.433}).
\end{proof}

\subsection{An identity in the process of adiabatic limit}\label{ss1.10}
%%%%%%%%%%%%%%%%%%%%%%%%%%%%%%%%%%%%%%%%%%%%%%%%%%%%%%%%%%%%%%%

\begin{defn}\label{d.21}
Recall that $\mathbb{K}_{Z}$ is the smooth triangulation of $Z$ in Definition \ref{d.21},
then the stretching diffeomorphism $\phi_{R}:Z\longrightarrow Z_{R}$ induces a smooth triangulation
 $\mathbb{K}_{Z_{R}}$ of $Z_{R}$ from $\mathbb{K}_{Z}$, such that there are smooth sub-triangulations $\mathbb{K}_{Y}$, $\mathbb{K}_{Z_{1, R}}$
 and $\mathbb{K}_{Z_{2, R}}$ by our construction of $\phi_{R}$.
\end{defn}

\begin{rem}\label{r.1}
 Consequently, the volume of simplex in $\mathbb{K}_{Y_{[-R,R]}}=\mathbb{K}_{Y_{[-R,0]}}\cup \mathbb{K}_{Y_{[0,R]}}$ growths linearly with
respect to the Riemannian volume form induced by $g^{TZ_{R}}$ when $R$ goes to infinity, while the volume of the other simplexes
are unchanged.
\end{rem}

Analogue to (\ref{e.434}), we have the long exact sequence $(\mathscr{H}_{R}, \nabla^{\mathscr{H}_{R}})$ of flat vector bundles:
\begin{align}\begin{aligned}\label{e.434}
\cdots \overset{\delta_{R}}\longrightarrow H^{p}(Z_{R}, F_{R})\overset{\delta_{R}}\longrightarrow H^{p}(Z_{1, R}, F_{R}) \overset{\delta_{R}}\longrightarrow H^{p+1}(Z_{2, R}, Y, F_{R}) \overset{\delta_{R}}\longrightarrow\cdots.
\end{aligned}\end{align}
Here we use $\nabla^{\mathscr{H}_{R}}$ to denote the canonical flat connection on $\mathscr{H}_{R}$ induced by $\nabla^{F}$.

Let $P^{\infty}_{R}:\Omega(Z_{R}, F)\rightarrow C^{\bullet}\left(\mathbb{K}_{Z_{R}}, F\right)$, $P^{\infty}_{1, R}:\Omega(Z_{1, R}, F_{R})\rightarrow C^{\bullet}\left(\mathbb{K}_{Z_{1,R}}, F_{R}\right)$ and
$P^{\infty}_{2, R}:\Omega(Z_{2, R}, Y, F_{R})\rightarrow C^{\bullet}\left(\mathbb{K}_{Z_{2, R}}/\mathbb{K}_{Y}, F_{R}\right)$ be the de Rham maps introduced in
Definition \ref{d.14}.
They induce the isomorphisms between the space of harmonic forms and the cohomology groups, respectively,
\begin{align}\begin{aligned}\label{e.177}
&P^{\infty}_{R}:\mathscr{H}^{p}\left(Z_{R}, F_{R}\right)\cong H^{p}(Z_{R}, F_{R});\,\,
P^{\infty}_{1, R}:\mathscr{H}^{p}\left(Z_{1, R}, F_{R}\right)\cong H^{p}(Z_{1, R}, F_{R}); \\
& P^{\infty}_{2, R}:\mathscr{H}^{p}\left(Z_{2, R}, Y, F_{R}\right)\cong H^{p}(Z_{2, R}, Y, F_{R}).
\end{aligned}\end{align}

We use $\nabla_{R}^{TZ}$ (resp.  $\nabla_{R}^{TZ_{1}}$, $\nabla_{R}^{TZ_{2}}$) to denote the Levi-Civita connection with respect to $g^{TZ}_{R}$
(resp.  $g^{TZ_{1}}_{R}$, $g^{TZ_{2}}_{R}$ ), and use $h_{L^{2}, R}^{H(Z, F)}$
(resp.  $h_{L^{2}, R}^{H(Z_{1}, F)}$, $h_{L^{2}, R}^{H(Z_{2}, Y, F)}$) to denote the $L^{2}-$metric induced by $g^{TZ}_{R}$
(resp.  $g^{TZ_{1}}_{R}$, $g^{TZ_{2}}_{R}$ ) and $h^{F}$.
By the construction of $\phi_{R}$, $\phi_{1, R}$ and $\phi_{2, R}$ in Lemma \ref{l.1} and (\ref{e.782}), we see that
\begin{align}\begin{aligned}\label{e.746}
g^{TZ}_{R}=g^{TZ_{1}}_{R}\cup_{X}g^{TZ_{2}}_{R}.
\end{aligned}\end{align}
By Lemma \ref{l.44}, we see that on $X_{(-\frac{\varepsilon}{8},0]}$ (resp. $X_{[0,\frac{\varepsilon}{8})}$)
$g^{TZ_{1}}_{R}$ (resp. $g^{TZ_{2}}_{R}$) has the product structure (\ref{e.360}),
it means that they satisfy the condition of Theorem \ref{t.5}.

If we let $\mathscr{T}_{\rm abs}(T^{H}M_{1}, g^{TZ_{1}}, h^{F})$ (resp. $\mathscr{T}_{\rm rel}(T^{H}M_{2}, g^{TZ_{2}}, h^{F})$) to denote the analytic torsion form on $M_{1}$ (resp. $M_{2}$) with absolute (relative) boundary conditions,
then by \cite[Thm. 3.24]{BL}, Theorem \ref{t.5}, (\ref{e.72}) and (\ref{e.73}), we have in $Q^{S}/Q^{S, 0}$
\begin{align}\begin{aligned}\label{e.74}
 \mathscr{T}&\left(T^{H}M_{R}, g^{TZ_{R}}, h^{F_{R}}\right)-\mathscr{T}\left(T^{H}M, g^{TZ}, h^{F}\right)\\
&= \mathscr{T}\left(T^{H}M, g^{TZ}_{R}, h^{F}\right)-\mathscr{T}(T^{H}M, g^{TZ}, h^{F})\\
&=\int_{Z}\widetilde{e}(TZ, \nabla^{TZ}, \nabla_{R}^{TZ})f(\nabla^{F}, h^{F})
-\widetilde{f}(\nabla^{H(Z, F)}, h_{L^{2}}^{H(Z, F)}, h_{L^{2}, R}^{H(Z, F)}),
\end{aligned}\end{align}
\begin{align}\begin{aligned}\label{e.75}
 \mathscr{T}_{\rm abs}&(T^{H}M_{1, R}, g^{TZ_{1, R}}, h^{F_{R}})-\mathscr{T}_{\rm abs}(T^{H}M_{1}, g^{TZ_{1}}, h^{F})\\
&= \mathscr{T}_{\rm abs}(T^{H}M_{1}, g^{TZ_{1}}_{R}, h^{F})-\mathscr{T}_{\rm abs}(T^{H}M_{1}, g^{TZ_{1}}, h^{F})\\
&=\int_{Z_{1}}\widetilde{e}(TZ_{1}, \nabla^{TZ_{1}}, \nabla_{R}^{TZ_{1}})f(\nabla^{F}, h^{F})-\widetilde{f}(\nabla^{H(Z_{1}, F)}, h_{L^{2}}^{H(Z_{1}, F)}, h_{L^{2}, R}^{H(Z_{1}, F)}),
\end{aligned}\end{align}
\begin{align}\begin{aligned}\label{e.76}
& \mathscr{T}_{\rm rel}(T^{H}M_{2, R}, g^{TZ_{2, R}}, h^{F_{R}})-\mathscr{T}_{\rm rel}(T^{H}M_{2}, g^{TZ_{2}}, h^{F})\\
=& \mathscr{T}_{\rm rel}(T^{H}M_{2}, g^{TZ_{2}}_{R}, h^{F})-\mathscr{T}_{\rm rel}(T^{H}M_{2}, g^{TZ_{2}}, h^{F})\\
=&\int_{Z_{2}}\widetilde{e}(TZ_{2}, \nabla^{TZ_{2}}, \nabla_{R}^{TZ_{2}})f(\nabla^{F}, h^{F})-\widetilde{f}(\nabla^{H(Z_{2}, Y, F)}, h_{L^{2}}^{H(Z_{2}, Y, F)}, h_{L^{2}, R}^{H(Z_{2}, Y, F)}).
\end{aligned}\end{align}

For the long exact sequence $(\mathscr{H}, \nabla^{\mathscr{H}})$ in (\ref{e.434}) of flat vector bundles over $S$,
by \cite[Thm. 2.24]{BL} we have in $Q^{S}/Q^{S, 0}$
\begin{align}\begin{aligned}\label{e.79}
T_{f}(A^{\mathscr{H}}, h_{L^{2}}^{\mathscr{H}})-T_{f}(A^{\mathscr{H}}, h_{L^{2}, R}^{\mathscr{H}})
=-\widetilde{f}(\nabla^{\mathscr{H}}, h_{L^{2}}^{\mathscr{H}}, h_{L^{2}, R}^{\mathscr{H}}).
\end{aligned}\end{align}
From \cite[Def. 1.12]{BL}, we get
\begin{align}\begin{aligned}\label{e.81}
&-\widetilde{f}\left(\nabla^{\mathscr{H}}, h_{L^{2}}^{\mathscr{H}}, h_{L^{2}, R}^{\mathscr{H}}\right)=
\widetilde{f}\left(\nabla^{H(Z, F)}, h_{L^{2}}^{H(Z, F)}, h_{L^{2}, R}^{H(Z, F)}\right)\\
&-\widetilde{f}\left(\nabla^{H(Z_{1}, F)}, h_{L^{2}}^{H(Z_{1}, F)}, h_{L^{2}, R}^{H(Z_{1}, F)}\right)
-\widetilde{f}\left(\nabla^{H(Z_{2}, Y, F)}, h_{L^{2}}^{H(Z_{2}, Y, F)}, h_{L^{2}, R}^{H(Z_{2}, Y, F)}\right).\end{aligned}\end{align}

Hence, from (\ref{1.106}), (\ref{e.746})--(\ref{e.81}), we obtain that in $Q^{S}/Q^{S, 0}$

\begin{align}\begin{aligned}\label{e.80}
\mathscr{T}&(T^{H}M, g^{TZ}, h^{F})-\mathscr{T}_{\rm abs}(T^{H}M_{1}, g^{TZ_{1}}, h^{F})\\
&\quad\quad\quad\quad\quad-\mathscr{T}_{\rm rel}(T^{H}M_{2}, g^{TZ_{2}}, h^{F})-T_{f}(A^{\mathscr{H}}, h_{L^{2}}^{\mathscr{H}})\\
&=\mathscr{T}(T^{H}M_{R}, g^{TZ_{R}}, h^{F_{R}})-\mathscr{T}_{\rm abs}(T^{H}M_{1, R}, g^{TZ_{1, R}}, h^{F_{R}})\\
&\quad\quad\quad\quad-\mathscr{T}_{\rm rel}(T^{H}M_{2, R}, g^{TZ_{2, R}}, h^{F_{R}})-T_{f}(A^{\mathscr{H}}, h_{L^{2}, R}^{\mathscr{H}}).
\end{aligned}\end{align}

For the long exact sequence $(\mathscr{H}_{R}, \nabla^{\mathscr{H}_{R}})$ of flat vector bundle over $S$ introduced in (\ref{e.434}),
we have the following lemma:
\begin{lemma}\label{l.3} The following identity holds in $Q^{S}/Q^{S, 0}$
\begin{align}\begin{aligned}\label{e.83}
T_{f}(A^{\mathscr{H}}, h_{L^{2}, R}^{\mathscr{H}})=T_{f}(A^{\mathscr{H}_{R}}, h_{L^{2}}^{\mathscr{H}_{R}}).\end{aligned}\end{align}
\end{lemma}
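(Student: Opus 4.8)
The plan is to identify the two torsion forms $T_f(A^{\mathscr H}, h^{\mathscr H}_{L^2,R})$ and $T_f(A^{\mathscr H_R}, h^{\mathscr H_R}_{L^2})$ by exhibiting an explicit isomorphism of the underlying triples $(\mathscr H, A^{\mathscr H}, h^{\mathscr H}_{L^2,R})$ and $(\mathscr H_R, A^{\mathscr H_R}, h^{\mathscr H_R}_{L^2})$ of flat $\mathbb Z$-graded vector bundles with long exact sequences, compatible with the Hermitian metrics. Once such an isomorphism is produced, the equality \eqref{e.83} in $Q^S/Q^{S,0}$ follows from the functoriality of the Bismut-Lott construction of $T_f$ (cf. \cite[Def. 2.20]{BL}): the torsion form depends only on the isometry class of the data.

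First I would recall that the stretching diffeomorphism $\phi_R : Z \to Z_R$ (and its restrictions $\phi_{i,R}: Z_i \to Z_{i,R}$) is fiberwise over $S$, by Lemma \ref{l.1}(2), hence induces over $S$ an isomorphism of flat vector bundles $\phi_R^* : H^p(Z_R, F_R) \xrightarrow{\sim} H^p(Z, F)$ intertwining the canonical flat connections, and likewise for the absolute/relative cohomology of $M_{1,R}, M_{2,R}$; moreover $\phi_R$ is compatible with the inclusions $Z_1 \hookrightarrow Z$ and the quotients $Z \to Z/Z_1$, so these isomorphisms fit together into an isomorphism of long exact sequences, i.e. they intertwine the connecting maps $\delta_R$ and $\delta$. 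This gives an isomorphism of pairs $(\mathscr H_R, A^{\mathscr H_R}) \cong (\mathscr H, A^{\mathscr H})$. The second step is metrics: under the Hodge isomorphism $\mathscr H^p(Z_R, F_R) \cong H^p(Z_R, F_R)$, the $L^2$-metric $h^{\mathscr H_R}_{L^2}$ on $H^p(Z_R, F_R)$ is by definition computed with the metric $g^{TZ_R}$ and $h^{F_R}$ on $Z_R$; pulling back by $\phi_R$ and using $g^{TZ}_R := \phi_R^*(g^{TZ_R})$, $h^F = \phi_R^*(h^{F_R})$ (see \eqref{e.782}, \eqref{e.70}) shows $\phi_R^* h^{\mathscr H_R}_{L^2} = h^{\mathscr H}_{L^2,R}$, the $L^2$-metric on $H^p(Z,F)$ induced by $(g^{TZ}_R, h^F)$, and similarly for the two pieces. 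Hence $\phi_R$ induces an isometry of the triples $(\mathscr H_R, A^{\mathscr H_R}, h^{\mathscr H_R}_{L^2}) \cong (\mathscr H, A^{\mathscr H}, h^{\mathscr H}_{L^2,R})$.

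Finally I would invoke the invariance of the torsion form $T_f$ under such isomorphisms: if $u$ is an isomorphism of $\mathbb Z$-graded flat bundles with long exact sequences carrying one Hermitian metric to the other, then $T_f$ of the two triples agree in $Q^S/Q^{S,0}$, since the whole superconnection heat-form construction of \cite[Def. 2.20]{BL} is natural in this data. Applying this to $u = \phi_R^*$ yields \eqref{e.83}.

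The step I expect to require the most care is the compatibility of the Hodge/de Rham identifications with the stretching. One must check that the harmonic-form representatives on $Z_R$ pull back (not just cohomology classes, but the $L^2$-inner products of harmonic representatives) correctly under $\phi_R^*$; since $\phi_R^*$ is a diffeomorphism it commutes with the de Rham differential and pulls the metric data on $Z_R$ back to $(g^{TZ}_R, h^F)$ on $Z$ exactly by construction, so the harmonic projection on $Z$ with respect to $(g^{TZ}_R, h^F)$ corresponds under $\phi_R^*$ to the harmonic projection on $Z_R$. The only mild subtlety is the boundary conditions: one checks that $\phi_R$ respects the product collar near $X$ (Lemma \ref{l.1}(1)) so that absolute/relative boundary conditions are preserved, which is exactly the content of the product-structure assumptions, and that the triangulations $\mathbb K_{Z_R}$ are defined precisely as $\phi_R(\mathbb K_Z)$ (Definition \ref{d.21}), so the de Rham maps $P^\infty_R$ correspond to $P^\infty$ under $\phi_R^*$ on the nose.
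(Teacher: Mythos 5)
Your proposal is correct and captures the same underlying mechanism as the paper: the stretching diffeomorphism $\phi_R$ (and its restrictions $\phi_{1,R}, \phi_{2,R}$) induces a flat-connection-preserving isomorphism of the two long exact sequences which is simultaneously an isometry for the relevant $L^2$-metrics, because $\phi_R^*$ carries $(g^{TZ_R}, h^{F_R})$ to $(g^{TZ}_R, h^F)$ by construction. The only difference is in how naturality of $T_f$ is invoked: you appeal directly to invariance of the Bismut--Lott torsion form under isometric isomorphism of the flat exact sequence, whereas the paper assembles the two sequences into a flat double complex whose columns are the isomorphisms $\phi_R^*, \phi_{1,R}^*, \phi_{2,R}^*$ and cites the double-complex torsion theorem of \cite[Thm.~A1.4, Thm.~A1.1(c)]{BL} --- since each column is a two-term exact sequence with isometric map, its torsion vanishes, forcing the torsions of the two rows to coincide in $Q^S/Q^{S,0}$. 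Both routes hinge on precisely the compatibilities you check (fiberwise diffeomorphism over $S$, commutation with $\delta$, pullback of metric data and hence of Hodge theory, preservation of boundary conditions and triangulations), so the arguments are substantively the same.
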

\begin{proof}
We have the following flat double complex of complex vector bundles (cf. \cite[Appendix I, (d)]{BL}) on $S$
\begin{align}\begin{aligned}\label{e.783}
\xymatrix{
\cdots\ar[r]&H^{\bullet}(Z_{2,R},Y,F_{2,R})\ar[d]^{\phi_{2,R}^{*}}\ar[r]& H^{\bullet}(Z_{R},F_{R})\ar[d]^{\phi_{R}^{*}}\ar[r]&H^{\bullet}(Z_{1,R},F_{1,R})\ar[d]^{\phi_{1,R}^{*}}\ar[r]& \cdots\\
\cdots\ar[r]&H^{\bullet}(Z_{2},Y,F)\ar[r]
&H^{\bullet}(Z,F) \ar[r]
& H^{\bullet}(Z_{1},F)\ar[r]
&\cdots\,.}
\end{aligned}\end{align}
Each line and column of (\ref{e.783}) is exact. Since $\phi_{R}^{*}$, $\phi_{1,R}^{*}$ and
$\phi_{2,R}^{*}$ are isomormorphisms keeping the $L^{2}-$metrics, the identity (\ref{e.83})
follows from \cite[Thm. A1.4, Thm. A1.1, (c)]{BL}. The proof is completed.
\end{proof}

Using (\ref{e.80}) and Lemma (\ref{l.3}), we establish the important identity (\ref{e.367}) in the process of adiabatic limit.

\section{The gluing formula by using adiabatic limit methods}\label{s.3}
In this section under the assumption (\ref{e.356}), we utilize the adiabatic method
 to prove the gluing formula (\ref{e.354}) for the analytic torsion forms of Bismut-Lott. We will divide the right hand side of (\ref{e.367}) into three parts and handle them separately.

This section is organized as follows.
In Section \ref{ss3.1}, we state our main result and divide the right hand side of (\ref{e.367}) into three parts: small time contribution, large time contribution and $T_{f}(A^{\mathscr{H}_{R}}, h_{L^{2}}^{\mathscr{H}_{R}})$. In Section \ref{ss3.2}, we treat the small time contribution.

\subsection{The gluing formula when $H(Y, F)=0$}\label{ss3.1}

In this section, we use the notation of Section \ref{ss1.8}. Recall that $\pi:M\rightarrow S$ is a fibration divided into two fibrations $M_{1}$
and $M_{2}$ by a hypersurface $X$. For $R>0$, the stretched fibrations $M_{R},M_{1,R},M_{2,R}$ are introduced in Section
\ref{ss1.9}.

Set
\begin{align}\begin{aligned}\label{e.189}
h^{(p)}:=\rk H^{p}(Z, F), \, h^{(p)}_{1}:=\rk H^{p}(Z_{1}, F),\,h^{(p)}_{2}:=\rk H^{p}(Z_{2}, Y, F).
\end{aligned}\end{align}
By our assumption (\ref{e.356}), we see that the long exact sequence $\mathscr{H}_{R}$ (see (\ref{e.434})) of flat vector bundles splits into the direct sum
of short exact sequences: for $0\leq p\leq m$
\begin{align}\begin{aligned}\label{e.86}
\mathscr{H}^{p}_{R}:\quad 0 \longrightarrow H^{p}(Z_{2, R}, Y, F_{R})\longrightarrow H^{p}(Z_{R}, F_{R})\longrightarrow H^{p}(Z_{1, R}, F_{R}) \longrightarrow 0.
\end{aligned}\end{align}
Hence we have in $Q^{S}/Q^{S, 0}$
\begin{align}\begin{aligned}\label{e.87}
T_{f}(A^{\mathscr{H}_{R}}, h_{L^{2}}^{\mathscr{H}_{R}})=\sum_{i=0}^{m}(-1)^{p}T_{f}(A^{\mathscr{H}^{p}_{R}}, h_{L^{2}}^{\mathscr{H}^{p}_{R}}).\end{aligned}\end{align}
By (\ref{e.189}) and (\ref{e.86}), we also get for $0\leq p\leq m$
\begin{align}\begin{aligned}\label{e.89}
h^{(p)}=h_{1}^{(p)}+h_{2}^{(p)}.
\end{aligned}\end{align}

By \cite[Def. 1.19]{Zhu13}, (\ref{e.367}) and (\ref{e.89}), we get in $Q^{S}/Q^{S, 0}$
\begin{align}\begin{aligned}\label{e.94}
&\mathscr{T}(T^{H}M, g^{TZ}, h^{F})-\mathscr{T}_{\rm abs}(T^{H}M_{1}, g^{TZ_{1}}, h^{F})\\
&\quad\quad\quad\quad\quad\quad\quad -\mathscr{T}_{\rm rel}(T^{H}M_{2}, g^{TZ_{2}}, h^{F})-T_{f}(A^{\mathscr{H}}, h_{L^{2}}^{\mathscr{H}})\\
=&-\int_{0}^{+\infty}\Big[f^{\wedge}(C'_{R,t}, h^{W_{R}})-f^{\wedge}(C'_{1, R, t}, h^{W_{1, R}})-f^{\wedge}(C'_{2, R, t}, h^{W_{2, R}})\Big]\frac{dt}{t}\\
&\quad\quad\quad\quad\quad\quad\quad -T_{f}(A^{\mathscr{H}_{R}}, h_{L^{2}}^{\mathscr{H}_{R}}).
\end{aligned}\end{align}

The rest part of Section \ref{s.3} will be contributed to prove Theorem \ref{t.13}.
Let $\varepsilon>0$ be a small positive constant, we divide the integral at the right side of (\ref{e.94}) into two parts:\\
(1) The small time contribution:
\begin{align}\begin{aligned}\label{e.92}
S(R):=-\int_{0}^{R^{2-\varepsilon}}\Big[f^{\wedge}(C'_{R,t}, h^{W_{R}})-f^{\wedge}(C'_{1, R, t}, h^{W_{1, R}})-f^{\wedge}(C'_{2, R, t}, h^{W_{2, R}})\Big]\frac{dt}{t}.
\end{aligned}\end{align}
(2) The large time contribution:
\begin{align}\begin{aligned}\label{e.93}
L(R):=-\int_{R^{2-\varepsilon}}^{\infty}\Big[f^{\wedge}(C'_{R,t}, h^{W_{R}})-f^{\wedge}(C'_{1, R, t}, h^{W_{1, R}})-f^{\wedge}(C'_{2, R, t}, h^{W_{2, R}})\Big]\frac{dt}{t}.
\end{aligned}\end{align}\index{$S(R),\,L(R)$}

\subsection{Adiabatic limit of the small time contribution $S(R)$}\label{ss3.2}

In this section, we will compute the limit of the small time contribution (\ref{e.92}).

\begin{thm}\label{t.6}Under the assumption (\ref{e.356}), we have
$\lim_{R\rightarrow \infty}S(R)=0$.
\end{thm}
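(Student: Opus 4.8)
The plan is to show that the bracketed integrand of $S(R)$ in (\ref{e.92}) is, uniformly for $t\in(0,R^{2-\varepsilon}]$, exponentially small in a positive power of $R$, and then to integrate. By (\ref{e.568}) each term is a fibre integral of a local density,
\[
f^{\wedge}(C'_{R,t},h^{W_{R}})=\int_{Z_{R}}a_{R}(x,t)\,dv_{Z_{R}}(x),\qquad
a_{R}(x,t):=\varphi\,\tr_{s}\Big[\tfrac{N}{2}\big(1+2D_{R,t}^{2}\big)e^{D_{R,t}^{2}}\Big](x,x),
\]
where $D_{R,t}^{2}=-C_{R,t}^{2}$ is a fibrewise generalized Laplacian with coefficients in $\Lambda T^{*}S$; likewise one has densities $a_{1,R}$ on $Z_{1,R}$ (absolute boundary conditions) and $a_{2,R}$ on $Z_{2,R}$ (relative boundary conditions). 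Since $Z_{R}=Z_{1,R}\cup_{Y}Z_{2,R}$, the bracket in (\ref{e.92}) equals $\int_{Z_{1,R}}(a_{R}-a_{1,R})+\int_{Z_{2,R}}(a_{R}-a_{2,R})$, and it suffices to bound this by $O(Re^{-cR^{2}/t})$.

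Fix $\kappa\in(0,1)$ and split $Z_{i,R}=A_{i,R}\sqcup B_{i,R}$ with $B_{i,R}=\{x:\,d(x,\partial Z_{i,R})<\kappa R\}$ a purely cylindrical collar and $A_{i,R}$ its complement. On $A_{i,R}$ the Hodge Laplacians of $Z_{R}$ and of $Z_{i,R}$ agree on a $\kappa R$-neighbourhood, so the superconnection analogue of Lemma \ref{l.43} — deduced from it by Duhamel's formula, the positive–degree part of $C_{R,t}^{2}$ acting as a bounded, local perturbation and the constants being uniform in $R$ because the local geometry is $R$-independent (cf. Remark \ref{r.11}) — yields $|a_{R}-a_{i,R}|_{\mathscr{C}^{0}}\le Ce^{-cR^{2}/t}$ on $A_{i,R}$; since $\vol(A_{i,R})=O(R)$, the contribution of the $A_{i,R}$ to the bracket is $O(Re^{-cR^{2}/t})$.

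On $B_{i,R}$ the two Laplacians genuinely differ near $\{x_{m}=0\}$, so one must use the product structures (\ref{e.67}): on the cylinder $C_{R,t}$ is, up to a finite–degree ($R$-independent) perturbation, the product superconnection $C^{Y}_{t}\,\widehat{\otimes}\,1+\tfrac{\sqrt{t}}{2}\,c(\partial_{x_{m}})\partial_{x_{m}}$, hence by Duhamel $a_{R},a_{1,R},a_{2,R}$ are built from one–dimensional diagonal heat kernels of $-\partial_{x_{m}}^{2}$ — the free kernel for $Z_{R}$, and, for $Z_{1,R}$ (resp. $Z_{2,R}$), Neumann on the tangential and Dirichlet on the normal form component (resp. the reverse) — tensored with fibrewise operators over $Y$. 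Away from $\{x_{m}=0\}$ the three densities agree up to $O(e^{-cR^{2}/t})$; near $\{x_{m}=0\}$ they differ only by the image terms, proportional to $e^{-x_{m}^{2}/(ct)}$, and because the absolute and relative image terms carry opposite signs while $\int_{-\kappa R}^{0}e^{-x_{m}^{2}/(ct)}dx_{m}=\int_{0}^{\kappa R}e^{-x_{m}^{2}/(ct)}dx_{m}$, the contributions of $B_{1,R}$ and $B_{2,R}$ to the bracket cancel, up to an $O(Re^{-cR^{2}/t})$ error from the portions of $Z_{1},Z_{2}$ at distance $\ge(1-\kappa)R$. Thus the whole bracket is $O(Re^{-cR^{2}/t})$ uniformly on $(0,R^{2-\varepsilon}]$ — in particular its $t\to0^{+}$ singularity cancels.

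Finally, substituting $v=R^{2}/t$,
\[
|S(R)|\le C\int_{0}^{R^{2-\varepsilon}}Re^{-cR^{2}/t}\,\frac{dt}{t}
= CR\int_{R^{\varepsilon}}^{\infty}e^{-cv}\,\frac{dv}{v}
\le \tfrac{C}{c}\,R^{1-\varepsilon}e^{-cR^{\varepsilon}},
\]
which tends to $0$ as $R\to\infty$, proving $\lim_{R\to\infty}S(R)=0$. I expect the main obstacle to be the explicit analysis on the collars $B_{i,R}$: the bookkeeping for $N=N_{Y}+N_{I}$ and the supertrace over $\Lambda T^{*}Y\otimes\Lambda(dx_{m})\otimes F$ (notably that the free one–dimensional supertrace without the $x_{m}$–number operator contributes nothing), the translation of the absolute and relative boundary conditions into Neumann/Dirichlet conditions on the tangential and normal parts of a form, and the verification that the resulting image terms cancel between $M_{1,R}$ and $M_{2,R}$. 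In the general gluing formula (\ref{e.474}) this cancellation is incomplete, the surviving piece being $\tfrac{\log 2}{2}\rk(F)\chi(Y)$; it disappears here because (\ref{e.356}) forces $\chi(Y,F)=0$. A technical prerequisite for the estimate on $A_{i,R}$ is the superconnection version of Lemma \ref{l.43} with $R$-uniform constants.
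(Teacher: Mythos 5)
Your proposal is essentially the same as the paper's argument: comparison of heat kernels via finite propagation speed to localize the problem, the product structure on the cylinder reducing to a one-dimensional free/Neumann/Dirichlet calculation, cancellation of the image terms between absolute and relative boundary conditions by parity, and integration of the resulting $e^{-cR^{2}/t}$ bound over $t\le R^{2-\varepsilon}$. The paper organizes this through an explicit parametrix $\mQ_{R}=\phi_{1,R}\mE_{c}\psi_{1,R}+\phi_{2,R}e^{-t\mathscr{F}_{R}}\psi_{2,R}$ and Duhamel's principle, splits the integrand into $\mathrm{I}_{R}(t)+\mathrm{II}_{R}(t)$, shows $\mathrm{I}_{R}(t)=0$ \emph{identically} for all $t,R$ by the parity argument (Lemma~\ref{l.10}), and then bounds the remainder $\mathrm{II}_{R}$ by the off-diagonal estimates (Lemma~\ref{l.7}); your direct split of $Z_{i,R}$ into $A_{i,R}\sqcup B_{i,R}$ carries the same content, with the $B_{i,R}$ cancellation playing the role of Lemma~\ref{l.10} and the $A_{i,R}$ estimate playing the role of Lemmas~\ref{l.5}--\ref{l.7}. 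The pieces you flag as remaining work — the superconnection version of Lemma~\ref{l.43}, and the $\Lambda T^{*}Y\otimes\Lambda\langle dx_{m}\rangle$ bookkeeping with $N$ — are precisely what the paper handles in Lemma~\ref{l.47} and in (\ref{e.139})--(\ref{e.142}). You do not explicitly account for the $\psi_{t}^{-1}$ rescaling; this contributes only a polynomial factor in $t$ (hence in $R$ on $(0,R^{2-\varepsilon}]$), so your bound $R^{1-\varepsilon}e^{-cR^{\varepsilon}}$ should really be $R^{N}e^{-cR^{\varepsilon}}$ for some $N$ (the paper gets $R^{4-2\varepsilon}e^{-cR^{\varepsilon}/4}$), but the conclusion is unaffected.

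One inaccuracy in your closing remark: the parity cancellation is \emph{complete} here unconditionally — the paper's $\mathrm{I}_{R}(t)=0$ uses only the evenness of the cutoff and the oddness of $\mathrm{sign}(x_{m})e^{-x_{m}^{2}/t}$, not the hypothesis (\ref{e.356}). The $\tfrac{\log 2}{2}\rk(F)\chi(Y)$ defect in Conjecture~\ref{c.1} does not arise from a leftover in the small-time contribution; it is expected to come from the zero-mode/large-time analysis (the extended $L^{2}$-solutions and the scattering contribution when $\Ker D^{Y}\neq 0$), which is exactly what (\ref{e.356}) is imposed to avoid.
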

The main methods used to prove this theorem are the Duhamel's principle (cf. \cite[$\S$2.7]{BGV92}) and the finite propagation speed property of the wave operator
 (cf. \cite[Appendix D.2]{MaMa07}).

For $i=1, 2$ and $t>0$, we define $C_{R, t}, D_{R, t}$ (resp. $C_{i, R, t}, D_{i, R, t}$) \index{$C_{R, t}, D_{R, t}, C_{i, R, t}, D_{i, R, t}$}
in the same way as (\ref{e.549}) for $(M_{R},F_{R})$ (resp. $(M_{i,R},F_{i,R})$), moreover we have
\begin{align}\begin{aligned}\label{e.122}
C_{R,t}^{2}=-D_{R,t}^{2}, \quad C_{i, R,t}^{2}=-D_{i, R,t}^{2}.
\end{aligned}\end{align}
Similarly, for $\pi_{\partial}: X \rightarrow S$ with the objects $T^{H}X,g^{TY}, \nabla^{F|_{X}}, h^{F|_{X}}$ induced by
 $T^{H}M$, $g^{TZ}$, $\nabla^{F}$, $h^{F}$, we define $\widetilde{C}_{t}$ and $\widetilde{D}_{t}$ for $t>0$ as in (\ref{e.549}). Then we have
\begin{align}\begin{aligned}\label{e.100}
\widetilde{C}_{t}^{2}=-\widetilde{D}_{t}^{2}.
\end{aligned}\end{align}
\begin{defn}\label{d.9}For $i=1, 2$, we let
\begin{align}\begin{aligned}\label{e.102}
\mathscr{F}_{R}:=4C_{R,1}^{2}=(D^{Z_{R}})^{2}+\mathscr{F}_{R}^{[+]}, \quad \mathscr{F}_{i, R}:=4C_{i, R,1}^{2}=(D^{Z_{i, R}})^{2}+\mathscr{F}_{i, R}^{[+]},
\end{aligned}\end{align}\index{$\mathscr{F}_{R}$, $\mathscr{F}_{R}^{[+]}$, $\mathscr{F}_{i, R}$, $\mathscr{F}_{i, R}^{[+]}$}
where $(D^{Z_{R}})^{2}$ (resp. $(D^{Z_{i, R}})^{2}$), the corresponding $0$-degree part in $\Lambda (T^{*}S)$, is a smooth family of Hodge Laplacians
along the fibers $Z_{R}$ (resp. $Z_{i, R}$) and $\mathscr{F}_{R}^{[+]}$ (resp. $\mathscr{F}_{i, R}^{[+]}$) represents the part of positive
degrees in $\Lambda (T^{*}S)$. Similarly, we set
\begin{align}\begin{aligned}\label{e.105}
\widetilde{\mathscr{F}}:= 4\widetilde{C}_{1}^{2}=(D^{Y})^{2}+\widetilde{\mathscr{F}}^{[+]},
\end{aligned}\end{align}
where the $0-$degree component $(D^{Y})^{2}$ is a smooth family of Hodge Laplacians along the fibers $Y$ and
 $\widetilde{\mathscr{F}}^{[+]}$ is its positive degree part in
$\Lambda (T^{*}S)$.
\end{defn}

For $t>0$, let $\psi_{t}\in \text{End}(\Omega(S))$ such that if $\alpha\in \Omega^{k}(S)$, then $\psi\alpha=t^{-k/2}\alpha$.
Following \cite[Prop. 3.17]{BGo} and (\ref{e.122}), we have
\begin{prop}\label{p.3} For $t>0$ and $i=1, 2$, the following identities hold
\begin{align}\begin{aligned}\label{e.101}
&-D^{2}_{R, t}=C^{2}_{R, t}=\frac{t}{4}\psi_{t}^{-1}\mathscr{F}_{R}\psi_{t},\quad-D^{2}_{i, R, t}=C^{2}_{i, R, t}=\frac{t}{4}\psi_{t}^{-1}\mathscr{F}_{i,R}\psi_{t}.
\end{aligned}\end{align}
\end{prop}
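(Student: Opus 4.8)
The plan is to reduce both displayed identities to a single rescaling property of the Bismut--Lott superconnection and then to square it. First, by (\ref{e.122}) the identities involving $D_{R,t}$ and $D_{i,R,t}$ follow from the ones involving $C_{R,t}$ and $C_{i,R,t}$, while by Definition~\ref{d.9} we have $C_{R,1}^{2}=\tfrac14\mathscr{F}_R$ and $C_{i,R,1}^{2}=\tfrac14\mathscr{F}_{i,R}$. So it suffices to prove
\[
C_{R,t}=\sqrt t\,\psi_t^{-1}\,C_{R,1}\,\psi_t,\qquad C_{i,R,t}=\sqrt t\,\psi_t^{-1}\,C_{i,R,1}\,\psi_t\quad(i=1,2),
\]
for then, squaring and using $C^{2}=-D^{2}$, one gets $-D_{R,t}^{2}=C_{R,t}^{2}=t\,\psi_t^{-1}C_{R,1}^{2}\psi_t=\tfrac t4\psi_t^{-1}\mathscr{F}_R\psi_t$, and the same for $i=1,2$.

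To establish the displayed rescaling identity I would argue directly from (\ref{e.544}) and (\ref{e.549}) applied to the fibration $\pi\colon M_R\to S$ (resp.\ $M_{i,R}\to S$): this is legitimate because (\ref{e.544}) is \cite[Prop.~3.4]{BL} and holds for an arbitrary smooth fibration, and because the construction of the superconnection is fibrewise local, so it transports verbatim to the stretched fibrations once $T^{H}M$, $g^{TZ}$, $\nabla^{F}$, $h^{F}$ have been extended by the product structure as in Section~\ref{ss1.6}. Writing $d^{M_R}=d^{Z_R}+\nabla^{\Omega^{\bullet}(Z_R,F_R|_{Z_R})}+i_{T}$, the three summands are homogeneous of exterior degree $0$, $1$ and $2$ in $\Lambda(T^{*}S)$ respectively (the last because $T\in C^{\infty}(M_R,\pi^{*}\Lambda^{2}(T^{*}S)\otimes TZ_R)$), while conjugation by $t^{N/2}$ multiplies them by $\sqrt t$, $1$ and $1/\sqrt t$, since $d^{Z_R}$, $\nabla^{\Omega^{\bullet}}$ and $i_{T}$ change the vertical number operator $N$ by $+1$, $0$ and $-1$. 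Hence the exterior-degree-$j$ component of $C'_{R,t}$ is $t^{(1-j)/2}$ times that of $C'_{R,1}$; the same holds for $C''_{R,t}$, whose homogeneous pieces are the adjoints---formed with respect to $h^{\Omega^{\bullet}(Z_R,F_R|_{Z_R})}$, resp.\ subject to the absolute or relative boundary conditions, which the rescaling leaves unchanged---of those of $C'_{R,t}$, hence for $C_{R,t}$ and for $D_{R,t}$. The operator $\psi_t$ is by design the rescaling of $\Lambda(T^{*}S)$ whose conjugation reproduces, up to the overall factor $\sqrt t$, exactly this $t$-dependence on each homogeneous component; matching components degree by degree yields $C_{R,t}=\sqrt t\,\psi_t^{-1}C_{R,1}\psi_t$, and the argument on $M_{i,R}$ is identical. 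This gives (\ref{e.101}).

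In substance this is \cite[Prop.~3.17]{BGo} transplanted to the stretched fibrations $M_R$ and $M_{i,R}$, and it carries no analytic content. The only point requiring care is the bookkeeping: one must keep track of how the vertical rescaling $t^{\pm N/2}$ and the horizontal rescaling $\psi_t$ act on each degree-homogeneous summand of $C'$, $C''$ and $D$, and note (routinely) that neither operation disturbs the absolute/relative boundary conditions entering the definition of $C''_{i,R,t}$. One also uses, here and in what follows, that $N$ commutes with $\psi_t$, the two acting on different factors of $\Omega^{\bullet}(M_R,F_R)=\Omega^{\bullet}(S,\Omega^{\bullet}(Z_R,F_R|_{Z_R}))$.
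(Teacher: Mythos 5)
Your proof is correct and takes essentially the same route as the paper, which simply invokes \cite[Prop.~3.17]{BGo} at this point: the decomposition of $d^{M_R}$ into pieces of exterior degree $0$, $1$, $2$ in $\Lambda(T^{*}S)$, together with the observation that conjugation by $t^{N/2}$ scales these pieces by $\sqrt t$, $1$, $1/\sqrt t$, is exactly the content of that reference transplanted to $M_R$ and $M_{i,R}$, and the transplantation is indeed legitimate because the construction is fibrewise local.

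One small point worth making explicit when ``matching components degree by degree'': with an operator $L$ of exterior degree $j$ in $\Lambda(T^{*}S)$, the conjugation $\psi_t^{-1}L\psi_t$ equals $t^{-j/2}L$ only if $\psi_t$ acts on $\Omega^{k}(S)$ as multiplication by $t^{k/2}$, i.e.\ $\psi_t^{-1}$ acts as $t^{-k/2}$. This is the convention under which your identity $C_{R,t}=\sqrt t\,\psi_t^{-1}C_{R,1}\psi_t$ holds degree by degree and, upon squaring, yields exactly (\ref{e.101}); it is also what the later use of $\psi_t^{-1}$ in the paper (the bound $\lvert\psi_t^{-1}\alpha\rvert_{\mathscr{C}^0}\leq C(1+t^{-n/2})\lvert\alpha\rvert_{\mathscr{C}^0}$ in (\ref{e.758})) presupposes. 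The paper's sentence introducing $\psi_t$ in Section~\ref{ss3.2} appears to have the opposite sign in the exponent, which would flip the conjugation to $\psi_t C_{R,1}^{2}\psi_t^{-1}$; so your formula is the correct one, but you should state the action of $\psi_t$ explicitly rather than appeal to it being ``by design'' the right rescaling, since that is precisely where a sign could silently go wrong.
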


By using the product structures (\ref{e.7611}), (\ref{e.102})
 and (\ref{e.105}), we get the following lemma.

\begin{lemma}\label{l.52}
On the product neighborhood $X_{[-R, R]}$ (resp. $X_{[-R, 0]}$, $X_{[0, R]}$), we have
\begin{align}\begin{aligned}\label{e.654}
\mathscr{F}_{R}=\widetilde{\mathscr{F}}-\frac{\partial^{2}}{\partial x_{m}^{2}} \,\,\,(\text{resp.}\,
\mathscr{F}_{i,R}=\widetilde{\mathscr{F}}-\frac{\partial^{2}}{\partial x_{m}^{2}}).
\end{aligned}\end{align}
\end{lemma}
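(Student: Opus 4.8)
The plan is to restrict the Bismut--Lott superconnection $C_{R,1}$ of the fibration $M_{R}\to S$ to the cylindrical region and to recognise it there as a graded tensor product of the superconnection of $\pi_{\partial}:X\to S$ with the de Rham operator of the interval. On $X_{[-R,R]}$ the assumptions of product structure (\ref{e.7611}), together with the extensions (\ref{e.67}) and the trivialisations (\ref{e.7601}), (\ref{e.361}), show that, over this region, the data $(T^{H}M_{R},g^{TZ_{R}},\nabla^{F_{R}},h^{F_{R}})$ is the pull-back by $\psi_{R}$ of the corresponding data on $X$; thus $Z_{R}$ restricted there is the Riemannian product $Y\times[-R,R]$ with $g^{TZ_{R}}=g^{TY}+dx_{m}^{2}$, the flat pair $(F_{R},\nabla^{F_{R}})$ is independent of $x_{m}$, and every horizontal lift $U^{H}$ is tangent to the slices $X\times\{x_{m}\}$. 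From these facts I would read off three identities valid on $X_{[-R,R]}$: the torsion tensor $T_{R}$ of $T^{H}M_{R}$ takes values in $TY$ and coincides with the torsion $\widetilde{T}$ of $T^{H}X$; the connection $\nabla^{\Omega^{\bullet}(Z_{R},F_{R}|_{Z_{R}})}$ acts only on the $\Omega^{\bullet}(Y,F|_{Y})$-factor and equals $\nabla^{\Omega^{\bullet}(Y,F|_{Y})}$ there, so the same holds for its adjoint and hence $\nabla^{\Omega^{\bullet}(Z_{R},F_{R}|_{Z_{R}}),u}$ restricts to $\widetilde{\nabla}^{u}:=\nabla^{\Omega^{\bullet}(Y,F|_{Y}),u}$; and $D^{Z_{R}}$ restricted to $Y\times[-R,R]$ is the de Rham--Hodge operator of this Riemannian product with coefficients in the $x_{m}$-constant bundle $F|_{Y}$, so $D^{Z_{R}}=D^{Y}+c(\partial_{x_{m}})\frac{\partial}{\partial x_{m}}$ there, the two summands anticommuting. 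Substituting these into (\ref{e.556}) at $t=1$ yields, on $X_{[-R,R]}$,
\[
2C_{R,1}=\Big(D^{Y}+2\widetilde{\nabla}^{u}-c(\widetilde{T})\Big)+c(\partial_{x_{m}})\frac{\partial}{\partial x_{m}}=2\widetilde{C}_{1}+\widehat{D},\qquad \widehat{D}:=c(\partial_{x_{m}})\frac{\partial}{\partial x_{m}},
\]
where $2\widetilde{C}_{1}=D^{Y}+2\widetilde{\nabla}^{u}-c(\widetilde{T})$ is, again by (\ref{e.556}) applied to $\pi_{\partial}$, twice the Bismut--Lott superconnection of $X\to S$ at $t=1$, acting on the $\Omega^{\bullet}(Y,F|_{Y})$-factor only.

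It then remains to square this. Since $2\widetilde{C}_{1}$ and $\widehat{D}$ are both odd, $\mathscr{F}_{R}=4C_{R,1}^{2}=4\widetilde{C}_{1}^{2}+\widehat{D}^{2}+\{2\widetilde{C}_{1},\widehat{D}\}$. By (\ref{e.105}) the first term is $\widetilde{\mathscr{F}}$; from $c(\partial_{x_{m}})^{2}=-|\partial_{x_{m}}|^{2}=-1$ (see (\ref{e.553})) and the fact that $c(\partial_{x_{m}})$ commutes with $\frac{\partial}{\partial x_{m}}$, the second term is $-\frac{\partial^{2}}{\partial x_{m}^{2}}$; and the cross term $\{2\widetilde{C}_{1},\widehat{D}\}$ vanishes, because $\frac{\partial}{\partial x_{m}}$ commutes with every differential or multiplication operator occurring in $2\widetilde{C}_{1}$ (all of whose coefficients are $x_{m}$-independent, and which differentiates only along $Y$ and $S$), while the odd Clifford factor $c(\partial_{x_{m}})$ anticommutes with the odd operator $2\widetilde{C}_{1}$ in the graded tensor product $\Omega^{\bullet}(Z_{R},F_{R}|_{Z_{R}})=\Omega^{\bullet}(Y,F|_{Y})\otimes\Omega^{\bullet}([-R,R])$: Clifford orthogonality $TY\perp\partial_{x_{m}}$ handles the $D^{Y}$- and $c(\widetilde{T})$-parts, and the graded commutation rule absorbs the exterior degrees on $S$ carried by $\widetilde{\nabla}^{u}$. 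This gives $\mathscr{F}_{R}=\widetilde{\mathscr{F}}-\frac{\partial^{2}}{\partial x_{m}^{2}}$ on $X_{[-R,R]}$. For $\mathscr{F}_{i,R}$, $i=1,2$, the same computation applies verbatim on the corresponding cylinders $X_{[-R,0]}$, resp. $X_{[0,R]}$, where $Z_{i,R}$ again restricts to a Riemannian product carrying the same product data; the absolute, resp. relative, boundary conditions imposed at $x_{m}=0$ concern $Z_{i,R}$ globally and play no role in this local identity of operators on the open cylinder.

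I expect the only step that genuinely needs care to be the vanishing of the cross term $\{2\widetilde{C}_{1},\widehat{D}\}$: one must verify that moving the odd Clifford element $c(\partial_{x_{m}})$ past the vertical Clifford variables of $D^{Y}$ and $c(\widetilde{T})$ and past the $S$-form degrees carried by the superconnection part $\widetilde{\nabla}^{u}$ produces precisely the signs that cancel the two copies of $2\widetilde{C}_{1}\,\widehat{D}$. Everything else is the routine translation of the product structures (\ref{e.7611}), (\ref{e.67}) into the Bismut--Lott formula (\ref{e.556}).
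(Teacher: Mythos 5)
Your argument is correct, and since the paper states Lemma~\ref{l.52} without any proof (merely citing the product structures), there is no explicit ``paper's proof'' to compare against; what you have written is a faithful supply of the detail the author leaves implicit. The decomposition $2C_{R,1}=2\widetilde{C}_1+\widehat{D}$ on the cylinder, with $\widehat{D}=c(\partial_{x_m})\tfrac{\partial}{\partial x_m}$, is exactly the translation of the product assumptions into the Bismut--Lott formula (\ref{e.556}); the squaring then reduces to $\widehat{D}^2=-\tfrac{\partial^2}{\partial x_m^2}$ (from $c(\partial_{x_m})^2=-1$, cf.\ (\ref{e.553})) together with the vanishing of $\{2\widetilde{C}_1,\widehat{D}\}$. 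You flagged the latter as the delicate point, and rightly so, but it does hold: $D^Y$ and $c(\widetilde T)$ are odd elements acting on the $\Omega^\bullet(Y,F|_Y)$ factor alone, so the Koszul rule for the graded tensor product with $\Omega^\bullet([-R,R])$ gives an anticommutator; and for $\widetilde{\nabla}^u=\sum_i dy^i\otimes\widetilde{\nabla}^u_i$ one checks directly that $\{\widetilde{\nabla}^u,\widehat{D}\}=\sum_i dy^i\otimes[\widetilde{\nabla}^u_i,\widehat{D}]$, which vanishes because each $\widetilde{\nabla}^u_i$ is $x_m$-independent and acts on the $\Omega^\bullet(Y,F|_Y)$ factor while $\widehat{D}$ acts on $\Omega^\bullet([-R,R])$.

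A marginally shorter route, worth recording, bypasses (\ref{e.556}) entirely. Since $(d^{M_R})^2=0$ and $((d^{M_R})^*)^2=0$, the identities (\ref{e.549})--(\ref{e.550}) give $\mathscr{F}_R=4C_{R,1}^2=-4D_{R,1}^2=d^{M_R}(d^{M_R})^*+(d^{M_R})^*d^{M_R}$, the ``Hodge Laplacian'' of the flat superconnection $d^{M_R}$. Over the cylinder, the product structures make $d^{M_R}=d^{X}\hat\otimes 1+1\hat\otimes d^{[-R,R]}$ with anticommuting summands, and likewise for the adjoint; the cross terms in the anticommutator cancel for the same Koszul reason, leaving $\widetilde{\mathscr{F}}-\tfrac{\partial^2}{\partial x_m^2}$ directly. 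Either version is a legitimate proof; yours has the advantage of making the role of $D^Y$, the torsion tensor $\widetilde T$, and the unitary connection $\widetilde{\nabla}^u$ explicit, which is in the spirit of the paper's use of (\ref{e.556}).

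Your final remark is also correct: the identity in (\ref{e.654}) is one of formal differential operators on the cylinder, so the absolute and relative boundary conditions at $x_m=0$ for $\mathscr{F}_{1,R}$, $\mathscr{F}_{2,R}$ affect only the domain, not the local expression.
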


For $b\in S$, let $\mathscr{F}_{R,b}$ (resp. $\mathscr{F}_{1,R,b}$, $\mathscr{F}_{2,R,b}$ and $\widetilde{\mathscr{F}}_{b}$) be the restriction of $\mathscr{F}$
 (resp. $\mathscr{F}_{1,R}$, $\mathscr{F}_{2,R}$ and $\widetilde{\mathscr{F}}$) on $Z_{b}$ (resp. $Z_{1,b}$, $Z_{2,b}$ and $Y_{b}$). For $b\in S$,
 there exist a neighborhood $U\subset S$ of $b$ such that
\begin{align}\begin{aligned}\label{e.663}
\pi^{-1}(U)=U\times Z \quad&
(\text{resp. } \pi^{-1}(U)=U\times Z_{1},\\
&\pi^{-1}(U)=U\times Z_{2},\,\pi^{-1}(U)=U\times Y).
\end{aligned}\end{align}
For $b\in U$, $\mathscr{F}_{R,b}$ (resp. $\mathscr{F}_{1,R,b}$, $\mathscr{F}_{2,R,b}$ and $\widetilde{\mathscr{F}}_{b}$) is a smooth family of second order elliptic differential
operator on $Z_{R}$ (resp, $Z_{1,R}$, $Z_{2,R}$ and $Y$). Then we denote the heat kernel of $e^{-t\mathscr{F}_{R,b}}$ by $e^{-t\mathscr{F}_{R,b}}(x,x')$, which
is $C^{\infty}$ in $(t,x,x',b)\in (0,\infty)\times Z_{R}\times Z_{R}\times U$. And for $i=1,2$, we denote the heat kernel of $e^{-t\mathscr{F}_{i,R,b}}$
by $e^{-t\mathscr{F}_{i,R,b}}(x,x')$, which is $C^{\infty}$ in $(t,x,x',b)\in (0,\infty)\times Z_{i,R}\times Z_{i,R}\times U$ (cf. \cite[Lemma 1.14]{Zhu13}) with the absolute (resp. relative) boundary conditions for $i=1$
(resp. $i=2$). Similarly, let $e^{-t\widetilde{\mathscr{F}}_{b}}(y,y')$ denote the smooth heat kernel of $e^{-t\widetilde{\mathscr{F}}_{b}}$.

We have the off-diagonal estimates.
\begin{lemma}\label{l.47} There exists $c>0$, such that for any $l\in \mathbb{N}$ there exist $C_{l}>0$ such that for any $t>0$, $r_{0}>0$ and
 $x, x'\in Z_{R}$ with ${\rm d}(x, x')\geq r_{0}$
\begin{align}\begin{aligned}\label{5.44}
 \|e^{-t\mathscr{F}_{R}}(x, x') \|_{\mathscr{C}^{l}}\leq C_{l}e^{-c_{l}\frac{d^{2}(x,x')}{t}}.
\end{aligned}\end{align}
\end{lemma}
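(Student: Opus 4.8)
The plan is to reduce the off-diagonal estimate for $e^{-t\mathscr{F}_{R}}$ to the corresponding estimate for the scalar Hodge Laplacian $e^{-t(D^{Z_R})^2}$ already established in Lemma \ref{l.42}, using the fact that $\mathscr{F}_{R}$ is a finite-dimensional perturbation of $(D^{Z_R})^2$ by the nilpotent operator $\mathscr{F}_{R}^{[+]}$ of positive exterior degree in $\Lambda(T^*S)$. First I would recall from Definition \ref{d.9} that $\mathscr{F}_{R}=(D^{Z_R})^2+\mathscr{F}_{R}^{[+]}$, where $\mathscr{F}_{R}^{[+]}$ raises the $\Lambda(T^*S)$-degree by at least one; since $\dim S=n$, we have $(\mathscr{F}_{R}^{[+]})^{n+1}=0$. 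Hence the Duhamel (Volterra) expansion
\begin{align}\begin{aligned}\label{e.duhamel}
e^{-t\mathscr{F}_{R}}=\sum_{k=0}^{n}(-1)^{k}\int_{\Delta_{k}}e^{-t_{0}(D^{Z_R})^{2}}\mathscr{F}_{R}^{[+]}e^{-t_{1}(D^{Z_R})^{2}}\cdots \mathscr{F}_{R}^{[+]}e^{-t_{k}(D^{Z_R})^{2}}\,t\,dt_{1}\cdots dt_{k}
\end{aligned}\end{align}
terminates after finitely many terms, where $\Delta_{k}=\{(t_{1},\dots,t_{k}): t_{i}\geq 0,\ \sum t_{i}\leq 1\}$ and $t_{0}=t(1-\sum_{i\geq 1}t_{i})$, rescaled to the interval $[0,t]$. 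This is exactly the finite Volterra series that appears in \cite[Thm. 3.17]{BGo}; because the series is finite there are no convergence issues.

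Next I would estimate each term in \eqref{e.duhamel} kernel-wise. The operator $\mathscr{F}_{R}^{[+]}$ is, by \eqref{e.556} and the structure of the Bismut superconnection, a first-order fiberwise differential operator (built from $c(T)$, the curvature terms, and $\nabla^{\Omega^\bullet(Z,F|_Z)}$) with coefficients that — thanks to the product structure \eqref{e.7611} and Lemma \ref{l.52} — are uniformly bounded in all $\mathscr{C}^{l}$ norms independently of $R$ on the cylinder, and locally independent of $R$ elsewhere. Composing $k+1$ heat kernels $e^{-t_{i}(D^{Z_R})^{2}}(x_{i},x_{i+1})$ interspersed with the differential operators $\mathscr{F}_{R}^{[+]}$, and using the semigroup property together with Lemma \ref{l.42}, each intermediate kernel contributes a Gaussian factor $e^{-c\,d^{2}(x_{i},x_{i+1})/t_{i}}$ (the differentiations only produce polynomial-in-$1/t_i$ prefactors, absorbed by shrinking $c$), while the total propagation distance satisfies $\sum_i d(x_i,x_{i+1})\geq d(x,x')$. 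Carrying out the resulting convolution of Gaussians over the intermediate points $x_{1},\dots,x_{k}\in Z_{R}$ and over the simplex $\Delta_{k}$ — exactly as in the standard Cheeger–Gromov–Taylor type argument — yields a bound $C_{l}e^{-c\,d^{2}(x,x')/t}$ for each term, hence for the whole finite sum; one handles the small-$t$ singularity $t_i^{-\dim Z/2}$ coming from the on-diagonal heat kernel behaviour in the usual way (it is integrable against the Gaussian decay in the off-diagonal regime $d(x,x')\geq r_0$, after possibly shrinking $c$ again), and for larger $t$ the operator norm bounds from the spectral theorem suffice. The uniformity in $R$ follows exactly as in the proof of Lemma \ref{l.42}: all local geometric data entering the estimates are $R$-independent by construction.

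Alternatively — and this is probably the cleanest route to write up — I would bypass the explicit Volterra expansion entirely and instead run the finite-propagation-speed argument of Lemma \ref{l.42} directly for $\mathscr{F}_{R}$. One decomposes $e^{-t\mathscr{F}_{R}}=F_{2t/r^{2}}(\sqrt{2t}\,\mathscr{F}_{R}^{1/2})+G_{2t/r^{2}}(\sqrt{2t}\,\mathscr{F}_{R}^{1/2})$ as in \eqref{e.438}; but since $\mathscr{F}_{R}$ is not self-adjoint, one must instead work with the superconnection Laplacian via its relation \eqref{e.101}, $C_{R,t}^2=\frac{t}{4}\psi_t^{-1}\mathscr{F}_R\psi_t$, and use that $D_{R,1}$ is a genuine first-order fiberwise-elliptic operator whose square is $-C_{R,1}^2$ up to the nilpotent part — so the finite propagation speed of $\cos(v D_{R,1})$ still holds because $D_{R,1}$ has principal symbol governed by $g^{TZ_R}$. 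The function $G_{u}$ is estimated exactly as in \eqref{e.440}, now using polynomial bounds for $f(D_{R,1})$ of the form $\|(1+\mathscr{F}_R)^{m_1}G_u(\sqrt{2t}\,D_{R,1})(1+\mathscr{F}_R)^{m_2}\| \leq C e^{-cr^2/t}$ which follow because $\mathscr{F}_R^{[+]}$ is nilpotent and bounded; then Sobolev and elliptic estimates applied fiberwise (with $R$-uniform constants, as before) upgrade the $L^2$ bound to the $\mathscr{C}^{l}$ bound \eqref{5.44}, setting $r_0 = d(x,x')$.

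The main obstacle is the non-self-adjointness of $\mathscr{F}_{R}$: one cannot directly invoke the spectral theorem for $\mathscr{F}_R$ itself, so all the functional-calculus steps from Lemma \ref{l.42} must be routed through the self-adjoint operator $(D^{Z_R})^2$ (or through $D_{R,1}$) and the nilpotent correction controlled separately — this is precisely why the finite Duhamel expansion \eqref{e.duhamel} is the natural device, as it converts the problem into finitely many estimates involving only the self-adjoint fiberwise Hodge Laplacian, for which Lemma \ref{l.42} applies verbatim. The bookkeeping of the convolution of Gaussians and the simplex integration is routine but must be done with care to preserve $R$-uniformity.
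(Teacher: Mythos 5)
Your preferred route (the finite-propagation-speed argument run directly for $\mathscr{F}_{R}$) is exactly the paper's approach: the paper's entire proof of this lemma is the single sentence ``Since the principal symbol of $\mathscr{F}_{R}$ is equal to $|\xi|^{2}$, the proof is essentially the same as that of Lemma \ref{l.42}.'' Your elaboration of what this requires --- decomposing via $\widetilde{F}_{2t/r^2}$, $\widetilde{G}_{2t/r^2}$, using that finite propagation speed is governed by the principal symbol (hence independent of self-adjointness), and controlling the $\widetilde{G}$ term by routing the functional-calculus estimate through $(D^{Z_R})^2$ together with the nilpotent correction $\mathscr{F}_{R}^{[+]}$ --- is precisely the content the paper leaves implicit, and is carried out in a similar spirit in the earlier proof of Lemma \ref{l.6}. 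Your alternative Duhamel/Volterra route is a genuinely different and valid argument not taken by the paper: it eliminates all functional calculus on the non-self-adjoint $\mathscr{F}_{R}$ in favor of finitely many convolutions of the scalar heat kernel, to which Lemma \ref{l.42} applies verbatim, at the cost of the simplex/intermediate-point bookkeeping; either route is fine. One small imprecision in your second route: you write that $D_{R,1}^{2}$ equals $-C_{R,1}^{2}$ ``up to the nilpotent part,'' but in fact $D_{R,1}^{2}=-C_{R,1}^{2}=-\tfrac14\mathscr{F}_{R}$ holds exactly by (\ref{e.550}) and (\ref{e.102}); what is true only up to nilpotent terms is $-D_{R,1}^{2}=\tfrac14(D^{Z_R})^{2}+\mathscr{F}_R^{[+]}/4$, which is the identity actually used. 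This does not affect the argument.
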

\begin{proof}
Since the principle symbol of $\mathscr{F}_{R}$ is equal to $|\xi|^{2}$, the proof is essentially the same as that of Lemma \ref{l.42}.
\end{proof}

\begin{defn}\label{d.11}
Let $\mathbb{R}_{\leq 0}=(-\infty,0]$ and $\mathbb{R}_{\geq 0}=[0,+\infty)$. For $t>0$, let $e^{-t\frac{\partial^{2}}{\partial x_{m}^{2}}}(u, v)\in \text{End}\big(\Lambda (T^{*}\mathbb{R})\big)$ be the heat kernel for
$(u,v)\in\mathbb{R}^{2}$,
$e_{{\rm abs}}^{-t\frac{\partial^{2}}{\partial x_{m}^{2}}}(u, v)$ (resp. $e_{{\rm rel}}^{-t\frac{\partial^{2}}{\partial x_{m}^{2}}}(u, v)$) be the heat kernel on $\mathbb{R}_{\leq 0}^{2}$ (resp. $\mathbb{R}_{\geq 0}^{2}$) with the absolute (resp. relative) boundary conditions
at $0$.
\end{defn}

For $t>0$, let $e_{\rm Dir}^{-t\frac{\partial^{2}}{\partial x^{2}_{m}}}(x_{m},x'_{m})$ (resp. $e_{\rm Neu}^{-t\frac{\partial^{2}}{\partial x^{2}_{m}}}(x_{m},x'_{m})$) be the smooth heat kernel on
 $\mathbb{R}_{\geq 0}$ with Dirichlet (resp. Neumann) boundary condition. Then we have for $(x_{m},x'_{m})\in \mathbb{R}_{\geq 0}^{2}$
\begin{align}\begin{aligned}\label{e.668}
&e_{\rm Dir/Neu}^{-t\frac{\partial^{2}}{\partial x^{2}_{m}}}(x_{m},x'_{m})=\frac{1}{\sqrt{4\pi t}}\Big(e^{-\frac{|x_{m}-x'_{m}|^{2}}{4t}}\mp e^{-\frac{|x_{m}+x'_{m}|^{2}}{4t}}\Big).
\end{aligned}\end{align}
Let $1, dx_{m}$ be a basis of $\Lambda(T^{*}\mathbb{R})$ and $1^{*}, (dx_{m})^{*}$ be its dual basis, then we can write explicitly
\begin{align}\begin{aligned}\label{e.139}
e^{-t\frac{\partial^{2}}{\partial x_{m}^{2}}}(x_{m}, x'_{m})=&\frac{1}{\sqrt{4\pi t}}e^{-\frac{|x_{m}-x'_{m}|^{2}}{4t}}dx_{m}\otimes(dx'_{m})^{*}
+\frac{1}{\sqrt{4\pi t}}e^{-\frac{|x_{m}-x'_{m}|^{2}}{4t}}1\otimes 1^{*},\\
e_{{\rm abs/rel}}^{-t\frac{\partial^{2}}{\partial x_{m}^{2}}}(x_{m}, x'_{m})=&e_{\rm Dir/Neu}^{-t\frac{\partial^{2}}{\partial x^{2}_{m}}}(x_{m},x'_{m})dx_{m}\otimes(dx'_{m})^{*}
+e_{\rm Neu/Dir}^{-t\frac{\partial^{2}}{\partial x^{2}_{m}}}(x_{m},x'_{m})\cdot1\otimes 1^{*}.
\end{aligned}\end{align}

\begin{defn}\label{d.10} We define $Y_{c}:=Y_{\mathbb{R}}$ (resp. $Y_{c, 1}:=Y _{(-\infty,0]}$, $Y_{c, 2}:=Y_{[0,+\infty)}$). We extend
the geometrical data $F|_{Y}, g^{TY}, h^{F|_{Y}}, \nabla^{F|_{Y}}$ trivially from $Y$ to $Y_{c}$ (resp. $Y_{c,1}$, $Y_{c,2}$), which will be denoted by
$$
F_{c}, g^{TY_{c}}, h^{F_{c}}, \nabla^{F_{c}} \qquad ({\rm resp}.\quad F_{c, i}, g^{TY_{c, i}}, h^{F_{c, i}}, \nabla^{F_{c, i}}).
$$
\end{defn}

Under the identification $\pi^{-1}(U)=Y\times U$, for $b\in U$, we construct a smooth family of second order operator
 $\mathscr{F}_{c,b}$ (resp. $\mathscr{F}_{c,1,b}$, $\mathscr{F}_{c,2,b}$ ) on $Y_{\mathbb{R}}\times U$
(resp. $Y_{(-\infty,0]}\times U$, $Y_{[0,+\infty)}\times U$)  such that for
$(y,x_{m},b)\in Y_{\mathbb{R}}\times U$
(resp. $Y_{(-\infty,0]}\times U$, $Y_{[0,+\infty)}\times U$)
\begin{align}\begin{aligned}\label{e.104}
\mathscr{F}_{c,b}=\widetilde{\mathscr{F}}_{b}-\frac{\partial^{2}}{\partial x_{m}^{2}}
\quad (\text{resp.}\, \mathscr{F}_{c, i,b}=\widetilde{\mathscr{F}}_{b}-\frac{\partial^{2}}{\partial x_{m}^{2}}).
\end{aligned}\end{align}
For $b\in U$, $\mathscr{F}_{c,b}$ acts on the bundle $\Lambda(T^{*}_{b}S)\otimes\big(\Lambda(T^{*}Y_{c})\otimes F\big)$
 over $Y_{\mathbb{R}}$, and for $i=1,2,$
$\mathscr{F}_{c,i,b}$ acts on the bundle $\Lambda(T^{*}_{b}S)\otimes\big(\Lambda(T^{*}Y_{c,i})\otimes F\big)$
 over $Y_{c,i}$.

\begin{defn}\label{d.36}
For $\big((y,u), (y',v),b \big)\in Y_{\mathbb{R}}^{2}\times U$, we set
\begin{align}\begin{aligned}\label{e.664}
\mE_{c,b}\big(t, (y, u), (y', v)\big)&=e^{-t\widetilde{\mathscr{F}}_{b}}(y, y')\otimes e^{-t\frac{\partial^{2}}{\partial x_{m}^{2}}}(u, v)\\
&\in \Lambda(T^{*}_{b}S)\otimes\big(\Lambda(T^{*}Y_{c})\otimes F\big)_{(y,u)}\otimes\big(\Lambda(T^{*}Y_{c})\otimes F\big)^{*}_{(y',v)}.
\end{aligned}\end{align}
Similarly, for $\big((y,u), (y',v),b \big)\in Y_{(-\infty,0]}^{2}\times U$ (resp. $Y_{[0,+\infty)}^{2}\times U$), we set
\begin{align}\begin{aligned}\label{e.665}
\mE_{c, 1,b}\big(t, (y, u), (y', v)\big)&=e^{-t\widetilde{\mathscr{F}}_{b}}(y, y')\otimes e_{{\rm abs}}^{-t\frac{\partial^{2}}
{\partial x_{m}^{2}}}(u, v)\\
\big(\text{resp.}\,\,\, \mE_{c, 2,b}\big(t, (y, u), (y', v)\big)&=e^{-t\widetilde{\mathscr{F}}_{b}}(y, y')\otimes e_{{\rm rel}}^{-t\frac{\partial^{2}}{\partial x_{m}^{2}}}(u, v)\,\big).
\end{aligned}\end{align}
\end{defn}

By Definition \ref{d.36} and (\ref{e.104}), for $b\in U$, $i=1,2$ we have
\begin{align}\begin{aligned}\label{e.666}
&\big(\partial_{t}+\mathscr{F}_{c,b}\big)\mE_{c,b}\big(t, (y, u), (y', v)\big)=0,\\
&\big(\partial_{t}+\mathscr{F}_{c,i,b}\big)\mE_{c,i,b}\big(t, (y, u), (y', v)\big)=0.
\end{aligned}\end{align}
and
\begin{align}\begin{aligned}\label{e.667}
\mE_{c,1,b}\, (\text{resp.}\,\mE_{c,2,b}) \text{ verifies the absolute (resp. relative) boundary}\\ \text{ conditions at}\quad Y\times \{0\}.
\end{aligned}\end{align}

For $d>a>0$, let $\rho(a, d):\mathbb{R}\rightarrow [0, 1]$ be an even cut-off function such that
 \begin{align}\begin{aligned}\label{e.108}
\rho(a, d)(v)=\left\{ \begin{array}{ll}
     0, & \hbox{$0\leq |v|\leq a $;}\\
     1, & \hbox{$ |v|\geq d.$}
    \end{array}
 \right.\end{aligned}\end{align}
Then we set
 \begin{align}\begin{aligned}\label{e.109}
\begin{array}{lllll}
 \phi_{1, R}(v)=&1-\rho(\frac{5}{7}, \frac{6}{7})(\frac{v}{R}), & & \psi_{1, R}(v)=&1-\rho(\frac{3}{7}, \frac{4}{7})(\frac{v}{R}), \\
 \phi_{2, R}(v)=&\rho(\frac{1}{7}, \frac{2}{7})(\frac{v}{R}), & & \psi_{2, R}(v)=&\rho(\frac{3}{7}, \frac{4}{7})(\frac{v}{R}).
\end{array}
\end{aligned}\end{align}
For $x\in Z_{R}$, we set
 \begin{align}\begin{aligned}\label{e.652}
\phi_{1, R}(x)=\left\{
                 \begin{array}{ll}
                   \phi_{1,R}(x_{m})& \hbox{ for $x=(y,x_{m})\in Y_{[-R,R]}$;} \\
                   0 & \hbox{ for  $x\notin Y_{[-R,R]}$.}
                 \end{array}
               \right.
\\
\psi_{1, R}(x)=\left\{
                 \begin{array}{ll}
                   \psi_{1,R}(x_{m})& \hbox{ for $x=(y,x_{m})\in Y_{[-R,R]}$;} \\
                   0 & \hbox{ for  $x\notin Y_{[-R,R]}$.}
                 \end{array}
               \right.
\end{aligned}\end{align}
and
 \begin{align}\begin{aligned}\label{e.653}
\phi_{2, R}(x)=\left\{
                 \begin{array}{ll}
                   \phi_{2,R}(x_{m})& \hbox{ for $x=(y,x_{m})\in Y_{[-R,R]}$;} \\
                   1 & \hbox{ for  $x\notin Y_{[-R,R]}$.}
                 \end{array}
               \right.
\\
\psi_{2, R}(x)=\left\{
                 \begin{array}{ll}
                   \psi_{2,R}(x_{m})& \hbox{ for $x=(y,x_{m})\in Y_{[-R,R]}$;} \\
                   1 & \hbox{ for  $x\notin Y_{[-R,R]}$.}
                 \end{array}
               \right.
\end{aligned}\end{align}
For $i=1, 2$, we set
 \begin{align}\begin{aligned}\label{e.120}
 \begin{array}{lllll}
 \phi_{1, R}^{(i)}=& \phi_{1, R}|_{Z_{i, R}},& & \psi_{1, R}^{(i)}=&\psi_{1, R}|_{Z_{i, R}}, \\
 \phi_{2, R}^{(i)}=& \phi_{2, R}|_{Z_{i, R}},& & \psi_{2, R}^{(i)}=&\psi_{2, R}|_{Z_{i, R}}.
\end{array}
\end{aligned}\end{align}

Now we define three parametrixes for $(x,x',b)\in (Z_{R})^{2}\times U$ (resp. $(Z_{i,R})^{2}\times U,\,i=1,2$)
 \begin{align}\begin{aligned}\label{e.110}
 &\mQ_{R,b}(t, x, x'):=\phi_{1, R}(x)\mE_{c,b}(t, x, x')\psi_{1, R}(x')+\phi_{2, R}(x)e^{-t\mathscr{F}_{R,b}}(x, x')\psi_{2, R}(x'), \\
 &\mQ_{i, R,b}(t, x, x'):=\phi_{1, R}^{(i)}(x)\mE_{c, i,b}(t, x, x')\psi^{(i)}_{1, R}(x')+\phi^{(i)}_{2, R}(x)e^{-t\mathscr{F}_{R,b}}(x, x')\psi^{(i)}_{2, R}(x').
\end{aligned}\end{align}\index{$\mQ_{R,b},\,\mQ_{i, R,b}$}
with the corresponding error terms:
\begin{align}\begin{aligned}\label{e.111}
 &\mC_{R,b}(t, x, x'):=\left(\partial_{t}+\mathscr{F}_{R,b,x}\right)\mQ_{R,b}(t, x, x'), \\
 &\mC_{i,R,b}(t, x, x'):=\left(\partial_{t}+\mathscr{F}_{i,R,b,x}\right)\mQ_{i,R,b}(t, x, x').
\end{aligned}\end{align}\index{$\mC_{R,b},\,\mC_{i,R,b}$}
Set
\begin{align}\begin{aligned}\label{e.112}
&(e^{-t\mathscr{F}_{R,b}}*\mC_{R,b})(t, x, x')=\int_{0}^{t}\int_{Z_{R}}e^{-(t-s)\mathscr{F}_{R,b}}(x, z)\mC_{R,b}(s, z, x')dzds, \\
&(e^{-t\mathscr{F}_{i,R,b}}*\mC_{i,R,b})(t, x, x')=\int_{0}^{t}\int_{Z_{i, R}}e^{-(t-s)\mathscr{F}_{i,R,b}}(x, z)\mC_{i,R,b}(s, z, x')dzds.
\end{aligned}\end{align}
By Duhamel's principle, we get the following lemma.
\begin{lemma}\label{l.51} For $R>0,t>0$ and $i=1,2$, we have
\begin{align}\begin{aligned}\label{e.113}
 &e^{-t\mathscr{F}_{R,b}}(x, x')=\mQ_{R,b}(t, x, x')-(e^{-t\mathscr{F}_{R,b}}*\mC_{R,b})(t, x, x'), \\
 &e^{-t\mathscr{F}_{i,R,b}}(x, x')=\mQ_{i,R,b}(t, x, x')-(e^{-t\mathscr{F}_{i,R,b}}*\mC_{i,R,b})(t, x, x').
\end{aligned}\end{align}
\end{lemma}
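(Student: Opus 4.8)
The plan is to run the standard parametrix/Duhamel argument: the kernels $\mQ_{R,b}$ and $\mQ_{i,R,b}$ of (\ref{e.110}) are approximate heat kernels, and (\ref{e.113}) asserts precisely that the genuine heat kernels are recovered from them by the Volterra correction. I give the argument for the first identity; the second is identical, one only has to carry along the absolute (for $i=1$) resp. relative (for $i=2$) boundary conditions at $\partial Z_{i,R}=Y\times\{0\}$. Fix $b\in U$, abbreviate $\mathscr{F}=\mathscr{F}_{R,b}$, $\mQ=\mQ_{R,b}$, $\mC=\mC_{R,b}$, and set $\mathcal{R}(t,x,x'):=\mQ(t,x,x')-(e^{-t\mathscr{F}}*\mC)(t,x,x')$. \textbf{Step 1 ($\mathcal{R}$ is a heat kernel).} By (\ref{e.111}) we have $(\partial_{t}+\mathscr{F}_{x})\mQ=\mC$. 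Differentiating the convolution in (\ref{e.112}) under the integral sign, using $(\partial_{t}+\mathscr{F}_{x})e^{-t\mathscr{F}}=0$ together with the contribution of the endpoint $s=t$, yields the Duhamel identity
\[
(\partial_{t}+\mathscr{F}_{x})\bigl(e^{-t\mathscr{F}}*\mC\bigr)(t,x,x')=\mC(t,x,x'),
\]
so $(\partial_{t}+\mathscr{F}_{x})\mathcal{R}=0$. In the $Z_{i,R}$ case one notes in addition that both summands of $\mQ_{i,R,b}$ satisfy the prescribed boundary conditions (the first by (\ref{e.667}), the second by definition of $e^{-t\mathscr{F}_{i,R,b}}$) and that $e^{-(t-s)\mathscr{F}_{i,R,b}}$ maps into the domain of the relevant self-adjoint realization, so $\mathcal{R}$ satisfies them as well.

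\textbf{Step 2 (initial condition).} An inspection of the ranges of the cut-offs in (\ref{e.109}), (\ref{e.652}), (\ref{e.653}) and (\ref{e.120}) gives
\[
\phi_{1,R}(x)\psi_{1,R}(x)+\phi_{2,R}(x)\psi_{2,R}(x)=1\qquad\text{for every }x\in Z_{R}
\]
(and likewise for the restrictions to $Z_{i,R}$). Since $\mE_{c,b}$ and $e^{-t\mathscr{F}}$ both converge, as $t\to0^{+}$, to the Schwartz kernel of the identity, this forces $\mQ(t,\cdot,\cdot)\to\Id$. On the other hand $\mC=(\partial_{t}+\mathscr{F}_{x})\mQ$ is supported where some $d\phi_{j,R}$ is nonzero, and there the partner factor $\psi_{j,R}$ in the other variable vanishes, so $d(x,x')\geq R/7$ on $\supp\mC$; hence $\mC$ is smooth and bounded near $t=0$ (indeed, by Lemma \ref{l.42} and Lemma \ref{l.47}, exponentially small in $R^{2}/t$ once $R$ is not too small), and therefore $(e^{-t\mathscr{F}}*\mC)(t,\cdot,\cdot)\to0$ as $t\to0^{+}$. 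Thus $\mathcal{R}(t,\cdot,\cdot)\to\Id$ as $t\to0^{+}$.

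\textbf{Step 3 (uniqueness and the main obstacle).} The kernel $e^{-t\mathscr{F}_{R,b}}$ is the unique smooth kernel solving the heat equation and converging to $\Id$ as $t\to0^{+}$ (with the prescribed boundary conditions in the $Z_{i,R}$ case), and Steps 1--2 show that $\mathcal{R}$ has all these properties; hence $\mathcal{R}=e^{-t\mathscr{F}_{R,b}}$, which is the first line of (\ref{e.113}), and the same reasoning on $Z_{i,R}$ gives the second. No step is genuinely hard: the points that need care are the partition-of-unity identity of Step 2 (which pins down the initial condition) and the observation that $\mC$ is supported away from the diagonal (so that the Volterra correction is negligible as $t\to0^{+}$), and, in the boundary case, the check that the two summands of the parametrix and the semigroup appearing in the convolution share one common set of boundary conditions, so that $\mathcal{R}$ stays in the correct domain throughout.
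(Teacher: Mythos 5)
Your argument is correct and is precisely the standard Duhamel-plus-uniqueness reasoning that the paper compresses into its one-line proof citing the uniqueness of the heat kernel (cf. \cite[Thm. 2.48]{BGV92}). In particular, the partition-of-unity identity $\phi_{1,R}\psi_{1,R}+\phi_{2,R}\psi_{2,R}=1$ that you isolate in Step 2 is exactly what makes $\mQ_{R,b}(t,\cdot,\cdot)\to\Id$ as $t\to0^{+}$, so your route coincides with the one intended by the paper, merely spelled out in full.
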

\begin{proof}
These equations follow from the uniqueness of heat kernel (cf. \cite[Thm. 2.48]{BGV92}).
\end{proof}

Using the heat equation and (\ref{e.654}), we can rewrite the error terms as:
\begin{align}\begin{aligned}\label{e.114}
&\mC_{R,b}(t, x, x')=\\
&-\frac{\partial^{2} \phi_{1, R}}{\partial x^{2}_{m}}(x)\mE_{c,b}(t, x, x')\psi_{1, R}(x')-2\frac{\partial\phi_{1,R} }{\partial x_{m}}(x)\frac{\partial \mE_{c,b}}{\partial x_{m}}(t, x, x')\psi_{1, R}(x')
\\
&-\frac{\partial^{2} \phi_{2, R}}{\partial x^{2}_{m}}(x)e^{-t\mathscr{F}_{R,b}}(x, x')\psi_{2, R}(x')-2\frac{\partial \phi_{2, R}}{\partial x_{m}}(x)\frac{\partial e^{-t\mathscr{F}_{R,b}}}{\partial x_{m}}( x,x')\psi_{2, R}(x'), \\
&\mC_{i,R,b}(t, x,x')=\\
&-\frac{\partial^{2} \phi^{(i)}_{1, R}}{\partial x^{2}_{m}}(x)\mE_{c, i,b}(t, x,x')\psi^{(i)}_{1, R}(x')-2\frac{\partial\phi^{(i)}_{1, R} }{\partial x_{m}}(x)\frac{\partial \mE_{c,i,b}}{\partial x_{m}}(t, x,x')\psi^{(i)}_{1, R}(x')
\\
&-\frac{\partial^{2} \phi^{(i)}_{2, R}}{\partial x^{2}_{m}}(x)e^{-t\mathscr{F}_{i,R,b}}(x,x')\psi^{(i)}_{2, R}(x')-2\frac{\partial \phi^{(i)}_{2, R}}{\partial x_{m}}(x)\frac{\partial e^{-t\mathscr{F}_{i,R,b}}}{\partial x_{m}}(x,x')\psi^{(i)}_{2, R}(x').
\end{aligned}\end{align}
From (\ref{e.652}), (\ref{e.653}) and (\ref{e.114}), we get
\begin{lemma}\label{l.5}
For $x'\in Z_{R}$ {\rm (resp. $Z_{1,R}, Z_{2,R}$)} fixed and $t>0$, we have
\begin{align}\label{e.115}
\supp_{x}\left\{\mC_{R,b}(t, x, x') \right\}\subset Y_{[-\frac{6R}{7}, -\frac{R}{7}]}\cup Y_{[\frac{R}{7}, \frac{6R}{7}]},
\end{align}
\begin{align}\begin{aligned}\label{e.116}
\supp_{x}\left\{\mC_{1,R,b}(t, x, x') \right\}\subset Y_{[-\frac{6R}{7}, -\frac{R}{7}]},\,\,\supp_{x}\left\{\mC_{2,R,b}(t, x, x') \right\}\subset Y_{[\frac{R}{7}, \frac{6R}{7}]}.
\end{aligned}\end{align}
If ${\rm d}(x, x')<\frac{R}{7}$, then
\begin{align}\label{e.117}
\mC_{R,b}(t, x, x'), \quad \mC_{1,R,b}(t, x, x')\quad {\rm and }\quad \mC_{2,R,b}(t, x, x') \quad \text{are all vanished.}
\end{align}
\end{lemma}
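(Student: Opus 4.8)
The plan is to read everything off the explicit definitions of the cut-off functions $\phi_{j,R},\psi_{j,R}$ in (\ref{e.109})--(\ref{e.120}), the formulas (\ref{e.114}) for the error terms (which already encode the heat equations for $e^{-t\mathscr{F}_{R,b}}$ and $\mE_{c,b}$ together with Lemma \ref{l.52}), and one elementary estimate for the Riemannian distance on the cylindrical part of the fibres. Beyond this, the proof is essentially bookkeeping.

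First I would record the supports of the derivatives of the cut-offs. From the definition of $\rho(a,d)$ in (\ref{e.108}) and from (\ref{e.109}), the function $\phi_{1,R}$ is locally constant off $Y_{[-6R/7,-5R/7]}\cup Y_{[5R/7,6R/7]}$, the function $\phi_{2,R}$ is locally constant off $Y_{[-2R/7,-R/7]}\cup Y_{[R/7,2R/7]}$, the function $\psi_{1,R}$ vanishes outside $Y_{(-4R/7,4R/7)}$, and $\psi_{2,R}$ vanishes on $Y_{(-3R/7,3R/7)}$; the extensions to $Z_R$ in (\ref{e.652})--(\ref{e.653}) match these values at the two ends $Y\times\{\pm R\}$, so $\partial_{x_m}^k\phi_{j,R}$ is supported exactly in the corresponding annuli for $j=1,2$ and $k=1,2$. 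Since by (\ref{e.114}) every summand of $\mC_{R,b}(t,x,x')$ carries a factor $\partial_{x_m}\phi_{j,R}(x)$ or $\partial_{x_m}^2\phi_{j,R}(x)$, the support statement (\ref{e.115}) follows at once; restricting to $Z_{1,R}$, whose cylindrical part is $Y_{[-R,0]}$ (resp. to $Z_{2,R}$, with cylinder $Y_{[0,R]}$) keeps only the ``left'' (resp. ``right'') annuli, which gives (\ref{e.116}).

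For the vanishing (\ref{e.117}) I would go through the four summands of (\ref{e.114}) individually. Each one pairs a derivative of $\phi_{j,R}$ evaluated at $x$ with $\psi_{j,R}$ evaluated at $x'$. In the two summands involving $\phi_{1,R}$, a non-zero value forces $|x_m(x)|\in[5R/7,6R/7]$ and, because $\psi_{1,R}(x')\ne 0$, forces $x'$ to lie on the cylinder with $|x_m(x')|<4R/7$; hence $|x_m(x)-x_m(x')|>R/7$. In the two summands involving $\phi_{2,R}$, a non-zero value forces $|x_m(x)|\in[R/7,2R/7]$ and, because $\psi_{2,R}(x')\ne 0$, forces either $x'$ on the cylinder with $|x_m(x')|>3R/7$ (again $|x_m(x)-x_m(x')|>R/7$) or $x'$ in one of the interior cores $Z_1,Z_2$. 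On the Riemannian product $Y\times[-R,R]$ with metric $g^{TY}+dx_m^2$ the distance between two points is at least the difference of their $x_m$-coordinates, and a path leaving the cylinder through an end $Y\times\{\pm R\}$ is only longer; in particular any path reaching a core from $x$ already costs at least $R-2R/7=5R/7$. In every case $d(x,x')>R/7$, which is (\ref{e.117}) for $\mC_{R,b}$; the identical argument on $Z_{1,R}$ and $Z_{2,R}$, with cylinders $Y_{[-R,0]}$ and $Y_{[0,R]}$, gives it for $\mC_{1,R,b}$ and $\mC_{2,R,b}$.

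The one point that needs a little care — the ``hard part'', such as it is — is the distance lower bound when $x'$ lies in an interior core rather than on the cylinder: there one must observe that a minimizing path from $x$ to $x'$ is forced to run along essentially the whole stretch of cylinder between $x$ and the gluing hypersurface $Y\times\{\pm R\}$, which has length exceeding $R/7$ because $|x_m(x)|\le 2R/7$. Everything else is a direct consequence of the prescribed supports of the cut-offs.
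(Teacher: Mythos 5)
Your proof is correct and follows exactly what the paper intends: Lemma \ref{l.5} is asserted in the paper as an immediate consequence of (\ref{e.652}), (\ref{e.653}) and (\ref{e.114}), and you simply spell out the bookkeeping — derivatives of $\phi_{1,R}$ and $\phi_{2,R}$ are supported in $Y_{[-\frac{6R}{7},-\frac{5R}{7}]}\cup Y_{[\frac{5R}{7},\frac{6R}{7}]}$ and $Y_{[-\frac{2R}{7},-\frac{R}{7}]}\cup Y_{[\frac{R}{7},\frac{2R}{7}]}$ respectively, which gives (\ref{e.115})–(\ref{e.116}), and the pairing with the supports of $\psi_{1,R}$, $\psi_{2,R}$ plus the $1$-Lipschitz property of $x_m$ on the product cylinder gives the distance bound (\ref{e.117}). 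The only detail you add beyond what is tacitly assumed is the observation that when $x'$ sits in a core rather than on $Y_{[-R,R]}$, a minimizing path from $x\in Y_{[\frac{R}{7},\frac{2R}{7}]}$ must traverse at least $R-\frac{2R}{7}=\frac{5R}{7}$ of cylinder before leaving it, which is indeed the one place where something beyond the cut-off supports is needed.
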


\begin{lemma}\label{l.6}
There exists $c>0$, such that for $i=1, 2$ and any $k\in \mathbb{N}$
there exists $C_{k}>0$ such that for all $t>0$, $R\geq 1$, $b\in U$, $(x, x')\in \supp \mC_{R,b}(t, \cdot, \cdot)$
 {\rm(resp.} $(x, x')\in \supp \mC_{i,R,b}(t, \cdot, \cdot)${\rm )},
\begin{align}\begin{aligned}\label{e.118}
 &\left|  e^{-t\mathscr{F}_{R,b}}(x, x')\right|_{\mathscr{C}^{k}}\leq C_{k}e^{-c\frac{R^{2}}{t}}, &\left|  e^{-t\mathscr{F}_{i,R,b}}(x, x')\right|_{\mathscr{C}^{k}}\leq C_{k}e^{-c\frac{R^{2}}{t}}, \\
 & \left|  \mC_{R,b}(t, x, x')\right|_{\mathscr{C}^{k}}\leq C_{k}e^{-c\frac{R^{2}}{t}}, &\quad \left|  \mC_{i,R,b}(t, x, x')\right|_{\mathscr{C}^{k}}\leq C_{k}e^{-c\frac{R^{2}}{t}}
 .
\end{aligned}\end{align}
Here $|\cdot|_{\mathscr{C}^{k}}$ denotes the $\mathscr{C}^{k}-$norms.
\end{lemma}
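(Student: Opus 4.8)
The plan is to assemble the estimate from three ingredients already at hand: the Gaussian off-diagonal bound of Lemma \ref{l.47} (together with its boundary analogue), the support information of Lemma \ref{l.5}, and the explicit expression (\ref{e.114}) for the error kernels. First I would dispose of the estimates on $e^{-t\mathscr{F}_{R,b}}$ and $e^{-t\mathscr{F}_{i,R,b}}$. By (\ref{e.117}) one has $d(x,x')\geq R/7$ for every $(x,x')\in\supp\mC_{R,b}(t,\cdot,\cdot)$, so Lemma \ref{l.47}, applied with $r_{0}=R/7$ and $l=k$, gives at once
\[
\big|e^{-t\mathscr{F}_{R,b}}(x,x')\big|_{\mathscr{C}^{k}}\leq C_{k}\,e^{-cR^{2}/t}
\]
on that set (with $c>0$ independent of $k$), and likewise on $\supp\mC_{i,R,b}$ once we note that the same off-diagonal estimate holds for $\mathscr{F}_{i,R,b}$ with absolute (resp. relative) boundary conditions; this is proved exactly as Lemmas \ref{l.42} and \ref{l.47}, via finite propagation speed for $D^{Z_{i,R}}$ with those boundary conditions (alternatively one may localize using Lemma \ref{l.43}). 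This settles the first line of (\ref{e.118}).

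Next I would estimate $\mC_{R,b}$ itself by expanding (\ref{e.114}) into a finite sum of terms of the two shapes
\[
(\partial_{x_{m}}\phi_{a,R})(x)\,(\partial_{x_{m}}K)(t,x,x')\,\psi_{a,R}(x')\qquad\text{and}\qquad(\partial^{2}_{x_{m}}\phi_{a,R})(x)\,K(t,x,x')\,\psi_{a,R}(x'),
\]
with $a\in\{1,2\}$ and with $K=\mE_{c,b}$ when $a=1$, $K=e^{-t\mathscr{F}_{R,b}}$ when $a=2$. The cut-off derivatives are $O(R^{-1})=O(1)$ (as are all their further derivatives), and by (\ref{e.109}) $\partial^{j}_{x_{m}}\phi_{1,R}$ is supported in $5R/7\leq|x_{m}|\leq 6R/7$ and $\partial^{j}_{x_{m}}\phi_{2,R}$ in $R/7\leq|x_{m}|\leq 2R/7$, while $\psi_{1,R}(x')$ is supported in $|x'_{m}|\leq 4R/7$ and $\psi_{2,R}(x')$ in $\{|x'_{m}|\geq 3R/7\}\cup(Z_{R}\setminus Y_{[-R,R]})$. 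Hence on an $a=2$ term one has $d(x,x')\geq R/7$, so the bound just proved applies, now with $l=k+1$ in Lemma \ref{l.47} to absorb the extra $\partial_{x_{m}}$. On an $a=1$ term one has $|x_{m}-x'_{m}|\geq R/7$; since $\mE_{c,b}=e^{-t\widetilde{\mathscr{F}}_{b}}(y,y')\otimes e^{-t\partial^{2}_{x_{m}}}(u,v)$, the first factor together with its derivatives up to order $k+1$ is bounded by $C_{k}\max(1,t^{-(\dim Y+k+1)/2})$ uniformly in $R$ and $b$ (compactness of the fibre $Y$, smoothness of the fibration of $X$ over the compact base $S$; by (\ref{e.356}) the $t\geq 1$ bound may even be taken exponentially small), while (\ref{e.139}), (\ref{e.668}) give $|\partial^{j}_{x_{m}}e^{-t\partial^{2}_{x_{m}}}(u,v)|\leq C_{k}\,t^{-(k+2)/2}e^{-|u-v|^{2}/8t}$ for $j\leq k+1$. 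Multiplying, using $|u-v|\geq R/7$, and absorbing the polynomial-in-$1/t$ prefactor by the elementary inequality $s^{-N}e^{-a/s}\leq C_{N,a}\,e^{-a/(2s)}$ (and, for $t\geq 1$, trivially), we obtain $|\mC_{R,b}(t,x,x')|_{\mathscr{C}^{k}}\leq C_{k}e^{-cR^{2}/t}$; the $b$-derivatives implicit in the $\mathscr{C}^{k}$-norm are controlled in the same way, from the smooth dependence of all data on $b\in S$.

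Finally, $\mC_{i,R,b}$ is handled verbatim: by (\ref{e.116}) its support lies in one half-cylinder, the cut-offs are the restrictions (\ref{e.120}) (whose supports obey the same separation bounds), and $\mE_{c,i,b}$ differs from $\mE_{c,b}$ only in replacing the free one-dimensional heat kernel by the half-line one with Dirichlet/Neumann conditions, which by (\ref{e.668}) still satisfies $|e_{\rm abs/rel}^{-t\partial^{2}_{x_{m}}}(u,v)|\leq C\,t^{-1/2}e^{-|u-v|^{2}/4t}$, so the same computation applies. I expect the only genuinely delicate point to be this bookkeeping: on each of the finitely many terms of (\ref{e.114}) one must check that the support of the cut-off derivative in $x$ is separated from the support of the companion $\psi$-factor in $x'$ by at least $R/7$ — in Riemannian distance on $Z_{R}$ for the $e^{-t\mathscr{F}_{R,b}}$-terms, and in the $x_{m}$-coordinate for the $\mE_{c,b}$-terms — and, secondarily, that the constants in Lemma \ref{l.47} and in the interior heat-kernel bounds are uniform in $R$ and $b$; the latter holds because, thanks to the product structure (\ref{e.7611}) and the extension (\ref{e.67}), the local geometry on the cylinder is independent of $R$, exactly as was already exploited in Lemmas \ref{l.42} and \ref{l.47}.
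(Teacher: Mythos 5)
Your argument is correct and follows essentially the same route as the paper: the first line of (\ref{e.118}) comes from the finite--propagation--speed off-diagonal estimate (which the paper re-derives via the even functions $\widetilde F_{2t/R^2}$, $\widetilde G_{2t/R^2}$ in (\ref{e.453})--(\ref{e.461}) rather than citing Lemma~\ref{l.47} directly, but the mechanism is identical), combined with the support separation $d(x,x')\ge R/7$ of (\ref{e.117}); and the second line reduces, via the explicit formula (\ref{e.114}), to the $\ge R/7$ separation in the $x_m$-coordinate (Lemma~\ref{l.5}) and Gaussian decay of $\mE_{c,b}$ and its $x_m$-derivative, exactly as in (\ref{e.463})--(\ref{e.470}). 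The only place I would tighten the write-up is the parenthetical ``for $t\ge 1$, trivially'': because of the $|x_m-x'_m|/t\le 2R/t$ prefactor from (\ref{e.463}), the absorption into $e^{-cR^2/2t}$ is not literally trivial, though it follows at once from $\sqrt{u}\,e^{-cu/2}\le C$ with $u=R^2/t$, which is what the paper's passage from (\ref{e.466}) to (\ref{e.470}) uses.
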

\begin{proof}

Let $f:\mathbb{R}\rightarrow [0,1]$ be a smooth even cut-off function such that
\begin{align}\begin{aligned}\label{e.450}
f(v):=\left\{
\begin{array}{ll}
 1 , & \text{ for }|v|\leq \frac{1}{14}, \\
 0 , & \text{ for }|v|\geq \frac{1}{7}.
\end{array}
\right.\end{aligned}\end{align}
For $a\in \mathbb{C}$, $u>0$, the functions $F_{u}(a),\,G_{u}(a)$ are introduced in (\ref{e.437}), and we have
\begin{align}\begin{aligned}\label{e.453}
e^{ -ta^{2}}=F_{2t/R^{2}}(\sqrt{2t}a)+G_{2t/R^{2}}(\sqrt{2t}a).
\end{aligned}\end{align}

The functions $F_{2t/R^{2}}(a), G_{2t/R^{2}}(a)$ are even holomorphic functions, therefore there exist holomorphic functions
 $\widetilde{F}_{2t/R^{2}}(a), \widetilde{G}_{2t/R^{2}}(a)$ such that
\begin{align}\begin{aligned}\label{e.454}
F_{2t/R^{2}}(a)=\widetilde{F}_{2t/R^{2}}(a^{2}), \quad G_{2t/R^{2}}(a)=\widetilde{G}_{2t/R^{2}}(a^{2}).
\end{aligned}\end{align}
From (\ref{e.453}) and (\ref{e.454}), we get for $t>0$
\begin{align}\begin{aligned}\label{e.455}
e^{-ta}=\widetilde{F}_{2t/R^{2}}(2ta)+\widetilde{G}_{2t/R^{2}}(2ta).
\end{aligned}\end{align}
The operator $\mathscr{F}_{R,b}=(D^{Z_{R,b}})^{2}+\mathscr{F}_{R,b}^{[+]}$ is a fiberwise second order elliptic operator whose principal symbol is $|\xi|^{2}$, hence we have
\begin{align}\begin{aligned}\label{e.456}
e^{-t\mathscr{F}_{R,b}}=\widetilde{F}_{2t/R^{2}}(2t\mathscr{F}_{R,b})+\widetilde{G}_{2t/R^{2}}(2t\mathscr{F}_{R,b}).
\end{aligned}\end{align}
Using the finite propagation speed of the wave operator (cf. \cite[Appendix D.2]{MaMa07}), for $x,x'\in Z_{R},\, {\rm d}(x, x')\geq \frac{R}{7}$ we have
\begin{align}\begin{aligned}\label{e.457}
\widetilde{F}_{2t/R^{2}}(2t\mathscr{F}_{R,b})(x, x')=0.
\end{aligned}\end{align}

Using the integration by parts (see (\ref{e.440})), there exists $c>0$, such that for any $m\in \mathbb{N}$ there
exists $C_{m}>0$ such that for any $R\geq 1$ and $t>0$, we have (cf. \cite[(1.6.16)]{MaMa07})
\begin{align}\begin{aligned}\label{e.458}
\sup_{a\in \mathbb{R}}\left|a^{m}\right|\cdot\big|\widetilde{G}_{2t/R^{2}}(2ta)\big|\leq C_{m}e^{-c\frac{R^{2}}{t}}.
\end{aligned}\end{align}
By the spectral theorem and (\ref{e.458}) we could get the following estimates:
\begin{align}\begin{aligned}\label{e.459}
\left\| \mathscr{F}_{R,b}^{m_{1}}\widetilde{G}_{2t/R^{2}}(2t\mathscr{F}_{R,b})\mathscr{F}_{R,b}^{m_{2}}\right\|\leq C_{m_{1},m_{2}}e^{-c\frac{R^{2}}{t}},
\end{aligned}\end{align}
where the constants $C_{m_{1},m_{2}}>0$ depend only on $m_{1}$ and $m_{2}$.
Apply a proof similar to the equations (\ref{e.741})-(\ref{e.443}), for $s\in \pi^{*}(\Lambda T^{*}S)\widehat{\otimes} \Omega(Z_{R}, F_{R}|_{Z_{R}})$ and $m_{1}+m_{2}\geq m+l$, we get by Sobolev inequality and elliptic estimates
\begin{align}\begin{aligned}\label{e.461}
\left| \widetilde{G}_{2t/R^{2}}(2t\mathscr{F}_{R,b})(x, x') \right|_{\mathscr{C}^{l}}\leq C_{l}e^{-c\frac{R^{2}}{t}}.
\end{aligned}\end{align}
By Lemma \ref{l.5}, (\ref{e.456}), (\ref{e.457}) and (\ref{e.461}), we get the estimates in the first line of (\ref{e.118}).

For the error terms in the second line of (\ref{e.118}), by (\ref{e.114}), we need to deal with the heat kernel $\mE_{c,b}(t, x, x')$ restricted on the cylinder part $Y_{[-R, R]}$.
By Definition \ref{d.36}, we get for $ (y, x_{m}), (y', x'_{m})\in Y_{[-R, R]}$
\begin{align}\begin{aligned}\label{e.463}
\frac{\partial \mE_{c,b}}{\partial x_{m}}\big(t, (y, x_{m}), (y', x'_{m})\big)=\frac{-(x_{m}-x'_{m})}{2t}e^{-t\widetilde{\mathscr{F}}_{b}}(y, y')\otimes e^{-t\frac{\partial^{2}}{\partial x_{m}^{2}}}(x_{m}, x'_{m}).
\end{aligned}\end{align}
We estimate the kernel of $e^{-t\widetilde{\mathscr{F}}_{b}}$ by two cases: For any $l\in \mathbb{N}$, there exists $C_{l}>0$ such that for any
$R\geq 1$, $b\in U$,
\begin{align}\begin{aligned}\label{e.464}
\left\{
\begin{array}{cc}
 \big| e^{-t\widetilde{\mathscr{F}}_{b}}(y, y') \big|_{\mathscr{C}^{l}}\leq C_{l}t^{-\frac{l}{2}} , & {\rm for }\,\, t\geq 1, \\
 \big| e^{-t\widetilde{\mathscr{F}}_{b}}(y, y') \big|_{\mathscr{C}^{l}}\leq C_{l}t^{-\frac{m+l-1}{2}} , & {\rm for }\,\, 0<t<1.
\end{array}
\right.
\end{aligned}\end{align}
If $\big((y, x_{m}),(y', x'_{m})\big)\in
\supp \mC_{R,b}(t, \cdot, \cdot)$, then by Lemma \ref{l.5} we have
\begin{align}\begin{aligned}\label{e.655}
\frac{R}{7} \leq|x_{m}-x'_{m}|\leq 2R.
\end{aligned}\end{align}
By (\ref{e.139}), (\ref{e.664}), (\ref{e.463}), (\ref{e.464}) and (\ref{e.655}), for $t\geq 1$ we have
\begin{align}\begin{aligned}\label{e.466}
&\left|  \mE_{c,b}(t, x, x') \right|_{\mathscr{C}^{l}}\leq C_{l}e^{-c\frac{R^{2}}{t}}t^{-\frac{l+1}{2}}
\leq C_{l}e^{-c\frac{R^{2}}{2t}},\\
&\left|\frac{\partial \mE_{c,b}}{\partial x_{m}}(t, x, x')\right|_{\mathscr{C}^{l}}\leq C_{l}Re^{-c\frac{R^{2}}{t}}t^{-\frac{l+3}{2}}
\leq C_{l}e^{-c\frac{R^{2}}{2t}}.
\end{aligned}\end{align}
And for $0<t<1$, we have
\begin{align}\begin{aligned}\label{e.468}
&\left|  \mE_{c,b}(t, x, x') \right|_{\mathscr{C}^{l}}\leq C_{l}e^{-c\frac{R^{2}}{t}}t^{-\frac{m+l}{2}}\leq C_{l}e^{-c\frac{R^{2}}{2t}},\\
&\left|\frac{\partial \mE_{c,b}}{\partial x_{m}}(t, x, x')\right|_{\mathscr{C}^{l}}\leq C_{l}Re^{-c\frac{R^{2}}{t}}t^{-\frac{m+l+2}{2}}
\leq C_{l}e^{-c\frac{R^{2}}{2t}}.
\end{aligned}\end{align}
Finally by (\ref{e.466}), (\ref{e.468}), we can get the estimates: for $x, x' \in Y_{[-R,R]} $
\begin{align}\begin{aligned}\label{e.470}
\left|  \mE_{c,b}(t, x, x') \right|_{\mathscr{C}^{l}}\leq C_{l}e^{-c\frac{R^{2}}{2t}}
 \quad \text{and} \quad
\left|\frac{\partial \mE_{c,b}}{\partial x_{m}}(t, x, x')\right|_{\mathscr{C}^{l}}\leq C_{l}e^{-c\frac{R^{2}}{2t}}.
\end{aligned}\end{align}
By the estimates in the first line of (\ref{e.118}) obtained above, (\ref{e.114}) and (\ref{e.470}), we get the estimates for the error terms
in the second line of (\ref{e.118}). We follow the same way to get estimates for $i=1,2$ in (\ref{e.118}).
The proof is completed.
\end{proof}

\begin{lemma}\label{l.8} For $t>0$ and $i=1, 2$, the following identities hold
\begin{align}\begin{aligned}\label{e.123}
\exp(D^{2}_{R,t})=\psi^{-1}_{t}e^{-\frac{t}{4}\mathscr{F}_{R}}\psi_{t}, \quad
\exp(D^{2}_{i, R,t})=\psi^{-1}_{t}e^{-\frac{t}{4}\mathscr{F}_{i,R}}\psi_{t}.
\end{aligned}\end{align}
\end{lemma}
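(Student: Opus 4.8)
The plan is to read the identity off Proposition \ref{p.3} by conjugating the heat semigroup. First I would recall from Proposition \ref{p.3} and \eqref{e.122} that
\[
D^{2}_{R,t}=-C^{2}_{R,t}=-\tfrac{t}{4}\,\psi_{t}^{-1}\mathscr{F}_{R}\psi_{t},
\qquad
D^{2}_{i,R,t}=-\tfrac{t}{4}\,\psi_{t}^{-1}\mathscr{F}_{i,R}\psi_{t}\quad(i=1,2),
\]
where $\psi_{t}\in\text{End}(\Omega(S))$ is the invertible rescaling operator acting by $t^{-k/2}$ on $k$-forms. Since $\psi_{t}$ affects only the exterior-algebra degree on the base and is a fiberwise bounded invertible endomorphism, conjugation $A\mapsto\psi_{t}^{-1}A\psi_{t}$ is an algebra automorphism of the relevant operator algebra, and in particular it is even and commutes through functional calculus.

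Next I would show that conjugation by $\psi_{t}$ commutes with the heat semigroup generated by $\mathscr{F}_{R}$ (resp. $\mathscr{F}_{i,R}$, with its absolute/relative boundary conditions). Concretely, $e^{-\frac{t}{4}\mathscr{F}_{R}}$ is well defined with smooth kernel — this is the operator whose kernel $e^{-t\mathscr{F}_{R,b}}(x,x')$ was introduced and estimated in Section~\ref{ss1.6} — and for each fixed $t>0$ the family $s\mapsto\psi_{t}^{-1}e^{-s\mathscr{F}_{R}}\psi_{t}$ solves the linear Cauchy problem $\partial_{s}U_{s}=-\big(\psi_{t}^{-1}\mathscr{F}_{R}\psi_{t}\big)U_{s}$ with $U_{0}=\Id$, which is precisely the Cauchy problem solved by $s\mapsto e^{-s\,\psi_{t}^{-1}\mathscr{F}_{R}\psi_{t}}$. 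By uniqueness for this evolution equation — equivalently, by expanding the nilpotent positive-degree part via Duhamel and invoking uniqueness of the scalar heat kernel — the two families coincide; evaluating at $s=t/4$ gives $\exp\big(-\tfrac{t}{4}\psi_{t}^{-1}\mathscr{F}_{R}\psi_{t}\big)=\psi_{t}^{-1}e^{-\frac{t}{4}\mathscr{F}_{R}}\psi_{t}$. Combined with the displayed formula for $D^{2}_{R,t}$, this yields $\exp(D^{2}_{R,t})=\psi_{t}^{-1}e^{-\frac{t}{4}\mathscr{F}_{R}}\psi_{t}$; the argument for $\exp(D^{2}_{i,R,t})$ is identical, using $\mathscr{F}_{i,R}$ with the absolute (resp. relative) boundary conditions, which are preserved by conjugation with $\psi_{t}$ since $\psi_{t}$ acts only in base directions.

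The one point requiring a little care, and the step I would flag as the main technical obstacle, is the meaning of $e^{-\frac{t}{4}\mathscr{F}_{R}}$ as an operator-valued differential form: by Definition~\ref{d.9}, $\mathscr{F}_{R}=(D^{Z_{R}})^{2}+\mathscr{F}_{R}^{[+]}$ has a positive-degree part $\mathscr{F}_{R}^{[+]}$ in $\Lambda(T^{*}S)$, so the exponential must be interpreted as the heat semigroup of the degree-zero part $(D^{Z_{R}})^{2}$ corrected by a \emph{finite} Duhamel expansion in $\mathscr{F}_{R}^{[+]}$ (the expansion terminates because $\Lambda^{>\dim S}(T^{*}S)=0$). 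Once this is granted, the conjugation identity is a formal consequence of $\psi_{t}$ being an invertible even operator, and no heat-kernel estimates are needed for this lemma.
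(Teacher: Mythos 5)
Your proof is correct and follows essentially the same route as the paper: start from the conjugation identity $D^{2}_{R,t}=-\tfrac{t}{4}\psi_t^{-1}\mathscr{F}_{R}\psi_t$ of Proposition \ref{p.3}, and pass the conjugation through the exponential by uniqueness of the heat kernel (the paper phrases this via intermediate operators $B_R$, $A_R$, but the content is the same). Your remark that the positive $\Lambda(T^{*}S)$-degree part $\mathscr{F}_R^{[+]}$ is nilpotent, so the Duhamel expansion terminates and $e^{-\frac{t}{4}\mathscr{F}_R}$ is well defined, is a useful clarification that the paper leaves implicit.
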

\begin{proof}
By (\ref{e.101}) and uniqueness of the heat kernel, we get
\begin{align}\begin{aligned}\label{e.124}
\exp(D^{2}_{R, t})=\psi^{-1}_{t}\exp(tB^{2}_{R})\psi_{t}, \quad \exp(D^{2}_{i, R, t})=\psi^{-1}_{t}\exp(tB^{2}_{i, R})\psi_{t}.
\end{aligned}\end{align}
From (\ref{e.122}) and (\ref{e.102}), we have
\begin{align}\begin{aligned}\label{e.121}
&\exp(D^{2}_{R, t})=\psi^{-1}_{t}\exp(-tA^{2}_{R})\psi_{t}=\psi^{-1}_{t}e^{-\frac{t}{4}\mathscr{F}_{R}}\psi_{t}, \\
 &\exp(D^{2}_{i, R, t})=\psi^{-1}_{t}\exp(-tA^{2}_{i, R})\psi_{t}=\psi^{-1}_{t}e^{-\frac{t}{4}\mathscr{F}_{i, R}}\psi_{t}.
\end{aligned}\end{align}
The proof is completed.
\end{proof}
For $i=1, 2$, let $\chi_{i, R}$ be the characteristic function of $Z_{i, R}$ in $Z_{R}$ such that
\begin{align}\label{e.127}
\chi_{i, R}(x)=\left\{
\begin{array}{ccc}
 1 &, & x\in Z_{i, R}, \\
 0 &, &\text{otherwise.}
\end{array}
\right.\end{align}\index{$\chi_{i, R}(x)$}
We set
\begin{align}\begin{aligned}\label{e.128}
\text{I}_{R}(t):=\varphi\psi^{-1}_{t}&\int_{Z_{R}}\tr_{s}\Big[\frac{N}{2}\big[(1-\frac{1}{2}\mathscr{F}_{R, x})\mQ_{R}(\frac{t}{4}, x, x')\\
&-\sum_{i=1}^{2}\chi_{i, R}(x)(1-\frac{1}{2}\mathscr{F}_{i, R, x})\mQ_{i, R}(\frac{t}{4}, x, x')\big]_{x=x'}\Big]dv_{x},\\
\text{II}_{R}(t):=\varphi\psi^{-1}_{t}&\int_{Z_{R}}\tr_{s}\Big[\frac{N}{2}\big[(1-\frac{1}{2}\mathscr{F}_{R, x})(e^{-\frac{t}{4}\mathscr{F}}*\mC_{R})(\frac{t}{4}, x, x')\\
&-\sum_{i=1}^{2}\chi_{i, R}(x)(1-\frac{1}{2}\mathscr{F}_{i, R, x})(e^{-\frac{t}{4}\mathscr{F}_{i,R}}*\mC_{i,R})(\frac{t}{4}, x, x')\big]_{x=x'}\Big]dv_{
x}.
\end{aligned}\end{align}
Here we use some simple notations $W_{R}=\Omega^{\bullet}(Z_{R},F|_{Z_{R}})$ (resp. $W_{1,R}=\Omega^{\bullet}(Z_{1,R},F|_{Z_{1,R}})$, $W_{2,R}=\Omega^{\bullet}(Z_{2,R},Y,F|_{Z_{2,R}})$) to denote the infinite dimensional vector bundle over $S$. Then by (\ref{e.568}), (\ref{e.102}), (\ref{e.101}), (\ref{e.113}), (\ref{e.123}) and (\ref{e.127}), we get
\begin{align}\begin{aligned}\label{e.617}
&f^{\wedge}(C'_{R,t}, h^{W_{R}})-\sum_{i=1}^{2}f^{\wedge}(C'_{i, R, t}, h^{W_{i, R}})\\
&=\varphi\tr_{s}\left[\psi^{-1}_{t}\frac{N}{2}(1-\frac{1}{2}\mathscr{F}_{R})e^{-\frac{t}{4}\mathscr{F}_{R}}\psi_{t}\right]
-\sum_{i=1}^{2}\varphi\tr_{s}\left[\psi^{-1}_{t}\frac{N}{2}(1-\frac{1}{2}\mathscr{F}_{i, R})e^{-\frac{t}{4}\mathscr{F}_{i, R}}\psi_{t}\right]\\
&=\varphi\psi^{-1}_{t}\left\{\tr_{s}\left[\frac{N}{2}(1-\frac{1}{2}\mathscr{F}_{R})e^{-\frac{t}{4}\mathscr{F}_{R}}\right]
-\sum_{i=1}^{2}\varphi\tr_{s}\left[\frac{N}{2}(1-\frac{1}{2}\mathscr{F}_{i, R})e^{-\frac{t}{4}\mathscr{F}_{i, R}}\right] \right\}\\
&=\varphi\psi^{-1}_{t}\int_{Z_{R}}\tr_{s}\left[\Big(\frac{N}{2}(1-\frac{1}{2}\mathscr{F}_{R})e^{-\frac{t}{4}\mathscr{F}_{R}}\Big)(x, x)\right.\\
&\left.\quad\qquad\quad\qquad-\sum_{i=1}^{2}\chi_{i, R}(x)\Big(\frac{N}{2}(1-\frac{1}{2}\mathscr{F}_{i, R})e^{-\frac{t}{4}\mathscr{F}_{i, R}}\Big)(x, x)\right]dv_{x}\\
&=\text{I}_{R}(t)+\text{II}_{R}(t).
\end{aligned}\end{align}

Let
\begin{align}\begin{aligned}\label{e.749}
\mathcal{E}_{\rm dif}(t,x;b):=\big[\mE_{c,b}(\frac{t}{4}, x, x)-\sum_{i=1}^{2}\chi_{i, R}(x)\mE_{c,i,b}(\frac{t}{4}, x, x)\big],
\end{aligned}\end{align}
then we have the following lemma.
\begin{lemma}\label{l.9} For $t>0,\, R>0,\,b\in U$, we have
\begin{align}\begin{aligned}\label{e.131}
\text{I}_{R,b}(t)=\varphi\psi^{-1}_{t}(1+2\partial_{t})\int_{Z_{R}}\psi_{1, R}(x)\tr_{s}\Big[\frac{N}{2}\mathcal{E}_{\rm dif}(t,x;b)\Big]dv_{x}.
\end{aligned}\end{align}
\end{lemma}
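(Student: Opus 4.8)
The plan is to substitute the explicit form (\ref{e.110}) of the parametrices into the definition (\ref{e.128}) of $\text{I}_{R,b}(t)$ and to show that the ``interior'' summands built from $e^{-\frac{t}{4}\mathscr{F}_{R,b}}$ cancel exactly, while the ``cylindrical'' summands built from $\mE_{c,b}$ and $\mE_{c,i,b}$ collapse on the diagonal into the right-hand side of (\ref{e.131}). Everything is driven by two elementary identities: $\mathscr{F}_{R,b}\,e^{-u\mathscr{F}_{R,b}}=-\partial_{u}e^{-u\mathscr{F}_{R,b}}$, and, on the product neighbourhood where Lemma \ref{l.52} gives $\mathscr{F}_{R,b}=\widetilde{\mathscr{F}}_{b}-\partial^{2}/\partial x_{m}^{2}=\mathscr{F}_{c,b}$ (and similarly with the index $i$), the heat equation $\mathscr{F}_{c,b}\mE_{c,b}(u)=-\partial_{u}\mE_{c,b}(u)$. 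Since all the operators $\mathscr{F}_{R,b},\mathscr{F}_{i,R,b},\mathscr{F}_{c,b},\mathscr{F}_{c,i,b}$ share the second-order part $-\partial^{2}/\partial x_{m}^{2}$ and all cut-offs $\phi$ in play depend only on $x_{m}$, for a kernel $g$ solving the relevant heat equation one gets $\bigl(1-\tfrac12\mathscr{F}\bigr)\bigl[\phi(x)\,g(u,x,x')\bigr]=\phi(x)\bigl(1+\tfrac12\partial_{u}\bigr)g+\tfrac12\phi''(x)\,g+\phi'(x)\,\partial_{x_{m}}g$.

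First I would dispose of the interior pieces. Using $\chi_{i,R}\phi_{2,R}^{(i)}=\chi_{i,R}\phi_{2,R}$, $\chi_{i,R}\psi_{2,R}^{(i)}=\chi_{i,R}\psi_{2,R}$ and $\sum_{i}\chi_{i,R}=1$ almost everywhere on $Z_{R}$, and the fact that $\supp\phi_{2,R}^{(i)}$ and the supports of its derivatives lie at distance $\ge R/7$ from $\partial Z_{i,R}$, where $\mathscr{F}_{i,R,b}$ has the same local expression as the ambient $\mathscr{F}_{R,b}$, the summand $\chi_{i,R}(x)\bigl(1-\tfrac12\mathscr{F}_{i,R,b,x}\bigr)\bigl[\phi_{2,R}^{(i)}(x)\,e^{-\frac{t}{4}\mathscr{F}_{R,b}}(x,x')\,\psi_{2,R}^{(i)}(x')\bigr]$ equals $\chi_{i,R}(x)\bigl(1-\tfrac12\mathscr{F}_{R,b,x}\bigr)\bigl[\phi_{2,R}(x)\,e^{-\frac{t}{4}\mathscr{F}_{R,b}}(x,x')\,\psi_{2,R}(x')\bigr]$; summing over $i$ this reproduces the interior summand of $\bigl(1-\tfrac12\mathscr{F}_{R,b,x}\bigr)\mQ_{R,b}(\tfrac t4,x,x')$, so these contributions to $\text{I}_{R,b}(t)$ subtract to zero.

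Next I would treat the cylindrical pieces. Applying the displayed identity with $\phi=\phi_{1,R}$, $g=\mE_{c,b}$ and restricting to the diagonal, the $\phi_{1,R}'$-term carries $\partial_{x_{m}}\mE_{c,b}(u,x,x')\big|_{x=x'}$, which vanishes by (\ref{e.463}); once the factor $\psi_{1,R}(x')\big|_{x=x'}=\psi_{1,R}(x)$ is reinstated, both the $\phi_{1,R}'$- and the $\phi_{1,R}''$-terms drop out, because $\supp\psi_{1,R}$ is disjoint from $\supp\phi_{1,R}'\cup\supp\phi_{1,R}''$, and on $\supp\psi_{1,R}$ one has $\phi_{1,R}\equiv1$; the same computation applies verbatim to $\phi_{1,R}^{(i)},\psi_{1,R}^{(i)}$ on $Z_{i,R}$. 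Hence the cylindrical contribution to the integrand of $\text{I}_{R,b}(t)$ reduces to $\tr_{s}\Bigl[\tfrac N2\,\psi_{1,R}(x)\bigl(1+\tfrac12\partial_{u}\bigr)\bigl(\mE_{c,b}(\tfrac t4,x,x)-\sum_{i}\chi_{i,R}(x)\mE_{c,i,b}(\tfrac t4,x,x)\bigr)\Bigr]$, and since $u=t/4$ turns $\bigl(1+\tfrac12\partial_{u}\bigr)$ into $\bigl(1+2\partial_{t}\bigr)$ on functions of $\tfrac t4$, this is $\tr_{s}\bigl[\tfrac N2\,\psi_{1,R}(x)(1+2\partial_{t})\mathcal{E}_{\rm dif}(t,x;b)\bigr]$. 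Combining this with the vanishing of the interior part and moving the $t$-independent operations $\psi_{1,R}(x)$, $\tr_{s}[\tfrac N2\,\cdot\,]$ and $\int_{Z_{R}}dv_{x}$ through $1+2\partial_{t}$ (differentiation under the integral over the compact manifold $Z_{R}$ is legitimate since all kernels are smooth) yields (\ref{e.131}).

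The step I expect to be the main obstacle is the exact cancellation of the interior pieces: it relies on reading the interior summand of $\mQ_{i,R,b}$ in (\ref{e.110}) as the ambient heat kernel $e^{-\frac{t}{4}\mathscr{F}_{R,b}}$ restricted to $Z_{i,R}\subset Z_{R}$, rather than $e^{-\frac{t}{4}\mathscr{F}_{i,R,b}}$ (which would only agree with it up to an exponentially small error), together with the locality statement that $\mathscr{F}_{i,R,b}$ and $\mathscr{F}_{R,b}$ coincide as differential operators on the region cut out by $\phi_{2,R}^{(i)}$ and its derivatives. The support bookkeeping for the cylindrical pieces, though routine, must also be carried out carefully to confirm that every term carrying $\phi_{1,R}'$ or $\phi_{1,R}''$ really disappears after multiplication by $\psi_{1,R}$.
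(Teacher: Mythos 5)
Your proof is correct and follows essentially the same route as the paper's. You correctly identified the two load-bearing facts: (i) the interior summand of the parametrix $\mQ_{i,R,b}$ in \eqref{e.110} is $e^{-t\mathscr{F}_{R,b}}$ (the ambient heat kernel) rather than $e^{-t\mathscr{F}_{i,R,b}}$, which together with $\mathscr{F}_{i,R,b}=\mathscr{F}_{R,b}$ on $\supp\phi^{(i)}_{2,R}$ and $\sum_i\chi_{i,R}=1$ makes the interior pieces cancel exactly; and (ii) the commutator terms $\tfrac12\phi''g+\phi'\partial_{x_m}g$ drop out on the diagonal after multiplying by $\psi_{1,R}$, since the supports are disjoint. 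The paper packages these commutator terms as the error kernel $\mC_{R,b}$ from \eqref{e.114} and invokes \eqref{e.117} to kill them on the diagonal, and then simplifies $\phi_{1,R}\psi_{1,R}=\psi_{1,R}$ in \eqref{e.136}, which is exactly the bookkeeping you carry out directly.
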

\begin{proof}
Using the heat equations and by (\ref{e.110}), (\ref{e.114}), we get
\begin{align}\begin{aligned}\label{e.133}
(1-\frac{1}{2}&\mathscr{F}_{R,b,x})\mQ_{R,b}(\frac{t}{4}, x, x')\\
&=-\frac{1}{2}\mC_{R,b}(\frac{t}{4}, x, x')+\phi_{1, R}(x)(1-\frac{1}{2}\mathscr{F}_{R,b,x})\mE_{c,b}(\frac{t}{4}, x,x')\psi_{1, R}(x')\\
&\qquad\qquad\qquad\qquad+\phi_{2, R}(x)(1-\frac{1}{2}\mathscr{F}_{R,b,x})e^{-\frac{t}{4}\mathscr{F}_{R,b}}(x,x')\psi_{2, R}(x')\\
&=-\frac{1}{2}\mC_{R,b}(\frac{t}{4}, x, x')+\phi_{1, R}(x)(1+2\partial_{t})\mE_{c,b}(\frac{t}{4}, x, x')\psi_{1, R}(x')\\
&\qquad\qquad\qquad\qquad+\phi_{2, R}(x)(1+2\partial_{t})e^{-\frac{t}{4}\mathscr{F}_{R,b}}(x,x')\psi_{2, R}(x'),
\end{aligned}\end{align}
and similarly
\begin{align}\begin{aligned}\label{e.134}
(1-\frac{1}{2}&\mathscr{F}_{i,R,b,x})\mQ_{i,R,b}(\frac{t}{4}, x,x')\\
&=-\frac{1}{2}\mC_{i,R,b}(\frac{t}{4}, x, x')+\phi_{1, R}^{(i)}(x)(1+2\partial_{t})\mE_{c,i,b}(\frac{t}{4}, x,x')\psi^{(i)}_{1, R}(x')\\
&\qquad\qquad\qquad\qquad+\phi^{(i)}_{2, R}(x)(1+2\partial_{t})e^{-\frac{t}{4}\mathscr{F}_{R,b}}(x,x')\psi^{(i)}_{2, R}(x').\end{aligned}\end{align}
By (\ref{e.117}), we have
\begin{align}\begin{aligned}\label{e.135}
\mC_{R,b}(\frac{t}{4}, x, x)=0, \quad \mC_{i,R,b}(\frac{t}{4}, x, x)=0, \quad i=1, 2.
\end{aligned}\end{align}
Then it follows from (\ref{e.109}), (\ref{e.120}), (\ref{e.133}), (\ref{e.134}) and (\ref{e.135}) that
\begin{align}\begin{aligned}\label{e.136}
&\left\{(1-\frac{1}{2}\mathscr{F}_{R,b, x})\mQ_{R,b}(\frac{t}{4}, x,x')-\sum_{i=1}^{2}(1-\frac{1}{2}\mathscr{F}_{i, R,b, x})\mQ_{i, R,b}(\frac{t}{4}, x,x')\right\}_{x=x'}\\
&=(1+2\partial_{t})\phi_{1, R}(x)\mathcal{E}_{\rm dif}(t,x;b)\psi_{1, R}(x)
+(1+2\partial_{t})\phi_{2, R}(x)\Big[e^{-\frac{t}{4}\mathscr{F}_{R,b}}(x,x)\\
&\qquad\qquad\qquad-\sum_{i=1}^{2}\chi_{i, R}(x)e^{-\frac{t}{4}\mathscr{F}_{R,b}}(x,x)\Big]\psi_{2, R}(x)\\
&=(1+2\partial_{t})\psi_{1, R}(x)\mathcal{E}_{\rm dif}(t,x;b).
\end{aligned}\end{align}
Finally, (\ref{e.131}) follows from (\ref{e.128}) and (\ref{e.136}). The proof is completed.
\end{proof}

\begin{lemma}\label{l.10}The following identity hold, for any $t>0,\,R>0,\,b\in U,$
\begin{align}\begin{aligned}\label{e.137}
\int_{Z_{R}}\psi_{1, R}(x)\tr_{s}\Big[\frac{N}{2}\mathcal{E}_{\rm dif}(t,x;b)\Big]dv_{x}=0.
\end{aligned}\end{align}
\end{lemma}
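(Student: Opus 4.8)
The plan is to reduce the integral in (\ref{e.137}) to an integral over the cylinder $Y_{[-R,R]}$ and then to observe that the integrand is an \emph{odd} function of the normal coordinate $x_m$. First I would note that by (\ref{e.109}) and (\ref{e.652}) the cut-off $\psi_{1,R}$ depends only on $x_m$, is even in $x_m$, and is supported in $Y_{[-\frac{4R}{7},\frac{4R}{7}]}\subset Y_{[-R,R]}$, so the integral is really over $Y_{[-R,R]}$. On this cylinder all the geometric data is pulled back from $Y$, hence the fibre $\Lambda(T^{*}Z_R)\otimes F_R$ over $(y,x_m)$ is canonically $\big(\Lambda(T^{*}Y)\otimes F\big)_{y}\,\widehat{\otimes}\,\Lambda(T^{*}\mathbb{R})$ and does not depend on $x_m$; under this identification $dv_x=dv_y\,dx_m$, the number operator $N$, the supertrace $\tr_s$, and the fibrewise operator $\widetilde{\mathscr{F}}_b$ all carry no $x_m$-dependence.

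Next I would compute $\mathcal{E}_{\rm dif}$ explicitly on $Y_{[-R,R]}$, setting $s=t/4$. By Lemma \ref{l.52} and (\ref{e.104}), $\mathscr{F}_R=\widetilde{\mathscr{F}}-\partial_{x_m}^{2}$ on the cylinder, while $\mathscr{F}_{1,R}$ (resp. $\mathscr{F}_{2,R}$) equals $\widetilde{\mathscr{F}}-\partial_{x_m}^{2}$ with absolute (resp. relative) boundary conditions on $Y_{[-R,0]}$ (resp. $Y_{[0,R]}$), and $\chi_{1,R}$ (resp. $\chi_{2,R}$) restricted to the cylinder is the indicator of $\{x_m\le 0\}$ (resp. $\{x_m\ge 0\}$). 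Hence by Definition \ref{d.36}, for $(y,x_m)\in Y_{[-R,0]}$,
\[
\mathcal{E}_{\rm dif}\big(t,(y,x_m);b\big)=e^{-s\widetilde{\mathscr{F}}_b}(y,y)\otimes\Big(e^{-s\partial_{x_m}^{2}}(x_m,x_m)-e_{\rm abs}^{-s\partial_{x_m}^{2}}(x_m,x_m)\Big),
\]
and likewise with $e_{\rm rel}^{-s\partial_{x_m}^{2}}$ for $(y,x_m)\in Y_{[0,R]}$. Plugging in the explicit diagonal values (\ref{e.668}), (\ref{e.139}) — recalling that the $dx_m$-component of the absolute kernel is Dirichlet and its $1$-component Neumann, with these roles exchanged for the relative kernel — the free Gaussians cancel and only the reflected Gaussian remains,
\[
e^{-s\partial_{x_m}^{2}}(x_m,x_m)-e_{\rm abs}^{-s\partial_{x_m}^{2}}(x_m,x_m)=\frac{e^{-x_m^{2}/s}}{\sqrt{4\pi s}}\big(dx_m\otimes(dx_m)^{*}-1\otimes 1^{*}\big),
\]
while the corresponding relative difference is minus the right-hand side. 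Consequently, throughout $Y_{[-R,R]}$,
\[
\mathcal{E}_{\rm dif}\big(t,(y,x_m);b\big)=-\operatorname{sgn}(x_m)\,\frac{e^{-x_m^{2}/s}}{\sqrt{4\pi s}}\;e^{-s\widetilde{\mathscr{F}}_b}(y,y)\otimes\big(dx_m\otimes(dx_m)^{*}-1\otimes 1^{*}\big),
\]
which is odd in $x_m$.

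Finally, since $\psi_{1,R}(x_m)$ and $e^{-x_m^{2}/s}/\sqrt{4\pi s}$ are even in $x_m$, while $N$ and $\tr_s$ are $x_m$-independent and $\tr_s$ is linear, the map $x_m\mapsto\psi_{1,R}(x_m)\,\tr_s\big[\tfrac{N}{2}\,\mathcal{E}_{\rm dif}(t,(y,x_m);b)\big]$ is odd on $[-R,R]$ for each fixed $y\in Y$; therefore its integral over $x_m\in[-R,R]$ vanishes, and integrating over $Y$ with $dv_x=dv_y\,dx_m$ yields (\ref{e.137}). The step requiring care is the middle one: tracking the Dirichlet/Neumann assignments — hence the signs — in the absolute versus the relative kernels, so that the free Gaussians cancel while the reflected ones come out with exactly opposite signs on the two sides of $x_m=0$. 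Everything else is elementary, and note that the vanishing hypothesis (\ref{e.356}) plays no role here, this being a purely local computation on the cylinder.
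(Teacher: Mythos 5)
Your proposal is correct and follows essentially the same route as the paper: reduce to the cylinder $Y_{[-R,R]}$, compute the pointwise difference of diagonal heat kernels explicitly using (\ref{e.668})--(\ref{e.139}) to exhibit the Gaussian reflection term with a $\operatorname{sign}(x_m)$ factor, and then conclude by integrating the resulting odd function of $x_m$ against the even cut-off $\psi_{1,R}$. The one cosmetic difference is that the paper isolates the one-dimensional kernel difference as a standalone object $e_{\rm dif}$ and evaluates the two traces $\tr|_1$ and $\tr|_{dx_m}$ separately before summing over $p$ in the supertrace, whereas you package the whole of $\mathcal{E}_{\rm dif}$ at once; the content of the computation, including the Dirichlet/Neumann sign-tracking you flag as the delicate point, is identical.
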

\begin{proof}
We set $\eta_{-}, \eta_{+}$ the characteristic functions of $\mathbb{R}_{\leq 0}, \mathbb{R}_{\geq 0}$.
Let $x=(y, x_{m})\in Y_{[-R,R]}$ and
 \begin{align}\begin{aligned}\label{e.138}
e_{\rm dif}(t,x_{m}):=e^{-t\frac{\partial^{2}}{\partial x_{m}^{2}}}(x_{m}, x_{m})
&-\eta_{-}(x_{m})e_{{\rm abs}}^{-t\frac{\partial^{2}}{\partial x_{m}^{2}}}(x_{m}, x_{m})\\
&-\eta_{+}(x_{m})e_{{\rm rel}}^{-t\frac{\partial^{2}}{\partial x_{m}^{2}}}(x_{m}, x_{m}).
\end{aligned}\end{align}
 By (\ref{e.664}), (\ref{e.665}), (\ref{e.749}) and $\text{supp}(\psi_{1, R})\subset Y_{[-R, R]}$, we get
 \begin{align}\begin{aligned}\label{e.138}
 &\psi_{1, R}(y, x_{m})\mathcal{E}_{\rm dif}\big(t,(y,x_{m});b\big)
  =e^{-t\widetilde{\mathscr{F}}_{b}}(y, y)\otimes \psi_{1, R}(x_{m}) e_{\rm dif}(t,x_{m}).
\end{aligned}\end{align}
Let ${\rm sign}(x_{m})$ be the sign function defined as
\begin{align}\begin{aligned}\label{e.781}
{\rm sign}(x_{m})=\left\{\begin{array}{cc}
                     1 & x_{m}\geq 0, \\
                     -1& x_{m}<0.
                   \end{array}
\right.
\end{aligned}\end{align}
 By (\ref{e.668}) and (\ref{e.139}), we find
\begin{align}\begin{aligned}\label{e.656}
e_{\rm dif}(t,x_{m})
=
-{\rm sign}(x_{m})\frac{e^{-\frac{x_{m}^{2}}{t}}}{\sqrt{4\pi t}}dx_{m}\otimes(dx_{m})^{*}+{\rm sign}(x_{m})\frac{e^{-\frac{x_{m}^{2}}{t}}}{\sqrt{4\pi t}}1\otimes1^{*}.
\end{aligned}\end{align}
Recall that $\{1, dx_{m}\}$ form a basis of $\Lambda(T^{*}\mathbb{R})$, we use $\tr_{|_{dx_{m}}}$ (resp. $\tr_{|_{1}}$) to denote the point-wise trace restricted on the subbundle $\mathbb{R}\cdot dx_{m}$ (resp. $\mathbb{R}\cdot 1$).
As $\psi_{1, R}(x_{m})$ is an even function on $x_{m}$, by (\ref{e.656}) we get
\begin{align}\begin{aligned}\label{e.140}
\int_{-R}^{R}\psi_{1, R}(x_{m})\tr|_{dx_{m}}\big[e_{\rm dif}(t,x_{m})\big]dx_{m}=-\int_{-R}^{R}\psi_{1, R}(x_{m}){\rm sign}(x_{m})\frac{e^{-\frac{x_{m}^{2}}{t}}}{\sqrt{4\pi t}}dx_{m}=0,
\end{aligned}\end{align}
and similarly we get
\begin{align}\begin{aligned}\label{e.141}
\int_{-R}^{R}\psi_{1, R}(x_{m})\tr|_{1}\big[e_{\rm dif}(t,x_{m})\big]dx_{m}=0.
\end{aligned}\end{align}
By (\ref{e.652}), (\ref{e.138}), (\ref{e.140}) and (\ref{e.141}), we get
\begin{align}\begin{aligned}\label{e.142}
&\int_{Z_{R}}\psi_{1, R}(x)\tr_{s}\Big[\frac{N}{2}\mathcal{E}_{\rm dif}(t,x;b)\Big]dv_{Z_{R}}\\
=&\sum_{p=0}^{m}(-1)^{p}\frac{p}{2}\Big\{\tr|_{\Omega^{p}(Y,F)}\big[e^{-t\widetilde{\mathscr{F}}_{b}}\big]\cdot
\int_{-R}^{R}\psi_{1, R}(x_{m})\tr_{|_{1}}\big[e_{\rm dif}(t,x_{m})\big]dx_{m}\\
&+\tr|_{\Omega^{p-1}(Y,F)}\big[e^{-t\widetilde{\mathscr{F}}_{b}}\big]\cdot
\int_{-R}^{R}\psi_{1, R}(x_{m})\tr_{|_{dx_{m}}}\big[e_{\rm dif}(t,x_{m})\big]dx_{m}\Big\}
=0.
\end{aligned}\end{align}
From (\ref{e.142}) we have proved (\ref{e.137}). The proof is completed.
\end{proof}
By Lemma \ref{l.9} and Lemma \ref{l.10}, we have
\begin{lemma}\label{l.11} For all $t>0,\,R>0,\, b\in U$, we have
\begin{align}\begin{aligned}\label{e.143}
\text{I}_{R,b}(t)=0.\end{aligned}\end{align}
\end{lemma}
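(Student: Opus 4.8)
The plan is immediate: Lemma \ref{l.11} is a direct consequence of the two preceding lemmas. First I would invoke Lemma \ref{l.9}, which expresses the quantity $\text{I}_{R,b}(t)$ as
\begin{align*}
\text{I}_{R,b}(t)=\varphi\psi^{-1}_{t}(1+2\partial_{t})\int_{Z_{R}}\psi_{1, R}(x)\tr_{s}\Big[\frac{N}{2}\mathcal{E}_{\rm dif}(t,x;b)\Big]dv_{x}.
\end{align*}
Then I would apply Lemma \ref{l.10}, which asserts that the integral $\int_{Z_{R}}\psi_{1, R}(x)\tr_{s}\big[\tfrac{N}{2}\mathcal{E}_{\rm dif}(t,x;b)\big]dv_{x}$ vanishes identically for all $t>0$, $R>0$, $b\in U$.

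Since the map $\varphi\psi^{-1}_{t}(1+2\partial_{t})$ is linear and the function it is applied to is identically zero in $t$ (not merely zero at a single value of $t$), the operator $\partial_t$ also annihilates it, and hence $\text{I}_{R,b}(t)=0$ for all $t>0$, $R>0$, $b\in U$. No further computation is needed; the only subtlety worth noting is that Lemma \ref{l.10} gives the vanishing for every $t$, which is precisely what is required so that the differential operator $(1+2\partial_t)$ produces zero. There is no real obstacle here — the substantive work was already carried out in establishing Lemmas \ref{l.9} and \ref{l.10}, in particular the explicit cancellation of the one-dimensional heat-kernel difference $e_{\rm dif}(t,x_m)$ against the even cutoff $\psi_{1,R}$ by the sign-function parity argument in the proof of Lemma \ref{l.10}.
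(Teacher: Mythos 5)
Your proof is correct and coincides exactly with the paper's argument: the paper likewise states Lemma \ref{l.11} as an immediate corollary of Lemmas \ref{l.9} and \ref{l.10}, with no additional work. The one point you flag — that Lemma \ref{l.10} gives vanishing for all $t$, so the operator $(1+2\partial_t)$ annihilates the integral — is the right observation and is implicit in the paper's treatment.
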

Now we start to treat $\text{II}_{R,b}(t)$ appearing in (\ref{e.128}).
Let
\begin{align}\begin{aligned}\label{e.750}
g_{b}(t,x;R)&:=\big[(1-\frac{1}{2}\mathscr{F}_{R,b, x})(e^{-\frac{t}{4}\mathscr{F}_{R,b}}*\mC_{R,b})(\frac{t}{4}, x, x')\big]_{x=x'},\\
g_{i,b}(t,x;R)&:=\big[(1-\frac{1}{2}\mathscr{F}_{i, R,b, x})(e^{-\frac{t}{4}\mathscr{F}_{i,R,b}}*\mC_{i,R,b})(\frac{t}{4}, x, x')\big]_{x=x'},
\end{aligned}\end{align}
then we have the following lemma.
\begin{lemma}\label{l.7}For $\varepsilon>0$ sufficiently small, $b\in U$, we have
\begin{align}\begin{aligned}\label{e.119}
\lim_{R\rightarrow \infty}&\int_{0}^{R^{2-\varepsilon}}\varphi\psi^{-1}_{t}\int_{Z_{R}}
\tr_{s}\Big[\frac{N}{2}g_{b}(t,x;R)\Big]dv_{Z_{R}}(x)\frac{dt}{t}=0,
\end{aligned}\end{align}
and for $i=1, 2$,
\begin{align}\begin{aligned}\label{e.144}
\lim_{R\rightarrow \infty}&\int_{0}^{R^{2-\varepsilon}}\varphi\psi^{-1}_{t}\int_{Z_{i, R}}\tr_{s}\Big[
\frac{N}{2}g_{i,b}(t,x;R)\Big]dv_{Z_{i,R}}(x)\frac{dt}{t}=0.
\end{aligned}\end{align}
\end{lemma}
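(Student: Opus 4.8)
The plan is to show that the inner form-valued quantity $\int_{Z_R}\tr_s\big[\frac{N}{2}g_b(t,x;R)\big]\,dv_{Z_R}(x)$ is, in absolute value, bounded by $C(1+R)^2\,t\,e^{-cR^2/t}$ uniformly in $b\in U$, and then to check that after inserting the elementary factors $\varphi\psi^{-1}_t$ and $\frac{dt}{t}$ and integrating over $0<t<R^{2-\varepsilon}$ the result still tends to $0$ as $R\to\infty$. The statement (\ref{e.144}) for $i=1,2$ is then obtained verbatim, with $Z_R,\mathscr{F}_{R,b},\mC_{R,b}$ replaced by $Z_{i,R},\mathscr{F}_{i,R,b},\mC_{i,R,b}$ and using the corresponding parts of Lemma \ref{l.5} and Lemma \ref{l.6}, together with $\vol(Z_{i,R,b})\le C(1+R)$. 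Note that any $\varepsilon\in(0,2)$ suffices for this lemma, since all that is needed is $R^2/R^{2-\varepsilon}=R^\varepsilon\to\infty$.

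First I would estimate $g_b(t,x;R)$ pointwise. Unwinding (\ref{e.750}) and (\ref{e.112}),
\[
g_b(t,x;R)=\Big[(1-\tfrac{1}{2}\mathscr{F}_{R,b,x})\int_0^{t/4}\!\!\int_{Z_R}e^{-(\frac{t}{4}-s)\mathscr{F}_{R,b}}(x,z)\,\mC_{R,b}(s,z,x')\,dz\,ds\Big]_{x=x'}.
\]
The operator $1-\frac{1}{2}\mathscr{F}_{R,b,x}$ is a fiberwise differential operator of order $\le 2$ whose coefficients are bounded uniformly in $R$ (the local geometric data are $R$-independent, by the product structure on the cylinder), so its action on $e^{-(\frac{t}{4}-s)\mathscr{F}_{R,b}}(x,z)$ is controlled by $C\,|e^{-(\frac{t}{4}-s)\mathscr{F}_{R,b}}(x,z)|_{\mathscr{C}^2}$. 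By Lemma \ref{l.5}, $\mC_{R,b}(s,z,x)$ vanishes unless $d(z,x)\ge R/7$, and its support in the first variable lies in the fixed annular region $Y_{[-6R/7,-R/7]}\cup Y_{[R/7,6R/7]}$, of fiberwise volume $\le C(1+R)$; in particular $d(x,z)\ge R/7$ throughout the domain of integration. Hence Lemma \ref{l.6} gives $|e^{-(\frac{t}{4}-s)\mathscr{F}_{R,b}}(x,z)|_{\mathscr{C}^2}\le C\,e^{-cR^2/(\frac{t}{4}-s)}$ and $|\mC_{R,b}(s,z,x)|_{\mathscr{C}^0}\le C\,e^{-cR^2/s}$, uniformly in $R\ge1,\,b\in U$. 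Using the elementary inequality $\frac{1}{s}+\frac{1}{\frac{t}{4}-s}\ge\frac{16}{t}$ for $0<s<t/4$, the product of the two exponentials is $\le e^{-16cR^2/t}$. Integrating over $z$ (volume $\le C(1+R)$), over $s\in(0,t/4)$ (giving a factor $\le t/4$), and then over $x\in Z_R$ (volume $\le C(1+R)$), and renaming $c$, I obtain
\[
\Big|\int_{Z_R}\tr_s\big[\tfrac{N}{2}g_b(t,x;R)\big]\,dv_{Z_R}(x)\Big|\le C(1+R)^2\,t\,e^{-cR^2/t}.
\]

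Next I would run the $t$-integral. Since $\psi^{-1}_t$ multiplies a $k$-form on $S$ by $t^{k/2}$ it is bounded by $\max(1,t^{n/2})$ with $n=\dim S$, and $\varphi$ is bounded; hence the integrand of (\ref{e.119}) is $\le C(1+R)^2\max(1,t^{n/2})\,e^{-cR^2/t}$. Split $\int_0^{R^{2-\varepsilon}}=\int_0^1+\int_1^{R^{2-\varepsilon}}$. On $(1,R^{2-\varepsilon})$ one has $e^{-cR^2/t}\le e^{-cR^\varepsilon}$, $\max(1,t^{n/2})\le R^{(2-\varepsilon)n/2}$ and $\int_1^{R^{2-\varepsilon}}\frac{dt}{t}\le 2\log R$, so this contribution is $\le C(1+R)^2 R^{(2-\varepsilon)n/2}(\log R)\,e^{-cR^\varepsilon}\to0$. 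On $(0,1)$ one has $\psi^{-1}_t$ bounded by $1$ and $e^{-cR^2/t}\le e^{-cR^2/2}\,e^{-cR^2/(2t)}$, while $\int_0^1\frac{1}{t}e^{-cR^2/(2t)}\,dt\le\int_0^1\frac{1}{t}e^{-c/(2t)}\,dt<\infty$ uniformly for $R\ge1$, so this contribution is $\le C(1+R)^2 e^{-cR^2/2}\to0$. Together these give (\ref{e.119}); the identical computation, using the estimates for $\mC_{i,R,b}$ and $e^{-t\mathscr{F}_{i,R,b}}(\cdot,\cdot)$ on $\supp\mC_{i,R,b}$ from Lemma \ref{l.6}, the support statements $\supp_x\mC_{1,R,b}\subset Y_{[-6R/7,-R/7]}$, $\supp_x\mC_{2,R,b}\subset Y_{[R/7,6R/7]}$ from Lemma \ref{l.5}, and $\vol(Z_{i,R,b})\le C(1+R)$, yields (\ref{e.144}).

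The essential analytic input — exponential smallness in $R$ of the error terms $\mC_{R,b},\mC_{i,R,b}$ and of the off-diagonal heat kernels on the band where $\mC$ is supported — is already packaged in Lemmas \ref{l.5} and \ref{l.6}, so the only genuinely delicate point is the interplay of the two exponential factors near the endpoints of the $s$-integration, where $s$ or $\frac{t}{4}-s$ becomes small; this is exactly what the harmonic-mean inequality resolves, turning the product into the single clean bound $e^{-16cR^2/t}$ with no residual polynomial blow-up (a blow-up is excluded because the off-diagonal distance is bounded below by $R/7$, so the Gaussian bounds of Lemmas \ref{l.47} and \ref{l.6} already absorb the small-time heat-kernel singularities). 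Everything else is bookkeeping of polynomial-in-$t$ and polynomial-in-$R$ factors, all dominated by $e^{-cR^\varepsilon}$ once $t\le R^{2-\varepsilon}$.
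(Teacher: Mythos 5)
Your proof follows the same strategy as the paper's: it uses Lemmas \ref{l.5} and \ref{l.6} (together with the off-diagonal estimate of Lemma \ref{l.47}) and $\vol(Z_R)=\mathcal{O}(R)$ to derive the pointwise bound $C R^2\,t\,e^{-cR^2/t}$ on the fiberwise integral, then integrates in $t$ over $(0,R^{2-\varepsilon})$ to conclude exponential decay in $R^\varepsilon$, which is exactly the route taken in the paper (estimate (\ref{e.145}) followed by (\ref{e.146})). The only differences are cosmetic choices in elementary calculus — you combine the two exponentials via the harmonic-mean inequality $\frac{1}{s}+\frac{1}{t/4-s}\geq\frac{16}{t}$ rather than dropping one factor and using monotonicity, and you split the $t$-integral at $t=1$ rather than substituting $u=R^2/t$ — together with a couple of harmless bookkeeping slips (an extra $1/t$ retained after it had already been absorbed into the pointwise bound) that only overestimate and do not affect the conclusion.
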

\begin{proof}
By Lemmas \ref{l.5}, \ref{l.6} and (\ref{e.112}), we have that for any $R\geq 1$, $t>0$ and $b\in U$
\begin{align}\begin{aligned}\label{e.145}
 &\left|\int_{Z_{R}}\tr_{s}\Big[\frac{N}{2}g_{b}(t,x;R)\Big]dv_{Z_{R}}(x)
\right|_{\mathscr{C}^{0}(U)}
 \leq C_{m}\text{Vol}(Z_{R})\left|g_{b}(t,x;R)\right|_{\mathscr{C}^{0}(Z_{R})}\\
  \leq & C_{m}R\int_{0}^{t/4}ds\int_{Y\times[-\frac{6R}{7}, \frac{6R}{7}]}\big|e^{-(\frac{t}{4}-s)\mathscr{F}_{R,b}}(x,z)\big|_{\mathscr{C}^{2}}
\big|\mC_{R,b}(s, z, x)\big|_{\mathscr{C}^{0}}dv_{Z_{R}}(z) \\
  \leq &C_{m}R\int_{0}^{t/4}ds\int_{Y\times[-\frac{6R}{7}, \frac{6R}{7}]}\exp(-c\frac{4R^{2}}{t-4s})\exp(-c\frac{R^{2}}{s})dv_{Z_{R}}(z) \\
  \leq &C_{m}R^{2}\text{Vol}(Y)\int_{0}^{t/4}\exp(-c\frac{tR^{2}}{s(t-4s)})ds
\leq C_{m}R^{2}\int_{0}^{t/4}\exp(-c\frac{R^{2}}{s})ds\\
\leq &C_{m}R^{2}\int_{0}^{t}\exp(-c\frac{R^{2}}{s})ds
\leq C_{m}R^{2}t\exp(-c\frac{R^{2}}{t}).
\end{aligned}\end{align}
 Recall that $n=\dim(S)$, for any $\alpha\in \Omega(S)$ and $t>0$, we have
\begin{align}\begin{aligned}\label{e.758}
|\psi^{-1}_{t}\alpha|_{\mathscr{C}^{0}}\leq C(1+t^{-\frac{n}{2}})|\alpha|_{\mathscr{C}^{0}}.
\end{aligned}\end{align}
 Then for $R\geq 1$ sufficiently large we get
\begin{align}\begin{aligned}\label{e.146}
&\Big|\int_{0}^{R^{2-\varepsilon}}\frac{dt}{t}\varphi\psi^{-1}_{t}\int_{Z_{R}}\tr_{s}\Big[\frac{N}{2}g_{b}(t,x;R)\Big]
dv_{Z_{R}}\Big|_{\mathscr{C}^{0}(U)}\\
&\leq C_{m}\int_{0}^{R^{2-\varepsilon}}\frac{dt}{t}\|\psi^{-1}_{t}\|\Big|\int_{Z_{R}}\tr_{s}\Big[\frac{N}{2}
g_{b}(t,x;R)\Big]
dv_{Z_{R}}(x)\Big|_{\mathscr{C}^{0}(U)}\\
 &\leq C_{m}\int_{0}^{R^{2-\varepsilon}}R^{2}(1+t^{-\frac{n}{2}}) e^{-c\frac{R^{2}}{t}}dt
 \leq C_{m} R^{2} \int_{0}^{R^{2-\varepsilon}}\big(1+R^{n}t^{-n/2}\big)e^{-cR^{2}/t}dt\\
& \leq C_{m}R^{4}\int_{R^{\varepsilon}}^{\infty}\big(1+u^{n/2}\big)e^{-cu}\frac{du}{u^{2}}
\leq C_{m}R^{4-2\varepsilon}\int_{R^{\varepsilon}}^{\infty}e^{-\frac{cu}{2}}du
 \leq C_{m}R^{4-2\varepsilon}e^{-cR^{\varepsilon}/4}.
\end{aligned}\end{align}
By (\ref{e.146}), we get (\ref{e.119}). In the same way we prove (\ref{e.144}). The proof is completed.
\end{proof}

By (\ref{e.128}) and Lemma \ref{l.7}, we get
\begin{align}\begin{aligned}\label{e.147}
\lim_{R\rightarrow \infty}\int_{0}^{R^{2-\varepsilon}}\text{II}_{R}(t)\frac{dt}{t}=0.\end{aligned}\end{align}
By Lemma \ref{l.9}, (\ref{e.92}), (\ref{e.617}) and (\ref{e.147}), we have proved Theorem \ref{t.6}.

\section{Large time contributions in the adiabatic limit}\label{s.2}

In this section, we will first study the spectral properties of the Hodge-Laplacian $(D^{Z_{R, b}})^{2}$ (resp. $(D^{Z_{i, R, b}})^{2},\,i=1,2$) parameterized by $b\in U\subset S$
  on $Z_{R}$ (resp. $Z_{i, R}$) under the assumption (\ref{e.356}) when the length of the cylinder $R$ extends to infinity. As we will see, in this process the spectral of these operators is divided into two groups: one is the $0-$spectrum, the other one is the
 collection of spectrum uniformly bounded away from $0$ with respect to $ b\in U$ and $R$ sufficiently large.
Then by using the existence of spectral gap we show that the large time contribution is null.

In Section \ref{ss3.3}, we state the main result, Theorem \ref{t.10}, on the properties of the spectral of the fiberwise Hodge-Laplacians.
 We establish the $L^{2}(Y)-$norm estimates of the $\lambda-$eigensections of the Hodge-Laplacians
 on the cylinder part along
the fibers $Z$.
In Section \ref{ss3.4}, we show that the eigensections lying in $\mathscr{W}_{R}$ (resp. $\mathscr{W}_{1, R}$, $\mathscr{W}_{2, R}$) correspond to the
eigenvalues equal to $0$ or decaying exponentially with respect to $R\rightarrow \infty$.
In Section \ref{ss3.5}, we show that the eigensections orthogonal to $\mathscr{W}_{R}$ (resp. $\mathscr{W}_{1, R}$, $\mathscr{W}_{2, R}$) own
the eigenvalue uniformly bounded away from $0$ by a uniform positive constant when $R$ goes to infinity. Then by a result in \cite{APS}
 we show that in fact there does not exist any eigenvalues decaying exponentially.
In Section \ref{ss3.6}, we show that the large time contribution is null.

\subsection{Spectral gaps uniform with respect to $R\rightarrow \infty$}\label{ss3.3}
In this section, we adopt the notation of Section \ref{ss3.2}. For $b\in U\subset S$, $(D^{Z_{R,b}})^{2}$ (resp. $(D^{Z_{i, R,b}})^{2},\, i=1,2,$)
is the $0$-degree part in $\Lambda (T_{b}^{*}S)$
of $\mathscr{F}_{R,b}$ (resp. $\mathscr{F}_{i, R,b}$) (see Definition \ref{d.9}).
Under the local trivialization (\ref{e.663}), they are smooth families of generalized Laplacians
along the fibers $Z_{R}$ (resp. $Z_{i, R}$) parameterized by $b\in U$.
The $0-$degree component $(D^{Y_{b}})^{2}$ of $\widetilde{\mathscr{F}}_{b}$ is a smooth family of generalized Laplacians along the fibers $Y$
parameterized by $b\in U$.

 We will omit the sub-script $b\in U$ indicating on which fiber we work, and only mention it when it is necessary. First,
 we announce our main theorem of this section.

\begin{thm}\label{t.10} Under the assumption (\ref{e.356}), there exist $R_{0}>0$ and $c>0$, such that for any $ R>R_{0},\, b\in U$, $i=1, 2$,
the eigenvalue $\mu$ of the operator $(D^{Z_{R,b}})^{2}$ (resp. $(D^{Z_{i, R,b}})^{2}$) is either bounded away from $0$ with $\mu>c$,
or it is equal to $0$. In other words, we have
\begin{align}\begin{aligned}\label{e.148}
{\rm Spec}((D^{Z_{R,b}})^{2})\subset\{0\}\cup[c, +\infty) \quad\big(\text{resp.}\,\,{\rm Spec}((D^{Z_{i,R,b}})^{2})\subset\{0\}\cup[c, +\infty)\big).
\end{aligned}\end{align}
\end{thm}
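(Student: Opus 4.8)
The plan is to prove Theorem~\ref{t.10} by establishing a uniform spectral gap via a contradiction argument, exploiting the product structure on the long cylinder $X_{[-R-\varepsilon,R+\varepsilon]}$ together with the vanishing assumption \eqref{e.356}, which says $\Ker(D^{Y_b})=0$, and hence gives a positive lower bound $\mu_0>0$ for $\Spec((D^{Y_b})^2)$ uniform in $b\in U$ (by compactness of $S$). Suppose the conclusion fails for $(D^{Z_{R,b}})^2$; then there exist $R_j\to\infty$, points $b_j\in U$, and eigensections $s_j\in\Omega^\bullet(Z_{R_j},F_{R_j}|_{Z_{R_j}})$ with $\|s_j\|_{L^2(Z_{R_j})}=1$ and eigenvalues $\mu_j\to 0$, $\mu_j>0$. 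After passing to a subsequence we may assume $b_j\to b_\infty$. The whole point is to show that, for $R$ large, a nonzero eigensection with small eigenvalue cannot exist: it would have to be essentially supported in the interior parts coming from $Z_1$ and $Z_2$ and essentially harmonic there, but the matching across the cylinder forces it to vanish on the cylinder, contradicting the global topology recorded in \eqref{e.434}--\eqref{e.86}.

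The key steps, in order, would be as follows. First, I would derive the $L^2(Y)$-norm decay estimate on the cylinder: writing $x=(y,x_m)\in Y_{[-R,R]}$ and using Lemma~\ref{l.52} (so that on the cylinder the fiberwise Laplacian is $(D^Y)^2-\partial^2/\partial x_m^2$), separate variables against the eigenbasis of $(D^{Y_b})^2$. A $\mu$-eigensection $s$ decomposes along the cylinder into components $f_\lambda(x_m)\,\phi_\lambda(y)$ where $(D^Y)^2\phi_\lambda=\lambda\phi_\lambda$ and $-f_\lambda''+\lambda f_\lambda=\mu f_\lambda$, i.e. $f_\lambda''=(\lambda-\mu)f_\lambda$. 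For $\lambda\geq\mu_0>\mu$ (all $\lambda$, since $\Ker(D^Y)=0$), the solutions are combinations of $e^{\pm\sqrt{\lambda-\mu}\,x_m}$, so each component decays exponentially away from the ends of the cylinder at rate $\geq\sqrt{\mu_0/2}$ once $\mu<\mu_0/2$; summing over $\lambda$ with the standard spectral/elliptic bookkeeping gives
\begin{align*}
\|s\|_{L^2(Y_{\{x_m=a\}})}\leq C\Big(e^{-\sqrt{\mu_0/2}\,(R-|a|)}\Big)\|s\|_{L^2(Z_R)},\qquad |a|\leq R,
\end{align*}
uniformly in $b$ and $R$. Second, I would use a cutoff supported away from the middle of the cylinder to split $s_j$ into pieces living on $Z_{1,R_j}$ and on $Z_{2,R_j}$ (extended by a collar), where by the exponential estimate $(D^{Z_{1,R_j}})^2$, $(D^{Z_{2,R_j}})^2$ act on these pieces with error $O(e^{-cR_j})$; commutator terms from the cutoff are controlled by the same decay estimate evaluated at $|a|\sim R_j/2$. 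This shows the $s_j$-pieces are asymptotically eigensections of the absolute (resp. relative) Laplacians with eigenvalue $\to 0$, hence asymptotically harmonic, and by elliptic regularity and compactness they converge (a subsequence) to a genuine harmonic pair on $(Z_{1,b_\infty},\partial)$ and $(Z_{2,b_\infty},Y)$. Third, I would invoke \eqref{e.356} again: harmonic forms on the half-infinite cylinder $Y_{\mathbb R_{\leq 0}}$ (resp. $Y_{\mathbb R_{\geq 0}}$) that are $L^2$ must vanish on the cylinder since $\Ker(D^Y)=0$; matched with the exponential decay this forces the limiting pieces, and ultimately $s_j$ for large $j$, to have $L^2$-norm concentrated on the fixed compact cores, and the gluing/Mayer--Vietoris compatibility (the long exact sequence \eqref{e.434}) pins down the only possibility, namely that $s_j$ lies (asymptotically) in the $0$-eigenspace — contradicting $\mu_j>0$ for $j$ large. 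The argument for $(D^{Z_{i,R,b}})^2$ is the same but simpler, one end of the cylinder only.

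The main obstacle I expect is making the ``asymptotically eigensection $\Rightarrow$ genuinely an eigensection'' step quantitative and uniform in $b\in U$: one needs that a nonzero $s_j$ with eigenvalue in the window $(0,c)$ cannot survive, and a naive limiting argument only shows the limit is harmonic, not that the small-but-positive eigenvalues are excluded. The clean way around this is the Atiyah--Patodi--Singer trick alluded to in the introduction (Section~\ref{ss3.5} in the paper's plan): one shows separately that (a) eigensections \emph{orthogonal} to the span $\mathscr W_R$ of the relevant harmonic-type sections have eigenvalue $\geq c>0$ uniformly, and (b) on $\mathscr W_R$ the restricted Laplacian, after the gluing identification, is either zero or bounded below — there is no intermediate exponentially small eigenvalue because such an eigensection would be forced to be honestly harmonic by the cylinder decay plus $\Ker(D^Y)=0$. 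So strictly speaking Theorem~\ref{t.10} should be proved jointly with the construction of $\mathscr W_R$ in the next subsections; here I would state and use the $L^2(Y)$ decay estimate as the technical engine, and defer the orthogonal-complement gap (step (a)) and the no-small-eigenvalue claim (step (b)) to Sections~\ref{ss3.4} and \ref{ss3.5} as announced, with Theorem~\ref{t.10} following by combining them. Along the way I would also record that all Sobolev and elliptic constants are $R$-independent because the local geometry on the cylinder is literally independent of $R$, exactly as in the proof of Lemma~\ref{l.42}.
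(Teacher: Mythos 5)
Your overall skeleton matches the paper's: the cylinder $L^2(Y)$-decay estimate (the paper's Lemmas~\ref{l.28}--\ref{l.30}) is indeed the engine, the model space $\mathscr W_R$ of truncated $L^2$-harmonic forms on the half-infinite cylinders is indeed the intermediary, and the two-step structure (a uniform gap on the orthogonal complement of $P_R^{[0,e^{-cR/16}\sqrt\delta]}\mathscr W_R$, then an elimination of the exponentially small nonzero eigenvalues) is exactly how the paper proceeds. You are also right to warn that a naive compactness/contradiction argument only produces a harmonic limit and does not by itself exclude an exponentially thin positive spectrum.

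However, your step (b) misidentifies the mechanism by which those exponentially small positive eigenvalues are killed. You claim ``such an eigensection would be forced to be honestly harmonic by the cylinder decay plus $\Ker(D^Y)=0$.'' That is not a sound argument and is not what the paper does: the cylinder decay only shows the eigensection is small in the middle of the neck, which is perfectly compatible with a nonzero exponentially small eigenvalue, and indeed the restricted quadratic form on $\mathscr W_R$ genuinely does have exponentially small expectation values (Lemma~\ref{l.17}). The paper's actual argument is a dimension count using the Atiyah--Patodi--Singer topological interpretation of $L^2$-harmonic forms on the half-cylinder (cited as \cite[Prop.~4.9]{APS}): first, Propositions~\ref{p.4}, \ref{p.13}, \ref{p.14a} show $P^{[0,e^{-R\sqrt\delta/16}]}_{i,R}|_{\mathscr W_{i,R}}$ is bijective, so $\dim\mathbb F^{[0,e^{-R\sqrt\delta/16}],(p)}_{i,R}=h^{(p)}_{i,\infty}:=\dim\Ker_{L^2}(D^{Z_{i,\infty}})^{2,(p)}$; second, APS plus the vanishing assumption (\ref{e.356}) give $h^{(p)}_{i,\infty}\leq h^{(p)}_i=\dim\mathbb F^{\{0\},(p)}_{i,R}$; since $\mathbb F^{\{0\}}_{i,R}\subset\mathbb F^{[0,e^{-R\sqrt\delta/16}]}_{i,R}$, equality follows and the interval $(0,e^{-R\sqrt\delta/16}]$ is empty of spectrum (Proposition~\ref{p.7}). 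Without this APS input, which replaces your ``forced to be harmonic'' reasoning, the argument has a genuine gap: one cannot a priori exclude that the cut-off harmonic forms in $\mathscr W_R$ spread out into more eigenspaces than the zero eigenspace. You should also retire the initial compactness/limiting paragraph, which plays no role in the final argument once the orthogonality-and-counting structure is in place.
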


Set
\begin{align}\begin{aligned}\label{e.669}
\delta=\inf_{b\in U}\min\{\mu>0|\,\,\mu \in \text{Spec}((D^{Y_{b}})^{2})\},
\end{aligned}\end{align}
from our assumption (\ref{e.356}) we can assume that $\delta>0$, since $b$ varies in the compact subset $U$.
 Let
$
\{\phi_{i}\}_{i=1}^{\infty}
$
be an orthonormal basis
 of $L^{2}(Y, \Lambda (T^{*}Y)\otimes F)$ consisting of smooth eigensections of $(D^{Y})^{2}$ such that
\begin{align}\begin{aligned}\label{e.271}
 (D^{Y})^{2}\phi_{i}=\mu_{i}\phi_{i}, \quad 0<\delta\leq \mu_{1}\leq \mu_{2}\leq \cdots \leq \mu_{i}\leq \cdots \rightarrow + \infty.
\end{aligned}\end{align}
Let $\psi$ be a smooth eigensection of $(D^{Z_{R}})^{2}$ such that
\begin{align}\begin{aligned}\label{e.272}
 (D^{Z_{R}})^{2}\psi=\lambda \psi, \quad 0\leq\lambda<\frac{3\delta}{4}, \quad \|\psi\|_{L^{2}(Z_{R})}=1.
\end{aligned}\end{align}
On the cylinder $Y_{[-R,R]}$ we expand $\psi$ in term of the basis (\ref{e.271})
\begin{align}\begin{aligned}\label{e.273}
\psi(y, x_{m})=\sum_{k=1}^{\infty}f_{k}(x_{m})\phi_{k}(y)+\sum_{k=1}^{\infty}g_{k}(x_{m})dx_{m}\wedge\phi_{k}(y), \quad (y, x_{m})\in Y_{[-R,R]}.
\end{aligned}\end{align}

Using the product structures, on $Y_{[-R,R]}$ we have
\begin{align}\begin{aligned}\label{e.274}
(D^{Z_{R}})^{2}=-\frac{\partial^{2}}{\partial x^{2}_{m}}+(D^{Y})^{2}.\end{aligned}\end{align}
By (\ref{e.272}), (\ref{e.273}) and (\ref{e.274}), we get
\begin{align}\begin{aligned}\label{e.275}
     \frac{\partial^{2}f_{k}}{\partial x^{2}_{m}}  =& (\mu_{k}-\lambda)f_{k},\quad
     \frac{\partial^{2}g_{k}}{\partial x^{2}_{m}} =&  (\mu_{k}-\lambda)g_{k}.
\end{aligned}\end{align}
By (\ref{e.271}), (\ref{e.272}), (\ref{e.275}), for any $k\in \mathbb{N}$ and $\lambda<u_{k}$ we find
\begin{align}\begin{aligned}\label{e.276}
\left\{\begin{array}{ll}
     f_{k}(y, x_{m})  = &a_{k}e^{-\sqrt{\mu_{k}-\lambda}\,x_{m}}+b_{k}e^{\sqrt{\mu_{k}-\lambda}\,x_{m}}, \\
     g_{k}(y, x_{m})  = &c_{k}e^{-\sqrt{\mu_{k}-\lambda}\,x_{m}}+d_{k}e^{\sqrt{\mu_{k}-\lambda}\,x_{m}},
\end{array}
\right.
\end{aligned}\end{align}
where $a_{k}, b_{k}, c_{k}, d_{k}$ are some constants. Let
\begin{align}\begin{aligned}\label{e.278}
\psi^{+}(y, x_{m})&:=\sum_{k}e^{-\sqrt{\mu_{k}-\lambda}\,x_{m}}\big(a_{k}\phi_{k}(y)+c_{k}dx_{m}\wedge\phi_{k}(y)\big),\\
\psi^{-}(y, x_{m})&:=\sum_{k}e^{\sqrt{\mu_{k}-\lambda}\,x_{m}}\big(b_{k}\phi_{k}(y)+d_{k}dx_{m}\wedge\phi_{k}(y)\big).
\end{aligned}\end{align}
Then substitute (\ref{e.276}) into (\ref{e.273}), it produces
\begin{align}\begin{aligned}\label{e.277}
&\psi(y, x_{m})=\psi^{+}(y, x_{m})+\psi^{-}(y, x_{m}).
\end{aligned}\end{align}

\begin{lemma}\label{l.28}
 There exist constants $C>0$ and $R_{0}>0$ such that for any $R>R_{0}$, $-\frac{3R}{4}\leq x_{m}\leq \frac{3R}{4}$
and $\psi$ a smooth eigensection of $(D^{Z_{R}})^{2}$ satisfying (\ref{e.272}), we have
\begin{align}\begin{aligned}\label{e.290}
\left\|\psi  \right\|_{L^{2}(Y\times\{x_{m}\})}\leq C e^{-\frac{\sqrt{\delta}}{16}R}.
\end{aligned}\end{align}
As its consequence, we also have $\left\|D^{Z_{R}}\psi  \right\|_{L^{2}(Y\times\{x_{m}\})}\leq C \lambda^{\frac{1}{2}}e^{-\frac{\sqrt{\delta}}{16}R}$.
\end{lemma}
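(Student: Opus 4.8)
The plan is to exploit the explicit form \eqref{e.277} of $\psi$ on the cylinder $Y_{[-R,R]}$. Write $\psi=\psi^{+}+\psi^{-}$ as in \eqref{e.278}, where each mode is multiplied by $e^{\mp\sqrt{\mu_k-\lambda}\,x_m}$. Since $\lambda<\tfrac{3\delta}{4}$ and $\mu_k\ge\delta$ for all $k$ by \eqref{e.271}, every exponent satisfies $\sqrt{\mu_k-\lambda}\ge\sqrt{\delta-3\delta/4}=\tfrac{\sqrt{\delta}}{2}$. Thus $\psi^{+}$ decays (in the forward $x_m$-direction) at least like $e^{-\frac{\sqrt{\delta}}{2}x_m}$ and $\psi^{-}$ like $e^{\frac{\sqrt{\delta}}{2}x_m}$. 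The first step is to make this precise: for a fixed slice $x_m=t_0$ one has, using orthonormality of $\{\phi_k\}$ in $L^2(Y)$,
\begin{align}\begin{aligned}\label{e.slicenorm}
\|\psi^{+}\|^{2}_{L^{2}(Y\times\{t_0\})}=\sum_{k}e^{-2\sqrt{\mu_k-\lambda}\,t_0}\big(|a_k|^{2}+|c_k|^{2}\big),
\end{aligned}\end{align}
and similarly for $\psi^{-}$ with $e^{+2\sqrt{\mu_k-\lambda}\,t_0}$.

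Next I would relate the coefficients $a_k,c_k$ (resp.\ $b_k,d_k$) to the global $L^2$-norm $\|\psi\|_{L^2(Z_R)}=1$. The key idea is that $\psi^{+}$ is \emph{largest} near $x_m=-R$ and $\psi^{-}$ largest near $x_m=R$, so integrating \eqref{e.slicenorm} over a suitable sub-cylinder gives a lower bound on the integrated norm in terms of $\sum_k(|a_k|^2+|c_k|^2)$. Concretely, integrating $\|\psi^{+}\|^2_{L^2(Y\times\{x_m\})}$ over $x_m\in[-R,0]$ produces a factor $\int_{-R}^{0}e^{-2\sqrt{\mu_k-\lambda}\,x_m}dx_m\ge \frac{1}{2\sqrt{\mu_k-\lambda}}\big(e^{2\sqrt{\mu_k-\lambda}R}-1\big)$, which is enormous for large $R$; since the total is bounded by $\|\psi\|^2_{L^2(Z_R)}=1$ (after separating the $\psi^{+}$ and $\psi^{-}$ contributions — see below), this forces $\sum_k(|a_k|^2+|c_k|^2)\le C e^{-\sqrt{\delta}R}$, and symmetrically $\sum_k(|b_k|^2+|d_k|^2)\le C e^{-\sqrt{\delta}R}$. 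Feeding these back into \eqref{e.slicenorm} at a slice with $|x_m|\le\tfrac{3R}{4}$ yields $\|\psi^{\pm}\|_{L^2(Y\times\{x_m\})}\le C e^{-\frac{\sqrt{\delta}}{2}R}\cdot e^{\frac{\sqrt{\delta}}{2}\cdot\frac{3R}{4}}$, which is bounded by $C e^{-\frac{\sqrt{\delta}}{8}R}$; combining the two halves and adjusting the constant in the exponent to $\tfrac{\sqrt{\delta}}{16}$ (to absorb the $\lambda$-dependence of the exponents and cross terms) gives \eqref{e.290}. The final assertion about $D^{Z_R}\psi$ follows because on the cylinder $D^{Z_R}$ differentiates the modes, multiplying the $k$-th coefficient by a bounded combination of $\sqrt{\mu_k-\lambda}$ and the $Y$-eigenvalue contribution; a cleaner route is to use $(D^{Z_R})^2\psi=\lambda\psi$, so $\|D^{Z_R}\psi\|_{L^2(Y\times\{x_m\})}$ is controlled by $\lambda^{1/2}$ times slice-norms of $\psi$ on a neighbouring region, again bounded by \eqref{e.290}.

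The main obstacle is the cross term: $\|\psi\|^2_{L^2}$ is not simply $\|\psi^{+}\|^2+\|\psi^{-}\|^2$ because $\psi^{+}$ and $\psi^{-}$ need not be $L^2$-orthogonal over a slice — they share the same $\phi_k$'s, so the slice inner product contains terms $2\,\mathrm{Re}(a_k\bar b_k+c_k\bar d_k)$ with no exponential suppression. I would handle this by integrating over a long interval rather than a single slice: $\int_{-R}^{R}e^{(\sqrt{\mu_k-\lambda})(-x_m)}e^{(\sqrt{\mu_k-\lambda})x_m}\,dx_m$ for the cross term grows only linearly in $R$, whereas the pure $\psi^{+}$ (resp.\ $\psi^{-}$) integrals grow exponentially; hence for $R$ large the diagonal terms dominate and the cross term can be absorbed, e.g.\ via $2|a_k\bar b_k|\le |a_k|^2 e^{\sqrt{\mu_k-\lambda}R}+|b_k|^2 e^{-\sqrt{\mu_k-\lambda}R}$ chosen to match the exponential weights. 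This yields $\sum_k(|a_k|^2+|c_k|^2)+\sum_k(|b_k|^2+|d_k|^2)\le C e^{-\sqrt{\delta}R}$ for $R>R_0$, which is the quantitative heart of the lemma; everything else is the bookkeeping sketched above.
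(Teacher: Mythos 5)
Your approach is structurally the same as the paper's: expand $\psi=\psi^{+}+\psi^{-}$ in the basis $\{\phi_k\}$, compare the integrated $L^{2}$-norm of each piece with $\|\psi\|_{L^{2}(Z_R)}=1$, separate the diagonal contributions from the cross terms, and observe that the cross term grows only like $R$ while the diagonal terms grow exponentially. The paper organizes exactly this into $T_{\mathrm I}$ and $T_{\mathrm{II}}$ in \eqref{e.280} and reaches the same conclusion.

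However, there is a genuine gap in your feedback step. From the integrated bound you deduce the \emph{unweighted} estimate $\sum_k\bigl(|a_k|^2+|c_k|^2\bigr)\le Ce^{-\sqrt{\delta}R}$, and then claim this yields $\|\psi^{\pm}\|_{L^2(Y\times\{x_m\})}\le C e^{-\frac{\sqrt{\delta}}{2}R}\, e^{\frac{3\sqrt{\delta}}{8}R}$ for $|x_m|\le \frac{3R}{4}$. This does not follow: the slice norm at $x_m = -\frac{3R}{4}$ equals $\sum_k e^{\frac{3R}{2}\sqrt{\mu_k-\lambda}}\bigl(|a_k|^2+|c_k|^2\bigr)$, and the weight $e^{\frac{3R}{2}\sqrt{\mu_k-\lambda}}$ grows without bound as $k\to\infty$, so the unweighted control of $\sum_k\bigl(|a_k|^2+|c_k|^2\bigr)$ is insufficient to bound it. Your own integration step in fact delivers more — the factor $\frac{1}{2\sqrt{\mu_k-\lambda}}\bigl(e^{2\sqrt{\mu_k-\lambda}R}-1\bigr)$ grows with $k$ — but you discard this $k$-dependence when stating the conclusion as an unweighted sum. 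The correct intermediate statement is the \emph{weighted} bound $\sum_k e^{\frac{3R}{2}\sqrt{\mu_k-\lambda}}|\sigma_k|^2 \le C e^{-\frac{\sqrt{\delta}}{8}R}$, which is \eqref{e.288} in the paper, and it is precisely this weight that absorbs the exponential growth on the slices $|x_m|\le\frac{3R}{4}$ in \eqref{e.289}. With the weighted bound carried through, your argument closes; as written, it does not.
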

\begin{proof}
We denote
\begin{align}\begin{aligned}\label{e.280}
T_{\text{I}}&:=\int_{-R}^{+R}\left\|\psi^{+}\right\|_{L^{2}(Y \times \{x_{m}\})}^{2}+\left\|\psi^{-}\right\|_{L^{2}(Y \times \{x_{m}\})}^{2}dx_{m},\\
T_{\text{II}}&:=2\int_{-R}^{+R}\Re\left\langle\psi^{+}, \psi^{-} \right\rangle_{L^{2}(Y \times \{x_{m}\})}dx_{m}.
\end{aligned}\end{align}
In (\ref{e.272}), we have supposed that $\|\psi\|_{L^{2}(Z_{R})}=1$, then by (\ref{e.277}) and (\ref{e.280})
\begin{align}\begin{aligned}\label{e.279}
1&=\|\psi\|^{2}_{L^{2}(Z_{R})} \geq
\int_{-R}^{+R}\left\|\psi^{+}+\psi^{-}\right\|_{L^{2}(Y \times \{x_{m}\})}^{2}dx_{m}=T_{\text{I}}+T_{\text{II}}.
\end{aligned}\end{align}

\textbf{For the first term $T_{\text{I}}$}. We use a simpler notation
\begin{align}\begin{aligned}\label{e.283}
|\sigma_{k}|:=\sqrt{|a_{k}|^{2}+|c_{k}|^{2}+|b_{k}|^{2}+|d_{k}|^{2}}.
\end{aligned}\end{align}\index{$\mid\sigma_{k}\mid$}
By (\ref{e.283}), (\ref{e.278}) and (\ref{e.280}), we get
\begin{align}\begin{aligned}\label{e.281}
T_{\text{I}}&\geq \int_{-R}^{+R}\sum_{k}e^{-2x_{m}\sqrt{\mu_{k}-\lambda}}\left(|a_{k}|^{2}+|c_{k}|^{2}\right)dx_{m}\\
&\qquad\qquad+\int_{-R}^{+R}\sum_{k}e^{2x_{m}\sqrt{\mu_{k}-\lambda}}\left(|b_{k}|^{2}+|d_{k}|^{2}\right)dx_{m} \\
&=\sum_{k}\frac{e^{2R\sqrt{\mu_{k}-\lambda}}-e^{-2R\sqrt{\mu_{k}-\lambda}}}{2\sqrt{\mu_{k}-\lambda}}|\sigma_{k}|^{2}.
\end{aligned}\end{align}
For $x >0$, there exist a constant $C_{0}>0$ such that
\begin{align}\begin{aligned}\label{e.282}
\frac{e^{x}-e^{-x}}{x}\geq C_{0}e^{\frac{7}{8}x}.
\end{aligned}\end{align}
By (\ref{e.272}), (\ref{e.283}), (\ref{e.281}) and (\ref{e.282}), we find
\begin{align}\begin{aligned}\label{e.284}
T_{\text{I}} \geq& \sum_{k}C_{0}R e^{\frac{7R}{4}\sqrt{\mu_{k}-\lambda}}|\sigma_{k}|^{2}
\geq &C_{0}R e^{\frac{R}{8}\sqrt{\delta}}\sum_{k}e^{\frac{3R}{2}\sqrt{\mu_{k}-\lambda}}
|\sigma_{k}|^{2}.
\end{aligned}\end{align}

\textbf{For the second term $T_{\text{II}}$}. By (\ref{e.278}) and (\ref{e.280}), we get
\begin{align}\begin{aligned}\label{e.285}
T_{\text{II}}&=\int_{-R}^{+R}\left\{\sum_{k}(a_{k}\bar{b}_{k}+c_{k}\bar{d}_{k}+\bar{a}_{k}b_{k}+\bar{c}_{k}d_{k})\right\}dx_{m}\\
&=2R\sum_{k}(a_{k}\bar{b}_{k}+c_{k}\bar{d}_{k}+\bar{a}_{k}b_{k}+\bar{c}_{k}d_{k}),
\end{aligned}\end{align}
hence we have by (\ref{e.283}) and (\ref{e.285})
\begin{align}\begin{aligned}\label{e.286}
|T_{\text{II}}|\leq 2R\sum_{k}|\sigma_{k}|^{2}\leq 2R\sum_{k}e^{\frac{3R}{2}\sqrt{\mu_{k}-\lambda}}
|\sigma_{k}|^{2}.
\end{aligned}\end{align}
By (\ref{e.280}), (\ref{e.279}), (\ref{e.284}) and (\ref{e.286}), we find for $R\geq 1$ large enough
\begin{align}\begin{aligned}\label{e.287}
1&=\|\psi\|^{2}\geq T_{\text{I}}+T_{\text{II}}\geq C_{0}R e^{\frac{R}{8}\sqrt{\delta}}\sum_{k}e^{\frac{3R}{2}\sqrt{\mu_{k}-\lambda}}
|\sigma_{k}|^{2}+T_{\text{II}}\\
&\geq \frac{1}{2}C_{0}R e^{\frac{R}{8}\sqrt{\delta}}\sum_{k}e^{\frac{3R}{2}\sqrt{\mu_{k}-\lambda}}
|\sigma_{k}|^{2}+\Big\{\frac{1}{2}C_{0}R e^{\frac{R}{8}\sqrt{\delta}}-2R\Big\}\sum_{k}e^{\frac{3R}{2}\sqrt{\mu_{k}-\lambda}}
|\sigma_{k}|^{2}\\
&\geq \frac{1}{2}C_{0}R e^{\frac{R}{8}\sqrt{\delta}}\sum_{k}e^{\frac{3R}{2}\sqrt{\mu_{k}-\lambda}}
|\sigma_{k}|^{2}.\end{aligned}\end{align}
In other words, for $R\geq1$ large enough we have
\begin{align}\begin{aligned}\label{e.288}
\sum_{k}e^{\frac{3R}{2}\sqrt{\mu_{k}-\lambda}}
|\sigma_{k}|^{2}\leq \frac{2}{C_{0}R}e^{-\frac{R}{8}\sqrt{\delta}}\leq 2C^{-1}_{0}e^{-\frac{R}{8}\sqrt{\delta}}.
\end{aligned}\end{align}
By (\ref{e.283}), (\ref{e.277}) and (\ref{e.288}), we obtain the $L^{2}-$norm estimate of $\psi$ in the $Y$-direction for any $-\frac{3R}{4}\leq x_{m}\leq \frac{3R}{4}$
\begin{align}\begin{aligned}\label{e.289}
&\left\|\psi\right\|_{L^{2}(Y\times\{x_{m}\} )}^{2}
\leq 2\left(\big\|\psi^{+}\big\|_{L^{2}(Y\times\{x_{m}\} )}^{2}+\big\|\psi^{-}\big\|_{L^{2}(Y\times\{x_{m}\} )}^{2}\right)\\
&=\sum_{k}2e^{-2x_{m}\sqrt{\mu_{k}-\lambda}}\left(|a_{k}|^{2}+|c_{k}|^{2}\right)+\sum_{k}2e^{2x_{m}\sqrt{\mu_{k}-\lambda}}\left(|b_{k}|^{2}+|d_{k}|^{2}\right)\\
&\leq 2\sum_{k}e^{\frac{3R}{2}\sqrt{\mu_{k}-\lambda}}|\sigma_{k}|^{2}
\leq 4C^{-1}_{0}e^{-\frac{\sqrt{\delta}}{8}R}.
\end{aligned}\end{align}
Now we have finished the proof.
\end{proof}

Next we try to get similar lemmas for the eigensections of $(D^{Z_{1, R}})^{2}$ and $(D^{Z_{2, R}})^{2}$. Let
\begin{align}\begin{aligned}\label{e.291}
\left\{ \begin{array}{c}
     (D^{Z_{1, R}})^{2}\psi_{1}=\lambda\cdot \psi_{1},\, 0\leq\lambda<\frac{3\delta}{4}, \quad \|\psi_{1}\|_{L^{2}(Z_{1, R})}=1, \\
     \left.
\left(i(e_{\mathbf{n}})\psi_{1}\right) \right |_{Y}=\left.\left(i(e_{\mathbf{n}})d^{Z_{1, R}}\psi_{1} \right)\right |_{Y}=0,
 \end{array}
 \right.\end{aligned}\end{align}
where $e_{\mathbf{n}}$ denotes the inward-pointing normal vector along the boundary. On $Y_{[-R,0]}$ we expand $\psi_{1}$ in term of basis (\ref{e.271})
\begin{align}\begin{aligned}\label{e.292}
\psi_{1}(y, x_{m})=\sum_{k=1}^{\infty}f_{k}(x_{m})\phi_{k}(y)+\sum_{k=1}^{\infty}g_{k}(x_{m})dx_{m}\wedge\phi_{k}(y),
\quad (y, x_{m})\in Y_{(-\infty,0]}.
\end{aligned}\end{align}
Similar to (\ref{e.274})-(\ref{e.276}), and using (\ref{e.291}), (\ref{e.292}), for absolute boundary conditions, we find
$\frac{\partial f_{k}}{\partial x_{m}}(0) = 0,\, g_{k}(0) = 0$, then it follows that
\begin{align}\begin{aligned}\label{e.293}
     a_{k}=b_{k},\quad
     c_{k}=-d_{k}.
\end{aligned}\end{align}
We set
\begin{align}\begin{aligned}\label{e.295}
\psi^{+}_{1}(y, x_{m})&:=\sum_{k}e^{-\sqrt{\mu_{k}-\lambda}\,x_{m}}\big(a_{k}\phi_{k}(y)+c_{k}dx_{m}\wedge\phi_{k}(y)\big), \\
\psi^{-}_{1}(y, x_{m})&:=\sum_{k}e^{\sqrt{\mu_{k}-\lambda}\,x_{m}}\big(a_{k}\phi_{k}(y)-c_{k}dx_{m}\wedge\phi_{k}(y)\big),
\end{aligned}\end{align}
then substitute (\ref{e.293}) into (\ref{e.292}), we have on $Y_{[-R,0]}$
\begin{align}\begin{aligned}\label{e.294}
\psi_{1}(y, x_{m})=\psi^{+}_{1}(y, x_{m})+\psi^{-}_{1}(y, x_{m}).
\end{aligned}\end{align}

\begin{lemma}\label{l.29}
There exist constants $C>0$ and $R_{0}>0$ such that for any $R>R_{0}$, $-\frac{3R}{4}\leq x_{m}\leq 0$
and $\psi_{1}$ a smooth eigensection of $(D^{Z_{1, R}})^{2}$ satisfying (\ref{e.291}), we have
\begin{align}\begin{aligned}\label{e.305}
\left\|\psi_{1}\right\|_{L^{2}(Y\times \{x_{m}\})}\leq C e^{-\frac{\sqrt{\delta}}{16}R}.
\end{aligned}\end{align}
As its consequence, we also have $\left\|D^{Z_{1,R}}\psi_{1}\right\|_{L^{2}(Y\times\{x_{m}\})}\leq C \lambda^{\frac{1}{2}}e^{-\frac{\sqrt{\delta}}{16}R}$.
\end{lemma}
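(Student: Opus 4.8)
The plan is to run the proof of Lemma \ref{l.28} almost verbatim, this time on the half-cylinder $Y_{[-R,0]}$ and with the reflection symmetry forced by the absolute boundary conditions. The separation of variables is already in place: on $Y_{[-R,0]}$ one has $\psi_1=\psi_1^{+}+\psi_1^{-}$ as in (\ref{e.294})--(\ref{e.295}), and the boundary conditions at $Y\times\{0\}$ impose $a_k=b_k$ and $c_k=-d_k$, see (\ref{e.293}). First I would record, using the orthonormality of $\{\phi_k\}$ and the elementary integrals of $e^{\mp 2x_m\sqrt{\mu_k-\lambda}}$ over $[-R,0]$, that $T_{\mathrm{I}}:=\int_{-R}^{0}\big(\|\psi_1^{+}\|_{L^2(Y\times\{x_m\})}^{2}+\|\psi_1^{-}\|_{L^2(Y\times\{x_m\})}^{2}\big)\,dx_m=\sum_k\frac{e^{2R\sqrt{\mu_k-\lambda}}-e^{-2R\sqrt{\mu_k-\lambda}}}{2\sqrt{\mu_k-\lambda}}(|a_k|^{2}+|c_k|^{2})$, which is exactly the expression appearing in (\ref{e.281}): in the notation (\ref{e.283}), the boundary relations give $|\sigma_k|^{2}=2(|a_k|^{2}+|c_k|^{2})$.

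Next I would bound the cross term. Since $\psi_1^{+}$ and $\psi_1^{-}$ carry the reciprocal exponentials $e^{\mp x_m\sqrt{\mu_k-\lambda}}$, their slicewise inner product is the $x_m$-independent number $\langle\psi_1^{+},\psi_1^{-}\rangle_{L^2(Y\times\{x_m\})}=\sum_k(|a_k|^{2}-|c_k|^{2})$, so $T_{\mathrm{II}}:=2\int_{-R}^{0}\Re\langle\psi_1^{+},\psi_1^{-}\rangle_{L^2(Y\times\{x_m\})}\,dx_m=2R\sum_k(|a_k|^{2}-|c_k|^{2})$, whence $|T_{\mathrm{II}}|\leq 2R\sum_k e^{\frac{3R}{2}\sqrt{\mu_k-\lambda}}(|a_k|^{2}+|c_k|^{2})$. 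Using (\ref{e.282}) together with $\mu_k-\lambda\geq\delta/4$ (which follows from (\ref{e.271}) and the constraint $\lambda<3\delta/4$ in (\ref{e.291})) I would bound $T_{\mathrm{I}}$ below by $C_0 R\,e^{\frac{\sqrt{\delta}}{8}R}\sum_k e^{\frac{3R}{2}\sqrt{\mu_k-\lambda}}(|a_k|^{2}+|c_k|^{2})$, exactly as in (\ref{e.284}). Then the normalization $1=\|\psi_1\|_{L^2(Z_{1,R})}^{2}\geq T_{\mathrm{I}}+T_{\mathrm{II}}$ forces, for $R$ large, that $\sum_k e^{\frac{3R}{2}\sqrt{\mu_k-\lambda}}(|a_k|^{2}+|c_k|^{2})\leq \frac{2}{C_0 R}e^{-\frac{\sqrt{\delta}}{8}R}$, just as in (\ref{e.287})--(\ref{e.288}).

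Finally, for $-\frac{3R}{4}\leq x_m\leq 0$ one has $e^{-2x_m\sqrt{\mu_k-\lambda}}\leq e^{\frac{3R}{2}\sqrt{\mu_k-\lambda}}$ and $e^{2x_m\sqrt{\mu_k-\lambda}}\leq 1$, so $\|\psi_1\|_{L^2(Y\times\{x_m\})}^{2}\leq 2\big(\|\psi_1^{+}\|^{2}+\|\psi_1^{-}\|^{2}\big)\leq 4\sum_k e^{\frac{3R}{2}\sqrt{\mu_k-\lambda}}(|a_k|^{2}+|c_k|^{2})\leq C^{2}e^{-\frac{\sqrt{\delta}}{8}R}$ for $R$ large, and taking square roots gives (\ref{e.305}). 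The consequence for $\|D^{Z_{1,R}}\psi_1\|_{L^2(Y\times\{x_m\})}$ follows either by differentiating the expansion and absorbing the extra factor $\sqrt{\mu_k-\lambda}$ (and the $D^Y$-action) into the weight $e^{\frac{3R}{2}\sqrt{\mu_k-\lambda}}$, or by re-running the estimate keeping $(D^{Z_{1,R}})^{2}\psi_1=\lambda\psi_1$ in mind. The only point that genuinely differs from Lemma \ref{l.28} is the reflection relation (\ref{e.293}), which turns $T_{\mathrm{II}}$ into a difference of squares rather than a sum; this costs nothing, since $|T_{\mathrm{II}}|$ is still dominated by $T_{\mathrm{I}}$ in the same fashion, so I expect no real obstacle beyond the bookkeeping.
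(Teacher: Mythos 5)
Your proposal follows the paper's own proof of Lemma~\ref{l.29} essentially line for line: the same decomposition $\psi_1=\psi_1^{+}+\psi_1^{-}$ from~(\ref{e.294})--(\ref{e.295}), the same split of $\|\psi_1\|^2$ into the quantities the paper calls $S_{\mathrm{I}}$ and $S_{\mathrm{II}}$ (your $T_{\mathrm{I}},T_{\mathrm{II}}$), the same lower bound on $S_{\mathrm{I}}$ via~(\ref{e.282}) and the spectral gap $\mu_k-\lambda\geq\delta/4$, and the same absorption of the cross term. The observation that the reflection relation $a_k=b_k$, $c_k=-d_k$ turns the cross term into $2R\sum_k(|a_k|^2-|c_k|^2)$ without affecting the dominance argument is exactly the point the paper makes in~(\ref{e.300})--(\ref{e.303}), so this is a faithful reconstruction rather than an alternative route.
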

\begin{proof}
We denote
\begin{align}\begin{aligned}\label{e.297}
S_{\text{I}}&:=\int_{-R}^{0}\left\|\psi^{+}_{1}\right\|_{L^{2}(Y\times \{x_{m}\})}^{2}+\left\|\psi^{-}_{1}\right\|_{L^{2}(Y\times \{x_{m}\})}^{2}dx_{m}, \\
S_{\text{II}}&:=2\int_{-R}^{0}\Re\left\langle\psi^{+}_{1}, \psi^{-}_{1}\right\rangle_{L^{2}(Y\times \{x_{m}\})} dx_{m}.\end{aligned}\end{align}
By (\ref{e.291}), (\ref{e.292}), (\ref{e.294}) and (\ref{e.297}), we find
\begin{align}\begin{aligned}\label{e.296}
1&=\|\psi_{1}\|_{L^{2}(Z_{1, R})}^{2}\geq \int_{-R}^{0}\left\|\psi^{+}_{1}+\psi^{-}_{1}\right\|_{L^{2}(Y\times \{x_{m}\})}^{2}dx_{m}=S_{\text{I}}+S_{\text{II}}.
\end{aligned}\end{align}

\textbf{For the first term $S_{\text{I}}$}. By (\ref{e.295}) and (\ref{e.297}), we get
\begin{align}\begin{aligned}\label{e.298}
S_{\text{I}}&\geq \int_{-R}^{0}\sum_{k}e^{-2x_{m}\sqrt{\mu_{k}-\lambda}}\left(|a_{k}|^{2}+|c_{k}|^{2}\right)dx_{m}\\
&\qquad\qquad+\int_{-R}^{0}\sum_{k}e^{2x_{m}\sqrt{\mu_{k}-\lambda}}\left(|a_{k}|^{2}+|c_{k}|^{2}\right)dx_{m} \\
&=\sum_{k}\frac{e^{2R\sqrt{\mu_{k}-\lambda}}-e^{-2R\sqrt{\mu_{k}-\lambda}}}{2\sqrt{\mu_{k}-\lambda}}(|a_{k}|^{2}+|c_{k}|^{2}).
\end{aligned}\end{align}
By(\ref{e.282}), (\ref{e.291}) and (\ref{e.298}), we get
\begin{align}\begin{aligned}\label{e.299}
S_{\text{I}}&\geq C_{0} R \sum_{k}e^{\frac{7R}{4}\sqrt{\mu_{k}-\lambda}}
(|a_{k}|^{2}+|c_{k}|^{2})
\geq C_{0} R e^{\frac{R}{8}\sqrt{\mu_{k}}} \sum_{k}e^{\frac{3R}{2}\sqrt{\mu_{k}-\lambda}}
(|a_{k}|^{2}+|c_{k}|^{2}).
\end{aligned}\end{align}
\textbf{For the second term $S_{\text{II}}$}. By (\ref{e.295}) and (\ref{e.297}), we get
\begin{align}\begin{aligned}\label{e.300}
S_{\text{II}}&:=2\int_{-R}^{0}\Re\left\langle\psi^{+}_{1}, \psi^{-}_{1}\right\rangle_{L^{2}(Y\times \{x_{m}\})} dx_{m}\\
&=\int_{-R}^{0}\left\{\sum_{k}a_{k}\bar{a}_{k}-c_{k}\bar{c}_{k}+\bar{a}_{k}a_{k}-\bar{c}_{k}c_{k}\right\}
dx_{m}=2R\sum_{k}(|a_{k}|^{2}-|c_{k}|^{2}).
\end{aligned}\end{align}

 When $R$ is sufficiently large, we have $$
\Big|2R\sum_{k}(|a_{k}|^{2}-|c_{k}|^{2})\Big|
\leq \frac{C_{0}}{2} R \,e^{\frac{R}{8}\sqrt{\mu_{k}}} \sum_{k}e^{\frac{3R}{2}\sqrt{\mu_{k}-\lambda}}
(|a_{k}|^{2}
+|c_{k}|^{2}),$$ hence by (\ref{e.296}), (\ref{e.299}) and (\ref{e.300}), we get for $R\geq 1$ large enough
\begin{align}\begin{aligned}\label{e.303}
&\frac{C_{0}R}{2}e^{\frac{R}{8}\sqrt{\delta}}\sum_{k}e^{\frac{3R}{2}\sqrt{\mu_{k}-\lambda}}
(|a_{k}|^{2}+|c_{k}|^{2})\\
&\leq  C_{0} R e^{\frac{R}{8}\sqrt{\mu_{k}}} \sum_{k}e^{\frac{3R}{2}\sqrt{\mu_{k}-\lambda}}
(|a_{k}|^{2}
+|c_{k}|^{2})
+2R\sum_{k}(|a_{k}|^{2}-|c_{k}|^{2})\\
&\leq S_{\text{I}}+S_{\text{II}}\leq 1.
\end{aligned}\end{align}
By (\ref{e.295}), (\ref{e.294}) and (\ref{e.303}), we obtain the $L^{2}-$norm estimate of $\psi_{1}$ in $Y$-direction for any $-\frac{3R}{4}\leq x_{m}\leq 0$ and $R\geq 1$ large enough,
\begin{align}\begin{aligned}\label{e.304}
&\left\|\psi_{1}\right\|^{2}_{L^{2}(Y\times \{x_{m}\})}
\leq 2\left(\big\|\psi_{1}^{+}\big\|^{2}_{L^{2}(Y\times \{x_{m}\})}+\big\|\psi_{1}^{-}\big\|^{2}_{L^{2}(Y\times \{x_{m}\})}\right)\\
&=\sum_{k}2e^{-2x_{m}\sqrt{\mu_{k}-\lambda}}\left(|a_{k}|^{2}+|c_{k}|^{2}\right)+\sum_{k}2e^{2x_{m}\sqrt{\mu_{k}-\lambda}}\left(|a_{k}|^{2}+|c_{k}|^{2}\right)\\
&\leq 4\sum_{k}e^{\frac{3R}{2}\sqrt{\mu_{k}-\lambda}}\left(|a_{k}|^{2}+|c_{k}|^{2}\right)
\leq 8C^{-1}_{0}e^{-\frac{\sqrt{\delta}}{8}R}.
\end{aligned}\end{align}
Now we have finished the proof of the lemma.
\end{proof}

 We set
\begin{align}\begin{aligned}\label{e.306}
\left\{ \begin{array}{c}
     (D^{Z_{2, R}})^{2}\psi_{2}=\lambda\cdot \psi_{2}, \quad 0\leq\lambda<\frac{3\delta}{4}, \quad \|\psi_{2}\|_{L^{2}(Z_{2, R})}=1, \\
     \left.
\left(e^{\mathbf{n}}\wedge\psi_{2}\right) \right |_{Y}=\left.
\big(e^{\mathbf{n}}\wedge\big(d^{Z_{2, R}}\big)^{*}\psi_{2} \big)\right |_{Y}=0,    \end{array}
 \right.\end{aligned}\end{align}
where we use $e^{\mathbf{n}}$ to denote the dual vector of the inward-pointing normal vector along the boundary.
Similar to Lemma \ref{l.29}, we have the following lemma.

\begin{lemma}\label{l.30}
There exist $C>0$ and $R>R_{0}$ such that for any $R>R_{0}$, $0\leq x_{m}\leq \frac{3R}{4}$ and
 $\psi_{2}$ a smooth eigensection of $(D^{Z_{2, R}})^{2}$ satisfying (\ref{e.306}), we have
\begin{align}\begin{aligned}\label{e.310}
\left\|\psi_{2}\right\|_{L^{2}( Y\times\{x_{m}\})}\leq Ce^{-\frac{\sqrt{\delta}}{16}R}.
\end{aligned}\end{align}
As its consequence, we also have $\left\|D^{Z_{2,R}}\psi_{2}  \right\|_{L^{2}(Y\times\{x_{m}\})}\leq C \lambda^{\frac{1}{2}}e^{-\frac{\sqrt{\delta}}{16}R}$.
\end{lemma}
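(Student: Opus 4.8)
The plan is to establish Lemma \ref{l.30} by exactly mirroring the argument of Lemma \ref{l.29}, with the roles of the two pieces $\Lambda^0$ and $\Lambda^1\cdot dx_m$ interchanged (because relative boundary conditions impose Dirichlet on the tangential part and Neumann on the normal part, the opposite of the absolute case). First I would expand a smooth eigensection $\psi_2$ of $(D^{Z_{2,R}})^2$ on the cylinder $Y_{[0,R]}$ in the eigenbasis $\{\phi_k\}$ of $(D^Y)^2$ from (\ref{e.271}):
\begin{align}\begin{aligned}\label{e.planexpand}
\psi_2(y,x_m)=\sum_{k}f_k(x_m)\phi_k(y)+\sum_{k}g_k(x_m)\,dx_m\wedge\phi_k(y),\quad (y,x_m)\in Y_{[0,+\infty)}.
\end{aligned}\end{align}
Using the product structure $(D^{Z_{2,R}})^2=-\partial^2/\partial x_m^2+(D^Y)^2$ on the cylinder and the eigenvalue equation (\ref{e.306}) with $0\le\lambda<\tfrac{3\delta}{4}<\mu_k$, each $f_k,g_k$ satisfies $f_k''=(\mu_k-\lambda)f_k$, $g_k''=(\mu_k-\lambda)g_k$, hence is a combination of $e^{\pm\sqrt{\mu_k-\lambda}\,x_m}$. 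The relative boundary conditions $(e^{\mathbf n}\wedge\psi_2)|_Y=0$ and $(e^{\mathbf n}\wedge (d^{Z_{2,R}})^*\psi_2)|_Y=0$ translate, as in the computation preceding (\ref{e.293}) but with tangential/normal roles swapped, into $f_k(0)=0$ and $g_k'(0)=0$; this forces the two constants in $f_k$ to be negatives of each other and the two constants in $g_k$ to be equal. I would then define $\psi_2^{+},\psi_2^{-}$ as in (\ref{e.295}) (with the appropriate sign pattern) so that $\psi_2=\psi_2^{+}+\psi_2^{-}$ on $Y_{[0,R]}$.

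Next I would run the same two-term estimate as in Lemma \ref{l.29}: set
\begin{align}\begin{aligned}\label{e.plansplit}
S_{\mathrm I}:=\int_{0}^{R}\big\|\psi_2^{+}\big\|_{L^2(Y\times\{x_m\})}^2+\big\|\psi_2^{-}\big\|_{L^2(Y\times\{x_m\})}^2\,dx_m,\qquad
S_{\mathrm{II}}:=2\int_{0}^{R}\Re\langle\psi_2^{+},\psi_2^{-}\rangle_{L^2(Y\times\{x_m\})}\,dx_m,
\end{aligned}\end{align}
so that $1=\|\psi_2\|_{L^2(Z_{2,R})}^2\ge S_{\mathrm I}+S_{\mathrm{II}}$. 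Computing $S_{\mathrm I}$ gives a lower bound $\sum_k \frac{e^{2R\sqrt{\mu_k-\lambda}}-e^{-2R\sqrt{\mu_k-\lambda}}}{2\sqrt{\mu_k-\lambda}}(|a_k|^2+|c_k|^2)$, and applying the elementary inequality (\ref{e.282}) together with $\mu_k-\lambda\ge\delta/4$ yields $S_{\mathrm I}\ge C_0 R\,e^{\frac{R}{8}\sqrt\delta}\sum_k e^{\frac{3R}{2}\sqrt{\mu_k-\lambda}}(|a_k|^2+|c_k|^2)$. The cross term $S_{\mathrm{II}}$ evaluates to a linear-in-$R$ expression $2R\sum_k(\pm|a_k|^2\pm|c_k|^2)$, which is absorbed into half of $S_{\mathrm I}$ once $R$ is large. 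This gives $\sum_k e^{\frac{3R}{2}\sqrt{\mu_k-\lambda}}(|a_k|^2+|c_k|^2)\le 2C_0^{-1}e^{-\frac{R}{8}\sqrt\delta}$, and then, bounding $\|\psi_2\|^2_{L^2(Y\times\{x_m\})}$ for $0\le x_m\le \tfrac{3R}{4}$ by $2(\|\psi_2^+\|^2+\|\psi_2^-\|^2)\le 2\sum_k e^{\frac{3R}{2}\sqrt{\mu_k-\lambda}}(|a_k|^2+|c_k|^2)\cdot(\text{const})$, one obtains (\ref{e.310}). The consequence for $D^{Z_{2,R}}\psi_2$ follows since $\|D^{Z_{2,R}}\psi_2\|_{L^2(Y\times\{x_m\})}^2$ is controlled by $\lambda$ times the same slice norm after integrating by parts, exactly as stated.

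I do not expect any genuine obstacle here: the only point requiring care is the precise translation of the relative boundary conditions $(e^{\mathbf n}\wedge\psi_2)|_Y=0$, $(e^{\mathbf n}\wedge(d^{Z_{2,R}})^*\psi_2)|_Y=0$ into the scalar conditions $f_k(0)=0$, $g_k'(0)=0$ on the Fourier coefficients — one must keep track of the interior-product/wedge duality and the sign of the inward normal $e_{\mathbf n}$. Everything else is a verbatim transcription of the proof of Lemma \ref{l.29} with $(|a_k|^2+|c_k|^2)$ playing the symmetric role and with the sign in the $S_{\mathrm{II}}$ computation flipped, so the absorption of the cross term and the final exponential decay go through unchanged. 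The key structural input, used implicitly throughout, is the uniform spectral gap $\delta>0$ from (\ref{e.669}), which is guaranteed by the hypothesis (\ref{e.356}) together with compactness of $U$.
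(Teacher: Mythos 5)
Your proposal is correct and matches the paper's own approach: the paper simply remarks that the proof is ``essentially the same as that of Lemma \ref{l.29},'' and your verification that the relative boundary conditions $(e^{\mathbf{n}}\wedge\psi_{2})|_{Y}=0$, $(e^{\mathbf{n}}\wedge(d^{Z_{2,R}})^{*}\psi_{2})|_{Y}=0$ translate into $f_{k}(0)=0$, $g_{k}'(0)=0$ (hence $a_{k}=-b_{k}$, $c_{k}=d_{k}$) is precisely the one point where the argument differs from Lemma \ref{l.29}. After that, the $S_{\rm I}/S_{\rm II}$ decomposition and the absorption of the cross term go through verbatim, as you say; as an aside, the paper mentions elsewhere (proof of Lemma \ref{l.18}) that Hodge star exchanges the two boundary conditions, giving an alternative one-line deduction of Lemma \ref{l.30} from Lemma \ref{l.29}.
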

\begin{proof}
The proof is essentially the same as that of Lemma \ref{l.29}.
\end{proof}

 An estimate similar to Lemmas \ref{l.28}, \ref{l.29} and \ref{l.30} has been done by Wojciechowski \cite{Wojcie} for the APS boundary conditions.

\subsection{Eigenvalues decaying exponentially as $R\rightarrow \infty $}\label{ss3.4}

Let $f:[-1, 1]\rightarrow [0, 1]$ be a smooth even cut-off function such that
\begin{align}\begin{aligned}\label{e.311}
f(x)=\left\{\begin{array}{ll}
     0 \, \text{ for }&-\frac{1}{4} \leq x\leq \frac{1}{4};\\
     1 \, \text{ for }&\frac{1}{2} \leq |x|\leq 1,
  \end{array}
\right.
\end{aligned}\end{align}
then we define $f_{R}(y, x_{m}):=f(\frac{x_{m}}{R})$ on $Y_{[-R,R]}$. We extend $f_{R}$ by $1$ from $Y_{[-R,R]}$ to the whole fiber $Z_{R}$. We set
\begin{align}\begin{aligned}\label{e.312}
 f_{1,R}(x):=\left\{\begin{array}{ccc}
            f_{R}(x) ,& & x\in Z_{1, R}, \\
             & & \\
            0 ,& & x\in Z_{2, R};           \end{array}
 \right. \quad
 f_{2,R}(x):=\left\{\begin{array}{ccc}
            0 ,& & x\in Z_{1, R}, \\
             & & \\
            f_{R}(x) ,& & x\in Z_{2, R}.     \end{array}
 \right.\end{aligned}\end{align}

\begin{defn}\label{d.19}
Let $Z_{1, \infty}\cong Z_{1, R}\cup Y_{[0,+\infty)}$
(resp. $Z_{2, \infty}\cong Y_{(-\infty,0]}\cup Z_{2, R}$). We extends all the geometric data from $Z_{1, R}$ (resp. $Z_{2, R}$) to $Z_{1, \infty}$ (resp. $Z_{2, \infty}$) by using the product structures. For $i=1,2$, let $\Ker_{L^{2}}(D^{Z_{i, \infty}})^{2}$ be the $L^{2}-$integrable solutions of the Hodge-Laplacian $(D^{Z_{i, \infty}})^{2}$ on $Z_{i, \infty}$.
\end{defn}

\begin{defn}\label{d.13} We define $
\mathscr{W}_{i, R}:=\text{span}\{f_{i,R}\,s_{i},\, s_{i}\in \Ker_{L^{2}}(D^{Z_{i, \infty}})^{2} \}$
two subspaces of $\Omega(Z_{i,R}, F_{R})$ and let $
\mathscr{W}_{R}:=\mathscr{W}_{1, R}\oplus \mathscr{W}_{2, R}$
 regarded as a subspace of $\Omega(Z_{R}, F_{R})$.
\end{defn}

\begin{lemma}\label{l.17} There exist $R_{0}>0$, $C>0$ such that for any $R>R_{0}$ and $ s \in \mathscr{W}_{R}$
\begin{align}\begin{aligned}\label{e.190}
\|(D^{Z_{R}})^{2}s\|_{L^{2}(Z_{R})}\leq C e^{-\frac{R\sqrt{\delta}}{8}}\|s\|_{L^{2}(Z_{R})}.
\end{aligned}\end{align}
\end{lemma}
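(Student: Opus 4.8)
The plan is to exploit the fact that $\mathscr{W}_R$ is built from genuine $L^2$-harmonic sections on the infinite cylinder models $Z_{i,\infty}$, cut off by the function $f_{i,R}$ which is supported in the region $|x_m|\ge R/4$ on the cylinder $Y_{[-R,R]}$. Since $(D^{Z_{i,\infty}})^2 s_i=0$, the only error introduced by multiplying by $f_{i,R}$ comes from the commutator $[(D^{Z_R})^2, f_{i,R}]$, which by the product-structure formula (\ref{e.274}) equals $-\frac{\partial^2 f_{i,R}}{\partial x_m^2}-2\frac{\partial f_{i,R}}{\partial x_m}\frac{\partial}{\partial x_m}$ acting on $s_i$. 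First I would write, for $s=\sum_i f_{i,R}s_i\in\mathscr{W}_R$,
\begin{align}\begin{aligned}\label{e.plan1}
(D^{Z_R})^2 s=\sum_{i=1}^2\Big(-\frac{\partial^2 f_{i,R}}{\partial x_m^2}\,s_i-2\frac{\partial f_{i,R}}{\partial x_m}\frac{\partial s_i}{\partial x_m}\Big),
\end{aligned}\end{align}
so everything is supported in the cylindrical annulus $Y_{[R/4,R/2]}\cup Y_{[-R/2,-R/4]}$ where the derivatives of $f_{i,R}$ are nonzero, and where $|\partial_{x_m}f_{i,R}|\le C/R$, $|\partial^2_{x_m}f_{i,R}|\le C/R^2$.

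Next I would bound the $L^2$-norm of the right side of (\ref{e.plan1}). On the infinite half-cylinder, an $L^2$-harmonic section $s_i$ admits an expansion in the eigenbasis $\{\phi_k\}$ of $(D^Y)^2$ analogous to (\ref{e.273})--(\ref{e.276}) with $\lambda=0$; $L^2$-integrability on the semi-infinite end forces only the exponentially decaying modes $e^{-\sqrt{\mu_k}\,|x_m|}$ to appear (the zero-mode $\mu_k=0$ is excluded by the hypothesis (\ref{e.356}), which gives $\mu_k\ge\delta>0$). Hence on the annulus $|x_m|\in[R/4,R/2]$ one gets pointwise $L^2(Y)$-bounds $\|s_i\|_{L^2(Y\times\{x_m\})}\le Ce^{-\sqrt{\delta}\,|x_m|}\le Ce^{-\sqrt{\delta}R/4}$ and likewise for $\partial_{x_m}s_i$ — this is exactly the analogue of Lemmas \ref{l.28}--\ref{l.30} for the infinite-cylinder model. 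Integrating over the length-$O(R)$ annulus and using $|f_{i,R}'|\le C/R$, $|f_{i,R}''|\le C/R^2$ yields
\begin{align}\begin{aligned}\label{e.plan2}
\|(D^{Z_R})^2 s\|_{L^2(Z_R)}\le C e^{-\frac{\sqrt{\delta}}{4}R}\Big(\sum_i\|s_i\|_{L^2}\Big)\le C e^{-\frac{\sqrt{\delta}}{8}R}\|s\|_{L^2(Z_R)},
\end{aligned}\end{align}
where the last step needs a lower bound $\|s\|_{L^2(Z_R)}\ge c\|s_i\|_{L^2}$ (uniform in $R$); this holds because $f_{i,R}=1$ on the bulk of $Z_{i,\infty}$ (everything except the annulus and the far end $|x_m|>R/2$), so $f_{i,R}s_i$ retains a definite fraction of the mass of $s_i$ — again using the exponential decay of $s_i$ to control the mass lost at the cut-off region.

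The main obstacle is the uniformity in $R$ of this last lower bound $\|f_{i,R}s_i\|_{L^2(Z_{i,R})}\ge c\|s_i\|_{L^2(Z_{i,\infty})}$ together with the fact that $\mathscr{W}_{1,R}$ and $\mathscr{W}_{2,R}$ sit almost-orthogonally inside $\Omega(Z_R,F_R)$ (they are supported on opposite sides, overlapping only in the exponentially-small tails), so that no cancellation can occur in $\|s\|_{L^2(Z_R)}^2=\|s_1\|^2+\|s_2\|^2+O(e^{-cR})$-type estimates. Once these quantitative comparisons between the norm on the cut-off cylinder $Z_{i,R}$ and the norm on the model $Z_{i,\infty}$ are in place — all of which reduce to the elementary exponential-decay estimates of the harmonic sections established in the style of Lemma \ref{l.28} — the inequality (\ref{e.190}) follows by combining (\ref{e.plan1}) and (\ref{e.plan2}). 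I would state the decay-of-$L^2$-harmonic-sections estimate as a preliminary sublemma (mirroring Lemmas \ref{l.28}--\ref{l.30} but on the half-infinite cylinder, where it is in fact slightly easier since $\lambda=0$), and then the proof of Lemma \ref{l.17} is a short computation with the cut-off derivatives.
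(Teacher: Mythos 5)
Your proposal is correct and follows essentially the same route as the paper's proof: decompose $s=f_{1,R}s_1+f_{2,R}s_2$, use the commutator formula $(D^{Z_R})^2(f_{i,R}s_i)=-f''_{i,R}s_i-2f'_{i,R}\partial_{x_m}s_i$ supported on the annulus, expand the $L^2$-harmonic $s_i$ in the $(D^Y)^2$-eigenbasis on the cylinder (only decaying modes $e^{-\sqrt{\mu_k}\,(\mathrm{dist})}$ with $\mu_k\geq\delta$ appear, by (\ref{e.356})), and lower-bound $\|s\|_{L^2(Z_R)}$ using the region where $f_{i,R}=1$. The only differences are cosmetic: the decay of $s_1$ is measured from $x_m=-R$ (so it is $e^{-\sqrt{\mu_k}(x_m+R)}$, not $e^{-\sqrt{\mu_k}|x_m|}$), and the paper in (\ref{e.316})--(\ref{e.319}) keeps the whole estimate at the level of the Fourier coefficients $|\sigma_k|^2$ rather than first extracting a pointwise cross-sectional bound (which requires a little extra care, since $\sup_{x_m}\|s_1\|_{L^2(Y\times\{x_m\})}$ is not controlled by $\|s_1\|_{L^2}$ alone and one must fold the decaying exponential into the comparison with $\sum_k|\sigma_k|^2/2\sqrt{\mu_k}\leq\|s_1\|_{L^2}^2$); on the annulus either presentation gives the same exponential gain.
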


\begin{proof}
 As $ s \in \mathscr{W}_{R}$, there exist $s_{i}\in \Ker_{L^{2}}(D^{Z_{i, \infty}})^{2}, i=1, 2$, such that
 \begin{align}\begin{aligned}\label{e.314}
s=f_{1,R}s_{1}+f_{2,R}s_{2}.
\end{aligned}\end{align}
On the $Y_{[-R,R]}$, we expand $s_{1}, s_{2}$ in term of basis (\ref{e.271}) for $(y,x_{m})\in Y_{[-R,R]}$
\begin{align}\begin{aligned}\label{e.315}
\left\{\begin{array}{lll}
     s_{1}(y, x_{m})=\sum_{k=1}^{\infty}e^{-\sqrt{\mu_{k}}(x_{m}+R)}\left(a_{k}\phi_{k}(y)+c_{k}dx_{m}\wedge\phi_{k}(y)\right), \\
     s_{2}(y, x_{m})=\sum_{k=1}^{\infty}e^{+\sqrt{\mu_{k}}(x_{m}-R)}\left(b_{k}\phi_{k}(y)+d_{k}dx_{m}\wedge\phi_{k}(y)\right).
\end{array}
\right.
\end{aligned}\end{align}
By (\ref{e.274}) and $s_{i}\in \Ker_{L^{2}}(D^{Z_{i, \infty}})^{2}, i=1, 2$, we have
$(D^{Z_{R}})^{2}(f_{i,R}s_{i})=-\frac{\partial^{2}f_{i,R}}{\partial x^{2}_{m}}s_{i}-2\frac{\partial f_{i,R}}{\partial x_{m}}\frac{\partial s_{i} }{\partial x_{m}}.$
Hence from (\ref{e.669}), (\ref{e.312}), (\ref{e.314}) and (\ref{e.315}), we get for $R\geq 1$ by using the notation (\ref{e.283}) that
\begin{align}\begin{aligned}\label{e.316}
&\|(D^{Z_{R}})^{2}s\|^{2}_{L^{2}(Z_{R})}\\
&\leq 2\sum_{i=1}^{2}\Big(\big\|\frac{\partial^{2}f_{i,R}}{\partial x^{2}_{m}}s_{i} \big\|^{2}_{L^{2}(Z_{R})}+2\big\| \frac{\partial f_{i,R}}{\partial x_{m}}\frac{\partial s_{i} }{\partial x_{m}}\big\|^{2}_{L^{2}(Z_{R})}\Big)\\
&\leq 2\sup_{x_{m}\in \mathbb{R}}\left|\frac{\partial^{2}f_{R}}{\partial x^{2}_{m}}\right|^{2}
\left(\|s_{1}\|^{2}_{L^{2}(Y_{[-\frac{R}{2},-\frac{R}{4}]})}+
\|s_{2}\|^{2}_{L^{2}(Y_{[\frac{R}{4},\frac{R}{2}]})}\right)\\
&+4\sup_{x_{m}\in \mathbb{R}}\left|\frac{\partial f_{R}}{\partial x_{m}}\right|^{2}
\left(\Big\|\frac{\partial s_{1} }{\partial x_{m}}\Big\|^{2}_{L^{2}(Y_{[-\frac{R}{4},-\frac{R}{2}]})}
+
\Big\|\frac{\partial s_{2} }{\partial x_{m}}\Big\|^{2}_{L^{2}(Y_{[\frac{R}{4},\frac{R}{2}]})}\right)\\
&\leq\frac{2c_{1}}{R^{4}}\sum_{k=1}^{\infty}\frac{e^{-R\sqrt{\mu_{k}}}(1-e^{-\frac{R}{2}\sqrt{\mu_{k}}})}{2\sqrt{\mu_{k}}}
|\sigma_{k}|^{2}\\
&\quad \quad\quad \quad\quad\quad \quad+\frac{4c_{2}}{R^{2}}\sum_{k=1}^{\infty}\frac{\mu_{k}e^{-R\sqrt{\mu_{k}}}(1-e^{-\frac{R}{2}\sqrt{\mu_{k}}})}{2\sqrt{\mu_{k}}}
|\sigma_{k}|^{2}\\
&\leq c_{3}e^{-\frac{\sqrt{\delta}}{2}R}\sum_{k=1}^{\infty}\frac{|\sigma_{k}|^{2}}{2\sqrt{\mu_{k}}},
\end{aligned}\end{align}
where
\begin{align}\begin{aligned}\label{e.757}
c_{1}=\max_{u\in[-1, 1]}\left|\frac{\partial^{2}f}{\partial^{2}u}(u)\right|^{2},
\, c_{2}=\max_{u\in[-1, 1]}\left|\frac{\partial f}{\partial u}(u)\right|^{2}.
\end{aligned}\end{align}
On the other hand, from (\ref{e.669}), (\ref{e.312}), (\ref{e.314}) and (\ref{e.315}) we have
\begin{align}\begin{aligned}\label{e.318}
\|s\|^{2}_{L^{2}(Z_{R})}&=\|F_{R}s_{1}+F_{R}s_{2}\|^{2}_{L^{2}(Z_{R})}\\
&\geq \int_{-R}^{-\frac{R}{2}}\|s_{1}\|^{2}_{L^{2}(Y\times\{x_{m}\})}dx_{m}+\int_{\frac{R}{2}}^{R}\|s_{2}\|^{2}_{L^{2}(Y\times\{x_{m}\})}dx_{m}\\
&=\sum_{k=1}^{\infty}\frac{1-e^{-R\sqrt{\mu_{k}}}}{2\sqrt{\mu_{k}}}|\sigma_{k}|^{2}
\geq(1-e^{-R\sqrt{\delta}})\sum_{k=1}^{\infty}\frac{|\sigma_{k}|^{2}}{2\sqrt{\mu_{k}}}.
\end{aligned}\end{align}
By (\ref{e.316}) and (\ref{e.318}), there exist $R_{0}>0, c_{4}>0$ such that for any $R>R_{0}$
\begin{align}\begin{aligned}\label{e.319}
\|(D^{Z_{R}})^{2}s\|^{2}_{L^{2}(Z_{R})}\leq \frac{c_{3}e^{-\frac{\sqrt{\delta}}{2}R}}{1-e^{-R\sqrt{\delta}}}\|s\|^{2}_{L^{2}(Z_{R})}\leq c_{4}\cdot e^{-\frac{R}{4}\sqrt{\delta}}\|s\|^{2}_{L^{2}(Z_{R})}.
\end{aligned}\end{align}
The proof is now completed.
\end{proof}

Similar to Lemma \ref{l.17}, we have
\begin{lemma}\label{l.18} For $i=1,2$, there exist $R_{0}>0, C>0$ such that for any $R>R_{0}$ and $s \in \mathscr{W}_{i, R}$
\begin{align}\begin{aligned}\label{e.191}
\|(D^{Z_{i, R}})^{2}s\|_{L^{2}(Z_{i, R})}\leq C e^{-\frac{\sqrt{\delta}R}{8}}\|s\|_{L^{2}(Z_{i, R})}.
\end{aligned}\end{align}
\end{lemma}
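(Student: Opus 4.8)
The plan is to follow the proof of Lemma \ref{l.17} with only cosmetic changes, the difference being that we now work on a single half-infinite cylinder and must keep the absolute (for $i=1$) resp. relative (for $i=2$) boundary conditions in sight. Fix $i\in\{1,2\}$ and let $s\in\mathscr{W}_{i,R}$. By Definition \ref{d.13} we may write $s=f_{i,R}s_i$ with $s_i\in\Ker_{L^2}(D^{Z_{i,\infty}})^2$, the latter space being introduced in Definition \ref{d.19}. The first point I would check is that $f_{i,R}s_i$ genuinely lies in the domain of $(D^{Z_{i,R}})^2$ with the prescribed boundary conditions: by (\ref{e.311})--(\ref{e.312}) the cut-off $f_{i,R}$ vanishes identically on the collar $Y_{[-R/4,\,R/4]}$ of $\partial M=Y\times\{0\}$, so $s$ is supported away from the boundary and the boundary conditions are vacuous on its support; moreover there $(D^{Z_{i,R}})^2$ acts simply as the differential operator $(D^{Z_{i,\infty}})^2$.

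Since $(D^{Z_{i,\infty}})^2 s_i=0$ and the metric is a product on the cylindrical part (see (\ref{e.537}) and Lemma \ref{l.52}), I would compute, exactly as in the proof of Lemma \ref{l.17},
\begin{align*}
(D^{Z_{i,R}})^2 s=-\frac{\partial^2 f_{i,R}}{\partial x_m^2}\,s_i-2\,\frac{\partial f_{i,R}}{\partial x_m}\,\frac{\partial s_i}{\partial x_m},
\end{align*}
an expression supported in the annulus $Y_{[-R/2,\,-R/4]}$ when $i=1$ (resp. $Y_{[R/4,\,R/2]}$ when $i=2$). Next I would expand $s_i$ on the cylindrical part in the orthonormal basis $\{\phi_k\}$ of eigensections of $(D^Y)^2$ from (\ref{e.271}); since $s_i$ is $L^2$ and harmonic on $Z_{i,\infty}$ and, by assumption (\ref{e.356}) and (\ref{e.669}), all $\mu_k\ge\delta>0$ (so there is no harmonic constant mode along $Y$), this expansion is a sum of purely decaying exponentials of the form already recorded in (\ref{e.315}). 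On the support of $\partial_{x_m}f_{i,R}$ and $\partial_{x_m}^2 f_{i,R}$, where $R/4\le|x_m|\le R/2$, each exponential $e^{-\sqrt{\mu_k}|x_m|}$ is bounded by $e^{-\sqrt{\mu_k}R/4}$; combining this with $|\partial_{x_m}f_{i,R}|\le c_2^{1/2}/R$, $|\partial_{x_m}^2 f_{i,R}|\le c_1^{1/2}/R^2$ (notation of (\ref{e.757})) and Parseval's identity along $Y$ gives, with the notation $|\sigma_k|$ of (\ref{e.283}),
\begin{align*}
\big\|(D^{Z_{i,R}})^2 s\big\|^2_{L^2(Z_{i,R})}\le C\,e^{-\frac{\sqrt{\delta}}{2}R}\sum_{k}\frac{|\sigma_k|^2}{2\sqrt{\mu_k}}.
\end{align*}

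For the matching lower bound on $\|s\|_{L^2(Z_{i,R})}$ I would integrate $|s|^2=|s_i|^2$ over the outermost part of the cylinder, $Y_{[-R,\,-R/2]}$ (resp. $Y_{[R/2,\,R]}$), where $f_{i,R}\equiv1$, obtaining as in (\ref{e.318})
\begin{align*}
\|s\|^2_{L^2(Z_{i,R})}\ge\big(1-e^{-R\sqrt{\delta}}\big)\sum_{k}\frac{|\sigma_k|^2}{2\sqrt{\mu_k}}.
\end{align*}
Dividing the two displays and taking square roots yields $\|(D^{Z_{i,R}})^2 s\|_{L^2(Z_{i,R})}\le C e^{-\sqrt{\delta}R/8}\|s\|_{L^2(Z_{i,R})}$ for all $R>R_0$, which is (\ref{e.191}). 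No new idea is needed beyond Lemma \ref{l.17}; the only genuine point of care — the closest thing to an obstacle — is verifying that the cut-off localizes $f_{i,R}s_i$ strictly away from $\partial M$, so that the boundary conditions never intervene, together with the bookkeeping of the one-sided cylinder (the orientation of $x_m$ and the interval on which the derivatives of $f_{i,R}$ are supported) for $i=1$ versus $i=2$.
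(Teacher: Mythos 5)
Your proof is correct and follows essentially the same route as the paper's proof of Lemma \ref{l.18}, which in turn mirrors Lemma \ref{l.17}: write $s=f_{i,R}s_i$ with $s_i\in\Ker_{L^2}(D^{Z_{i,\infty}})^2$, compute $(D^{Z_{i,R}})^2 s$ via the product formula for the Laplacian on the cylinder (so only commutator terms with the cutoff survive), expand $s_i$ in the $(D^Y)^2$-eigenbasis (purely decaying exponentials since $\mu_k\geq\delta>0$ and $s_i\in L^2$), bound the result by the same Parseval computation as in (\ref{e.321}), and divide by the lower bound (\ref{e.322}) obtained by integrating over the outer half of the cylinder. The one addition you make, observing that $f_{i,R}$ vanishes identically on $Y_{[-R/4,R/4]}$ so that $f_{i,R}s_i$ is supported away from the boundary and the absolute/relative boundary conditions are vacuous on its support, is a worthwhile sanity check that the paper leaves implicit; and where the paper dispatches $i=2$ either by the Hodge-star duality between absolute and relative conditions or ``by the same proof,'' you carry out the direct computation, which is equivalent.
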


\begin{proof}
We first establish (\ref{e.191}) for $i=1$. There exists $s_{1}\in \Ker_{L^{2}} (D^{Z_{1, \infty}})^{2}$ such that $s=f_{1,R}s_{1}$. We expand $s_{1}$ for
$(y, x_{m})\in Y _{[-R, 0]}$ in term of basis
(\ref{e.271}),
\begin{align}\begin{aligned}\label{e.320}
s_{1}(y, x_{m})=\sum_{k=1}^{\infty}e^{-\sqrt{\mu_{k}}(x_{m}+R)}\left(a_{k}\phi_{k}(y)+c_{k}dx_{m}\wedge\phi_{k}(y)\right),
\end{aligned}\end{align}
then, for $R\geq 1$, as (\ref{e.316}) we have by using $s_{1}\in \Ker_{L^{2}} (D^{Z_{1, \infty}})^{2}$, (\ref{e.757}) and (\ref{e.320})
\begin{align}\begin{aligned}\label{e.321}
&\|(D^{Z_{1, R}})^{2}s\|^{2}_{L^{2}(Z_{1, R})}\\
&\leq \frac{2c_{1}}{R^{4}}
\|s_{1}\|^{2}_{L^{2}(Y_{[-\frac{R}{2},-\frac{R}{4}]})}+\frac{4c_{2}}{R^{2}}
\Big\|\frac{\partial s_{1}}{\partial x_{m}}\Big\|^{2}_{L^{2}(Y_{[-\frac{R}{2},-\frac{R}{4}]})}\\
&\leq\frac{2c_{1}}{R^{4}}\sum_{k=1}^{\infty}\frac{e^{-R\sqrt{\mu_{k}}}(1-e^{-\frac{R}{2}\sqrt{\mu_{k}}})}{2\sqrt{\mu_{k}}}
\left(|a_{k}|^{2}+|c_{k}|^{2}\right)\\
&\qquad\qquad+\frac{4c_{2}}{R^{2}}\sum_{k=1}^{\infty}\frac{\mu_{k}e^{-R\sqrt{\mu_{k}}}(1-e^{-\frac{R}{2}\sqrt{\mu_{k}}})}{2\sqrt{\mu_{k}}}
\left(|a_{k}|^{2}+|c_{k}|^{2}\right)\\
&\leq c_{3}e^{-\frac{\sqrt{\delta}}{2}R}\sum_{k=1}^{\infty}\frac{|a_{k}|^{2}+|c_{k}|^{2}}{2\sqrt{\mu_{k}}}.
\end{aligned}\end{align}
 On the other hand as (\ref{e.318}) we have
\begin{align}\begin{aligned}\label{e.322}
\|s\|^{2}_{L^{2}(Z_{1, R})}&\geq \int_{-R}^{-\frac{R}{2}}\|s_{1}\|^{2}_{L^{2}(Y\times \{x_{m}\})}dx_{m}
=\sum_{k=1}^{\infty}\frac{1-e^{-R\sqrt{\mu_{k}}}}{2\sqrt{\mu_{k}}}(|a_{k}|^{2}+|c_{k}|^{2})\\
&\geq(1-e^{-R\sqrt{\delta}})\sum_{k=1}^{\infty}\frac{|a_{k}|^{2}+|c_{k}|^{2}}{2\sqrt{\mu_{k}}}.
\end{aligned}\end{align}
By (\ref{e.321}) and (\ref{e.322}), there exist $C_{1}>0$ such that for $R\geq 1$ large enough
\begin{align}\begin{aligned}\label{e.323}
\|(D^{Z_{1, R}})^{2}s\|^{2}_{L^{2}(Z_{1, R})}\leq \frac{c_{3}e^{-\frac{\sqrt{\delta}}{2}R}}{1-e^{-R\sqrt{\delta}}}\|s\|^{2}_{L^{2}(Z_{1, R})}\leq C_{1}e^{-\frac{\sqrt{\delta}}{4}R}\|s\|^{2}_{L^{2}(Z_{1, R})}.
\end{aligned}\end{align}
Thus we get (\ref{e.191}) for $i=1$. For $i=2$, the estimate (\ref{e.191}) is true as that the Hodge star operator exchanges the relative and absolute boundary conditions or we can follow the same proof as for $i=1$. The proof is completed.
\end{proof}

\begin{defn}\label{d.18} Let $\mathbb{I}$ be a subset of $\mathbb{R}$. For $i=1, 2$, let $\mathbb{F}_{R}^{\mathbb{I}}$ (resp. $\mathbb{F}_{i, R}^{\mathbb{I}}$)
be the direct sum of the eigenspaces of $(D^{Z_{R}})^{2}$ (resp. $(D^{Z_{i, R}})^{2}$) associated to eigenvalues $\lambda \in \mathbb{I}$. Let $P_{R}^{\mathbb{I}}$ (resp. $P_{i, R}^{\mathbb{I}}$) be the orthogonal projection operator from
 $L^{2}(Z_{R}, \Lambda (T^{*}Z_{R})\otimes F_{R})$ (resp. $L^{2}(Z_{i, R}, \Lambda (T^{*}Z_{i, R})\otimes F_{R})$) onto $\mathbb{F}_{R}^{\mathbb{I}}$ (resp. $\mathbb{F}_{i, R}^{\mathbb{I}}$).
 In our application, for $c>0$, $R\geq 0$, the subset $\mathbb{I}\subset \mathbb{R}$ will be taken into $[0, c],\,(0, c],\,\{0\}$.
\end{defn}

\begin{lemma}\label{l.20} For $i=1,2$, there exist $C>0$ and $R_{0}>0$ such that for any $R>R_{0}$ and $s\in \mathscr{W}_{R}$ (resp. $\mathscr{W}_{i,R}$)
\begin{align}\begin{aligned}\label{e.193}
&\big\|\big({\rm Id}-P_{R}^{[0, e^{-\frac{R\sqrt{\delta}}{16}}]}\big)s \big\|_{L^{2}(Z_{R})} \leq Ce^{-\frac{R\sqrt{\delta}}{16}} \cdot \|s\|_{L^{2}(Z_{R})}\\
\big(\text{resp.}\,\,&\big\|\big({\rm Id}-P_{i, R}^{[0,\, e^{-\frac{R\sqrt{\delta}}{16}}]}\big)s\big\|_{L^{2}(Z_{i, R})} \leq Ce^{-\frac{R\sqrt{\delta}}{16}} \cdot \|s\|_{L^{2}(Z_{i, R})}\,\,\big).
\end{aligned}\end{align}
\end{lemma}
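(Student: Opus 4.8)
The plan is to obtain (\ref{e.193}) directly from the exponential smallness of the fiberwise Hodge--Laplacian on $\mathscr{W}_{R}$ (resp.\ $\mathscr{W}_{i,R}$) established in Lemma~\ref{l.17} (resp.\ Lemma~\ref{l.18}), by a soft spectral--gap argument that requires no new analysis. Write $\varepsilon_{R}:=e^{-R\sqrt{\delta}/16}$; set $A:=(D^{Z_{R}})^{2}$ and $P:=P_{R}^{[0,\varepsilon_{R}]}$ in the closed case, and $A:=(D^{Z_{i,R}})^{2}$, $P:=P_{i,R}^{[0,\varepsilon_{R}]}$ in the boundary case (with the absolute resp.\ relative boundary conditions for $i=1,2$). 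In each case $A$ is a nonnegative self-adjoint operator with discrete spectrum, so $P$ is the spectral projection onto the sum of the eigenspaces with eigenvalue $\le\varepsilon_{R}$; in particular $P$ commutes with $A$, one has $\|\Id-P\|\le1$, and $A\ge\varepsilon_{R}$ on $\ran(\Id-P)$.

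The single key estimate is then, for $s$ in the domain of $A$,
\begin{align*}
\varepsilon_{R}\,\big\|(\Id-P)s\big\|_{L^{2}}\ \le\ \big\|A(\Id-P)s\big\|_{L^{2}}\ =\ \big\|(\Id-P)As\big\|_{L^{2}}\ \le\ \big\|As\big\|_{L^{2}},
\end{align*}
where the first inequality is the spectral lower bound on $\ran(\Id-P)$, the equality uses $[P,A]=0$, and the last uses $\|\Id-P\|\le1$. Taking $s\in\mathscr{W}_{R}$ (resp.\ $s\in\mathscr{W}_{i,R}$) and inserting the bound $\|As\|_{L^{2}}\le Ce^{-R\sqrt{\delta}/8}\|s\|_{L^{2}}$ from Lemma~\ref{l.17} (resp.\ Lemma~\ref{l.18}) yields $\|(\Id-P)s\|_{L^{2}}\le C\varepsilon_{R}^{-1}e^{-R\sqrt{\delta}/8}\|s\|_{L^{2}}=Ce^{-R\sqrt{\delta}/16}\|s\|_{L^{2}}$, which is precisely (\ref{e.193}).

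I do not expect a genuine obstacle here: Lemma~\ref{l.20} is essentially a corollary of Lemmas~\ref{l.17}--\ref{l.18}. The only points that need care are the choice of the threshold $\varepsilon_{R}$ and the uniformity of the constants. The threshold must be small enough that $\mathbb{F}_{R}^{[0,\varepsilon_{R}]}$ absorbs all the ``small'' eigenvalues that $\mathscr{W}_{R}$ can create, yet large enough that dividing the decay rate $e^{-R\sqrt{\delta}/8}$ by $\varepsilon_{R}$ still leaves an exponentially small factor; the symmetric choice $\varepsilon_{R}=e^{-R\sqrt{\delta}/16}$ achieves both. Uniformity in $b\in U$ is inherited from Lemmas~\ref{l.17}--\ref{l.18}: as noted in Remark~\ref{r.11}, all the local geometric data defining $A$ are independent of $R$ and $U$ is relatively compact, so the spectral projections $P_{R}^{\mathbb{I}}$, $P_{i,R}^{\mathbb{I}}$ and the constants produced are controlled uniformly.
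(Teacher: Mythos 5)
Your proof is correct and is essentially the argument the paper gives: the paper expands $s$ in an eigenbasis of $(D^{Z_R})^2$ and bounds the tail $\sum_{\rho_k>\varepsilon_R}a_k^2$ by $\varepsilon_R^{-2}\|(D^{Z_R})^2 s\|^2$, which is exactly your operator-theoretic inequality $\varepsilon_R\|(\Id-P)s\|\le\|As\|$ written out coordinate-wise, and then both proofs invoke Lemma~\ref{l.17} (resp.~\ref{l.18}) in the same way. Your abstract phrasing via $[P,A]=0$ and $\|\Id-P\|\le1$ is cleaner but not a different route.
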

\begin{proof}
Let $\{\psi_{i}\}_{i=1}^{\infty}$ be an orthonormal basis of $L^{2}(Z_{R}, \Lambda (T^{*}Z_{R})\otimes F_{R})$ consisting of smooth eigensections of $(D^{Z_{R}})^{2}$ such that
\begin{align}\begin{aligned}\label{e.324}
 (D^{Z_{R}})^{2}\psi_{i}=\rho_{i}\psi_{i}, \quad 0<\rho_{1}\leq \rho_{2}\leq \cdots \leq \rho_{i}\leq \cdots \rightarrow + \infty.
\end{aligned}\end{align}
We expand $s$ in term of this basis as $s=\sum_{k}a_{k}\psi_{k}$, then by Lemma \ref{l.17} we have
\begin{align}\begin{aligned}\label{e.325}
\big\|\big( {\rm Id}-P^{[0, \exp(-\frac{R\sqrt{\delta}}{16})]}\big)s \big\|^{2}_{L^{2}(Z_{R})}
&=\sum_{\rho_{k}^{2}>\exp(-\frac{R\sqrt{\delta}}{8})}a^{2}_{k}\leq \sum_{\rho_{k}^{2}>\exp(-\frac{R\sqrt{\delta}}{8})}e^{\frac{R\sqrt{\delta}}{8}}\rho_{k}^{2}a^{2}_{k}\\
&\leq e^{\frac{R\sqrt{\delta}}{8}}\|(D^{Z_{R}})^{2}s\|^{2}_{L^{2}(Z_{R})}\leq C{e}^{-\frac{R\sqrt{\delta}}{8}}\|s\|^{2}_{L^{2}(Z_{R})}.
\end{aligned}\end{align}
Thus we get the first line in (\ref{e.193}). For $i=1,2$, the estimates are obtained in the same way. The proof is completed.
\end{proof}

\begin{prop}\label{p.4} For $i=1, 2$, there exist $R_{0}>0$ such that for all $R>R_{0}$, \\
{\rm(a)}\,the projection $P_{R}^{[0, \,e^{-\frac{R\sqrt{\delta}}{16}}]}$
restricted on $\mathscr{W}_{R}$ is injective.
In particular, $(D^{Z_{R}})^{2}$ has at least $\dim \mathscr{W}_{R}$ {\rm(see Def. \ref{d.13})} eigenvalues laying in
$[0, e^{-\frac{R\sqrt{\delta}}{16}}]$;\\
{\rm(b)}\,the projection $P_{i, R}^{[0, \,e^{-\frac{R\sqrt{\delta}}{16}}]}$ restricted to $\mathscr{W}_{i, R}$ is injective.
In particular, $(D^{Z_{i, R}})^{2}$ has at least $\dim \mathscr{W}_{i, R}$ eigenvalues laying in $[0, e^{-\frac{R\sqrt{\delta}}{16}}]$.
\end{prop}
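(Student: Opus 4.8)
The plan is to read off Proposition~\ref{p.4} directly from the exponential decay estimates already established, namely Lemma~\ref{l.17} (resp. Lemma~\ref{l.18}) together with Lemma~\ref{l.20}. As a preliminary I would record that $\mathscr{W}_{R}$ and $\mathscr{W}_{i,R}$ are finite-dimensional, of dimension independent of $R$: the spaces $\Ker_{L^{2}}(D^{Z_{i,\infty}})^{2}$ of $L^{2}$-harmonic forms on a manifold with a cylindrical end are finite-dimensional, and the cut-off maps $s_{i}\mapsto f_{i,R}s_{i}$ are injective since $f_{i,R}$ equals $1$ on a fixed open set on which an $L^{2}$-harmonic form cannot vanish unless it is identically zero. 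Thus ``$\dim\mathscr{W}_{R}$ eigenvalues'' is a well-posed finite count.

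For part~(a), write $P:=P_{R}^{[0,\,e^{-R\sqrt{\delta}/16}]}$ and let $0\neq s\in\mathscr{W}_{R}$. Lemma~\ref{l.20} gives $\|(\Id-P)s\|_{L^{2}(Z_{R})}\le Ce^{-R\sqrt{\delta}/16}\|s\|_{L^{2}(Z_{R})}$, whence by the triangle inequality $\|Ps\|_{L^{2}(Z_{R})}\ge\big(1-Ce^{-R\sqrt{\delta}/16}\big)\|s\|_{L^{2}(Z_{R})}$. Enlarging $R_{0}$ so that $Ce^{-R_{0}\sqrt{\delta}/16}\le\tfrac12$, we get $\|Ps\|_{L^{2}(Z_{R})}\ge\tfrac12\|s\|_{L^{2}(Z_{R})}>0$ for all $R>R_{0}$; hence $P$ restricted to $\mathscr{W}_{R}$ has trivial kernel and is injective. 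Consequently $\dim\mathbb{F}_{R}^{[0,\,e^{-R\sqrt{\delta}/16}]}\ge\dim\mathscr{W}_{R}$, and since by Definition~\ref{d.18} this space is the direct sum of the eigenspaces of $(D^{Z_{R}})^{2}$ with eigenvalue in $[0,\,e^{-R\sqrt{\delta}/16}]$, the operator $(D^{Z_{R}})^{2}$ has at least $\dim\mathscr{W}_{R}$ such eigenvalues, counted with multiplicity.

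Part~(b) is the same argument with $(Z_{R},D^{Z_{R}})$ replaced by $(Z_{i,R},D^{Z_{i,R}})$, using Lemma~\ref{l.18} and the second line of Lemma~\ref{l.20}; one only notes that for $i=1$ (resp. $i=2$) the absolute (resp. relative) boundary conditions are imposed, and that since $f_{i,R}$ is supported away from $Y$ the elements of $\mathscr{W}_{i,R}$ automatically lie in the corresponding domain, so no boundary contribution intervenes. Since all the analytic work sits in Lemmas~\ref{l.17}, \ref{l.18} and~\ref{l.20}, the proof of the proposition itself is a one-line triangle-inequality estimate; the only thing that truly needs care is the preliminary bookkeeping --- the finiteness of $\dim\mathscr{W}_{R}$ and the identification of $\dim\mathbb{F}_{R}^{[0,\,\cdot\,]}$ with the number of small eigenvalues --- together with checking that $C$ and $R_{0}$ can be chosen uniformly in $b\in U$.
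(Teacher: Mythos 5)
Your proof is correct and takes essentially the same route as the paper: both rest on Lemma~\ref{l.20} and a triangle-inequality absorption argument to show $P_{R}^{[0,\,e^{-R\sqrt{\delta}/16}]}$ has trivial kernel on $\mathscr{W}_{R}$ (the paper phrases it contrapositively, assuming $Ps=0$ and concluding $\|s\|\le\tfrac12\|s\|$, which is equivalent to your direct bound $\|Ps\|\ge\tfrac12\|s\|$). The extra remarks you include on finite-dimensionality of $\mathscr{W}_{R}$ and injectivity of the cut-off maps are not in the paper's proof but are harmless and correct.
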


\begin{proof}
Let $s\in \mathscr{W}_{R}$ and assume that $P^{[0, \exp(-\frac{R\sqrt{\delta}}{16})]}s=0$. By Lemma \ref{l.20}, we have for $R$ sufficiently large
\begin{align}\begin{aligned}\label{e.326}
\|s\|_{L^{2}(Z_{R})}=\|\big({\rm Id}-P^{[0, e^{-\frac{R\sqrt{\delta}}{16}}]}\big)s\|_{L^{2}(Z_{R})}\leq Ce^{-\frac{R\sqrt{\delta}}{16}}\|s\|_{L^{2}(Z_{R})}\leq \frac{1}{2}\|s\|_{L^{2}(Z_{R})}.
\end{aligned}\end{align}
Thus we get part (a). The same proof gives part (b).
\end{proof}

\subsection{Eigenvalues bounded away from 0}\label{ss3.5}
To prove Theorem \ref{t.10}, we establish first the following proposition:

\begin{prop}\label{p.13} There exist $R_{0}>0$ and $c>0$ such that for any $R>R_{0}$ and $\psi\in \Omega(Z_{R}, F_{R})$
such that
\begin{align}\begin{aligned}\label{e.327}
(D^{Z_{R}})^{2}\psi=\lambda \psi, \quad 0\leq\lambda<\frac{3\delta}{4}, \quad \|\psi\|_{L^{2}(Z_{R})}=1,
\end{aligned}\end{align}
and $\psi$ lays in the orthogonal complement of $P_{R}^{[0,\, e^{-\frac{R\sqrt{\delta}}{16}}]}\mathscr{W}_{R}$ in
$ L^{2}\left(Z_{R}, \Lambda (T^{*}Z_{R})\otimes F_{R}\right)$, i.e.,
$
\psi\in \big\{P_{R}^{[0,\, e^{-\frac{R\sqrt{\delta}}{16}}]}\mathscr{W}_{R}\big\}^{\perp},
$
then we get $\lambda \geq c>0.$
Consequently, the spectral projection
$$P_{R}^{[0,\, e^{-\frac{R\sqrt{\delta}}{16}}]}: L^{2}\left(Z_{R}, \Lambda (T^{*}Z_{R})\otimes F_{R}\right)
\rightarrow \mathbb{F}^{[0, e^{-\frac{R\sqrt{\delta}}{16}}]}_{R} $$
restricted on the subspace $\mathscr{W}_{R}$ is surjective.
\end{prop}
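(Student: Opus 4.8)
The plan is to argue by contradiction and to exploit the interplay between the exponential decay of small‑eigenvalue eigensections along the long cylinder (Lemma \ref{l.28}) and the $L^{2}$‑kernels of the model Laplacians on the half‑infinite cylinders $Z_{i,\infty}$. Suppose the first assertion fails. Then, taking $R_{0}=n$ and $c=1/n$ in the negation, for each $n$ one gets $R_{n}>n$, a point $b_{n}\in U$ and an eigensection $\psi_{n}$ of $(D^{Z_{R_{n}},b_{n}})^{2}$ with $(D^{Z_{R_{n}},b_{n}})^{2}\psi_{n}=\lambda_{n}\psi_{n}$, $0\le\lambda_{n}<1/n<\frac{3\delta}{4}$, $\|\psi_{n}\|_{L^{2}(Z_{R_{n}})}=1$, and $\psi_{n}\in\{P_{R_{n}}^{[0,\,e^{-R_{n}\sqrt{\delta}/16}]}\mathscr{W}_{R_{n}}\}^{\perp}$; since $U$ is compact we may assume $b_{n}\to b_{\infty}$, and all constants below will be uniform in $b$, so I suppress it. Thus $R_{n}\to\infty$ and $\lambda_{n}\to0$. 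By Lemma \ref{l.28} (and its consequence for $D^{Z_{R_{n}}}\psi_{n}$, upgraded to $\mathscr{C}^{1}$ bounds on a slightly larger slab by interior elliptic estimates applied to $(D^{Z_{R_{n}}})^{2}\psi_{n}=\lambda_{n}\psi_{n}$) both $\psi_{n}$ and $\nabla\psi_{n}$ are of size $e^{-\sqrt{\delta}R_{n}/16}$ on $Y_{[-3R_{n}/4,\,3R_{n}/4]}$.

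Next I would localise $\psi_{n}$ near the two caps. Choose cut‑off functions $\chi_{1,R},\chi_{2,R}$ on $Z_{R}$ with $\chi_{i,R}\equiv1$ on the cap $Z_{i}$ and on the half of the cylinder adjacent to it, supported in a neighbourhood of that half, with $|d\chi_{i,R}|\le C/R$, $|\nabla d\chi_{i,R}|\le C/R^{2}$, and with $\operatorname{supp}(d\chi_{1,R})\cup\operatorname{supp}(d\chi_{2,R})\subset Y_{[-R/2,-R/4]}\cup Y_{[R/4,R/2]}$. Then $\chi_{i,R}\psi_{n}$ extends by zero to a smooth compactly supported section on $Z_{i,\infty}$ (Definition \ref{d.19}), and using the product structure near the transition region,
\begin{align}\begin{aligned}
(D^{Z_{i,\infty}})^{2}(\chi_{i,R_{n}}\psi_{n})=\lambda_{n}\,\chi_{i,R_{n}}\psi_{n}-(\Delta\chi_{i,R_{n}})\psi_{n}-2\nabla_{\operatorname{grad}\chi_{i,R_{n}}}\psi_{n},
\end{aligned}\end{align}
so the decay just recalled gives $\|(D^{Z_{i,\infty}})^{2}(\chi_{i,R_{n}}\psi_{n})\|_{L^{2}(Z_{i,\infty})}\le\lambda_{n}+Ce^{-\sqrt{\delta}R_{n}/16}\to0$. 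Because the end of $Z_{i,\infty}$ is a cylinder over $Y$ and, by assumption (\ref{e.356}), $(D^{Y_{b}})^{2}\ge\delta>0$, the essential spectrum of $(D^{Z_{i,\infty},b})^{2}$ lies in $[\delta,\infty)$; hence $\{0\}$ is isolated, and since $\dim\Ker_{L^{2}}(D^{Z_{i,\infty},b})^{2}$ is the fibre of a flat cohomology bundle and therefore locally constant in $b$, a standard contour argument together with compactness of $U$ yields a uniform gap $\delta_{1}>0$ with $\|(D^{Z_{i,\infty},b})^{2}v\|\ge\delta_{1}\|v\|$ for every $v\perp\Ker_{L^{2}}(D^{Z_{i,\infty},b})^{2}$. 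Writing $\chi_{i,R_{n}}\psi_{n}=u_{i,n}+v_{i,n}$ with $u_{i,n}\in\Ker_{L^{2}}(D^{Z_{i,\infty}})^{2}$ and $v_{i,n}$ in the complementary spectral subspace, I get $\|v_{i,n}\|\le\delta_{1}^{-1}(\lambda_{n}+Ce^{-\sqrt{\delta}R_{n}/16})\to0$. Since every $L^{2}$‑harmonic form on $Z_{i,\infty}$ decays exponentially along the end, replacing $\chi_{i,R_{n}}$ by $f_{i,R_{n}}$ (see (\ref{e.312})) costs an exponentially small error, so $\|\chi_{i,R_{n}}\psi_{n}-f_{i,R_{n}}u_{i,n}\|_{L^{2}}\to0$, and $f_{i,R_{n}}u_{i,n}\in\mathscr{W}_{i,R_{n}}$ by Definition \ref{d.13}. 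As $\|\psi_{n}-\chi_{1,R_{n}}\psi_{n}-\chi_{2,R_{n}}\psi_{n}\|_{L^{2}(Z_{R_{n}})}\le CR_{n}^{1/2}e^{-\sqrt{\delta}R_{n}/16}\to0$, setting $s_{n}:=f_{1,R_{n}}u_{1,n}\oplus f_{2,R_{n}}u_{2,n}\in\mathscr{W}_{R_{n}}$ gives $\|\psi_{n}-s_{n}\|_{L^{2}(Z_{R_{n}})}\to0$, hence $\|s_{n}\|\to1$.

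To finish the first assertion, Lemma \ref{l.20} gives $\|s_{n}-P_{R_{n}}^{[0,\,e^{-R_{n}\sqrt{\delta}/16}]}s_{n}\|\le Ce^{-\sqrt{\delta}R_{n}/16}\|s_{n}\|\to0$, so $\|\psi_{n}-P_{R_{n}}^{[0,\,e^{-R_{n}\sqrt{\delta}/16}]}s_{n}\|\to0$; but $P_{R_{n}}^{[0,\,e^{-R_{n}\sqrt{\delta}/16}]}s_{n}\in P_{R_{n}}^{[0,\,e^{-R_{n}\sqrt{\delta}/16}]}\mathscr{W}_{R_{n}}$, to which $\psi_{n}$ is orthogonal, so $\|\psi_{n}-P_{R_{n}}^{[0,\,e^{-R_{n}\sqrt{\delta}/16}]}s_{n}\|^{2}\ge\|\psi_{n}\|^{2}=1$, a contradiction; this produces the uniform $c>0$. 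For the "consequently" part, Proposition \ref{p.4}(a) shows $P_{R}^{[0,\,e^{-R\sqrt{\delta}/16}]}|_{\mathscr{W}_{R}}$ is injective, so $\dim\mathbb{F}_{R}^{[0,\,e^{-R\sqrt{\delta}/16}]}\ge\dim\mathscr{W}_{R}$; were it not surjective the inequality would be strict, giving a unit vector $\psi\in\mathbb{F}_{R}^{[0,\,e^{-R\sqrt{\delta}/16}]}\cap\{P_{R}^{[0,\,e^{-R\sqrt{\delta}/16}]}\mathscr{W}_{R}\}^{\perp}$ with $\|(D^{Z_{R}})^{2}\psi\|_{L^{2}}\le e^{-R\sqrt{\delta}/16}$, and re‑running the decay/cut‑off argument above for such an approximate eigensection (the proof of Lemma \ref{l.28} applies once the homogeneous cylinder equations (\ref{e.275}) are replaced by the corresponding inhomogeneous ones with an $L^{2}$‑small right‑hand side) again yields a contradiction for $R$ large, so $P_{R}^{[0,\,e^{-R\sqrt{\delta}/16}]}|_{\mathscr{W}_{R}}$ is onto $\mathbb{F}_{R}^{[0,\,e^{-R\sqrt{\delta}/16}]}$.

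The hard part, beyond the (routine but tedious) bookkeeping of the cut‑off errors, is the uniform spectral statement of Step 2: that $0$ is separated from the rest of $\operatorname{Spec}((D^{Z_{i,\infty},b})^{2})$ by a gap $\delta_{1}>0$ independent of $b\in U$ and $i$. This is precisely where assumption (\ref{e.356}) enters (to push the essential spectrum away from $0$) together with the flatness of the relevant cohomology bundles (to keep $\dim\Ker_{L^{2}}$ constant as $b$ varies); the exponential decay of both $\psi_{n}$ and of the $L^{2}$‑harmonic forms on $Z_{i,\infty}$ is then what makes every comparison between $\chi_{i,R_{n}}$‑truncations and $f_{i,R_{n}}$‑truncations, and between $\psi_{n}$ and an element of $\mathscr{W}_{R_{n}}$, genuinely negligible in the limit.
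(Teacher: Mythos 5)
Your proof is correct and rests on the same essential ingredients as the paper's --- the $L^{2}(Y)$-slice decay of small-eigenvalue eigensections along the cylinder (Lemma~\ref{l.28}), the uniform spectral gap for $(D^{Z_{i,\infty},b})^{2}$ supplied by assumption (\ref{e.356}), and a cutoff localizing from $Z_{R}$ to $Z_{i,\infty}$ --- but it is structurally different from the paper's. The paper argues directly: it forms $\psi^{\infty}_{i,R}=h_{i,R}\psi$, uses Lemma~\ref{l.34} to show that after subtracting the $\Ker_{L^{2}}(D^{Z_{i,\infty}})^{2}$-component the remainders $\widetilde\psi_{i,R}$ have combined norm $\ge 1/12$, applies Min-Max with the gap $\gamma_{0}$, and feeds this into $\lambda=\|D^{Z_{R}}\psi\|^{2}$ split through the cutoffs, arriving at the explicit lower bound $\lambda\ge\gamma_{0}/72$. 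You argue by contradiction: from $\lambda_{n}\to0$ you localize, use the gap on $Z_{i,\infty}$ to build $s_{n}\in\mathscr{W}_{R_{n}}$ close to $\psi_{n}$, project via Lemma~\ref{l.20}, and contradict the orthogonality hypothesis. Your version is conceptually cleaner (a small eigenvalue forces the eigensection to be nearly in $\mathscr{W}_{R}$, contradicting orthogonality) at the price of an implicit constant $c$. Two smaller differences deserve note. First, the paper passes the \emph{first-order} operator $D^{Z_{R}}$ through $(1-h_{i,R})$, so the cutoff commutator is a zeroth-order Clifford term $-{\bf c}(dx_{m})\,\partial_{x_m}h_{i,R}\cdot\psi$, controlled directly by the slice decay of $\psi$ and $D^{Z_{R}}\psi$ from Lemma~\ref{l.28}; you pass the Laplacian through $\chi_{i,R}$, producing a $\nabla_{\operatorname{grad}\chi_{i,R}}\psi_{n}$ term that forces an extra elliptic step to control $\nabla\psi_{n}$ in the cylinder --- routine, but the first-order choice avoids it entirely. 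Second, for the ``consequently'' part you correctly flag that a unit vector in $\mathbb{F}_{R}^{[0,\,e^{-R\sqrt{\delta}/16}]}\cap\{P_{R}^{[0,\,e^{-R\sqrt{\delta}/16}]}\mathscr{W}_{R}\}^{\perp}$ need not be an eigensection, so Lemma~\ref{l.28} must be extended to approximate eigensections of $(D^{Z_R})^2$ (inhomogeneous cylinder ODEs with $L^{2}$-small right-hand side); the paper labels the surjectivity a consequence without detailing this passage from eigensections to general elements of the spectral subspace, and making it explicit, as you do, is a genuine point in your favor.
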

To prove Proposition \ref{p.13}, we need first to establish some lemmas.
Let $h:[-1, +\infty)\rightarrow [0, 1]$ be a smooth cut-off function such that
\begin{align}\begin{aligned}\label{e.332}
h(x)=\left\{\begin{array}{ll}
     1 & \text{ for } x \in [-1, -\frac{3}{4}], \\
     0 & \text{ for } x \in [-\frac{1}{2}, +\infty).
\end{array}
\right.
\end{aligned}\end{align}
We define $h_{1, R}:Z_{1, \infty}\rightarrow [0, 1]$ and $h_{2, R}:Z_{2, \infty}\rightarrow [0, 1]$ by
\begin{align}\begin{aligned}\label{e.333}
h_{1, R}(x)&=\left\{\begin{array}{cc}
     1 & \text{ for }x\in Z_{1,R}\backslash Y_{[-R,0]},  \\
     h(\frac{x_{m}}{R}) & \text{ for }(y, x_{m})\in Y_{[-R, +\infty)};
  \end{array}\right.\\
h_{2, R}(x)&=\left\{\begin{array}{ll}
     1 &  \text{ for } x\in Z_{2,R}\backslash Y_{[0,R]}, \\
    h(-\frac{x_{m}}{R})& \text{ for } (y, x_{m}) \in Y_{(-\infty, R]}.
  \end{array}
\right.\end{aligned}\end{align}
Then we have
\begin{align}\begin{aligned}\label{e.677}
{\rm supp}(h_{1,R})\subset Z_{1, \infty}\backslash Y_{[-\frac{R}{2},+\infty)},\quad
{\rm supp}(h_{2,R})\subset Z_{2, \infty}\backslash Y_{(-\infty,\frac{R}{2}]}.
\end{aligned}\end{align}
Let $\psi$ be given as in Propostion \ref{p.13}, then for $i=1, 2$ we define $\psi^{\infty}_{i, R}$ on $Z_{i, \infty}$ as follows:
\begin{align}\begin{aligned}\label{e.335}
\psi^{\infty}_{i, R}=h_{i, R}(x)\psi(x).
\end{aligned}\end{align}

\begin{lemma}\label{l.34} For $i=1,2$, there exist $C>0$ and $R_{0}>0$ such that for any $R>R_{0}$,\\
{\rm 1)}\, we have
\begin{align}\begin{aligned}\label{e.346}
\|\psi^{\infty}_{1, R}\|_{L^{2}(Z_{1, \infty})}^{2}+\|\psi^{\infty}_{2, R}\|_{L^{2}(Z_{2, \infty})}^{2}\geq \frac{1}{8};
\end{aligned}\end{align}
{\rm 2)}\, for any $s\in \Ker _{L^{2}}(D^{Z_{1, \infty}})^{2}$, we have
\begin{align}\begin{aligned}\label{e.336}
|\langle\psi^{\infty}_{i, R}, s\rangle_{L^{2}(Z_{i, \infty})}|\leq Ce^{-\frac{\sqrt{\delta}}{32}R}\|s\|_{L^{2}(Z_{i, \infty})}.
\end{aligned}\end{align}
\end{lemma}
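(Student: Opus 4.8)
The plan is to treat the two assertions in turn, both resting on the exponential decay estimates of Lemmas \ref{l.28}--\ref{l.30} together with the spectral separation hypothesis on $\psi$.

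For part 1), the idea is to compare $\|\psi^{\infty}_{1,R}\|^2_{L^2(Z_{1,\infty})}+\|\psi^{\infty}_{2,R}\|^2_{L^2(Z_{2,\infty})}$ with the full $L^2$-norm $\|\psi\|^2_{L^2(Z_R)}=1$. Since $h_{1,R}$ and $h_{2,R}$ are supported in $Z_{1,\infty}\setminus Y_{[-\frac R2,+\infty)}$ and $Z_{2,\infty}\setminus Y_{(-\infty,\frac R2]}$ respectively, and each equals $1$ away from the cylinder part near $X$, the only region where $h_{1,R}^2+h_{2,R}^2$ fails to equal $1$ is the middle cylinder $Y_{[-\frac R2,\frac R2]}$ (roughly; more precisely $Y_{[-\frac{3R}{4},\frac{3R}{4}]}$ where the cutoffs transition). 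First I would write
\begin{align}\begin{aligned}
1=\|\psi\|^2_{L^2(Z_R)}=\int_{Z_R\setminus Y_{[-\frac{3R}{4},\frac{3R}{4}]}}|\psi|^2+\int_{Y_{[-\frac{3R}{4},\frac{3R}{4}]}}|\psi|^2
\end{aligned}\end{align}
and bound the second integral using Lemma \ref{l.28}: for $|x_m|\le\frac{3R}{4}$ one has $\|\psi\|_{L^2(Y\times\{x_m\})}\le Ce^{-\frac{\sqrt\delta}{16}R}$, so that $\int_{Y_{[-\frac{3R}{4},\frac{3R}{4}]}}|\psi|^2\le C^2 R\,e^{-\frac{\sqrt\delta}{8}R}\le\frac12$ for $R$ large. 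Hence $\int_{Z_R\setminus Y_{[-\frac{3R}{4},\frac{3R}{4}]}}|\psi|^2\ge\frac12$. On this complement $h_{1,R}=1$ on the $Z_{1,R}$-part and $h_{2,R}=1$ on the $Z_{2,R}$-part, so $\|\psi^{\infty}_{1,R}\|^2+\|\psi^{\infty}_{2,R}\|^2\ge\int_{Z_R\setminus Y_{[-\frac{3R}{4},\frac{3R}{4}]}}|\psi|^2\ge\frac12$, which is even stronger than $\frac18$. (One should keep a little slack because of the precise location of the $h$-transition versus the $\frac{3R}{4}$ threshold; this is where the constant $\frac18$ rather than $\frac12$ comes from.)

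For part 2), let $s\in\Ker_{L^2}(D^{Z_{1,\infty}})^2$. The key point is the orthogonality hypothesis: $\psi$ is orthogonal in $L^2(Z_R,\Lambda(T^*Z_R)\otimes F_R)$ to $P_R^{[0,e^{-R\sqrt\delta/16}]}\mathscr{W}_R$, and $f_{1,R}s\in\mathscr{W}_R$ by Definition \ref{d.13}. I would write $\langle\psi^{\infty}_{1,R},s\rangle_{L^2(Z_{1,\infty})}=\langle h_{1,R}\psi,s\rangle_{L^2(Z_{1,\infty})}$ and then relate this to a pairing over $Z_R$. Observe that $h_{1,R}$ is supported in $Z_{1,R}\setminus Y_{[-\frac R2,+\infty)}$, where $f_{1,R}=1$ (since $f_{1,R}=f(x_m/R)$ equals $1$ for $x_m\le-\frac R2$ and, crucially, $h_{1,R}$ lives where $f_R$ is already $1$), so on $\supp(h_{1,R})$ we may replace $s$ by the element $f_{1,R}s$ of $\mathscr W_R$ without changing the integrand. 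Thus $\langle h_{1,R}\psi,s\rangle_{L^2(Z_{1,\infty})}=\langle h_{1,R}\psi,f_{1,R}s\rangle_{L^2(Z_R)}$. Now decompose $f_{1,R}s=P_R^{[0,e^{-R\sqrt\delta/16}]}(f_{1,R}s)+({\rm Id}-P_R^{[0,e^{-R\sqrt\delta/16}]})(f_{1,R}s)$. Pairing $\psi$ against the first summand gives $0$ by the orthogonality hypothesis (after noting $\psi\perp\psi$'s own... no --- more carefully: $\langle\psi,P_R^{[0,\cdot]}(f_{1,R}s)\rangle=\langle P_R^{[0,\cdot]}\psi,f_{1,R}s\rangle$, and either $\psi$ itself lies in the range of $P_R^{[0,\cdot]}$ with eigenvalue $\lambda<\frac{3\delta}{4}$, so I must instead argue: $\psi$ is orthogonal to $P_R^{[0,\cdot]}\mathscr W_R$, and $P_R^{[0,\cdot]}(f_{1,R}s)\in P_R^{[0,\cdot]}\mathscr W_R$, hence $\langle\psi,P_R^{[0,\cdot]}(f_{1,R}s)\rangle=0$ directly). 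Pairing $\psi$ against the second summand, I would use Lemma \ref{l.20}: $\|({\rm Id}-P_R^{[0,e^{-R\sqrt\delta/16}]})(f_{1,R}s)\|_{L^2(Z_R)}\le Ce^{-R\sqrt\delta/16}\|f_{1,R}s\|_{L^2(Z_R)}\le C'e^{-R\sqrt\delta/16}\|s\|_{L^2(Z_{1,\infty})}$, and Cauchy--Schwarz with $\|h_{1,R}\psi\|\le\|\psi\|=1$ then yields $|\langle\psi^{\infty}_{1,R},s\rangle_{L^2(Z_{1,\infty})}|\le C'e^{-R\sqrt\delta/16}\|s\|_{L^2(Z_{1,\infty})}$. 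Replacing $\frac{\sqrt\delta}{16}$ by $\frac{\sqrt\delta}{32}$ absorbs the comparison constant $\|f_{1,R}s\|_{L^2(Z_R)}$ versus $\|s\|_{L^2(Z_{1,\infty})}$ (which could a priori differ, though in fact $f_{1,R}$ truncates, so $\le$); the $\frac1{32}$ is simply safety margin.

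The main obstacle I anticipate is bookkeeping the precise supports: one must verify that $\supp(h_{1,R})$ sits inside the region where $f_{1,R}\equiv1$ so that the replacement $s\rightsquigarrow f_{1,R}s$ on that support is exact, and simultaneously that the ``transition annulus'' appearing in part 1) is contained in $Y_{[-\frac{3R}{4},\frac{3R}{4}]}$ so that Lemma \ref{l.28} applies there. With the cutoff conventions of \eqref{e.311}, \eqref{e.332}--\eqref{e.333} this works, but it requires care; once the support geometry is pinned down, both estimates follow routinely from Lemmas \ref{l.28}, \ref{l.20} and Cauchy--Schwarz.
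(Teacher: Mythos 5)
Part 1) is sound and closely parallels the paper's argument. The paper uses the inequality $\|h_{i,R}\psi\|^2 \geq \tfrac12\|\psi\|^2 - \|(1-h_{i,R})\psi\|^2$ together with Lemma \ref{l.28}, whereas you bound the integral over the transition annulus directly and look at the complement; both routes rest on the same support geometry and the same decay estimate of Lemma \ref{l.28}, and both give a constant better than $\tfrac18$. That part is fine.

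Part 2) has a genuine gap at the key step. After you correctly reduce to $\langle h_{1,R}\psi, f_{1,R}s\rangle_{L^2(Z_R)}$ and decompose $f_{1,R}s = P_R^{[0,e^{-R\sqrt\delta/16}]}(f_{1,R}s) + (\mathrm{Id}-P_R^{[0,e^{-R\sqrt\delta/16}]})(f_{1,R}s)$, the first pairing you must control is $\langle h_{1,R}\psi,\, P_R^{[0,\cdot]}(f_{1,R}s)\rangle$, not $\langle \psi,\, P_R^{[0,\cdot]}(f_{1,R}s)\rangle$. The orthogonality hypothesis $\psi\perp P_R^{[0,\cdot]}\mathscr{W}_R$ only kills the latter; you silently replace $h_{1,R}\psi$ by $\psi$ in this term, dropping the contribution $\langle (1-h_{1,R})\psi,\, P_R^{[0,\cdot]}(f_{1,R}s)\rangle$ without comment. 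This extra term is in fact exponentially small — $\mathrm{supp}(1-h_{1,R})\subset Y_{[-3R/4,0]}$, where Lemma \ref{l.28} gives $\|\psi\|_{L^2(Y\times\{x_m\})}\le C e^{-\sqrt\delta R/16}$, so $\|(1-h_{1,R})\psi\|_{L^2}\le CR^{1/2}e^{-\sqrt\delta R/16}$ — but it must be estimated, not ignored. The paper sidesteps the issue by first writing $h_{1,R}\psi = \psi - (1-h_{1,R})\psi$ and only then splitting: the $\psi$-term is handled by orthogonality and Lemma \ref{l.20} exactly as you propose, while the $(1-h_{1,R})\psi$-term is bounded by Lemma \ref{l.28} and the crude bound $\|f_{1,R}s\|\le\|s\|$. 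Once you add that missing term and estimate it by Lemma \ref{l.28}, your argument closes, and the exponent $\tfrac{\sqrt\delta}{32}$ safely absorbs the polynomial factor $R^{1/2}$.
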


\begin{proof}
By our definition of $h_{1,R}$ and $h_{2,R}$, we see that
\begin{align}\begin{aligned}\label{e.679}
{\rm supp}(1-h_{1,R})\subset Y_{[-\frac{3R}{4},0]}\quad {\rm and}\quad {\rm supp}(1-h_{2,R})\subset Y_{[0,\frac{3R}{4}]}.
\end{aligned}\end{align}
By  Lemma \ref{l.28}, (\ref{e.327}), (\ref{e.335}) and (\ref{e.679}), we have for $R$ sufficiently large
\begin{align}\begin{aligned}\label{e.680}
\sum_{i=1}^{2}\|\psi^{\infty}_{i, R}\|_{L^{2}(Z_{i, \infty})}^{2}
\geq &\sum_{i=1}^{2}\left(\frac{1}{2}\|\psi\|^{2}_{L^{2}(Z_{i,R})}-\|(1-h_{i,R})\psi\|^{2}_{L^{2}(Z_{i,R})}\right)\\
\geq &\frac{1}{2}-\int_{-\frac{3R}{4}}^{\frac{3R}{4}}\big\|\psi\big\|^{2}_{L^{2}(Y\times \{x_{m}\})}dx_{m}
\geq \frac{1}{2}-CR\,e^{-\frac{\sqrt{\delta}}{8}R}\geq\frac{1}{4}.
\end{aligned}\end{align}
By (\ref{e.312}), (\ref{e.333}) and (\ref{e.335}), we have
\begin{align}\begin{aligned}\label{e.337}
\big|\langle\psi^{\infty}_{1, R}, s\rangle_{L^{2}(Z_{1, \infty})}\big|&=\big| \big\langle h_{1, R}\psi, f_{1,R}s\big\rangle_{L^{2}(Z_{1, R})}\big|\\
&\leq \big| \big\langle \psi, f_{1,R}s\big\rangle_{L^{2}(Z_{1, R})}\big|+\big| \big\langle(1-h_{1, R})\psi, f_{1,R}s\big\rangle_{L^{2}(Z_{1, R})}\big|.
\end{aligned}\end{align}
By Lemma \ref{l.20}, (\ref{e.327}), the fact $\psi\in \big\{P_{R}^{[0,\, e^{-\frac{R\sqrt{\delta}}{16}}]}\mathscr{W}_{R}\big\}^{\perp}$ and Cauchy-Schwartz inequality, we have for any $f_{1,R}s\in \mathscr{W}_{R}$ and $R$ sufficiently large
\begin{align}\begin{aligned}\label{e.331}
|\langle\psi, f_{1,R}s\rangle_{L^{2}(Z_{1,R})}|
&=\big|\big\langle \psi, \big({\rm Id}-P_{1,R}^{[0,\, \exp(-\frac{R\sqrt{\delta}}{16})]}\big)f_{1,R}s\big\rangle_{L^{2}(Z_{1,R})}\big|\\
&\leq Ce^{-\frac{R\sqrt{\delta}}{16}}\|f_{1,R}s\|_{L^{2}(Z_{1,R})}\leq Ce^{-\frac{R\sqrt{\delta}}{16}}\|s\|_{L^{2}(Z_{1, \infty})}.
\end{aligned}\end{align}
For the second summand of (\ref{e.337}), by Lemma \ref{l.28}, (\ref{e.312}) and (\ref{e.333}) we get
\begin{align}\begin{aligned}\label{e.339}
&\big| \big\langle(1-h_{1, R})\psi, f_{1,R}s\big\rangle_{L^{2}(Z_{1, R})}\big|\leq \|\psi\|_{L^{2}(Y_{[-\frac{3R}{4},0]})}\|s\|_{L^{2}(Z_{1, \infty})}\\
&\leq C R^{\frac{1}{2}}e^{-\frac{\sqrt{\delta}}{16}R} \|s\|_{L^{2}(Z_{1, \infty})}\leq C{e}^{-\frac{\sqrt{\delta}}{32}R}
\|s\|_{L^{2}(Z_{1, \infty})}.
\end{aligned}\end{align}
Then for $i=1$ (\ref{e.336}) follows from (\ref{e.337}), (\ref{e.331}) and (\ref{e.339}).
In the same way we get (\ref{e.336}) for $i=2$.
\end{proof}

Now we begin to prove our main result in this subsection.

\textbf{Proof of Proposition \ref{p.13}.}
In \cite[Prop. 4.9]{APS} Atiyah, Patodi and Singer given a topological interpretation of the dimension of the space $\Ker_{L^{2}}(D^{Z_{i, \infty}})^{2, (p)}$
of $L^{2}$-harmonic forms on $Z_{i, \infty}$. We have the following isomorphism: for $ 0\leq p \leq m$
\begin{align}\begin{aligned}\label{e.202}
 \Ker_{L^{2}}(D^{Z_{i, \infty}})^{2, (p)}\cong \Im \left\{H^{p}(Z_{i, R}, Y, F_{R})\rightarrow
H^{p}(Z_{i, R}, F_{R})\right\},
\end{aligned}\end{align}
hence we have for $ 0\leq p\leq m $ under the assumption
(\ref{e.356})
\begin{align}\begin{aligned}\label{e.203}
h^{(p)}_{1, \infty}&:=\dim \Ker_{L^{2}}(D^{Z_{1, \infty}})^{2, (p)}\leq \dim H^{p}(Z_{1}, F)=\dim \mathbb{F}^{\{0\}, (p)}_{1, R}, \\
h^{(p)}_{2, \infty}&:=\dim \Ker_{L^{2}}(D^{Z_{2, \infty}})^{2, (p)}\leq\dim H^{p}(Z_{2}, Y, F)=\dim \mathbb{F}^{\{0\}, (p)}_{2, R}.
\end{aligned}\end{align}
For $i=1, 2$, let $\mathbb{T}_{i}$ be the orthonormal projection operator onto $\Ker_{L^{2}}(D^{Z_{i, \infty}})^{2}$, then
we define
\begin{align}\begin{aligned}\label{e.341}
\widetilde{\psi}_{i, R}:=\psi^{\infty}_{i, R}-\mathbb{T}_{i}\left(\psi^{\infty}_{i, R}\right)\in \big\{\Ker_{L^{2}}(D^{Z_{i, \infty}})^{2}\big\}^{\perp}.
\end{aligned}\end{align}
 It follows form Lemma \ref{l.34} and (\ref{e.341}) that for $R$ sufficiently large
 \begin{align}\begin{aligned}\label{e.342}
\sum_{i=1}^{2}\|\widetilde{\psi}_{i, R}\|_{L^{2}(Z_{i, \infty})}^{2}
\geq \frac{1}{3}\sum_{i=1}^{2}\|\psi^{\infty}_{i, R}
\|_{L^{2}(Z_{i, \infty})}^{2}\geq \frac{1}{12}.
\end{aligned}\end{align}
  For $i=1,2$ and $b\in U\subset S$, as $U$ is compact we set
\begin{align}\begin{aligned}\label{e.671}
\gamma_{i}=\inf_{b\in U}\min\Big\{\lambda>0\big|\,\lambda \in \text{spec}(D^{Z_{i,\infty,b}})^{2}\Big\}>0,\quad \gamma_{0}:=\min\{\gamma_{1}, \gamma_{2}\}.
\end{aligned}\end{align}
 Then it follows from the Min-Max Principle (cf. \cite[Appendix C.3]{MaMa07}) that
\begin{align}\begin{aligned}\label{e.343}
\big\langle(D^{Z_{i, \infty}})^{2}\widetilde{\psi}_{i, R}, \widetilde{\psi}_{i, R}\big\rangle_{L^{2}(Z_{i, \infty})}
\geq \gamma_{i}\|\widetilde{\psi}_{i, R}\|^{2}_{L^{2}(Z_{i, \infty})}.
\end{aligned}\end{align}
By (\ref{e.327}), we have
\begin{align}\begin{aligned}\label{e.344}
\lambda&=\big\langle(D^{Z_{R}})^{2}\psi, \psi\big\rangle_{L^{2}(Z_{R})}
=\|D^{Z_{R}}\psi\|^{2}_{L^{2}(Z_{R})}\\
&=\sum_{i=1}^{2}\left\|D^{Z_{R}}h_{i, R}\psi+D^{Z_{R}}(1-h_{i, R})\psi\right\|^{2}_{L^{2}(Z_{i, R})}\\
&\geq \frac{1}{2}\sum_{i=1}^{2}\left\|D^{Z_{i, \infty}}\psi_{i, R}^{\infty}\right\|_{L^{2}(Z_{i, \infty})}^{2}-\sum_{i=1}^{2}
\left\|D^{Z_{R}}(1-h_{i, R})\psi\right\|_{L^{2}(Z_{i, R})}^{2}.
\end{aligned}\end{align}
 For the first term, by (\ref{e.341}), (\ref{e.342}), (\ref{e.671}) and (\ref{e.343}) we have
\begin{align}\begin{aligned}\label{e.345}
 \sum_{i=1}^{2}\left\|D^{Z_{i, \infty}}\psi_{i, R}^{\infty}\right\|_{L^{2}(Z_{i, \infty})}^{2}
&=\sum_{i=1}^{2}\big\langle D^{Z_{i, \infty}}\widetilde{\psi}_{i, R}, D^{Z_{i, \infty}}\widetilde{\psi}_{i, R}\big\rangle_{L^{2}(Z_{i, \infty})}\\
&=\sum_{i=1}^{2}\big\langle(D^{Z_{i, \infty}})^{2}\widetilde{\psi}_{i, R}, \widetilde{\psi}_{i, R}\big\rangle_{L^{2}(Z_{i, \infty})}
\geq \frac{\gamma_{0}}{12}.
\end{aligned}\end{align}

Using the fact that $D^{Z_{R}}={\bf c}(dx^{m})\frac{\partial}{\partial x_{m}}+D^{Y}$ on $Y_{[-R,R]}$, where ${\bf c}(dx_{m})=dx_{m}\wedge-i(\frac{\partial}{\partial x_{m}})$ denotes the Clifford action, we have
$D^{Z_{R}}(1-h_{i, R})\psi=(1-h_{i, R})\left(D^{Z_{R}}\psi\right)-{\bf c}(dx_{m})\frac{\partial h_{i, R}}{\partial x_{m}}\psi$, hence from Lemma \ref{l.28}, (\ref{e.332}), (\ref{e.679}), we get
\begin{align}\begin{aligned}\label{e.347}
\sum_{i=1}^{2}&
\left\|D^{Z_{R}}(1-h_{i, R})\psi\right\|_{L^{2}(Z_{i, R})}^{2}\\
&\leq 2\sum_{i=1}^{2}
\left\|(1-h_{i, R})\left(D^{Z_{R}}\psi\right)\right\|^{2}_{L^{2}(Z_{i, R})}+2\sum_{i=1}^{2}
\big\|{\bf c}(dx_{m})\frac{\partial h_{i, R}}{\partial x_{m}}\psi\big\|^{2}_{L^{2}(Z_{i, R})}\\
&\leq 2\int_{-\frac{3R}{4}}^{\frac{3R}{4}}\left\|D^{Z_{R}}\psi\right\|_{L^{2}(Y\times\{x_{m}\})}^{2}dx_{m}+
\frac{2b_{1}}{R^{2}}\int_{-\frac{3R}{4}}^{-\frac{R}{2}}\big\|\psi\big\|_{L^{2}(Y\times\{x_{m}\})}^{2}dx_{m}\\ &\quad\quad\quad\quad+\frac{2b_{1}}{R^{2}}\int_{\frac{R}{2}}^{\frac{3R}{4}}\big\|\psi\big\|_{L^{2}(Y\times\{x_{m}\})}^{2}dx_{m}\\
&\leq 3C \lambda R\,e^{-\frac{\sqrt{\delta}}{16}R}+2\times \frac{2b_{1}}{R^{2}}\frac{CR}{4}e^{-\frac{\sqrt{\delta}}{16}R}
=3C R\,e^{-\frac{\sqrt{\delta}}{16}R}\cdot\lambda +\frac{Cb_{1}}{R}e^{-\frac{\sqrt{\delta}}{16}R},
\end{aligned}\end{align}
where we denote
$
b_{1}=\max_{u\in[-1, 1]}\left|\frac{\partial h}{\partial u}(u)\right|^{2}.
$
By (\ref{e.344}), (\ref{e.345}) and (\ref{e.347}), we get
\begin{align}\begin{aligned}\label{e.349}
\lambda\geq \frac{\gamma_{0}}{24}-3C R\,e^{-\frac{\sqrt{\delta}}{16}R}\cdot\lambda -\frac{Cb_{1}}{R}e^{-\frac{\sqrt{\delta}}{16}R}.
\end{aligned}\end{align}
Let $R$ be enough large such that
$
3C R\,e^{-\frac{\sqrt{\delta}}{16}R}\leq \frac{1}{2}, \, \frac{Cb_{1}}{R}e^{-\frac{\sqrt{\delta}}{16}R}\leq \frac{\gamma_{0}}{48},
$
then we get from (\ref{e.347}) that $\lambda\geq \frac{\gamma_{0}}{72}>0$.
Now the proof of Proposition \ref{p.13} is completed.\\

Similar to the proof of Proposition \ref{p.13}, by using Lemmas \ref{l.29}, \ref{l.30} and \ref{l.34}, we have

\begin{prop}\label{p.14a} For $i=1,2$, there exist constants $c>0$ and $R_{0}>0$  such that for any $R>R_{0}$ and
$\psi_{i}\in \Omega_{\rm bd}\left(Z_{i, R}, F_{R}\right)$ such that
\begin{align}\begin{aligned}\label{e.351}
(D^{Z_{i, R}})^{2}\psi_{i}=\lambda_{i} \psi_{i}, \quad 0\leq\lambda_{i}<\frac{3\delta}{4}, \quad \|\psi_{i}\|_{L^{2}(Z_{i, R})}=1,
\end{aligned}\end{align}
and $\psi_{i}$ lays in the orthogonal complement of $P_{i, R}^{[0, e^{-\frac{R\sqrt{\delta}}{16}}]}\mathscr{W}_{i, R}$ in
$L^{2}\left(Z_{i, R}, \Lambda (T^{*}Z_{i, R})\otimes F_{R}\right)$
i.e., $\psi_{i}\in \big\{P_{i, R}^{[0,\, e^{-\frac{R\sqrt{\delta}}{16}}]}\mathscr{W}_{i, R}\big\}^{\perp}$,
then we have $\lambda_{i} \geq c>0$. Consequently, the spectral projection
$$P_{i,R}^{[0,\, e^{-\frac{R\sqrt{\delta}}{16}}]}: L^{2}\left(Z_{i,R}, \Lambda (T^{*}Z_{i,R})\otimes F_{i,R}\right)
\rightarrow \mathbb{F}^{[0, e^{-\frac{R\sqrt{\delta}}{16}}]}_{i,R} $$
restricted on the subspace $\mathscr{W}_{i,R}$ is surjective.
\end{prop}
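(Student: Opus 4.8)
The plan is to run the argument of Proposition~\ref{p.13} with the closed fiber $Z_{R}$ replaced by the fiber with boundary $Z_{i,R}$, imposing absolute boundary conditions for $i=1$ and relative ones for $i=2$, and invoking Lemma~\ref{l.29} (for $i=1$) resp.\ Lemma~\ref{l.30} (for $i=2$) in every place where the proof of Proposition~\ref{p.13} used Lemma~\ref{l.28}. So fix $i$ and let $\psi_{i}$ satisfy (\ref{e.351}) with $\psi_{i}\in\big\{P_{i,R}^{[0,\,e^{-R\sqrt{\delta}/16}]}\mathscr{W}_{i,R}\big\}^{\perp}$. First I would cut $\psi_{i}$ off by the function $h_{i,R}$ of (\ref{e.333}) and set $\psi^{\infty}_{i,R}:=h_{i,R}\psi_{i}$ on $Z_{i,\infty}$; by (\ref{e.677}) and (\ref{e.679}) the cut-off $h_{i,R}$ vanishes on $Y_{[-R/2,+\infty)}$ (resp.\ on $Y_{(-\infty,R/2]}$), in particular near the boundary slice $Y\times\{0\}$, so $\psi^{\infty}_{i,R}$ is a smooth compactly supported section on the complete manifold $Z_{i,\infty}$ and no boundary condition has to be tracked in that picture. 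The decay estimate of Lemma~\ref{l.29} (resp.\ Lemma~\ref{l.30}) bounds the $L^{2}$-mass of $\psi_{i}$ on the strip $Y_{[-3R/4,3R/4]}$ by $CR\,e^{-\sqrt{\delta}R/8}$, so the computations of Lemma~\ref{l.34} carry over and give, for $R$ large, $\|\psi^{\infty}_{i,R}\|^{2}_{L^{2}(Z_{i,\infty})}\ge\frac12$ and, using the hypothesis $\psi_{i}\perp P_{i,R}^{[0,\,e^{-R\sqrt{\delta}/16}]}\mathscr{W}_{i,R}$ together with Lemma~\ref{l.20}, $|\langle\psi^{\infty}_{i,R},s\rangle|\le C\,e^{-\sqrt{\delta}R/32}\|s\|$ for every $s\in\Ker_{L^{2}}(D^{Z_{i,\infty}})^{2}$.

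Next I would set $\widetilde{\psi}_{i,R}:=\psi^{\infty}_{i,R}-\mathbb{T}_{i}(\psi^{\infty}_{i,R})$, where $\mathbb{T}_{i}$ is the orthogonal projection onto $\Ker_{L^{2}}(D^{Z_{i,\infty}})^{2}$; the two estimates above give $\|\widetilde{\psi}_{i,R}\|^{2}\ge\frac13\|\psi^{\infty}_{i,R}\|^{2}$, which is bounded below by a fixed positive constant for $R$ large, and since $\widetilde{\psi}_{i,R}$ is orthogonal to the $L^{2}$-kernel the Min-Max principle yields $\langle(D^{Z_{i,\infty}})^{2}\widetilde{\psi}_{i,R},\widetilde{\psi}_{i,R}\rangle\ge\gamma_{i}\|\widetilde{\psi}_{i,R}\|^{2}$ with $\gamma_{i}>0$ as in (\ref{e.671}). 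Decomposing $\psi_{i}=h_{i,R}\psi_{i}+(1-h_{i,R})\psi_{i}$, so that $D^{Z_{i,R}}\psi_{i}=D^{Z_{i,\infty}}\psi^{\infty}_{i,R}+D^{Z_{i,R}}\big((1-h_{i,R})\psi_{i}\big)$ on $Z_{i,R}$, using $\|D^{Z_{i,R}}\psi_{i}\|^{2}_{L^{2}(Z_{i,R})}=\lambda_{i}$ (legitimate under the imposed boundary conditions) and estimating the two terms $(1-h_{i,R})D^{Z_{i,R}}\psi_{i}$ and $[D^{Z_{i,R}},h_{i,R}]\psi_{i}$ coming from $D^{Z_{i,R}}\big((1-h_{i,R})\psi_{i}\big)$ on the strip $Y_{[-3R/4,3R/4]}$ by Lemma~\ref{l.29} (resp.\ Lemma~\ref{l.30}) exactly as in (\ref{e.344})--(\ref{e.349}), one reaches $\lambda_{i}\ge\gamma_{i}/24-CR\,e^{-\sqrt{\delta}R/16}\lambda_{i}-(C/R)e^{-\sqrt{\delta}R/16}$, whence $\lambda_{i}\ge\gamma_{i}/72=:c_{i}>0$ once $R_{0}$ is chosen so that $CR\,e^{-\sqrt{\delta}R/16}\le\frac12$ and $(C/R)e^{-\sqrt{\delta}R/16}\le\gamma_{i}/48$ for $R>R_{0}$. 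Setting $c:=\min\{c_{1},c_{2}\}$ and enlarging $R_{0}$ then covers $i=1,2$ simultaneously.

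The surjectivity assertion I would read off just as in Proposition~\ref{p.13}. By Proposition~\ref{p.4}(b) the restriction $P_{i,R}^{[0,\,e^{-R\sqrt{\delta}/16}]}\big|_{\mathscr{W}_{i,R}}$ is injective, so $\dim\mathbb{F}^{[0,\,e^{-R\sqrt{\delta}/16}]}_{i,R}\ge\dim\mathscr{W}_{i,R}$; on the other hand, by unique continuation $\dim\mathscr{W}_{i,R}=\dim\Ker_{L^{2}}(D^{Z_{i,\infty}})^{2}=\sum_{p}h^{(p)}_{i,\infty}$, and under (\ref{e.356}) the long exact sequence of the pair $(Z_{i,R},Y)$ combined with $H^{\bullet}(Y,F)=0$ makes every map $H^{p}(Z_{i,R},Y,F_{R})\to H^{p}(Z_{i,R},F_{R})$ an isomorphism, so (\ref{e.202})--(\ref{e.203}) give $h^{(p)}_{i,\infty}=\dim H^{p}(Z_{i,R},F_{R})$ and hence $\dim\mathscr{W}_{i,R}=\dim\mathbb{F}^{\{0\}}_{i,R}$. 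Since for $R$ large $e^{-R\sqrt{\delta}/16}<c$, the gap just proved forbids $\mathbb{F}^{[0,\,e^{-R\sqrt{\delta}/16}]}_{i,R}$ from being larger than $\mathbb{F}^{\{0\}}_{i,R}$; therefore $\dim\mathbb{F}^{[0,\,e^{-R\sqrt{\delta}/16}]}_{i,R}=\dim\mathbb{F}^{\{0\}}_{i,R}=\dim\mathscr{W}_{i,R}$, and the injective map $P_{i,R}^{[0,\,e^{-R\sqrt{\delta}/16}]}\big|_{\mathscr{W}_{i,R}}$ is a bijection onto $\mathbb{F}^{[0,\,e^{-R\sqrt{\delta}/16}]}_{i,R}$, in particular surjective.

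The step I expect to be the genuine crux, and the only one that really uses the running hypothesis, is the Min-Max lower bound: it rests on the \emph{uniform} positivity over the compact base $U$ of the first nonzero eigenvalue $\gamma_{i}$ of $(D^{Z_{i,\infty,b}})^{2}$ --- equivalently on the fact that $\mathrm{Spec}\,(D^{Z_{i,\infty,b}})^{2}$ meets $[0,\gamma_{i})$ only in $\{0\}$ uniformly in $b$ --- which is exactly where (\ref{e.356}) (i.e.\ $\Ker(D^{Y_{b}})=0$ for all $b$) is indispensable. The boundary-condition bookkeeping in moving between $Z_{i,R}$ and $Z_{i,\infty}$ through $h_{i,R}$, while it must be carried out with care (e.g.\ in identifying $D^{Z_{i,R}}(h_{i,R}\psi_{i})$ with $D^{Z_{i,\infty}}\psi^{\infty}_{i,R}$), causes no real difficulty, since $h_{i,R}\equiv0$ near the true boundary $Y\times\{0\}$ and the cut-off section never sees it.
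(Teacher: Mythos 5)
The first part of your proof — the eigenvalue gap — is correct and is exactly what the paper intends: the paper just says the proposition is ``similar to the proof of Proposition \ref{p.13}, by using Lemmas \ref{l.29}, \ref{l.30} and \ref{l.34},'' and you have spelled out precisely that adaptation, with the right observation that $h_{i,R}$ vanishes in a neighborhood of the boundary $Y\times\{0\}$ so that $\psi^{\infty}_{i,R}=h_{i,R}\psi_{i}$ lives cleanly on the complete manifold $Z_{i,\infty}$ and the boundary conditions never intervene, and the correct replacement of Lemma \ref{l.28} by Lemma \ref{l.29} (resp.\ \ref{l.30}).

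The surjectivity addendum, however, has a gap. You argue: $\dim\mathbb{F}^{[0,e^{-R\sqrt{\delta}/16}]}_{i,R}\ge\dim\mathscr{W}_{i,R}$ (injectivity), $\dim\mathscr{W}_{i,R}=\dim\mathbb{F}^{\{0\}}_{i,R}$ (from (\ref{e.202})--(\ref{e.203}) plus (\ref{e.720}) under (\ref{e.356}) — this step is fine), and then that ``the gap just proved forbids $\mathbb{F}^{[0,e^{-R\sqrt{\delta}/16}]}_{i,R}$ from being larger than $\mathbb{F}^{\{0\}}_{i,R}$.'' That last inference does not follow. The first part of the proposition rules out small eigenvalues only for eigensections that are \emph{orthogonal to $P_{i,R}^{[0,e^{-R\sqrt{\delta}/16}]}\mathscr{W}_{i,R}$}. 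An eigensection $\psi$ with eigenvalue $\lambda\in(0,e^{-R\sqrt{\delta}/16}]$ is certainly orthogonal to $\mathbb{F}^{\{0\}}_{i,R}$, but a priori $P_{i,R}^{[0,e^{-R\sqrt{\delta}/16}]}\mathscr{W}_{i,R}$ and $\mathbb{F}^{\{0\}}_{i,R}$ are merely two subspaces of equal dimension inside $\mathbb{F}^{[0,e^{-R\sqrt{\delta}/16}]}_{i,R}$; nothing you have proved forces them to coincide, so nothing forces $\psi$ to lie in $\{P_{i,R}^{[0,e^{-R\sqrt{\delta}/16}]}\mathscr{W}_{i,R}\}^{\perp}$. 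Notice also that this reverses the paper's logical order: the statement ``$\dim\mathbb{F}^{(0,e^{-R\sqrt{\delta}/16}]}_{i,R}=0$'' is Proposition \ref{p.7}, which the paper \emph{derives} from the surjectivity in Propositions \ref{p.13}, \ref{p.14a}, not the other way around — so using it here is implicitly circular. The cleaner route is to observe that the variational argument in the first part actually applies to any unit-norm $\psi\in\mathbb{F}^{[0,e^{-R\sqrt{\delta}/16}]}_{i,R}\cap\{P_{i,R}^{[0,e^{-R\sqrt{\delta}/16}]}\mathscr{W}_{i,R}\}^{\perp}$, not only to eigensections: write $\psi=\sum_{k}a_{k}\psi_{k}$ as a combination of orthonormal eigensections with eigenvalues $\le e^{-R\sqrt{\delta}/16}$, apply Lemma \ref{l.29}/\ref{l.30} to each $\psi_{k}$, and use $\sum_{k}\lvert a_{k}\rvert\le\sqrt{N}$ with $N=\dim\mathbb{F}^{[0,e^{-R\sqrt{\delta}/16}]}_{i,R}=O(R)$ (heat-trace bound on the stretched fiber), so the cylinder decay survives with only a polynomial loss absorbed by the exponential. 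Then $\langle(D^{Z_{i,R}})^{2}\psi,\psi\rangle\ge c$, contradicting $\psi\in\mathbb{F}^{[0,e^{-R\sqrt{\delta}/16}]}_{i,R}$, and surjectivity follows directly — with no topological input needed.
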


 In fact we will show in the following paragraph that $(D^{Z_{i, R}})^{2}, i=1, 2$, does not have the non-trivial exponentially decreasing
small eigenvalues when $R$ goes to infinity.

\begin{prop}\label{p.7} There exist $R_{0}>0$ such that for $0\leq p \leq m$, $i=1, 2$ and $R>R_{0}$, under the assumption (\ref{e.356}), we have
\begin{align}\begin{aligned}\label{e.200}
\dim \mathbb{F}^{(0, e^{-\frac{R\sqrt{\delta}}{16}}], (p)}_{i, R}=0, \quad h^{(p)}_{i, \infty}=h_{i}^{(p)}, \quad 0\leq p \leq m.
\end{aligned}\end{align}
Moreover
\begin{align}\begin{aligned}\label{e.201}
\dim \mathbb{F}^{(0,e^{-\frac{R\sqrt{\delta}}{16}}], (p)}_{R}=0, \quad 0\leq p \leq m.\end{aligned}\end{align}
\end{prop}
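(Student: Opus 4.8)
The plan is to combine the dimension-counting already available from Atiyah–Patodi–Singer with the two surjectivity statements of Propositions \ref{p.13} and \ref{p.14a}. First I would fix $0\le p\le m$, $i\in\{1,2\}$ and work with the eigenvalues of $(D^{Z_{i,R}})^{2,(p)}$ lying in the window $[0,e^{-R\sqrt\delta/16}]$. By Proposition \ref{p.4}(b) the projection $P_{i,R}^{[0,e^{-R\sqrt\delta/16}]}$ restricted to $\mathscr{W}_{i,R}$ is injective, so $\dim\mathbb{F}^{[0,e^{-R\sqrt\delta/16}],(p)}_{i,R}\ge \dim\mathscr{W}_{i,R}^{(p)}=h^{(p)}_{i,\infty}$; by Proposition \ref{p.14a} the same projection is surjective onto $\mathbb{F}^{[0,e^{-R\sqrt\delta/16}]}_{i,R}$, which gives the reverse inequality. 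Hence for $R>R_0$
\begin{align}\begin{aligned}\label{e.pf1}
\dim\mathbb{F}^{[0,e^{-R\sqrt\delta/16}],(p)}_{i,R}=h^{(p)}_{i,\infty}.
\end{aligned}\end{align}

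Next I would compare this with the count of genuine zero modes. On the one hand the dimension of $\mathbb{F}^{\{0\},(p)}_{i,R}$ equals $\dim H^{p}_{\mathrm{abs/rel}}(Z_{i},F)=h^{(p)}_{i}$, since the kernel of the fiberwise Hodge–Laplacian with absolute (resp.\ relative) boundary conditions computes absolute (resp.\ relative) cohomology and, as noted in Section~\ref{ss1.10}, the stretching diffeomorphism $\phi_{i,R}$ identifies these cohomology bundles. On the other hand (\ref{e.203}) gives the topological bound $h^{(p)}_{i,\infty}\le h^{(p)}_{i}$ under the assumption (\ref{e.356}). Since $\mathbb{F}^{\{0\},(p)}_{i,R}\subset \mathbb{F}^{[0,e^{-R\sqrt\delta/16}],(p)}_{i,R}$, combining with (\ref{e.pf1}) yields
\begin{align}\begin{aligned}\label{e.pf2}
h^{(p)}_{i}=\dim\mathbb{F}^{\{0\},(p)}_{i,R}\le\dim\mathbb{F}^{[0,e^{-R\sqrt\delta/16}],(p)}_{i,R}=h^{(p)}_{i,\infty}\le h^{(p)}_{i},
\end{aligned}\end{align}
so all the inequalities are equalities. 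This forces $h^{(p)}_{i,\infty}=h^{(p)}_{i}$ and, by the first and third terms of (\ref{e.pf2}), $\dim\mathbb{F}^{(0,e^{-R\sqrt\delta/16}],(p)}_{i,R}=\dim\mathbb{F}^{[0,e^{-R\sqrt\delta/16}],(p)}_{i,R}-\dim\mathbb{F}^{\{0\},(p)}_{i,R}=0$, which is (\ref{e.200}).

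For (\ref{e.201}) I would run the same argument on $M_R$ itself: by Proposition \ref{p.4}(a) and Proposition \ref{p.13}, $\dim\mathbb{F}^{[0,e^{-R\sqrt\delta/16}],(p)}_{R}=\dim\mathscr{W}^{(p)}_{R}=h^{(p)}_{1,\infty}+h^{(p)}_{2,\infty}$, using $\mathscr{W}_{R}=\mathscr{W}_{1,R}\oplus\mathscr{W}_{2,R}$. By the step just completed this equals $h^{(p)}_{1}+h^{(p)}_{2}$, which by (\ref{e.89}) is $h^{(p)}=\dim H^{p}(Z,F)=\dim\mathbb{F}^{\{0\},(p)}_{R}$. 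Subtracting the zero-mode count gives $\dim\mathbb{F}^{(0,e^{-R\sqrt\delta/16}],(p)}_{R}=0$. The one point that needs care—and is really the crux—is making sure the chain of inequalities in (\ref{e.pf2}) is genuinely a squeeze: this rests entirely on the injectivity/surjectivity pair (Propositions \ref{p.4} and \ref{p.14a}/\ref{p.13}) being valid for the \emph{same} range $R>R_0$ and on the APS bound (\ref{e.203}), so I would make explicit that $R_0$ is taken to be the maximum of the finitely many thresholds appearing in those statements.
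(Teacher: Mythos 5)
Your proof is correct and follows essentially the same route as the paper's: establish $\dim\mathbb{F}^{[0,e^{-R\sqrt\delta/16}],(p)}_{i,R}=h^{(p)}_{i,\infty}$ from the injectivity/surjectivity pair (Propositions \ref{p.4}, \ref{p.13}, \ref{p.14a}), squeeze against (\ref{e.203}) and the Hodge-theoretic identification of zero modes with cohomology, and then sum over $i$ with (\ref{e.89}) for the total fiber. You merely spell out the squeeze more explicitly than the paper's terse "By (\ref{e.203}) and (\ref{e.197}), we get (\ref{e.200})," which is a reasonable thing to do.
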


\begin{proof}
In fact, by Propositions \ref{p.4}, \ref{p.13}, \ref{p.14a}, we have proved that
\begin{align}\begin{aligned}\label{e.197}
\dim \mathbb{F}^{[0, e^{-\frac{R\sqrt{\delta}}{16}}], (p)}_{i, R}=h^{(p)}_{i, \infty},\quad i=1,2.
\end{aligned}\end{align}
By (\ref{e.203}) and (\ref{e.197}), we get (\ref{e.200}).
It means that $(D^{Z_{i, R}})^{2}$ don't have the non-trivial small eigenvalues decreasing exponentially when $R$ goes to infinity.
By Definition \ref{d.13}, (\ref{e.89}) and (\ref{e.200}), we get
$\dim  \mathbb{F}^{[0,e^{-\frac{R\sqrt{\delta}}{16}}], (p)}_{R}=h^{(p)}_{1, \infty}+h^{(p)}_{2, \infty}
=h^{(p)}_{1}+h^{(p)}_{2}
=h^{(p)}=\dim  \mathbb{F}^{\{0\},(p)}_{R}$,
this implies (\ref{e.201}). The proof is completed.
\end{proof}

Finally, \textbf{Theorem \ref{t.10}} follows from Propositions \ref{p.4}, \ref{p.13}, \label{p.14} and \ref{p.7}.

\begin{lemma}\label{l.22}For $i=1, 2$, there exist $R_{0}>0$ such that for $R>R_{0}$ we have the linear isomorphism
\begin{align}\begin{aligned}\label{e.205}
P^{\{0\}}_{R}:\mathscr{W}_{R}\cong \Ker (D^{Z_{R}})^{2}\quad (\text{resp.}\,\,
P^{\{0\}}_{i, R}:\mathscr{W}_{i, R}\cong \Ker (D^{Z_{i, R}})^{2}).
\end{aligned}\end{align}
\end{lemma}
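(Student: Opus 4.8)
The plan is to prove the isomorphism $P^{\{0\}}_R : \mathscr{W}_R \cong \Ker(D^{Z_R})^2$ (and similarly for the $i$-components) by combining injectivity, which is essentially a consequence of the small-eigenvalue estimates already established, with a dimension count that pins down surjectivity. First I would record that by Theorem \ref{t.10} there is a spectral gap: for $R > R_0$ the spectrum of $(D^{Z_R})^2$ meets $[0, e^{-R\sqrt{\delta}/16}]$ only at $0$, so that $\mathbb{F}^{[0,\,e^{-R\sqrt{\delta}/16}]}_R = \mathbb{F}^{\{0\}}_R = \Ker(D^{Z_R})^2$, and likewise $\mathbb{F}^{[0,\,e^{-R\sqrt{\delta}/16}]}_{i,R} = \Ker(D^{Z_{i,R}})^2$. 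Thus the projection $P_R^{[0,\,e^{-R\sqrt{\delta}/16}]}$ appearing in Proposition \ref{p.13} is exactly $P^{\{0\}}_R$, and Proposition \ref{p.13} already tells us this projection restricted to $\mathscr{W}_R$ is \emph{surjective} onto $\mathbb{F}^{[0,\,e^{-R\sqrt{\delta}/16}]}_R = \Ker(D^{Z_R})^2$. The same remark applies to Proposition \ref{p.14a} for the $i$-pieces.

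For injectivity I would invoke Lemma \ref{l.20}: if $s \in \mathscr{W}_R$ with $P^{\{0\}}_R s = \big(\operatorname{Id} - P^{\{0\}}_R\big) s$... more precisely if $P^{\{0\}}_R s = 0$ then $s = \big(\operatorname{Id} - P_R^{[0,\,e^{-R\sqrt{\delta}/16}]}\big)s$, and Lemma \ref{l.20} gives $\|s\|_{L^2(Z_R)} \leq C e^{-R\sqrt{\delta}/16}\|s\|_{L^2(Z_R)}$, which forces $s = 0$ once $R$ is large enough that $C e^{-R\sqrt{\delta}/16} < 1$. This is precisely the argument already run in the proof of Proposition \ref{p.4}(a). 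Hence $P^{\{0\}}_R|_{\mathscr{W}_R}$ is injective, and combined with the surjectivity from Proposition \ref{p.13} it is a linear isomorphism. The argument for $P^{\{0\}}_{i,R}|_{\mathscr{W}_{i,R}}$ is identical, using Lemma \ref{l.20} (resp.\ part) and Proposition \ref{p.14a}.

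Alternatively, and perhaps more cleanly, one can phrase the whole thing as a dimension count: injectivity of $P^{\{0\}}_R|_{\mathscr{W}_R}$ gives $\dim \mathscr{W}_R \leq \dim \Ker(D^{Z_R})^2$, while Proposition \ref{p.7} (specifically \eqref{e.201} together with \eqref{e.200} and Definition \ref{d.13}, $\dim \mathscr{W}_R = \sum_i h^{(p)}_{i,\infty} = \sum_i h^{(p)}_i = h^{(p)}$ summed over $p$) shows $\dim \mathscr{W}_R = \dim \Ker(D^{Z_R})^2$; an injective linear map between finite-dimensional spaces of equal dimension is an isomorphism. I would present the proof in this second form since Proposition \ref{p.7} has already done the bookkeeping. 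The only mild subtlety — the step I expect to require the most care — is making sure the degree-by-degree ($\mathbb{Z}$-grading on $\Omega(Z_R,F_R)$) compatibility is respected, i.e.\ that $P^{\{0\}}_R$ and $f_{i,R}(\cdot)$ preserve the form degree $p$, so that the equality $\dim \mathscr{W}_R^{(p)} = \dim \Ker(D^{Z_R})^{2,(p)}$ holds in each degree and not merely after summing; this follows because all the cut-off multiplications and harmonic projections involved commute with the number operator $N$, but it is worth stating explicitly. Everything else is a direct assembly of Theorem \ref{t.10}, Lemma \ref{l.20}, Propositions \ref{p.4}, \ref{p.13}, \ref{p.14a} and \ref{p.7}.
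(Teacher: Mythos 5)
Your proof is correct and follows essentially the same route as the paper's: injectivity from Proposition \ref{p.4} (via Lemma \ref{l.20}), and surjectivity from Propositions \ref{p.13}, \ref{p.14a}, and \ref{p.7}, using the spectral gap to identify $P_R^{[0,\,e^{-R\sqrt{\delta}/16}]}$ with $P_R^{\{0\}}$. Your alternative dimension-count phrasing and the remark about the $\mathbb{Z}$-grading are both sound and make the bookkeeping slightly more explicit than the paper's one-line citation, but they rest on the same ingredients.
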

\begin{proof}The injectivity is a consequence of Propositions \ref{p.4}. The surjectivity is a
 consequence of Propositions \ref{p.13}, \ref{p.14a} and \ref{p.7}.
\end{proof}

\subsection{Adiabatic limit of the large time contribution $L(R)$}\label{ss3.6}
In this subsection we will treat the large time contribution (\ref{e.93}). We have the following theorem.

\begin{thm}\label{t.8} Under the assumption {\rm(\ref{e.356})}, we have
$\lim_{R\rightarrow\infty}L(R)=0$.
\end{thm}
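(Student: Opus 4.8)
The plan is to show that the large time integrand $f^{\wedge}(C'_{R,t}, h^{W_{R}})-f^{\wedge}(C'_{1, R, t}, h^{W_{1, R}})-f^{\wedge}(C'_{2, R, t}, h^{W_{2, R}})$ decays fast enough, after integrating over $[R^{2-\varepsilon},\infty)$, to kill the whole contribution as $R\to\infty$. The crucial input is Theorem \ref{t.10}: under the assumption (\ref{e.356}) there is a uniform spectral gap $c>0$, so that for $R$ large the nonzero eigenvalues of $(D^{Z_{R}})^{2}$, $(D^{Z_{1,R}})^{2}$, $(D^{Z_{2,R}})^{2}$ all lie in $[c,+\infty)$, while the zero eigenvalues reassemble (by Lemma \ref{l.22}) into the harmonic bundles $\Ker(D^{Z_{R}})^{2}\cong\mathscr{W}_{R}$ and similarly for the pieces. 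By the usual large-time analysis of Bismut--Lott (cf. \cite[\S 3]{BL}), the limit of $f^{\wedge}(C'_{R,t}, h^{W_{R}})$ as $t\to\infty$ is governed by the harmonic (fiberwise cohomology) part, and the transgression form $T_f(A^{\mathscr{H}_R},h^{\mathscr{H}_R}_{L^2})$ has already been subtracted off in (\ref{e.367}). Concretely, using $D^{2}_{R,t}=\tfrac{t}{4}\psi^{-1}_t\mathscr{F}_R\psi_t$ from Proposition \ref{p.3}, one splits $f^{\wedge}(C'_{R,t},h^{W_R})=\varphi\tr_s[\tfrac{N}{2}f'(D_{R,t})]$ according to the spectral projections $P^{\{0\}}_R$ and $\mathrm{Id}-P^{\{0\}}_R$.

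First I would estimate the positive-eigenvalue part. On the range of $\mathrm{Id}-P^{\{0\}}_R$ one has $\mathscr{F}_R\geq c>0$ uniformly (Theorem \ref{t.10}, after absorbing the nilpotent $\mathscr{F}^{[+]}_R$ term into a standard perturbation argument as in \cite[\S 3]{BL}), so $f'(D_{R,t})$ restricted there is bounded in the relevant $\mathscr{C}^0$-norm by $C(1+t^{-n/2})e^{-ct/4}$ times a polynomial in $t$; this is exponentially small for $t\geq R^{2-\varepsilon}$. The analogous bounds hold for the $i=1,2$ pieces. Integrating $\int_{R^{2-\varepsilon}}^{\infty}e^{-ct/4}\,\tfrac{dt}{t}$ gives something bounded by $Ce^{-cR^{2-\varepsilon}/4}\to 0$, so the positive-spectrum part of $L(R)$ vanishes in the limit.

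Next, the zero-eigenvalue part. Here $f'(D_{R,t})$ acts on $\Ker(D^{Z_{R}})^{2}$, where $D_{R,t}$ reduces (in the limit) to the flat superconnection data on the cohomology bundles, i.e. to $A^{\mathscr{H}_R}$-type terms; the precise statement is that $\varphi\tr_s[\tfrac{N}{2}f'(D_{R,t})]$ restricted to the harmonic part, minus its analogue for $M_{1,R}$ and $M_{2,R}$, and minus the $T_f$-contribution, equals a total $t$-derivative whose boundary terms at $t=R^{2-\varepsilon}$ and $t=\infty$ both tend to $0$. For this I would use that, by Lemma \ref{l.22} and Proposition \ref{p.7}, the zero-degree cohomology data on $Z_R$, $Z_{1,R}$, $Z_{2,R}$ fit into the split exact sequence (\ref{e.86}), so the combination $\tr_s[\tfrac{N}{2}f'(\cdot)]_{Z_R}-\sum_i\tr_s[\tfrac{N}{2}f'(\cdot)]_{Z_{i,R}}$ matches, up to exact forms and the already-subtracted $T_f(A^{\mathscr{H}_R},h^{\mathscr{H}_R}_{L^2})$ transgression, the derivative of a form that converges; the convergence of the metrics $h^{H(Z_i,F)}_{L^2,R}$ (Remark \ref{r.1}) and the asymptotic splitting of $\mathscr{H}_R$ (to be used in Section \ref{s.1}) ensure that the remaining large-$t$ limit contributes $0$ modulo $Q^{S,0}$.

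I expect the main obstacle to be the zero-eigenvalue part: one must argue carefully that the off-diagonal/harmonic contribution of the stretched operators $D_{R,t}$ on $\Ker(D^{Z_R})^2$ genuinely matches the transgression $T_f(A^{\mathscr{H}_R},h^{\mathscr{H}_R}_{L^2})$ that was subtracted, uniformly enough that the $R\to\infty$ limit can be taken inside the integral. This requires controlling the dependence of the $L^2$-metric $h^{\mathscr{H}_R}_{L^2}$ on $R$ (it degenerates on the cylinder simplexes, cf. Remark \ref{r.1}) and showing that the nilpotent part $\mathscr{F}^{[+]}_R$ does not spoil the gap estimate on the orthogonal complement; the latter is a finite-rank perturbation on the bundle of harmonic forms plus an exponentially small coupling to the high modes, which I would handle with the resolvent/Duhamel estimates of Lemma \ref{l.6} together with the eigensection decay Lemmas \ref{l.28}--\ref{l.30}.
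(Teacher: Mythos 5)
Your reduction to the spectral gap of Theorem \ref{t.10} and the splitting of $f'(D_{R,t})$ into a harmonic projection and a high-energy part is indeed the paper's strategy (the paper uses a contour decomposition $f'(D_{R,t})=\mathbf{P}^{\{0\}}_{R,t}+\mathbf{K}_{R,t}$, (\ref{e.156})--(\ref{e.159})). For the high-energy part your estimate is morally right but slightly too quick: $\tr_s$ is taken over a space whose volume grows with $R$, and the paper's Lemma \ref{l.53} shows the trace norm of the resolvent grows like $R^{N_0}$; the paper's (\ref{e.171})--(\ref{e.172}) then observe that $R^{N_0}e^{-C'R^{2-\varepsilon}}\to 0$. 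This is a bookkeeping point, not a conceptual gap.

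The genuine gap is in your treatment of the zero-eigenvalue part. You try to match the harmonic contribution of $f'(D_{R,t})$ against the transgression $T_f(A^{\mathscr{H}_R},h^{\mathscr{H}_R}_{L^2})$ that appears in (\ref{e.367}). But Theorem \ref{t.8} is a statement about $L(R)$ alone, which by definition (\ref{e.93}) does not contain the $T_f(A^{\mathscr{H}_R})$ term; that term is handled separately in Theorem \ref{t.9}. Moreover, the harmonic forms of $D^{Z_R}$ correspond to $H^{\bullet}(Z_R,F_R)$, not to the full complex $\mathscr{H}_R$ to which $T_f$ is attached, so the proposed identification does not hold as stated. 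What the paper actually does (Lemma \ref{l.56}) is show, using the uniform spectral gap of $B^{(0)}_R$ so that the expansion of the resolvent has $R$-uniformly bounded terms, that
\[
\Big|\varphi\tr_s\big[\tfrac{N}{2}\mathbf{P}^{\{0\}}_{R,t}\big]-\tfrac{1}{2}\chi'(Z,F)\Big|\leq \tfrac{C}{\sqrt{t}}
\]
uniformly in $R$ (and likewise for the two boundary fibrations with $\chi'_{\rm bd}(Z_i,F)$). Since $h^{(p)}=h_1^{(p)}+h_2^{(p)}$ (\ref{e.89}), the three $\chi'$-constants cancel in the difference, leaving an integrand of size $O(1/\sqrt{t})$ uniformly in $R$; hence $|L(R)|\lesssim \int_{R^{2-\varepsilon}}^{\infty}t^{-3/2}dt\to 0$. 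No appeal to $T_f(A^{\mathscr{H}_R})$ or to any total-derivative structure is needed, and the uniformity in $R$ of the $O(1/\sqrt{t})$ bound --- which your sketch leaves unaddressed --- is precisely where the spectral gap is used a second time.
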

Then the rest part of this subsection will be contributed to prove this theorem.
By Definition \ref{d.9} and Theorem \ref{t.10}, there exists $c>0$ such that for $R$ sufficiently large and $b\in U$
\begin{align}\begin{aligned}\label{e.154}
\text{Spec}(\mathscr{F}_{R}^{(0)})\subset\{0\}\cup[c, +\infty), \quad \text{Spec}(\mathscr{F}_{i,R}^{(0)})\subset\{0\}\cup[c, +\infty).
\end{aligned}\end{align}
By (\ref{e.101}), we have
\begin{align}\begin{aligned}\label{e.155}
|\text{Spec}(C^{2}_{R,t})|\subset\{0\}\cup[\frac{ct}{4}, +\infty), \quad |\text{Spec}(C^{2}_{i,R,t})|\subset\{0\}\cup[\frac{ct}{4}, +\infty).
\end{aligned}\end{align}
Let $\delta$ be a circle centered at $0$ with radius $\frac{\sqrt{c}}{8}$ and $\Delta=\Delta^{+}\cup\Delta^{-}$ be the contour indicated by Figure \ref{fig.9}.

\begin{figure}
\input{Coutour.pstex_t}
\caption[Figure]{\label{fig.9}}
\end{figure}\index{Figure \ref{fig.9}}

For $i=1, 2$, let
\begin{align}\begin{aligned}
B_{R}=(d^{M_{R}})^{*}-d^{M_{R}}\quad (\text{resp.}\,\, B_{i,R}=(d^{M_{i,R}})^{*}-d^{M_{i,R}}\,\,).
\end{aligned}\end{align}
Then we put
\begin{align}\begin{aligned}\label{e.156}
&\mathbf{P}^{\{0\}}_{R, t}=\frac{1}{2i\pi}\psi^{-1}_{t}\int_{\delta}\frac{f'(\sqrt{t}\lambda)}{\lambda-B_{R}}d\lambda\cdot\psi_{t},
\quad \mathbf{K}_{R, t}=\frac{1}{2i\pi}\psi^{-1}_{t}\int_{\Delta}\frac{f'(\sqrt{t}\lambda)}{\lambda-B_{R}}d\lambda\cdot\psi_{t},\\
&\mathbf{P}^{\{0\}}_{i, R, t}=\frac{1}{2i\pi}\psi^{-1}_{t}\int_{\delta}\frac{f'(\sqrt{t}\lambda)}{\lambda-B_{i, R}}d\lambda\cdot\psi_{t},
\quad \mathbf{K}_{i, R, t}=\frac{1}{2i\pi}\psi^{-1}_{t}\int_{\Delta}\frac{f'(\sqrt{t}\lambda)}{\lambda-B_{i, R}}d\lambda\cdot\psi_{t},
\end{aligned}\end{align}\index{$\mathbf{P}^{\{0\}}_{R, t}$, $\mathbf{K}_{R, t}$, $\mathbf{P}^{\{0\}}_{i, R, t}$,\\ $\mathbf{K}_{i, R, t}$,
$P^{\{0\}}_{R, t}$, $P^{\{0\}}_{i, R, t}$}
then by (\ref{e.155}) and (\ref{e.156}), we have
\begin{align}\begin{aligned}\label{e.159}
f'(D_{R, t})=\mathbf{P}^{\{0\}}_{R, t}+\mathbf{K}_{R, t}, \quad f'(D_{i, R, t})=\mathbf{P}^{\{0\}}_{i, R, t}+\mathbf{K}_{i, R, t}.
\end{aligned}\end{align}

  Let $L\in \Lambda (T^{*}S)\otimes \text{End}(\Omega(Z_{R}, F_{R}))$, then for all $q\in \mathbb{N}$ we define
\begin{align}\begin{aligned}\label{e.160}
  \|L\|_{q}:=\{\tr(L^{*}L)^{q/2}\}^{1/q},
\end{aligned}\end{align}
 when $q=\infty$ we set $\|\cdot\|_{\infty}$ to be the operator norm, i.e., $\|A\|_{\infty}=\sup_{\|s\|_{L^{2}}=1}\|As\|_{L^{2}}$.\index{$\parallel\cdot\parallel_{q}$, $\parallel\cdot\parallel_{\infty}$}
Moreover, for $p\in \mathbb{N}$, if $\|A\|_{p}$ and $\|B\|_{\infty}$ exist, then we have a useful inequality
\begin{align}\begin{aligned}\label{e.766}
\|AB\|_{p}\leq \|A\|_{p}\|B\|_{\infty}.
\end{aligned}\end{align}

\begin{lemma}
\label{l.53} For $\lambda_{0}\in \Delta$ fixed, $i=1,2,$ and $p>\dim(Z_{R})$, there exist $C>0$ and $N_{0}\in\mathbb{N}^{*}$ such that
for any $R\geq 1$
\begin{align}\begin{aligned}\label{e.681}
\|(\lambda_{0}-B_{R})^{-1}\|_{p}\leq CR^{\frac{N_{0}}{p}}\quad (\text{resp.}\,\,\|(\lambda_{0}-B_{i,R})^{-1}\|_{p}\leq CR^{\frac{N_{0}}{p}}).
\end{aligned}\end{align}
\end{lemma}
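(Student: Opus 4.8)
The plan is to separate off the positive--degree part of $B_{R}$ in $\Lambda(T^{*}S)$ --- which, $S$ being finite dimensional, generates only a \emph{finite} Neumann series --- from the skew--adjoint $0$--form part $B^{(0)}_{R}:=(d^{Z_{R}})^{*}-d^{Z_{R}}$, and to bound the Schatten $p$--norm of the resolvent of the latter by a heat--trace identity which is uniform in $R$. Throughout, by (\ref{e.160}) and the finite dimensionality of $\Lambda(T^{*}S)$ it suffices to control the $\|\cdot\|_{p}$--norms of the components, in a fixed basis of $\Lambda(T^{*}S)$, of the operators in question, up to a multiplicative constant depending only on $\dim S$.

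First I would decompose $B_{R}=B^{(0)}_{R}+B^{[+]}_{R}$ according to the horizontal degree, where, by (\ref{e.544}),
\begin{align}\begin{aligned}
B^{[+]}_{R}&=\big((\nabla^{\Omega(Z_{R},F_{R})})^{*}-\nabla^{\Omega(Z_{R},F_{R})}\big)+\big((i_{T})^{*}-i_{T}\big)\\
&\in\bigoplus_{k\ge1}\Omega^{k}\big(S,\text{End}(\Omega(Z_{R},F_{R}))\big).
\end{aligned}\end{align}
Set $n:=\dim S$ and $T_{R}:=(\lambda_{0}-B^{(0)}_{R})^{-1}B^{[+]}_{R}$. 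Since $(\lambda_{0}-B^{(0)}_{R})^{-1}$ acts coefficientwise in $\Lambda(T^{*}S)$ while left multiplication by $B^{[+]}_{R}$ strictly raises the horizontal degree, one has $T_{R}^{\,n+1}=0$; hence from $\lambda_{0}-B_{R}=(\lambda_{0}-B^{(0)}_{R})(\Id-T_{R})$ I obtain the exact identity
\begin{align}\begin{aligned}
(\lambda_{0}-B_{R})^{-1}=\Big(\sum_{j=0}^{n}T_{R}^{\,j}\Big)(\lambda_{0}-B^{(0)}_{R})^{-1}.
\end{aligned}\end{align}
Next I would check that $\|T_{R}\|_{\infty}$ is uniform in $R$: since $B^{(0)}_{R}$ is skew--adjoint, $T_{R}^{*}=(B^{[+]}_{R})^{*}(\bar\lambda_{0}+B^{(0)}_{R})^{-1}$; because $B^{(0)}_{R}$ is fiberwise--elliptic of order one with $R$--independent local data and $\lambda_{0}\in\Delta$ (hence also $-\bar\lambda_{0}$, as $\text{Spec}(B^{(0)}_{R})\subset i\mathbb{R}$ is invariant under $z\mapsto-\bar z$) keeps a uniform positive distance $d_{0}$ from $\text{Spec}(B_{R})=\text{Spec}(B^{(0)}_{R})$ by Theorem \ref{t.10} and (\ref{e.155}), Gårding's inequality gives that $(\bar\lambda_{0}+B^{(0)}_{R})^{-1}$ is bounded from $L^{2}$ to the fiberwise Sobolev space $H^{1}$ uniformly in $R$, while $(B^{[+]}_{R})^{*}$, a first--order operator with $R$--independent local coefficients, is bounded from that $H^{1}$ back to $L^{2}$. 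By the Schatten--Hölder inequality (\ref{e.766}) and submultiplicativity of $\|\cdot\|_{\infty}$, it then remains to prove $\|(\lambda_{0}-B^{(0)}_{R})^{-1}\|_{p}\le CR^{1/p}$.

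For that, $B^{(0)}_{R}$ is normal with $(B^{(0)}_{R})^{2}=-(D^{Z_{R}})^{2}$, so its singular values are $|\lambda_{0}\mp i\sqrt{\nu}|^{-1}$ over the spectrum $\{\nu\}$ of $(D^{Z_{R}})^{2}$; one has $|\lambda_{0}\mp i\sqrt{\nu}|^{-1}\le C_{\lambda_{0}}(1+\nu)^{-1/2}$ for all such $\nu$ (for bounded $\nu$ since $\text{dist}(\lambda_{0},\text{Spec}(B^{(0)}_{R}))\ge d_{0}$, for large $\nu$ since $|\lambda_{0}\mp i\sqrt{\nu}|\ge\sqrt{\nu}-|\lambda_{0}|\ge\tfrac12\sqrt{1+\nu}$). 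Hence, via the Mellin transform of $s\mapsto(1+s)^{-p/2}$,
\begin{align}\begin{aligned}
\|(\lambda_{0}-B^{(0)}_{R})^{-1}\|_{p}^{p}&\le C_{\lambda_{0}}^{p}\,\tr\big((1+(D^{Z_{R}})^{2})^{-p/2}\big)\\
&=\frac{C_{\lambda_{0}}^{p}}{\Gamma(p/2)}\int_{0}^{\infty}t^{p/2-1}e^{-t}\,\tr\big(e^{-t(D^{Z_{R}})^{2}}\big)\,dt.
\end{aligned}\end{align}
I would then split the integral at $t=1$, use the uniform on--diagonal bound $e^{-t(D^{Z_{R}})^{2}}(x,x)\le Ct^{-m/2}$ for $0<t\le1$ with $C$ independent of $R$ (which holds by the same finite--propagation/local--parametrix argument as in Lemma \ref{l.42}), together with $\tr(e^{-t(D^{Z_{R}})^{2}})\le\tr(e^{-(D^{Z_{R}})^{2}})\le C\,\vol(Z_{R})$ for $t\ge1$, and invoke $p>m=\dim Z_{R}$ to make $\int_{0}^{1}t^{(p-m)/2-1}\,dt$ converge; this gives $\tr((1+(D^{Z_{R}})^{2})^{-p/2})\le C\,\vol(Z_{R})\le C(1+R)$, so for $R\ge1$, $\|(\lambda_{0}-B^{(0)}_{R})^{-1}\|_{p}\le CR^{1/p}$. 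Combined with the finite sum this proves (\ref{e.681}) with $N_{0}=1$. The estimate for $B_{i,R}$, $i=1,2$, follows by the same argument applied to the fiberwise Hodge Laplacian $(D^{Z_{i,R}})^{2}$ with absolute ($i=1$), resp. relative ($i=2$), boundary conditions and the self--adjoint framework of \cite{Zhu13}: skew--adjointness of $B^{(0)}_{i,R}$, relative boundedness of $B^{[+]}_{i,R}$, the uniform heat bounds and $\vol(Z_{i,R})\le C(1+R)$ all persist.

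The only genuinely delicate point, and the step deserving the most care, will be the $R$--uniformity of every constant involved: both $\|T_{R}\|_{\infty}\le C$ and $\tr(e^{-t(D^{Z_{R}})^{2}})\le C\,\vol(Z_{R})\,t^{-m/2}$ for $0<t\le1$ must hold with constants that do not deteriorate as $R\to\infty$. This is precisely where one relies on the structural fact emphasized throughout Section \ref{ss1.6}, namely that the cylindrical stretching enlarges only the global volume of $Z_{R}$, linearly in $R$, while leaving all local Riemannian, heat--kernel and elliptic--regularity data unchanged; the single power $R^{1/p}$ in (\ref{e.681}) is exactly the imprint of this linear volume growth.
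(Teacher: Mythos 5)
Your argument is correct and follows the same two-step strategy as the paper — reduce modulo the positive–degree part, then control a Schatten norm of the $0$–degree resolvent — but the execution differs enough to be worth comparing. (i) You package the expansion as the clean multiplicative identity $(\lambda_{0}-B_{R})^{-1}=\big(\sum_{j=0}^{n}T_{R}^{j}\big)(\lambda_{0}-B_{R}^{(0)})^{-1}$ with $T_{R}$ nilpotent, whereas the paper writes out the recursive finite sum (\ref{e.685}); these are equivalent, but your factorization isolates the operator–norm factor $\|\sum T_{R}^{j}\|_{\infty}$ more transparently. (ii) For the $\|\cdot\|_{p}$ bound on $(\lambda_{0}-B_{R}^{(0)})^{-1}$, the paper passes through $\widehat H_{R}^{-k}$ and a $\mathscr{C}^{l}$ kernel estimate invoked from \cite[Thm.~2.38]{BGV92}, landing on $R^{2/p}$; you instead use the singular–value characterization plus the Mellin/heat–trace identity and the $R$–uniform on–diagonal small–time bound, landing on the sharper $R^{1/p}$ ($N_{0}=1$). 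Both give the lemma, since any $N_{0}$ is permitted, but your bound is tighter and the mechanism — spectral gap for $|\lambda_{0}\mp i\sqrt{\nu}|^{-1}\leq C(1+\nu)^{-1/2}$, then $p>m$ to converge the small–time Mellin integral, then linear volume growth — is more direct. (iii) One inaccuracy, which does not invalidate the proof but introduces unnecessary machinery: you describe $B_{R}^{[+]}$ as a first–order fiberwise operator and invoke G\aa rding's inequality to show $T_{R}$ is bounded $L^{2}\to L^{2}$. In fact, as the paper states in connection with $B^{(\geq 1)}$ in (\ref{e.685}), $B_{R}^{[+]}=\omega(\Omega(Z_{R},F_{R}),h^{\Omega})+\big((i_{T})^{*}-i_{T}\big)$ is of order $0$ (a tensor: the variation of the fiberwise $L^{2}$ metric plus the $c(T)$–type term), so it is already $L^{2}$–bounded with $R$–uniform norm by the product structure on the cylinder, and the resolvent bound $\|(\lambda_{0}-B_{R}^{(0)})^{-1}\|_{\infty}\leq C$ from (\ref{e.166}) suffices without elliptic regularity. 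With that simplification, your proof is a slightly cleaner variant of the paper's.
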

\begin{proof} By (\ref{e.154}), for $\lambda_{0}\in \Delta$, we see that
$(\lambda_{0}-B_{R}^{(0)})^{*}(\lambda_{0}-B_{R}^{(0)})$ is a self-adjoint and strictly positive operator.
If we set $\widehat{H}_{R}:=(\lambda_{0}-B_{R}^{(0)})^{*}(\lambda_{0}-B_{R}^{(0)})$, then we have
\begin{align}\begin{aligned}\label{e.682}
\widehat{H}_{R}=(D^{Z_{R}})^{2}+2i\Im(\lambda_{0})B^{(0)}_{R}+|\lambda_{0}|^{2}.
\end{aligned}\end{align}
Hence $\widehat{H}_{R}$ is a self-adjoint positive generalized Laplacian. By \cite[Thm. 2.38]{BGV92}, for $k>1+\frac{\dim(Z_{R})+l}{2}$
, the operator $\widehat{H}^{-k}$
has a $C^{l}-$kernel given by
\begin{align}\begin{aligned}\label{e.683}
\widehat{H}_{R}^{-k}(x,x')=\frac{1}{(k-1)!}\int_{0}^{\infty}e^{-t\widehat{H}_{R}}(x,x')t^{k-1}dt.
\end{aligned}\end{align}
From (\ref{e.683}) and the proof of \cite[Thm. 2.38]{BGV92}, we see that there exists $C_{l}>0$ such that for $R\geq 1$ and $(x,x')\in Z_{R}\times Z_{R}$
\begin{align}\begin{aligned}\label{e.707}
\left|\widehat{H}_{R}^{-k}(x,x')\right|_{\mathscr{C}^{l}}\leq C_{l}R.
\end{aligned}\end{align}
(We remark that the factor $R$ at the right side of (\ref{e.707}) comes from the dependance of the volume of $Z_{R}$
on the length $R$ of its cylinder part in the proof of \cite[Prop. 2.37]{BGV92}, which was applied to prove \cite[Thm. 2.38]{BGV92} in our case. To prove a similar proposition as \cite[Prop. 2.37]{BGV92}, we have essentially used the fact that
 the spectral of $(D^{Z_{R}})^{2}$ has a uniform gap away from $0$ (see Theorem \ref{t.10}).) By (\ref{e.160}), (\ref{e.683}) and (\ref{e.707}), we get for $p>\dim(Z_{R})$
\begin{align}\begin{aligned}\label{e.684}
\|(\lambda_{0}-B^{(0)}_{R})^{-1}\|_{p}=&\left\{\tr[\widehat{H}_{R}^{-p/2}]\right \}^{\frac{1}{p}}\\
=&\left\{\int_{Z_{R}}\tr\big[\widehat{H}_{R}^{-p/2}(x,x)\big]dv_{Z_{R}}\right \}^{\frac{1}{p}}
\leq  CR^{\frac{2}{p}}.
\end{aligned}\end{align}
We have the expansion
\begin{align}\begin{aligned}\label{e.685}
(\lambda_{0}-B_{R})^{-1}=(\lambda_{0}-B^{(0)}_{R})^{-1}+(\lambda_{0}-B^{(0)}_{R})^{-1}B^{(\geq 1)}(\lambda_{0}-B^{(0)}_{R})^{-1}+\cdots,
\end{aligned}\end{align}
where the expansion only contains a finite number of terms and $B^{(\geq 1)}$ is an operator of order $0$. Since $\|B^{(\geq 1)}\|_{\infty}\leq \infty$,
by (\ref{e.684}) and (\ref{e.685}), we see that (\ref{e.681}) holds. For $i=1,2$, we follows the same proof. The proof is completed.
\end{proof}

\begin{lemma}\label{l.12}For $i=1, 2$, under assumption (\ref{e.356}) we have
\begin{align}\begin{aligned}\label{e.152}
\lim_{R\rightarrow\infty}\int_{R^{2-\varepsilon}}^{\infty}\varphi\tr_{s}\big[\frac{N}{2}\mathbf{K}_{R, t}\big]\frac{dt}{t}=0\quad
(\text{resp.}\,\,\lim_{R\rightarrow\infty}\int_{R^{2-\varepsilon}}^{\infty}\varphi\tr_{s}\big[\frac{N}{2}\mathbf{K}_{i, R, t}\big]\frac{dt}{t}=0).
\end{aligned}\end{align}
\end{lemma}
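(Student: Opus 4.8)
The plan is to estimate the operator $\mathbf{K}_{R,t}$ in trace norm and show the resulting integral tends to $0$ as $R\to\infty$. Recall from (\ref{e.156}) that
$$
\mathbf{K}_{R,t}=\frac{1}{2i\pi}\psi_t^{-1}\int_\Delta \frac{f'(\sqrt t\,\lambda)}{\lambda-B_R}\,d\lambda\cdot\psi_t,
$$
so that $\varphi\tr_s[\frac N2\mathbf{K}_{R,t}]$ is controlled once we bound $\|(\lambda-B_R)^{-1}\|_1$ together with the decay of $f'(\sqrt t\,\lambda)=(1+2t\lambda^2)e^{t\lambda^2}$ along the contour $\Delta$. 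The key point is that $\Delta=\Delta^+\cup\Delta^-$ stays in the half-planes $\Re(\lambda^2)\le -c/\mathrm{const}$ (more precisely, $\Delta$ runs through the region where, by the spectral gap (\ref{e.155}), $B_R$ has no spectrum and where $\mathrm{Re}(\lambda^2)$ is bounded above by a negative constant), so that $|f'(\sqrt t\,\lambda)|\le C(1+t|\lambda|^2)e^{-c' t}$ uniformly for $\lambda\in\Delta$ and $t\ge 1$.

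First I would fix $p>\dim(Z_R)$ and split $(\lambda-B_R)^{-1}=(\lambda-B_R)^{-p}(\lambda-B_R)^{p-1}$, using H\"older's inequality for the Schatten norms together with Lemma \ref{l.53}: there exist $C>0$, $N_0\in\mathbb N^*$ with $\|(\lambda-B_R)^{-1}\|_p\le CR^{N_0/p}$ for all $\lambda\in\Delta$ and $R\ge1$. Raising to a suitable power (writing the trace-class norm as a product of $\lceil p\rceil$ Schatten-$p$ norms via (\ref{e.766}) and the resolvent identity, or directly estimating $\tr|(\lambda-B_R)^{-1}|$ using the heat-kernel representation as in the proof of Lemma \ref{l.53}) gives $\|(\lambda-B_R)^{-1}\|_1\le CR^{N_0}$, i.e. the trace-class norm grows only polynomially in $R$. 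Since $N$ is a bounded operator and the contour $\Delta$ has finite (arc-)length with the integrand weighted by the smooth function $f'(\sqrt t\,\lambda)$, this yields
$$
\Big|\varphi\tr_s\big[\tfrac N2\mathbf{K}_{R,t}\big]\Big|_{\mathscr C^0(U)}\le C\,\|\psi_t^{-1}\|\cdot R^{N_0}\,(1+t)\,e^{-c' t}\le C\,(1+t^{-n/2})\,R^{N_0}(1+t)\,e^{-c' t}
$$
for $t\ge1$, using (\ref{e.758}) to bound $\|\psi_t^{-1}\|$.

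Then I would integrate over $t\in[R^{2-\varepsilon},\infty)$. The exponential factor $e^{-c' t}$ dominates every polynomial factor, so
$$
\Big|\int_{R^{2-\varepsilon}}^{\infty}\varphi\tr_s\big[\tfrac N2\mathbf{K}_{R,t}\big]\frac{dt}{t}\Big|_{\mathscr C^0(U)}
\le C\,R^{N_0}\int_{R^{2-\varepsilon}}^{\infty}(1+t^{-n/2})(1+t)\,e^{-c' t}\,\frac{dt}{t}
\le C\,R^{N_0}\,e^{-\frac{c'}{2}R^{2-\varepsilon}},
$$
which tends to $0$ as $R\to\infty$. The argument for $\mathbf{K}_{i,R,t}$, $i=1,2$, is identical, using the spectral gap for $(D^{Z_{i,R}})^2$ from Theorem \ref{t.10} and the corresponding estimate in Lemma \ref{l.53}, and noting that the absolute/relative boundary conditions do not affect any of the estimates.

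The main obstacle I anticipate is the $R$-dependence of the constants: the volume of $Z_R$ grows linearly in $R$ (Remark \ref{r.1}), which is exactly why the trace-class norm $\|(\lambda-B_R)^{-1}\|_1$ carries a polynomial factor $R^{N_0}$ rather than being uniformly bounded. One must check carefully that this polynomial growth is genuinely polynomial (and not, say, exponential) — this is guaranteed by Lemma \ref{l.53}, whose proof in turn relies crucially on the uniform spectral gap of Theorem \ref{t.10} to control the heat kernel of $\widehat H_R$ with a constant that is only linear in $R$. Once that polynomial bound is in hand, the super-exponential decay coming from integrating $e^{-c't}$ over $[R^{2-\varepsilon},\infty)$ easily absorbs it, so the delicate part is entirely the resolvent/trace-norm estimate, not the final integration.
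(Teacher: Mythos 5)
Your approach correctly identifies the role of the contour decomposition, the spectral gap, the exponential decay of $f'(\sqrt t\,\lambda)$ along $\Delta$, and Lemma \ref{l.53}, but there is a genuine gap at the heart of the argument: the claim $\|(\lambda-B_R)^{-1}\|_1\le C R^{N_0}$ is false. The resolvent of a first-order elliptic operator on a manifold of dimension $\ge 1$ is never trace class; Lemma \ref{l.53} gives only a Schatten-$p$ bound $\|(\lambda-B_R)^{-1}\|_p\le CR^{N_0/p}$ for $p>\dim(Z_R)$, and there is no way to descend from a Schatten-$p$ bound to a trace-norm bound for the same operator. The manipulation you sketch (``raising to a suitable power, writing the trace-class norm as a product of Schatten-$p$ norms via (\ref{e.766})'') only yields $\|(\lambda-B_R)^{-p}\|_1\le\|(\lambda-B_R)^{-1}\|_p^{\,p}\le CR^{N_0}$, i.e.\ a trace-norm bound on the $p$-th power, not on $(\lambda-B_R)^{-1}$ itself.

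The missing idea is the integration-by-parts that converts the contour integral from one against $(\lambda-B_R)^{-1}$ into one against $(\lambda-B_R)^{-p}$. Concretely, the paper introduces (following \cite[Prop.\ 3.41]{BGo}) the holomorphic function $k_p$ on $\mathbb C\setminus\mathbb R$ with $k_p^{(p-1)}/(p-1)!=f'$ and $k_p(\lambda)\to 0$ as $\lambda\to\pm i\infty$, so that
\begin{align*}
\frac{1}{2i\pi}\int_\Delta\frac{f'(\sqrt t\,\lambda)}{\lambda-B_R}\,d\lambda
=\frac{1}{2i\pi}\int_\Delta\frac{k_p(\sqrt t\,\lambda)}{t^{(p-1)/2}(\lambda-B_R)^p}\,d\lambda.
\end{align*}
Now $(\lambda-B_R)^{-p}$ \emph{is} trace class for $p>\dim(Z_R)$, and $\|(\lambda-B_R)^{-p}\|_1\le\|(\lambda-B_R)^{-1}\|_p^{\,p}\le C(1+|\lambda|)^pR^{N_0}$ using Lemma \ref{l.53} and the resolvent identity $(\lambda-B_R)^{-1}=(\lambda_0-B_R)^{-1}-(\lambda-\lambda_0)(\lambda_0-B_R)^{-1}(\lambda-B_R)^{-1}$ together with the uniform $\|\cdot\|_\infty$ bound. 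Combined with $|k_p(\sqrt t\,\lambda)|\le Ce^{-C't|\lambda|^2}$ on $\Delta$ (which handles the unboundedness of the contour — note $\Delta$ does \emph{not} have finite arc length, contrary to what you wrote, but the Gaussian decay in $|\lambda|$ saves the integral), this gives $|\varphi\tr_s[\tfrac N2\mathbf{K}_{R,t}]|\le CR^{N_0}e^{-C_3 t}$, and the final integration over $t\in[R^{2-\varepsilon},\infty)$ is as you describe. So the skeleton and the final estimate match the paper, but the step you elided — getting from a Schatten-$p$ bound to a trace-norm quantity — is precisely where the $k_p$ device is indispensable, and your argument as written does not close.
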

\begin{proof}
Take $p\in \mathbb{N}, p>m $. There exist a unique function $k_{p}(\lambda)$, holomorphic on $\mathbb{C}\backslash \mathbb{R}$, such that (cf.
\cite[Prop. 3.41]{BGo})

-As $\lambda\rightarrow \pm i\infty$, $k_{p}(\lambda)\rightarrow 0$.

-The following identity holds
\begin{align}\begin{aligned}\label{e.161}
\frac{k_{p}^{(p-1)}(\lambda)}{(p-1)!}=f'(\lambda).\end{aligned}\end{align}
Clearly, if $\lambda\in \Delta$,
\begin{align}\begin{aligned}\label{e.162}
|\text{Re}(\lambda)|\leq \frac{1}{2}|\text{Im}(\lambda)|.\end{aligned}\end{align}
Using (\ref{e.162}), we find that there exist $C>0$, $C'>0$ such that if $\lambda\in \Delta$,
\begin{align}\begin{aligned}\label{e.163}
|k_{p}(\sqrt{t}\lambda)|\leq C\exp(-C't|\lambda|^{2}).\end{aligned}\end{align}
Clearly,
\begin{align}\begin{aligned}\label{e.164}
\frac{1}{2i\pi}\int_{\Delta}\frac{f'(\sqrt{t}\lambda)}{\lambda-B_{R}}d\lambda=\frac{1}{2i\pi}\int_{\Delta}\frac{k_{p}(\sqrt{t}\lambda)}
{t^{\frac{p-1}{2}}(\lambda-B_{R})^{p}}d\lambda.
\end{aligned}\end{align}
If $\lambda\in \Delta$, we have the expansion
\begin{align}\begin{aligned}\label{e.165}
(\lambda-B_{R})^{-1}=(\lambda-B^{(0)}_{R})^{-1}+(\lambda-B^{(0)}_{R})^{-1}B^{(\geq 1)}(\lambda-B^{(0)}_{R})^{-1}+\cdots,
\end{aligned}\end{align}
where the expansion only contains a finite number of terms and $B^{(\geq 1)}$ is an operator of order $0$.
By (\ref{e.154}) and (\ref{e.165}), we find that there
exist $C>0$ such that if $\lambda\in\Delta$, for any $R\geq 1$
\begin{align}\begin{aligned}\label{e.166}
\|(\lambda-B_{R})^{-1}\|_{\infty}\leq C.
\end{aligned}\end{align}
Fix $\lambda_{0}\in \Delta$.
If $\lambda\in \Delta$, we have
\begin{align}\begin{aligned}\label{e.168}
(\lambda-B_{R})^{-1}=(\lambda_{0}-B_{R})^{-1}-(\lambda-\lambda_{0})(\lambda_{0}-B_{R})^{-1}(\lambda-B_{R})^{-1}.\end{aligned}\end{align}

By (\ref{e.681}), (\ref{e.166}), (\ref{e.766}) and (\ref{e.168}), we find that for $\lambda\in \Delta$
\begin{align}\begin{aligned}\label{e.169}
\|(\lambda-B_{R})^{-1}\|_{p}&\leq \|(\lambda_{0}-B_{R})^{-1}\|_{p}+|\lambda-\lambda_{0}|\cdot\|(\lambda_{0}-B_{R})^{-1}(\lambda-B_{R})^{-1}\|_{p}\\
&\leq \|(\lambda_{0}-B_{R})^{-1}\|_{p}-|\lambda-\lambda_{0}|\cdot\|(\lambda_{0}-B_{R})^{-1}\|_{p}\|(\lambda-B_{R})^{-1}\|_{\infty}\\
&\leq C(1+|\lambda-\lambda_{0}|)R^{\frac{N_{0}}{p}}\leq C'(1+|\lambda|)R^{\frac{N_{0}}{p}}.
\end{aligned}\end{align}
From (\ref{e.169}), we get
\begin{align}\begin{aligned}\label{e.170}
\|(\lambda-B_{R})^{-p}\|_{1}\leq (\|(\lambda-B_{R})^{-1}\|_{p})^{p}\leq C(1+|\lambda|)^{p}R^{N_{0}}.
\end{aligned}\end{align}
By (\ref{e.758}), (\ref{e.156}), (\ref{e.163}), (\ref{e.164}) and (\ref{e.170}), we get for $t\geq 1$ and $R$ sufficiently large
\begin{align}\begin{aligned}\label{e.171}
\left|\varphi\tr_{s}[\frac{N}{2}\mathbf{K}_{R, t}]\right|&\leq C(1+t^{-\frac{n}{2}})\cdot\Big\|\int_{\Delta}\frac{k_{p}(\sqrt{t}\lambda)}{\sqrt{t}^{p-1}(\lambda-B_{R})^{p}}d\lambda\Big\|_{1}\\
&\leq C_{1}(1+t^{-\frac{n}{2}})t^{-\frac{p-1}{2}}\int_{\Delta}\exp(-C't|\lambda|^{2})\big\|(\lambda-B_{R})^{-p}\big\|_{1}d\lambda\\
&\leq C_{2}R^{N_{0}}\exp(-C_{3}t).
\end{aligned}\end{align}
From (\ref{e.171}), we get
\begin{align}\begin{aligned}\label{e.172}
\Big|\int_{R^{2-\varepsilon}}^{\infty}\varphi\tr_{s}[\frac{N}{2}\mathbf{K}_{R, t}]\frac{dt}{t}\Big|&\leq
 CR^{N_{0}}\int_{R^{2-\varepsilon}}^{\infty}\exp(-C't)\frac{dt}{t}\\
&\leq C''R^{N_{0}}\exp(-C'R^{2-\varepsilon}),
\end{aligned}\end{align}
from which we get (\ref{e.152}). The proof of (\ref{e.152}) for $i=1,2,$ is the same. The proof is completed.
\end{proof}

\begin{defn}\label{l.13}
For $i=1, 2$, let $V_{R}=(d^{Z_{R}})^{*}-d^{Z_{R}}$ and $V_{i,R}=(d^{Z_{i,R}})^{*}-d^{Z_{i,R}}$. Let $P^{\{0\}}_{R, t}$ (resp. $P^{\{0\}}_{i, R, t}$) denote the $0-$degree component of $\mathbf{P}^{\{0\}}_{R, t}$
(resp. $\mathbf{P}^{\{0\}}_{i, R, t}$), which is a projection from $\pi^{*}(\Lambda(T^{*}S))\widehat{\otimes}\Omega(Z_{R}, F_{R})$
(resp. $\pi^{*}(\Lambda(T^{*}S))\widehat{\otimes}\Omega(Z_{i, R}, F_{R})$) to the kernel of $V_{R}$ (resp. $V_{i, R}$).
\end{defn}

\begin{lemma}\label{l.56}
For $i=1,2$, there exist $t_{0}>1$ and $C>0$, such that for any $t>t_{0}$, $R>0$ we have
\begin{align}\begin{aligned}\label{e.759}
&\Big|\varphi\tr_{s}\big[\frac{N}{2}\mathbf{P}^{\{0\}}_{R, t}\big]-\frac{1}{2}\chi'(Z, F)\Big|\leq \frac{C}{\sqrt{t}}\\
(\text{resp.}\,\,&\Big|\varphi\tr_{s}\big[\frac{N}{2}\mathbf{P}^{\{0\}}_{i, R, t}\big]-\frac{1}{2}\chi'_{\rm bd}(Z_{i}, F)\Big|\leq \frac{C}{\sqrt{t}}\,\,).
\end{aligned}\end{align}
\end{lemma}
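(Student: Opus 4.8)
The plan is to reduce the statement to a computation on the finite-dimensional cohomology bundle, exploiting the uniform spectral gap from Theorem \ref{t.10} together with the fact that $\mathbf{P}^{\{0\}}_{R,t}$ projects (in the $0$-degree part) onto the kernel of the fiberwise de Rham operator. First I would observe that $\mathbf{P}^{\{0\}}_{R,t}$ is the spectral projection of $B_R$ associated to the small circle $\delta$; by the gap estimate (\ref{e.155}) this circle encloses exactly the zero spectrum of $C^2_{R,t}$ for $R$ large, so $\mathbf{P}^{\{0\}}_{R,t}$ is, up to the rescaling by $\psi_t$, the Bismut--Lott finite-dimensional superconnection heat projection onto $\Ker(D^{Z_R})^2 = \mathscr{H}^\bullet(Z_R,F_R)$. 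Hence $\varphi\tr_s[\tfrac{N}{2}\mathbf{P}^{\{0\}}_{R,t}]$ equals, in the limit $t\to\infty$, the object $f^\wedge$ evaluated on the flat cohomology bundle with its $L^2$-metric, whose $0$-form component is by definition $\tfrac12\chi'(Z,F) := \tfrac12\sum_p (-1)^p\, p\, \rk H^p(Z,F)$ (and similarly $\chi'_{\rm bd}$ with absolute/relative cohomology).

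The key technical step is the rate estimate: I would write $\mathbf{P}^{\{0\}}_{R,t}$ via the contour integral (\ref{e.156}) and expand $(\lambda - B_R)^{-1}$ in powers of the positive-degree part $B_R^{(\geq 1)}$ as in (\ref{e.165}), the expansion being finite since $\Lambda(T^*S)$ is finite-dimensional. The $0$-degree term reproduces exactly the projection $P^{\{0\}}_{R,t}$ onto $\Ker V_R$ (Definition \ref{l.13}), which contributes $\tfrac12\chi'(Z,F)$ after taking $\tr_s[\tfrac{N}{2}\cdot]$ on the harmonic representatives. For the higher-degree terms, one uses $f'(a) = (1+2a^2)e^{a^2}$: on the range of $\mathbf{P}^{\{0\}}_{R,t}$ the operator $D_{R,t}$ acts by $\psi_t^{-1}$ times something of size $O(1/\sqrt t)$ (the positive-degree part of the Bismut superconnection carries a factor $1/\sqrt t$ after rescaling, cf. (\ref{e.556}) and Proposition \ref{p.3}), so each extra factor of $B_R^{(\geq1)}$ inside the resolvent expansion produces a factor bounded by $C/\sqrt t$ uniformly in $R$ — here the uniform gap $c>0$ from Theorem \ref{t.10} is what keeps the resolvent $(\lambda-B^{(0)}_R)^{-1}$ uniformly bounded on $\delta$ for all large $R$, so the constant $C$ does not depend on $R$. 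Collecting the bound over the finitely many terms of the expansion gives $\big|\varphi\tr_s[\tfrac N2\mathbf{P}^{\{0\}}_{R,t}] - \tfrac12\chi'(Z,F)\big|\leq C/\sqrt t$ for $t>t_0$.

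The argument for $\mathbf{P}^{\{0\}}_{i,R,t}$, $i=1,2$, is identical once one replaces $B_R$ by $B_{i,R}$, the fiber $Z_R$ by $Z_{i,R}$ with its absolute (resp. relative) boundary conditions, $\Ker(D^{Z_R})^2$ by $\Ker(D^{Z_{i,R}})^2$, and uses the gap statement of Theorem \ref{t.10} for $(D^{Z_{i,R}})^2$; the harmonic space is then identified with $H^\bullet(Z_i,F)$ (resp. $H^\bullet(Z_2,Y,F)$), producing $\tfrac12\chi'_{\rm bd}(Z_i,F)$. I expect the main obstacle to be making the $R$-uniformity of the constant $C$ fully rigorous: the resolvent $(\lambda-B^{(0)}_R)^{-1}$ must be controlled in operator norm uniformly in $R$ on the contour $\delta$, and one must check that the rescaling operators $\psi_t^{\pm1}$ interact with the trace and with $B_R^{(\geq1)}$ in a way that genuinely extracts the claimed $1/\sqrt t$ decay with an $R$-independent coefficient. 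This is where the uniform spectral gap of Theorem \ref{t.10}, already established via the APS-type argument of Section \ref{ss3.5}, does the essential work, while the trace-class estimates of Lemma \ref{l.53} take care of the fact that the volume of $Z_R$ grows linearly in $R$ (the factors $R^{N_0/p}$ are harmless after raising to the $p$-th power only in the $\mathbf{K}$-term; for the $\mathbf{P}^{\{0\}}$-term the range is finite-dimensional of $R$-independent dimension by Theorem \ref{t.10}, so no such factor appears).
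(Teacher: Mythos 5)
Your overall strategy matches the paper's: rewrite $\mathbf{P}^{\{0\}}_{R,t}$ as a contour integral around the zero eigenvalue, use the residue expansion, and exploit the uniform spectral gap of Theorem~\ref{t.10} to make the constants $R$-independent (and you are right that the finite, $R$-independent rank of $P^{\{0\}}_{R}$ replaces the trace-norm growth that Lemma~\ref{l.53} must absorb for $\mathbf{K}_{R,t}$). However, the key rate estimate as you have stated it is wrong, and the error sits exactly at the heart of the argument.

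You assert that ``each extra factor of $B_R^{(\geq 1)}$ inside the resolvent expansion produces a factor bounded by $C/\sqrt t$,'' on the grounds that the positive-degree part of the Bismut superconnection carries a $1/\sqrt t$ after rescaling. This is false for the degree-one part: from (\ref{e.556}), $C_t=\tfrac{\sqrt t}{2}D^Z+\nabla^{\Omega^\bullet(Z,F|_Z),u}-\tfrac{1}{2\sqrt t}c(T)$, so the degree-$1$ piece of $D_{R,t}^{(\geq 1)}$ (the piece coming from $\tfrac12\omega(\nabla^{W_R},h^{W_R})$) is $t$-\emph{independent}. Only the degree-$2$ piece scales as $1/\sqrt t$. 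Consequently the $t\to\infty$ limit of $\mathbf{P}^{\{0\}}_{R,t}$ is not $P^{\{0\}}_R$ but the genuinely form-valued operator $P^{\{0\}}_R\,f'\big(\tfrac12\omega(\nabla^{W_R},h^{W_R})\big)P^{\{0\}}_R$, which is exactly the constant term the paper isolates in (\ref{e.762})--(\ref{e.767}). The $1/\sqrt t$ decay of the remaining terms does \emph{not} come from the $D^{(\geq 1)}_{R,t}$ factors; it comes from the factors $(\sqrt t\,B_R^{(0)})^{-(1+j_m)}$ that appear in the residue expansion whenever one of the slots $T_{R,j}$ is not the projector $P^{\{0\}}_R$ (i.e.\ from the non-zero part of the spectrum of $B_R^{(0)}$), and it is precisely the uniform gap that bounds $\|(B_R^{(0)})^{-(1+j_m)}\|_\infty$ independently of $R$.

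To close the argument you still need two inputs that your proposal does not supply: first, \cite[Prop.~3.14]{BL} (equation (\ref{e.763})) to identify $P^{\{0\}}_R\,\omega(\nabla^{W_R},h^{W_R})\,P^{\{0\}}_R$ with $\omega(\nabla^{H(Z_R,F_R)},h^{H(Z_R,F_R)}_{L^2})$, and second, \cite[Prop.~1.3]{BL} (equation (\ref{e.765})) to compute $\varphi\tr_s\big[\tfrac{N}{2}f'(\tfrac12\omega)\big]=\tfrac12\chi'(Z,F)$; without the latter there is no reason the positive-degree pieces of the constant term drop out of the supertrace, and your proof would fail because you cannot simply discard the $t$-independent degree-one contributions. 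Your reading that ``the $0$-degree term reproduces $P^{\{0\}}_{R,t}$, which contributes $\tfrac12\chi'(Z,F)$'' is therefore true but for the wrong reason: it is the full constant polynomial term, not merely the zeroth-order piece, that must be traced, and the equality with $\tfrac12\chi'(Z,F)$ is a nontrivial consequence of flatness rather than of a naive decay estimate.
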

\begin{proof} To prove this lemma, we will follow the method used in \cite[Thm. 3.42]{BGo} (cf. also \cite[Thm. 2.11]{BGo}). By (\ref{e.155}), (\ref{e.156}) and the fact that $D_{R,t}=\psi^{-1}_{t}\sqrt{t}B_{R}\psi_{t}$ (cf. \cite[Prop. 3.17]{BGo}), we get for $t>1$
\begin{align}\begin{aligned}\label{e.760}
\mathbf{P}^{\{0\}}_{R, t}&=\frac{1}{2i\pi}\psi^{-1}_{t}\int_{\frac{\delta}{\sqrt{t}}}\frac{f'(\sqrt{t}\lambda)}{\lambda-B_{R}}d\lambda\cdot\psi_{t}\\
&=\frac{1}{2i\pi}\psi^{-1}_{t}\int_{\delta}\frac{f'(\lambda)}{\lambda-\sqrt{t}B_{R}}d\lambda\cdot\psi_{t}
=\frac{1}{2i\pi}\int_{\delta}\frac{f'(\lambda)}{\lambda-D_{R,t}}d\lambda.
\end{aligned}\end{align}
Now we have the expansion
\begin{align}\begin{aligned}\label{e.761}
(\lambda-D_{R,t})^{-1}=(\lambda-\sqrt{t}B_{R}^{(0)}&)^{-1}\\
&+(\lambda-\sqrt{t}B_{R}^{(0)})^{-1}D_{R,t}^{(\geq 1)}(\lambda-\sqrt{t}B_{R}^{(0)})^{-1}+\cdots\, ,
\end{aligned}\end{align}
and the expansion in (\ref{e.761}) only contains a finite number of terms. By (\ref{e.155}), $0$ is the only element inside the domain bounded by $\delta$ which may lie in the spectrum of $B_{R}^{(0)}$. Using (\ref{e.760}), (\ref{e.761}) and the theorem of residues, we get for $t\geq 1$
\begin{align}\begin{aligned}\label{e.762}
\mathbf{P}^{\{0\}}_{R, t}=\sum_{p=0}^{\dim S}\sum_{\begin{subarray}{l}
1\leq i_{0}\leq p+1 \\j_{1},\cdots,j_{p+1-i_{0}}\geq 0\\
\sum_{m=1}^{p+1-i_{0}}j_{m}\leq i_{0}-1
 \end{subarray}}&
\frac{f^{(i_{0}-\sum_{m=0}^{p+1-i_{0}}j_{m})}(0)}{(i_{0}-1-\sum_{m=0}^{p+1-i_{0}}j_{m})!}
(-1)^{p+1-i_{0}}\\
&\qquad T_{R,1}D^{(\geq 1)}_{R,t}T_{R,2}\cdots D^{(\geq 1)}_{R,t}T_{R,p+1}.
\end{aligned}\end{align}
In (\ref{e.762}), $i_{0}$ of the $T_{R,j}$ are equal to $P^{\{0\}}_{R}$, and the other $T_{R,j}$ are equal respectively to
$(\sqrt{t}B^{(0)}_{R})^{-(1+j_{1})},\cdots,(\sqrt{t}B^{(0)}_{R})^{-(1+j_{p+1-i_{0}})}$. In particular, $\mathbf{P}^{\{0\}}_{R, t}$ is a polynomial in the variable $\frac{1}{\sqrt{t}}$, whose the constant term is given by
\begin{align}\begin{aligned}\label{e.767}
P_{R}^{\{0\}}f'(\frac{\omega(\nabla^{W_{R}}, h^{W_{R}})}{2})P_{R}^{\{0\}},
\end{aligned}\end{align}
when all $T_{R,j}$ are equal to $P^{\{0\}}_{R}$. We have (cf. \cite[Prop. 3.14]{BL})
\begin{align}\begin{aligned}\label{e.763}
\omega(\nabla^{H(Z_{R}, F_{R})}, h_{L^{2}}^{H(Z_{R}, F_{R})})=P_{R}^{\{0\}}\omega(\nabla^{W_{R}}, h^{W_{R}})P_{R}^{\{0\}}.
\end{aligned}\end{align}
We observe that in (\ref{e.762}), $i_{0}\geq 1$, so that $P^{\{0\}}_{R}$ appears at least once. Now $P^{\{0\}}_{R}$ is a projector on a finite dimensional vector bundle, and in particular it is trace class whose $\|\,\,\|_{1}$ norm is bounded uniformly with repect to $R$. We note that all the coefficients of $\frac{1}{\sqrt{t}}$ in $D^{(\geq 1)}_{R,t}$ are bounded operators in norm $\|\,\,\|_{\infty}$ uniformly with respect to $R>0$. And using the existence of the uniform spectral gap of $B^{(0)}_{R}$ with respect to $R>0$, there exists $C>0$ such that for all $R>0$
\begin{align}\begin{aligned}\label{e.768}
\|(B_{R}^{(0)})^{-(1+j_{m})}\|_{\infty}\leq C.
\end{aligned}\end{align}
Hence, by (\ref{e.766}), (\ref{e.762}), (\ref{e.767}) and (\ref{e.763}), we get
\begin{align}\begin{aligned}\label{e.764}
\left\|\mathbf{P}^{\{0\}}_{R, t}-f'(\frac{\omega(\nabla^{H(Z_{R}, F_{R})}, h_{L^{2}}^{H(Z_{R}, F_{R})})}{2})\right\|_{1}\leq \frac{C}{\sqrt{t}}.
\end{aligned}\end{align}
By \cite[Prop. 1.3]{BL}, we have (cf. also \cite[(2.55), (2.56)]{BL})
\begin{align}\begin{aligned}\label{e.765}
\varphi\tr_{s}\big[\frac{N}{2}f'(\frac{\omega(\nabla^{H(Z_{R}, F_{R})}, h_{L^{2}}^{H(Z_{R}, F_{R})})}{2})\big]=\frac{1}{2}\chi'(Z, F).
\end{aligned}\end{align}
Then (\ref{e.759}) follows from (\ref{e.764}) and (\ref{e.765}). In the same way, we get (\ref{e.759}) for $i=1,2$. The proof is completed.
\end{proof}

By Lemmas \ref{l.12}, \ref{l.56}, (\ref{e.89}), (\ref{e.93}) and (\ref{e.159}), we get Theorem \ref{t.8}.

\section{Contribution of the long exact sequence in the adiabatic limit}\label{s.1}
In Section \ref{ss3.8}, we compute the limit of the torsion form $T_{f}(A^{\mathscr{H}_{R}}, h_{L^{2}}^{\mathscr{H}_{R}})$
when $R\rightarrow \infty$.
In Section \ref{ss3.9}, we prove Lemma \ref{l.16}. In Section \ref{ss3.10}, we prove Lemma \ref{l.26}.

\subsection{Adiabatic limit of the torsion forms $T_{f}(A^{\mathscr{H}_{R}}, h_{L^{2}}^{\mathscr{H}_{R}})$}\label{ss3.8}
In this section we will treat the limit of the torsion forms $T_{f}(A^{\mathscr{H}_{R}}, h_{L^{2}}^{\mathscr{H}_{R}})$,
when $R\rightarrow \infty$.

\begin{thm}\label{t.9} Under the assumption {\rm(\ref{e.356})}, we have
$\lim_{R\rightarrow\infty}T_{f}(A^{\mathscr{H}_{R}}, h_{L^{2}}^{\mathscr{H}_{R}})=0$.
\end{thm}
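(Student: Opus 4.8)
The plan is to reduce the statement to the corresponding statement for the short exact sequences $\mathscr{H}^{p}_{R}$ of (\ref{e.86}), and then to show that, under the hypothesis (\ref{e.356}), each $\mathscr{H}^{p}_{R}$ becomes \emph{asymptotically orthogonally split} as $R\to\infty$, with exponentially small error, so that its torsion form tends to $0$. First, by (\ref{e.87}) it suffices to prove $\lim_{R\to\infty}T_{f}(A^{\mathscr{H}^{p}_{R}},h^{\mathscr{H}^{p}_{R}}_{L^{2}})=0$ in $Q^{S}/Q^{S,0}$ for each $0\le p\le m$. Next, note that under (\ref{e.356}) one has $H^{\bullet}(Y_{b},F)=0$, so Mayer--Vietoris and the relative cohomology sequence give flat isomorphisms $H^{p}(Z_{2},Y,F)\cong H^{p}(Z_{2},F)$ and $H^{p}(Z,F)\cong H^{p}(Z_{1},F)\oplus H^{p}(Z_{2},F)$ of flat vector bundles on $S$. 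Hence $\mathscr{H}^{p}_{R}$ (which is independent of $R$ as a complex of flat bundles) admits a fixed flat splitting $s:H^{p}(Z_{1},F)\oplus H^{p}(Z_{2},Y,F)\cong H^{p}(Z,F)$ of its quotient map, with $s$ restricting to its sub-inclusion on the second summand.

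With $s$ fixed, I would let $g^{p}_{R}$ be the metric on the complex $\mathscr{H}^{p}_{R}$ equal to $h^{\mathscr{H}^{p}_{R}}_{L^{2}}$ on the sub- and quotient bundles $H^{p}(Z_{2},Y,F)$ and $H^{p}(Z_{1},F)$, and equal to $s_{*}$ of the orthogonal direct sum of these two $L^{2}$-metrics on $H^{p}(Z,F)$. Since $s$ is flat and the decomposition is orthogonal, $(\mathscr{H}^{p}_{R},g^{p}_{R})$ is a flat orthogonal direct sum of two elementary acyclic complexes of the form $(V\overset{\mathrm{id}}{\to}V)$ with equal metrics on the two copies, whose torsion form vanishes; therefore $T_{f}(A^{\mathscr{H}^{p}_{R}},g^{p}_{R})=0$ in $Q^{S}/Q^{S,0}$ (cf. \cite[Appendix~I]{BL}). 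Applying the metric anomaly formula for torsion forms of acyclic complexes \cite[Thm.~2.24]{BL} (the analogue of (\ref{e.79})), and using that $g^{p}_{R}$ and $h^{\mathscr{H}^{p}_{R}}_{L^{2}}$ agree on the sub- and quotient bundles, we obtain, mod $Q^{S,0}$,
\[
T_{f}(A^{\mathscr{H}^{p}_{R}},h^{\mathscr{H}^{p}_{R}}_{L^{2}})=\pm\,\widetilde{f}\big(\nabla^{H(Z,F)},\,h^{H(Z,F)}_{L^{2},R},\,g^{p}_{R}|_{H^{p}(Z,F)}\big).
\]
It then remains to show that the linear-path representative of this $\widetilde{f}$-class tends to $0$ in $Q^{S}$.

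The core analytic input, to be drawn from Section~\ref{s.2}, is that $h^{H(Z,F)}_{L^{2},R}$ differs from $g^{p}_{R}|_{H^{p}(Z,F)}$ by a term which is $O(e^{-cR})$ uniformly on $S$ together with all its $b$-derivatives. Identifying $H^{p}(Z,F)\cong\Ker(D^{Z_{R}})^{2}$ by Hodge theory and using Lemma~\ref{l.22} to write each harmonic form as $P^{\{0\}}_{R}(f_{1,R}s_{1}+f_{2,R}s_{2})$ with $s_{i}\in\Ker_{L^{2}}(D^{Z_{i,\infty}})^{2}$: the cross term $\langle f_{1,R}s_{1},f_{2,R}s_{2}\rangle_{L^{2}(Z_{R})}$ vanishes because the cut-offs $f_{1,R},f_{2,R}$ have disjoint supports; one has $\|f_{i,R}s_{i}\|^{2}_{L^{2}(Z_{R})}=\|s_{i}\|^{2}_{L^{2}(Z_{i,\infty})}+O(e^{-cR})$ because $L^{2}$-harmonic forms on $Z_{i,\infty}$ decay exponentially along the cylinder (this is precisely where (\ref{e.356}) and the estimates behind Lemmas~\ref{l.28}--\ref{l.30} enter); and $P^{\{0\}}_{R}$ differs from the inclusion by $O(e^{-cR})$ in operator norm on $\mathscr{W}_{R}$ because $(D^{Z_{R}})^{2}(f_{i,R}s_{i})=O(e^{-cR})$ (Lemmas~\ref{l.17},~\ref{l.18}) while $(D^{Z_{R}})^{2}$ has a uniform spectral gap (Theorem~\ref{t.10}). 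Combined with the analogous statements on $Z_{1,R},Z_{2,R}$ (harmonic forms for the absolute, resp. relative, boundary conditions converge at rate $O(e^{-cR})$ to those on $Z_{i,\infty}$) and with the APS identifications (\ref{e.202})--(\ref{e.203}), which match the analytic decomposition $\mathscr{W}_{R}=\mathscr{W}_{1,R}\oplus\mathscr{W}_{2,R}$ with the flat splitting $s$, this yields the claimed uniform estimate. Feeding it into $\widetilde{f}$ along the linear path $h_{l}=g^{p}_{R}|_{H^{p}(Z,F)}+l\big(h^{H(Z,F)}_{L^{2},R}-g^{p}_{R}|_{H^{p}(Z,F)}\big)$, and using that $h_{l}^{-1}$, $\omega(\cdot,h_{l})$ and $f'(\omega(\cdot,h_{l})/2)$ stay uniformly bounded (the limiting metrics being nondegenerate with $C^{1}$-bounded derivatives), the integrand $\tfrac12 h_{l}^{-1}\dot{h}_{l}\,f'(\omega(\cdot,h_{l})/2)$ is $O(e^{-cR})$ in $C^{0}(S)$, so the representative of $\widetilde{f}$ tends to $0$; summing over $p$ with signs via (\ref{e.87}) gives the theorem.

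The step I expect to be the main obstacle is upgrading the exponential-decay estimates of Section~\ref{s.2}, which are stated fibrewise or in $L^{2}(Z_{R})$-norms, to estimates that are uniform over the base $S$ \emph{and} differentiable in $b\in S$, so that the above convergence takes place at the level of differential forms on $S$ with derivatives; this requires parameter-dependent elliptic estimates and the smooth dependence on $b$ of the spectral projections $P^{\{0\}}_{R,b}$, made possible only by the uniform gap of Theorem~\ref{t.10} (and is presumably the content of the technical lemmas proved in Sections~\ref{ss3.9}--\ref{ss3.10}). A subsidiary point is the precise compatibility of the analytic splitting $\mathscr{W}_{R}=\mathscr{W}_{1,R}\oplus\mathscr{W}_{2,R}$ with the topological Mayer--Vietoris splitting $s$, i.e. checking that the cohomology class of $P^{\{0\}}_{R}(f_{2,R}s_{2})$ in $H^{p}(Z,F)$ is the image under the connecting map of the class of $s_{2}$ in $H^{p}(Z_{2},Y,F)\cong\Ker_{L^{2}}(D^{Z_{2,\infty}})^{2}$.
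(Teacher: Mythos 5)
Your plan --- reduce to degree-$p$ short exact sequences via (\ref{e.87}), use $H^{\bullet}(Y,F)=0$ to split $\mathscr{H}^{p}_{R}$ flatly, and then show that $h^{H^{p}(Z_{R},F_{R})}_{L^{2}}$ is, with $O(e^{-cR})$ error, the orthogonal direct sum of the $L^{2}$-metrics on the two factors --- is precisely the paper's route (Lemmas~\ref{l.15} and \ref{l.16}), with your anomaly-formula step playing the role of the paper's citation of \cite[Appendix~I]{BL}. The ``subsidiary point'' you flag, matching the analytic decomposition $\mathscr{W}_{R}=\mathscr{W}_{1,R}\oplus\mathscr{W}_{2,R}$ with the flat topological splitting, is in fact a substantive part of the argument: it is exactly the content of Lemma~\ref{l.26} and Section~\ref{ss3.10}, carried out with the de Rham maps $P^{\infty}_{R},P^{\infty}_{i,R}$, dual chain/cochain frames, and careful tracking of the $O(R)$ growth of simplex volumes on the cylinder (Remark~\ref{r.1}, Lemmas~\ref{l.23}, \ref{l.45}).
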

Then \textbf{Theorem \ref{t.13}} follows from Theorems \ref{t.6}, \ref{t.8}, \ref{t.9}, (\ref{e.94}), (\ref{e.92}) and (\ref{e.93}).

In the rest part of this section we will prove this theorem.
Let
 \begin{align}\begin{aligned}\label{e.188}
m^{(p)}:=\rk C^{p}\left(\mathbb{K}_{Z}, F\right), \quad &m^{(p)}_{1}:=\rk C^{p}\left(\mathbb{K}_{Z_{1}}, F\right), \\
&m^{(p)}_{2}:=\rk C^{p}\left(\mathbb{K}_{Z_{2}}/\mathbb{K}_{Y}, F\right),
 \end{aligned}\end{align}\index{$m^{(p)}$, $m^{(p)}_{1}$, $m^{(p)}_{2}$}
then we have $m_{p}=m_{1, p}+m_{2, p}$.

\begin{lemma}\label{l.15}
The short exact sequence {\rm(\ref{e.86})} splits canonically such that
\begin{align}\begin{aligned}\label{e.179}
H^{p}(Z_{R}, F_{R})\cong H^{p}(Z_{1, R}, F_{R})\oplus H^{p}(Z_{2, R}, Y, F_{R}),
\end{aligned}\end{align}
\begin{align}\begin{aligned}\label{e.180}
\nabla^{H^{p}(Z_{R}, F_{R})}= \nabla^{H^{p}(Z_{1, R}, F_{R})}\oplus \nabla^{H^{p}(Z_{2, R}, Y, F_{R})}.\end{aligned}\end{align}
\end{lemma}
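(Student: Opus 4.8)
The plan is to produce a \emph{canonical} retraction of the short exact sequence $\mathscr{H}^{p}_{R}$ in (\ref{e.86}) built only out of natural restriction and excision maps, and then to observe that it intertwines the canonical flat connections. First recall the origin of (\ref{e.86}): after the simplicial excision identification $C^{\bullet}(\mathbb{K}_{Z_{2,R}}/\mathbb{K}_{Y},F_{R})=C^{\bullet}(\mathbb{K}_{Z_{R}},\mathbb{K}_{Z_{1,R}},F_{R})$, the sequence of cochain complexes (\ref{e.175}) is the relative cochain sequence $0\to C^{\bullet}(Z_{R},Z_{1,R},F_{R})\to C^{\bullet}(Z_{R},F_{R})\to C^{\bullet}(Z_{1,R},F_{R})\to 0$, so the long exact sequence $(\mathscr{H}_{R},\delta_{R})$ of (\ref{e.434}) is that of the pair $(Z_{R},Z_{1,R})$, with the excision isomorphism $\mathrm{exc}\colon H^{p}(Z_{R},Z_{1,R},F_{R})\xrightarrow{\sim}H^{p}(Z_{2,R},Y,F_{R})$ absorbed into it. Writing $i\colon H^{p}(Z_{R},F_{R})\to H^{p}(Z_{1,R},F_{R})$ for restriction and $j=q\circ\mathrm{exc}^{-1}$ for the first map of (\ref{e.86}) ($q$ forgetting the relative structure), the exactness of (\ref{e.86}) amounts to the vanishing of the connecting map $\delta_{R}\colon H^{p}(Z_{1,R},F_{R})\to H^{p+1}(Z_{2,R},Y,F_{R})$. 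I would get this from naturality of the long exact sequences of the pairs $(Z_{R},Z_{1,R})$ and $(Z_{2,R},Y)$ under the inclusion of pairs $(Z_{2,R},Y)\hookrightarrow(Z_{R},Z_{1,R})$, which yields $\mathrm{exc}\circ\delta^{(Z_{R},Z_{1,R})}=\delta^{(Z_{2,R},Y)}\circ(\text{restriction to }Y)$, together with $H^{\bullet}(Y_{b},F)=0$ for all $b\in S$ from (\ref{e.356}). All these maps are morphisms of the flat fiberwise cohomology bundles on $S$ (cf. \cite[App. I]{BL}, \cite[\S1.3]{Zhu13}).

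Next I would build the retraction. By (\ref{e.356}) the natural map $e\colon H^{p}(Z_{2,R},Y,F_{R})\to H^{p}(Z_{2,R},F_{R})$ in the long exact sequence of the pair $(Z_{2,R},Y)$ is an isomorphism of flat bundles, since its neighbours in that sequence are the vanishing bundles $H^{p-1}(Y,F_{R})$ and $H^{p}(Y,F_{R})$. Let $r_{2}\colon H^{p}(Z_{R},F_{R})\to H^{p}(Z_{2,R},F_{R})$ be restriction, a morphism of flat bundles, and set
\[
\rho:=e^{-1}\circ r_{2}\colon H^{p}(Z_{R},F_{R})\longrightarrow H^{p}(Z_{2,R},Y,F_{R}).
\]
The key computation is $\rho\circ j=\mathrm{id}$: the two maps of pairs $(Z_{2,R},\emptyset)\to(Z_{2,R},Y)\to(Z_{R},Z_{1,R})$ and $(Z_{2,R},\emptyset)\to(Z_{R},\emptyset)\to(Z_{R},Z_{1,R})$ coincide, so on cohomology $e\circ\mathrm{exc}=r_{2}\circ q$ as maps $H^{p}(Z_{R},Z_{1,R},F_{R})\to H^{p}(Z_{2,R},F_{R})$; hence $r_{2}\circ j=r_{2}\circ q\circ\mathrm{exc}^{-1}=e$, and $\rho\circ j=e^{-1}\circ e=\mathrm{id}$.

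From here the conclusion is formal. Since $\rho$ is a retraction of (\ref{e.86}) that is a morphism of flat bundles, $\Im j$ and $\Ker\rho$ are complementary flat subbundles of $H^{p}(Z_{R},F_{R})$; $j$ identifies $H^{p}(Z_{2,R},Y,F_{R})$ with $\Im j$, and by exactness of (\ref{e.86}) the restriction $i$ identifies $\Ker\rho$ with $H^{p}(Z_{1,R},F_{R})$ (it is surjective, and $\Ker i\cap\Ker\rho=\Im j\cap\Ker\rho=0$, with ranks matching by $h^{(p)}=h^{(p)}_{1}+h^{(p)}_{2}$ of (\ref{e.89})). This gives the splitting (\ref{e.179}), and because $j$, $i$, $\rho$ all intertwine the canonical flat connections it upgrades to (\ref{e.180}).

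The argument needs no analytic input — no spectral gaps, heat kernels or cylinder estimates; those enter only in the later Lemmas \ref{l.16} and \ref{l.26}. The one place demanding care is the bookkeeping: identifying the algebraically defined maps in (\ref{e.175})--(\ref{e.86}) with the topological restriction/excision/pair-sequence maps, and verifying the two naturality squares used above (for $\delta_{R}$ and for $r_{2}\circ j=e$). As a sanity check, this topological splitting is the same decomposition seen analytically through Lemma \ref{l.22} and Proposition \ref{p.7} (where $\Ker(D^{Z_{R}})^{2}\cong\mathscr{W}_{1,R}\oplus\mathscr{W}_{2,R}$), but the topological route delivers canonicity and flatness directly, which is what the statement requires.
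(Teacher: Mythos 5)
Your proof is correct, and it is the mirror image of the paper's argument. The paper constructs a canonical \emph{section} of $i^{*}_{p}$: the long exact sequence (\ref{e.720}) of the pair $(Z_{1,R},Y)$ collapses under (\ref{e.356}) to give the isomorphism $k^{*}_{p}\colon H^{p}(Z_{1,R},Y,F_{R})\to H^{p}(Z_{1,R},F_{R})$, and $(i^{*}_{p})^{-1}:=l^{*}_{p}\circ(k^{*}_{p})^{-1}$ is a flat section whose image gives the complementary subbundle. You instead construct a canonical \emph{retraction} of $j^{*}_{p}$, using the collapse (again from (\ref{e.356})) of the long exact sequence of the other boundary pair $(Z_{2,R},Y)$ to produce the flat isomorphism $e$ and then $\rho:=e^{-1}\circ r_{2}$, whose kernel gives the complementary subbundle. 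The two constructions define the \emph{same} flat splitting of (\ref{e.86}): $\rho\circ(i^{*}_{p})^{-1}=e^{-1}\circ r_{2}\circ l^{*}_{p}\circ(k^{*}_{p})^{-1}=0$ because $r_{2}\circ l^{*}_{p}$ is the composite of two consecutive arrows $H^{p}(Z_{R},Z_{2,R},F_{R})\to H^{p}(Z_{R},F_{R})\to H^{p}(Z_{2,R},F_{R})$ in the long exact sequence of the pair $(Z_{R},Z_{2,R})$. So the image of the paper's section is exactly the kernel of your retraction, and the two proofs buy the same thing — a canonical flat direct sum decomposition requiring only (\ref{e.356}) and naturality, with no analytic input — choosing to package the vanishing of $H^{\bullet}(Y,F)$ on the $Z_{1}$ side (paper) or the $Z_{2}$ side (you). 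The only point to be careful about, and which you handle correctly, is that the identifications between the simplicial objects of \ref{ss1.8} and the topological pair/excision maps are the natural ones, so that the naturality squares you invoke do in fact commute.
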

\begin{proof}
Since we have the following long exact sequence
\begin{align}\begin{aligned}\label{e.720}
\cdots\rightarrow H^{p}(Z_{1,R},Y,F_{1,R})\overset{k_{p}^{*}}\rightarrow H^{p}(Z_{1,R},F_{1,R})\rightarrow H^{p}(Y,F)\rightarrow\cdots,
\end{aligned}\end{align}
by our assumption (\ref{e.356}) we get an isomorphism $k_{p}^{*}:H^{p}(Z_{1,R},Y,F_{1,R})\cong H^{p}(Z_{1,R},F_{1,R})$. Using the following commutative diagram
\begin{align}\begin{aligned}\label{e.181}
\xymatrix{
0\ar[r]&H^{p}(Z_{2, R}, Y, F_{R})\ar[r]^{j^{*}_{p}}&H^{p}(Z_{R}, F_{R})\ar[r]^{i^{*}_{p}}&H^{p}(Z_{1, R}, F_{R})\ar[r]&0, \\
&&&H^{p}(Z_{1, R}, Y, F_{R})\ar[u]_{\wr}^{k^{*}_{p}}\ar[lu]_{l^{*}_{p}}&
}
\end{aligned}\end{align}
we choose a special inverse of $i^{*}_{p}$, denoted by $(i^{*}_{p})^{-1}$, to embed $H^{p}(Z_{1, R}, F_{R})$ into $H^{p}(Z_{R}, F_{R})$ as a subbundle, such that
\begin{align}\begin{aligned}\label{e.182}
(i^{*}_{p})^{-1}=l^{*}_{p}\circ (k^{*}_{p})^{-1}.\end{aligned}\end{align}
On other hand, we embed $H^{p}(Z_{2, R}, Y, F_{R})$ into $H^{p}(Z_{R}, F_{R})$ trivially by $j^{*}_{p}$.
We will show that (\ref{e.180}) is true under the isomorphisms selected as above. These
canonical connections induced by $\nabla^{F}$ on
these vector bundles of fiberwise cohomology group in diagram (\ref{e.181}) satisfy
\begin{align}\begin{aligned}\label{e.183}
j^{*}_{p}\circ\nabla^{H^{p}(Z_{2, R}, Y, F_{R})}&=\nabla^{H^{p}(Z_{R}, F_{R})}\circ j^{*}_{p}, \quad
i^{*}_{p}\circ\nabla^{H^{p}(Z_{R}, F_{R})} =\nabla^{H^{p}(Z_{1, R}, F_{R})}\circ i^{*}_{p}, \\
k^{*}_{p}\circ\nabla^{H^{p}(Z_{1, R}, Y, F_{R})}&=\nabla^{H^{p}(Z_{1, R}, F_{R})}\circ k^{*}_{p},\quad
l^{*}_{p}\circ\nabla^{H^{p}(Z_{1, R}, Y, F_{R})}=\nabla^{H^{p}(Z_{R}, F_{R})}\circ l^{*}_{p}.
\end{aligned}\end{align}
By (\ref{e.182}) and (\ref{e.183}), we get
\begin{align}\begin{aligned}\label{e.184}
\nabla^{H^{p}(Z_{R}, F_{R})}\circ (i^{*}_{p})^{-1} =(i^{*}_{p})^{-1}\circ \nabla^{H^{p}(Z_{1, R}, F_{R})}.\end{aligned}\end{align}
Then (\ref{e.180}) follows from the first identity of (\ref{e.183}) and (\ref{e.184}).\end{proof}
Under the identification of (\ref{e.179}),  $h^{H^{p}(Z_{R}, F_{R})}_{L^{2}}$
 and $h^{H^{p}(Z_{1, R}, F_{R})}_{L^{2}}\oplus h^{H^{p}(Z_{2, R}, Y, F_{R})}_{L^{2}}$ are two $L^{2}-$metrics on $H^{p}(Z_{R}, F_{R})$.
Next, we will show that these two Hermitian metrics differ by a term decreasing exponentially when $R$ goes to infinity.
We formulate it as the following lemma.
\begin{lemma}\label{l.16}
There exist $c>0$ such that for $R\geq 1$
 \begin{align}\begin{aligned}\label{e.185}
 h^{H^{p}(Z_{R}, F_{R})}_{L^{2}}=\left(h^{H^{p}(Z_{1, R}, F_{R})}_{L^{2}}\oplus h^{H^{p}(Z_{2, R}, Y, F_{R})}_{L^{2}}\right)\cdot(1+\mO(e^{-cR})).
\end{aligned}\end{align}
 \end{lemma}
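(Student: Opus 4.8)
The plan is to compare the two $L^2$-metrics on $H^p(Z_R,F_R)$ by realizing each cohomology class through a harmonic representative, then quantifying how much the two splitting procedures can differ. By Hodge theory (cf. (\ref{e.177})) each class in $H^p(Z_R,F_R)$, $H^p(Z_{1,R},F_R)$, $H^p(Z_{2,R},Y,F_R)$ has a unique harmonic representative, and the respective $L^2$-metrics are just the $L^2$-inner products of these harmonic forms. So the statement (\ref{e.185}) is really a statement about how close the harmonic form on $Z_R$ representing a given class is to the pair of harmonic forms on $Z_{1,R}$ and $Z_{2,R}$ obtained via the canonical splitting of Lemma \ref{l.15}.

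First I would set up, for $i=1,2$, the $L^2$-bounded harmonic forms on the half-infinite manifolds $Z_{i,\infty}$ (Definition \ref{d.19}) and recall from Proposition \ref{p.7} and Lemma \ref{l.22} that under assumption (\ref{e.356}) one has canonical isomorphisms $\mathscr{W}_{i,R}\cong \Ker(D^{Z_{i,R}})^2$ and $\mathscr{W}_R\cong\Ker(D^{Z_R})^2$, with $\mathscr{W}_R=\mathscr{W}_{1,R}\oplus\mathscr{W}_{2,R}$; moreover the small-eigenvalue projections are actual $\{0\}$-projections for $R$ large, by Theorem \ref{t.10}. The key geometric input is the exponential decay already established: by Lemmas \ref{l.28}--\ref{l.30}, and by the cut-off construction $f_{i,R}s_i$ of Definition \ref{d.13}, the harmonic representative on $Z_R$ of a class, when compared on the cylinder part $Y_{[-R,R]}$ with the glued pair $f_{1,R}s_1+f_{2,R}s_2$, differs by a term of $L^2$-norm $\mathcal{O}(e^{-cR})$ (this is essentially the content of Lemmas \ref{l.17}--\ref{l.20}, which bound $\|(\mathrm{Id}-P^{\{0\}}_R)s\|$ and the cross terms). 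Concretely, I would pick an orthonormal basis of $\Ker(D^{Z_R})^2$, push it to $\mathscr{W}_R$ via $P^{\{0\}}_R$ (inverse of the isomorphism in Lemma \ref{l.22}), and split each element as $f_{1,R}s_1+f_{2,R}s_2$; then $P^{\{0\}}_{i,R}(f_{i,R}s_i)$ gives the harmonic representative on $Z_{i,R}$. Computing $L^2$-inner products of these and using the decay estimates, the Gram matrices of the two metrics agree up to $1+\mathcal{O}(e^{-cR})$.

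The main steps in order: (1) express both metrics as Gram matrices relative to a common basis of $H^p(Z_R,F_R)$, using the splitting $i_p^*, j_p^*$ of Lemma \ref{l.15}; (2) for a class on the $Z_{2,R}$-side (embedded via $j_p^*$), show its harmonic representative on $Z_R$ is, up to $\mathcal{O}(e^{-cR})$ in $L^2$, equal to $f_{2,R}$ times the harmonic representative on $Z_{2,\infty}$, with negligible leakage into $Z_{1,R}$ — this follows from Lemma \ref{l.30} plus the orthogonality-to-$\mathscr{W}$ argument of Proposition \ref{p.13}; (3) do the analogous thing for the $Z_{1,R}$-side using the special section $(i_p^*)^{-1}=l_p^*\circ(k_p^*)^{-1}$, where $k_p^*$ is an isomorphism precisely by (\ref{e.356}) and (\ref{e.720}); (4) observe that the cross-inner-products between a $Z_{1,R}$-representative and a $Z_{2,R}$-representative are supported (up to exponentially small error) in disjoint regions of the cylinder, hence are $\mathcal{O}(e^{-cR})$; (5) conclude that the off-diagonal blocks of the Gram matrix of $h^{H^p(Z_R,F_R)}_{L^2}$ are exponentially small and the diagonal blocks agree with $h^{H^p(Z_{1,R},F_R)}_{L^2}$ and $h^{H^p(Z_{2,R},Y,F_R)}_{L^2}$ up to $1+\mathcal{O}(e^{-cR})$, which is exactly (\ref{e.185}).

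The hard part will be step (2)--(3): making precise that the harmonic representative on the \emph{finite} cylinder manifold $Z_R$ is exponentially close to the glued-together $L^2$-harmonic forms on the half-infinite ends, uniformly in $b\in U$. This requires combining the uniform spectral gap (Theorem \ref{t.10}), the $L^2$-decay of eigensections along the cylinder (Lemmas \ref{l.28}--\ref{l.30}), and the dimension count $h^{(p)}=h^{(p)}_{1,\infty}+h^{(p)}_{2,\infty}$ (Proposition \ref{p.7}) to guarantee that the projection $P^{\{0\}}_R$ restricted to $\mathscr{W}_R$ is an isometry up to $\mathcal{O}(e^{-cR})$; one must be careful that $f_{i,R}s_i$ are not exactly orthonormal and not exactly harmonic, so a small perturbation / Gram–Schmidt argument with explicit exponential error tracking is needed. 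Everything else is bookkeeping with the estimates already in the paper.
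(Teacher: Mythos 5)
Your overall framework matches the paper's: write both metrics as Gram matrices in a basis adapted to the splitting of Lemma \ref{l.15}, build harmonic representatives on $Z_R$ by applying $P^{\{0\}}_R$ to the glued sections $f_{i,R}s_i$, and feed the spectral gap, the decay estimates, and the volume growth $\mathrm{Vol}(Z_R)=\mathcal{O}(R)$ into a matrix identity. Your Lemma \ref{l.24}/\ref{l.25}-style estimates (pointwise and $L^2$ closeness of $\widehat\alpha_{k,R}$ to $\alpha_{k,R}$, near-orthogonality of the $\widehat\alpha$'s and $\widehat\beta$'s) are exactly what the paper proves to control the Gram matrix $G_R$ of the glued frame.

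However, there is a genuine gap at the juncture you flag as ``the hard part'', and your diagnosis of it is not right. You describe the remaining difficulty as a Gram--Schmidt/perturbation issue coming from $f_{i,R}s_i$ not being exactly harmonic or orthonormal. That part is actually cheap: it is precisely what the estimates of Lemmas \ref{l.17}--\ref{l.20}, \ref{l.24}, \ref{l.25} settle. The real missing step is cohomological, not analytic: you must identify the de Rham class $P^\infty_R(\widehat\alpha_{k,R})\in H^p(Z_R,F_R)$ with the algebraically distinguished class $(i^*_p)^{-1}[a^k_R]$ from the canonical splitting, and likewise for $\widehat\beta_{l,R}$ versus $j^*_p[b^l_R]$, both up to $\mathcal{O}(e^{-cR})$. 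This is the content of the change-of-basis matrix $\Theta_R$ in (\ref{e.241}) and the statement $\Theta_R={\rm I}+\mathcal{O}(e^{-cR})$ (Lemma \ref{l.26}). Your proposal uses the $L^2$-closeness of $\widehat\alpha_{k,R}$ to $\alpha_{k,R}$ to conclude that the ``class is close to the right one'', but that implication is circular: the $L^2$-metric on cohomology is defined through the harmonic representative, and the point at issue is exactly which class the harmonic form $\widehat\alpha_{k,R}$ represents; $f_{i,R}s_i$ itself is not closed, so it has no de Rham class to transfer.

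The mechanism the paper uses, and which you would need, is the non-degenerate pairing with homology (Definition \ref{d.17}) evaluated against a specific basis of cycles adapted to the splitting: absolute cycles $i_p a_{R,i}\in H_p(Z_R,F^*_R)$ coming from $Z_{1,R}$, and lifts $j^{-1}_p b_{R,j}=h_p\circ f^{-1}_p(b_{R,j})$ of relative cycles from $(Z_{2,R},Y)$ via the diagram (\ref{e.252}). The key observations are: (i) $\{P^\infty_R\widetilde\alpha_{k,R},P^\infty_R\widetilde\beta_{l,R}\}$ is by construction dual to $\{i_pa_{R,i},j^{-1}_pb_{R,j}\}$ (equations (\ref{e.723})); (ii) one can therefore compute $\Theta_R^{-1}$ as the matrix $A_R$ of integrals $\int_{i_pa_{R,i}}\widehat\alpha_{k,R}$, $\int_{j^{-1}_pb_{R,j}}\widehat\beta_{l,R}$, etc.; (iii) these integrals can be evaluated to $\delta^{\cdot}_{\cdot}+\mathcal{O}(e^{-cR})$ using your pointwise decay estimates \emph{and} the polynomial volume bound $\mathrm{Vol}(a_{R,i}),\mathrm{Vol}(f^{-1}_pb_{R,j})=\mathcal{O}(R^2)$ (which in turn needs the inverse-matrix estimate of Lemma \ref{l.45}); and (iv) on the $Z_2$-side the relative boundary condition on $\beta_{l,R}$ is essential, because $b_{R,j'}-f^{-1}_p(b_{R,j'})\in C_p(\mathbb{K}_Y,F^*)$, and one needs the boundary integral to vanish exactly, not just be small (equation (\ref{e.263})). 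None of this can be obtained from $L^2$-perturbation alone; the duality with homology is indispensable. Your steps (1), (4), (5) and the analytic content of (2)--(3) are sound, but the algebraic identification behind (2)--(3) needs this homology-pairing argument.
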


 By (\ref{e.180}) and (\ref{e.185}), we get (cf. \cite[Appendix I]{BL})
 \begin{align}\begin{aligned}\label{e.186}
 \lim_{R\rightarrow \infty}T_{f}(A^{\mathscr{H}^{p}_{R}}, h_{L^{2}}^{\mathscr{H}^{p}_{R}})=0, \quad 0\leq p\leq m, \end{aligned}\end{align}
then \textbf{Theorem \ref{t.9}} follows from (\ref{e.87}) and (\ref{e.186}). Next subsection will be contributed to prove Lemma \ref{l.16}.

\subsection{Proof of Lemma \ref{l.16}}\label{ss3.9}

Let $\{\mathfrak{a}_{i}|1\leq i\leq m^{(p)}_{1}\}$ (resp. $\{\mathfrak{b}_{j}|1\leq j\leq m^{(p)}_{2}\}$) be a local frame of the bundle of chain group $C_{p}\left(\mathbb{K}_{Z_{1}}, F^{*}\right)$ (resp. $C_{p}\left(\mathbb{K}_{Z_{2}}/\mathbb{K}_{Y}, F^{*}\right)$). Recall that the diffeomorphism $\phi_{R}:M\rightarrow M_{R}$
has been constructed in Lemma \ref{l.1}.
We set  $\mathfrak{a}_{R, i}=\phi_{R}(\mathfrak{a}_{i})$ (resp. $\mathfrak{b}_{R, j}=\phi_{R}(\mathfrak{b}_{j})$)
 which constitute a local frame of $C_{p}\left(\mathbb{K}_{Z_{1, R}}, F^{*}_{1, R}\right)$ (resp. $C_{p}\left(\mathbb{K}_{Z_{2, R}}/\mathbb{K}_{Y}, F^{*}_{2, R}\right)$).
 Let $\{\mathfrak{a}^{i}_{R}\}$ (resp. $\{\mathfrak{b}_{R}^{j}\}$) be a local frame of $C^{p}\left(\mathbb{K}_{Z_{1, R}}, F_{R}\right)$ (resp. $C^{p}\left(\mathbb{K}_{Z_{2, R}}/\mathbb{K}_{Y}, F_{R}\right)$)
such that
\begin{align}\begin{aligned}\label{e.187}
 \langle\mathfrak{a}_{R}^{i_{1}}, \mathfrak{a}_{R, i_{2}}\rangle=\delta^{i_{1}}_{i_{2}}\quad
(\text{resp.}\,\,\langle\mathfrak{b}_{R}^{j_{1}}, \mathfrak{b}_{R, j_{2}}\rangle=\delta^{j_{1}}_{j_{2}}),
\end{aligned}\end{align}
where $\langle\cdot, \cdot\rangle$ denotes the paring between the cochain group and chain group.

\begin{rem}\label{r.4}
In all the rest of this subsection, we use $\{\mathfrak{a}, a, \alpha, \widehat{\alpha}, \widetilde{\alpha}\}$ (resp. $\{\mathfrak{b}, b, \beta, \widehat{\beta}, \widetilde{\beta}\}$) with low or upper indices
 to denote the various objects related to $Z_{1, R}$ (resp. $Z_{2, R}$).
\end{rem}

For any $R\geq 0$, let $\{\alpha_{i, R}\in \mathscr{H}^{p}\left(Z_{1, R}, F_{R}\right)|1\leq i \leq h^{(p)}_{1}\}$ be an orthonormal
frame of $\mathscr{H}^{p}\left(Z_{1, R}, F_{R}\right)$ and
$\{\beta_{j, R}\in \mathscr{H}^{p}\left(Z_{2, R}, Y, F_{R}\right)|1\leq j \leq h_{2}^{(p)}\}$ be an orthonormal frame of
$\mathscr{H}^{p}\left(Z_{2, R}, Y, F_{R}\right)$, so we have
\begin{align}\begin{aligned}\label{e.223}
 \langle\alpha_{i, R}, \alpha_{i', R} \rangle_{L^{2}(Z_{1, R})}=\delta_{ii'}, \quad\langle\beta_{j, R}, \beta_{j', R} \rangle_{L^{2}(Z_{2, R})}=\delta_{jj'}.\end{aligned}\end{align}
 By (\ref{e.177}) and Definition \ref{d.14}, there exist
\begin{align}\begin{aligned}\label{e.220}
&\{a^{i}_{R}\in \Ker\widetilde{\partial}\cap C^{p}\left(\mathbb{K}_{Z_{1, R}}, F_{R}\right)|1\leq i \leq h_{1}^{(p)}\}, \\
&\{b^{j}_{R}\in \Ker\widetilde{\partial}\cap C^{p}\left(\mathbb{K}_{Z_{2, R}}/\mathbb{K}_{Y}, F_{R}\right)|1\leq j \leq h_{2}^{(p)}\},
 \end{aligned}\end{align}
given by
\begin{align}\begin{aligned}\label{e.222}
a^{i}_{R}=\sum_{i}\big(\int_{\mathfrak{a}_{R, i}}\alpha_{i, R}\big)\cdot\mathfrak{a}_{R}^{i},\quad
 b^{j}_{R}=\sum_{j}\big(\int_{\mathfrak{b}_{R, j}}\beta_{j, R}\big)\cdot\mathfrak{b}_{R}^{j},
\end{aligned}\end{align}
such that
\begin{align}\begin{aligned}\label{e.207}
P^{\infty}_{1, R}(\alpha_{i, R})= [a^{i}_{R}], \quad P^{\infty}_{2, R}(\beta_{j, R})=[b^{j}_{R}].
\end{aligned}\end{align}
\index{$[a^{i}_{R}],\,[b^{j}_{R}]$}
Their cohomology classes $\{[a^{i}_{R}]|1\leq i \leq h_{1}^{(p)}\}$, $\{[b^{j}_{R}]|1\leq j \leq h_{2}^{(p)}\}$ constitute frames of
 $H^{p}(Z_{1, R}, F_{R})$, $H^{p}(Z_{2, R}, Y, F_{R})$ respectively.

\begin{lemma}\label{l.23}When $R\rightarrow\infty$, the coefficients appearing at the right of {\rm(\ref{e.222})} increase with an order of $\mO(R)$,
 i.e.,
\begin{align}\begin{aligned}\label{e.224}
\int_{\mathfrak{a}_{R, i'}}\alpha_{i, R}=\mO(R),  \quad \int_{\mathfrak{b}_{R, j'}}\beta_{j, R}=\mO(R).
\end{aligned}\end{align}
\end{lemma}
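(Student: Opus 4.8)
The plan is to derive (\ref{e.224}) from two ingredients: a bound on the harmonic forms $\alpha_{i,R}$ and $\beta_{j,R}$ in the $\mathscr{C}^0$-norm that is uniform in $R$, and the fact that the Riemannian volumes of the simplices grow at most linearly under the stretching diffeomorphism $\phi_R$. First I would prove that there is $C>0$, independent of $R\geq 1$ and of $b\in U$, such that $\|\alpha_{i,R}\|_{\mathscr{C}^0(Z_{1,R})}\leq C$ and $\|\beta_{j,R}\|_{\mathscr{C}^0(Z_{2,R})}\leq C$. Indeed, $\alpha_{i,R}$ is annihilated by $(D^{Z_{1,R}})^2$ with absolute boundary conditions and $\beta_{j,R}$ by $(D^{Z_{2,R}})^2$ with relative boundary conditions, while both are $L^2$-normalized by (\ref{e.223}); the bound then follows from interior and boundary elliptic estimates together with the Sobolev inequality applied on balls (or half-balls along the boundary) of a fixed small radius. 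The crucial point is that these estimates carry constants that are independent of $R$: by the product structure (\ref{e.7611}) extended to $Z_{1,R}$ and $Z_{2,R}$, the local geometric data are unchanged on the fixed core and equal to the $R$-independent model $(D^{Y})^2-\partial^2/\partial x_m^2$ on the cylinder (cf. (\ref{e.274}) and Lemma \ref{l.52}), so on each ball the elliptic constant is one fixed constant; hence $\|\alpha_{i,R}\|_{\mathscr{C}^0(Z_{1,R})}\leq C\|\alpha_{i,R}\|_{L^2(Z_{1,R})}=C$, and similarly for $\beta_{j,R}$.

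Second, I would bound the $p$-dimensional volumes of the simplices. Recall that $\mathfrak{a}_{R,i'}=\phi_R(\mathfrak{a}_{i'})$ and $\mathfrak{b}_{R,j'}=\phi_R(\mathfrak{b}_{j'})$, and that $\mathbb{K}_{Z_{1,R}}$ and $\mathbb{K}_{Z_{2,R}}$ have the same finite, $R$-independent, number of simplices as $\mathbb{K}_{Z_1}$ and $\mathbb{K}_{Z_2}$. By Remark \ref{r.1}, equivalently by the explicit description of $\phi_R$ in Lemma \ref{l.1} and Lemma \ref{l.44} --- in which the normal coordinate $x_m$ is dilated by the factor $1\leq \partial\phi_R/\partial x_m=1+\mu_0(x_m)+\mu_1(x_m)R\leq CR$ (see (\ref{e.432})) while the $Y$-directions are left untouched --- the $p$-dimensional Riemannian volume ${\rm vol}_p(\mathfrak{a}_{R,i'})$ measured with $g^{TZ_{1,R}}$ (resp. ${\rm vol}_p(\mathfrak{b}_{R,j'})$ measured with $g^{TZ_{2,R}}$) is $\mO(R)$: it is uniformly bounded for those simplices lying in the fixed part of the fibre, and grows at most linearly for those lying in the cylinder, since a $p$-simplex has at most one edge direction aligned with the stretched coordinate.

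Finally, combining the two steps yields, uniformly in the finitely many indices involved,
\begin{align*}
\Big|\int_{\mathfrak{a}_{R,i'}}\alpha_{i,R}\Big|\leq \|\alpha_{i,R}\|_{\mathscr{C}^0(Z_{1,R})}\cdot{\rm vol}_p(\mathfrak{a}_{R,i'})=\mO(R),
\qquad
\Big|\int_{\mathfrak{b}_{R,j'}}\beta_{j,R}\Big|=\mO(R),
\end{align*}
which is exactly (\ref{e.224}). The only delicate point is the $R$-uniformity of the $\mathscr{C}^0$-estimate of the first step, and it is precisely the $R$-independence of the local geometry on the cylinder (the product structure) that keeps the elliptic constants uniform. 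One may also organise the same computation by pulling back to the fixed simplex, $\int_{\mathfrak{a}_{R,i'}}\alpha_{i,R}=\int_{\mathfrak{a}_{i'}}\phi_R^*\alpha_{i,R}$, and estimating $\|\phi_R^*\alpha_{i,R}\|_{\mathscr{C}^0}=\mO(R)\cdot\|\alpha_{i,R}\|_{\mathscr{C}^0(Z_{1,R})}$ via (\ref{e.432}).
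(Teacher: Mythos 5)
Your proposal is correct and follows essentially the same route as the paper: a uniform-in-$R$ $\mathscr{C}^0$ bound on the $L^2$-normalized harmonic forms via elliptic estimates and the Sobolev inequality (using the $R$-independence of the local geometry on the cylinder), combined with the $\mO(R)$ growth of the simplex volumes from Remark~\ref{r.1}. The paper records the $\mathscr{C}^0$ bound as (\ref{e.687}) and then concludes exactly as you do in (\ref{e.688}).
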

\begin{proof}
By (\ref{e.223}), we have that $\|\alpha_{i, R}\|_{L^{2}(Z_{1, R})}=\|\beta_{j, R}\|_{L^{2}(Z_{2, R})}=1$ and
\begin{align}\begin{aligned}\label{e.445}
\big(D^{Z_{1, R}}\big)^{2k}\alpha_{i, R}=0, \quad \big(D^{Z_{2, R}}\big)^{2k}\beta_{j, R}=0, \text{  for any } k\in \mathbb{N}.
\end{aligned}\end{align}
For $k>m+l$, by Sobolev inequality and elliptic estimates we get that there exist $C_{l}>0$ such that for any $R\geq 1$, $x\in Z_{1,R}$ and $x'\in Z_{2,R}$
\begin{align}\begin{aligned}\label{e.687}
\left\|\alpha_{i, R}(x) \right\|_{\mathscr{C}^{l}}\leq C_{l},\quad \left\|\beta_{j, R}(x') \right\|_{\mathscr{C}^{l}}\leq C_{l}.
\end{aligned}\end{align}
By  Remark \ref{r.1} and (\ref{e.687}), we get
\begin{align}\begin{aligned}\label{e.688}
\big|\int_{\mathfrak{a}_{R, i'}}\alpha_{i, R}\big|\leq C_{l}\cdot{\rm Vol}(\mathfrak{a}_{R, i'})\leq CR,
\end{aligned}\end{align}
so we have proved the first estimate in (\ref{e.224}). In the same way, we get the second estimate.
\end{proof}
By the definition of the $L^{2}-$metric on $H^{p}(Z_{1, R}, F_{R})$ and $H^{p}(Z_{2, R}, Y, F_{R})$, we have by (\ref{e.207})
\begin{align}\begin{aligned}\label{e.210}
 &\langle[a^{i}_{R}], [a^{i'}_{R}]\rangle_{ h^{H^{p}(Z_{1, R}, F_{R})}_{L^{2}}}=\langle\alpha_{i, R}, \alpha_{i', R} \rangle_{L^{2}(Z_{1, R})}=\delta_{ii'}, \\
 &\langle[b^{j}_{R}], [b^{j'}_{R}]\rangle_{ h^{H^{p}(Z_{2, R}, Y, F_{R})}_{L^{2}}}=\langle\beta_{j, R}, \beta_{j', R} \rangle_{L^{2}(Z_{2, R})}=\delta_{jj'}.
\end{aligned}\end{align}

By the identification (\ref{e.179}), we obtain a frame of $H^{p}(Z_{R}, F_{R})$, that's
\begin{align}\begin{aligned}\label{e.208}
\{(i^{*}_{p})^{-1}[a^{k}_{R}], j^{*}_{p}[b^{l}_{R}]\big|1\leq k \leq h_{1}^{(p)}, 1\leq l \leq h_{2}^{(p)}\}.
\end{aligned}\end{align}
By (\ref{e.177}), there exist $\{\widetilde{\alpha}_{k, R}, \widetilde{\beta}_{l, R}\in \mathscr{H}^{p}\left(Z_{R}, F_{R}\right)|1\leq k \leq h_{1}^{(p)}, 1\leq l \leq h_{2}^{(p)}\}$,
a frame of $\mathscr{H}^{p}\left(Z_{R}, F_{R}\right)$, such that\index{$\widetilde{\alpha}_{k, R},\, \widetilde{\beta}_{l, R}$}
\begin{align}\begin{aligned}\label{e.209}
P^{\infty}_{R}(\widetilde{\alpha}_{k, R})= (i^{*}_{p})^{-1}[a^{k}_{R}], \quad P^{\infty}_{R}(\widetilde{\beta}_{l, R})=j^{*}_{p}[b^{l}_{R}].
\end{aligned}\end{align}

By Lemma \ref{l.22}, there exist $s_{1, i}\in \Ker_{L^{2}}(D^{Z_{1, \infty}})^{2, (p)}$, $1\leq i\leq h_{1}^{(p)}$, and $s_{2, j}\in \Ker_{L^{2}}(D^{Z_{2, \infty}})^{2, (p)}$, $1\leq j\leq h_{2}^{(p)}$, such that
\begin{align}\begin{aligned}\label{e.212}
P^{\{0\}}_{1, R}\big(f_{1,R} s_{1, i}\big)=\alpha_{i, R}, \quad P^{\{0\}}_{2, R}\big(f_{2,R} s_{2, j}\big)=\beta_{j, R}.
\end{aligned}\end{align}

\begin{lemma}\label{l.27} For $1\leq i\leq h_{1}^{(p)}, 1\leq j\leq h_{2}^{(p)}$, $\nu=1, 2$ and $R$ large enough, we have
\begin{align}\begin{aligned}\label{e.228}
\|f_{\nu, R} s_{\nu, i}\|_{L^{2}(Z_{\nu, R})}\leq 2.
\end{aligned}\end{align}
\end{lemma}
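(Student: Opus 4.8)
\textbf{Proof proposal for Lemma \ref{l.27}.}

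The plan is to control the $L^{2}$-norm of $f_{\nu,R}s_{\nu,i}$ by comparing it with the normalized harmonic form $\alpha_{i,R}$ (resp. $\beta_{j,R}$) that it projects onto, and to show the correction introduced by the cut-off $f_{\nu,R}$ and the projection $P^{\{0\}}_{\nu,R}$ is exponentially small in $R$. First I would work with $\nu=1$ (the case $\nu=2$ being identical after exchanging absolute and relative boundary conditions, or after applying the Hodge star). By Definition \ref{d.13} and (\ref{e.212}), $f_{1,R}s_{1,i}\in\mathscr{W}_{1,R}$ and $P^{\{0\}}_{1,R}(f_{1,R}s_{1,i})=\alpha_{i,R}$ with $\|\alpha_{i,R}\|_{L^{2}(Z_{1,R})}=1$. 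Write
\begin{align}\begin{aligned}
f_{1,R}s_{1,i}=\alpha_{i,R}+\big({\rm Id}-P^{\{0\}}_{1,R}\big)(f_{1,R}s_{1,i}).
\end{aligned}\end{align}
Hence $\|f_{1,R}s_{1,i}\|^{2}_{L^{2}(Z_{1,R})}=1+\big\|\big({\rm Id}-P^{\{0\}}_{1,R}\big)(f_{1,R}s_{1,i})\big\|^{2}_{L^{2}(Z_{1,R})}$, so it suffices to bound the second term by a constant (in fact by a quantity tending to $0$).

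The key step is to estimate $\big\|\big({\rm Id}-P^{\{0\}}_{1,R}\big)(f_{1,R}s_{1,i})\big\|_{L^{2}(Z_{1,R})}$. Here I would combine two facts already available. By Lemma \ref{l.18}, $\|(D^{Z_{1,R}})^{2}(f_{1,R}s_{1,i})\|_{L^{2}(Z_{1,R})}\leq Ce^{-\sqrt{\delta}R/8}\|f_{1,R}s_{1,i}\|_{L^{2}(Z_{1,R})}$, and by Lemma \ref{l.20} (applied to $s=f_{1,R}s_{1,i}\in\mathscr{W}_{1,R}$), $\big\|\big({\rm Id}-P^{[0,e^{-R\sqrt{\delta}/16}]}_{1,R}\big)(f_{1,R}s_{1,i})\big\|_{L^{2}(Z_{1,R})}\leq Ce^{-R\sqrt{\delta}/16}\|f_{1,R}s_{1,i}\|_{L^{2}(Z_{1,R})}$. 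By Proposition \ref{p.7}, for $R>R_{0}$ there are no eigenvalues of $(D^{Z_{1,R}})^{2}$ in the interval $(0,e^{-R\sqrt{\delta}/16}]$, so $P^{[0,e^{-R\sqrt{\delta}/16}]}_{1,R}=P^{\{0\}}_{1,R}$; consequently
\begin{align}\begin{aligned}
\big\|\big({\rm Id}-P^{\{0\}}_{1,R}\big)(f_{1,R}s_{1,i})\big\|_{L^{2}(Z_{1,R})}\leq Ce^{-R\sqrt{\delta}/16}\,\|f_{1,R}s_{1,i}\|_{L^{2}(Z_{1,R})}.
\end{aligned}\end{align}
Plugging this into the Pythagorean identity above yields $\|f_{1,R}s_{1,i}\|^{2}_{L^{2}(Z_{1,R})}\leq 1+C^{2}e^{-R\sqrt{\delta}/8}\|f_{1,R}s_{1,i}\|^{2}_{L^{2}(Z_{1,R})}$, hence for $R$ large enough $\|f_{1,R}s_{1,i}\|^{2}_{L^{2}(Z_{1,R})}\leq (1-C^{2}e^{-R\sqrt{\delta}/8})^{-1}\leq 4$, i.e. $\|f_{1,R}s_{1,i}\|_{L^{2}(Z_{1,R})}\leq 2$. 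The same argument with $(D^{Z_{2,R}})^{2}$, Lemma \ref{l.18} for $i=2$, Lemma \ref{l.20} and Proposition \ref{p.7} gives $\|f_{2,R}s_{2,j}\|_{L^{2}(Z_{2,R})}\leq 2$, completing the proof.

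The only delicate point is the circularity: Lemmas \ref{l.17}, \ref{l.18}, \ref{l.20} and Propositions \ref{p.4}, \ref{p.7}, as well as Lemma \ref{l.22}, are all stated and proved before this lemma, so I may invoke them freely; the estimate above is genuinely just an algebraic rearrangement once $P^{[0,e^{-R\sqrt{\delta}/16}]}_{1,R}=P^{\{0\}}_{1,R}$ is known from Proposition \ref{p.7}. The main obstacle, if any, is making sure the threshold $R_{0}$ here is taken at least as large as the ones appearing in those earlier results, so that all of them apply simultaneously; since each provides such a threshold and there are finitely many, one simply takes the maximum.
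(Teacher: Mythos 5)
Your proof is correct and follows essentially the same route as the paper: decompose $f_{\nu,R}s_{\nu,i}$ via the spectral projector $P^{\{0\}}_{\nu,R}$, identify the image as $\alpha_{i,R}$ (norm $1$) by (\ref{e.212}) and (\ref{e.223}), and bound the orthogonal remainder by Lemma \ref{l.20}. The only cosmetic differences are that the paper uses a triangle inequality where you use the exact Pythagorean identity, and that the paper cites Lemma \ref{l.20} tersely without spelling out that $P^{\{0\}}_{\nu,R}=P^{[0,\,e^{-R\sqrt{\delta}/16}]}_{\nu,R}$ for $R$ large — a point you correctly make explicit via Proposition \ref{p.7}. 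Your passing citation of Lemma \ref{l.18} is superfluous (Lemma \ref{l.20} together with Proposition \ref{p.7} already suffices), but it does no harm.
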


\begin{proof}
From Lemma \ref{l.20}, (\ref{e.223}) and (\ref{e.212}) we get
\begin{align}\begin{aligned}\label{e.230}
\|f_{\nu, R} s_{\nu, k}\|_{L^{2}(Z_{\nu, R})}&\leq\left\|P_{\nu, R}^{\{0\}}f_{\nu, R} s_{\nu, k}\right\|_{L^{2}(Z_{\nu, R})}+\left\|\left({\rm Id}-P_{\nu, R}^{\{0\}}\right)f_{\nu, R} s_{\nu, k}\right\|_{L^{2}(Z_{\nu, R})}\\
&\leq1+Ce^{-cR} \cdot \|f_{\nu, R}s_{\nu, k}\|_{L^{2}(Z_{\nu, R})}.
\end{aligned}\end{align}
Take $R$ large enough, we get (\ref{e.228}) from (\ref{e.230}).
\end{proof}

\begin{defn}\label{d.15}
We define, for $1\leq k\leq h_{1}^{(p)}, 1\leq l\leq h_{2}^{(p)}$,
\begin{align}\begin{aligned}\label{e.213}
\widehat{\alpha}_{k, R}=P^{\{0\}}_{R}\big(f_{1,R} s_{1, k}\big), \quad \widehat{\beta}_{l, R}=P^{\{0\}}_{R}\big(f_{2,R} s_{2, l}\big),
\end{aligned}\end{align}\index{$\widehat{\alpha}_{k, R}$, $\widehat{\beta}_{l, R}$}
and by (\ref{e.205}) of Lemma \ref{l.22}, they constitute a frame of $\mathscr{H}^{p}\left(Z_{R}, F_{R}\right)$ for $R$ large enough.
\end{defn}

Now we expand the frame $\{\widetilde{\alpha}_{k, R}, \widetilde{\beta}_{l, R}\}$ in
term of the frame $\{\widehat{\alpha}_{k, R}, \widehat{\beta}_{l, R}\}$, and denote the matrix of coefficients by
${\rm \Theta}_{R}=\left(
                \begin{array}{cc}
                  {\rm \Theta}^{1}_{R} & {\rm \Theta}^{2}_{R} \\
                  {\rm \Theta}^{3}_{R} & {\rm \Theta}^{4}_{R} \\
                \end{array}
              \right)
$
%\begin{align}\begin{aligned}\label{e.214}
%\widetilde{\alpha}_{k, R}=\sum_{1\leq k'\leq h_{1}^{(p)}}\theta_{11,k}^{k'}(R)\widehat{\alpha}_{k', R}+\sum_{1\leq l'\leq h_{2}^{(p)}}\theta_{12,k}^{l'}(R)\widehat{\beta}_{l', R}, \\
% \widetilde{\beta}_{l, R}=\sum_{1\leq k'\leq h_{1}^{(p)}}\theta_{21,i}^{k'}(R)\widehat{\alpha}_{k', R}+\sum_{1\leq l'\leq h_{2}^{(p)}}\theta_{22,l}^{l'}(R)\widehat{\beta}_{l', R}.
%\end{aligned}\end{align}
such that
\begin{align}\begin{aligned}\label{e.241}
\left(
 \begin{array}{c}
  \big(\widetilde{\alpha}_{k, R}\big)_{h^{(p)}_{1}\times 1} \\
  \big(\widetilde{\beta}_{l, R}\big)_{h^{(p)}_{2}\times 1} \\
 \end{array}
\right)=
 \left(
                \begin{array}{cc}
                  {\rm \Theta}^{1}_{R} & {\rm \Theta}^{2}_{R} \\
                  {\rm \Theta}^{3}_{R} & {\rm \Theta}^{4}_{R} \\
                \end{array}
              \right)
\cdot
 \left(
 \begin{array}{c}
  \big(\widehat{\alpha}_{k', R}\big)_{h^{(p)}_{1}\times 1}\\
  \big(\widehat{\beta}_{l', R}\big)_{h^{(p)}_{2}\times 1} \\
 \end{array}
\right).
\end{aligned}\end{align}

We use
$
{\rm H}_{R}=\left(
          \begin{array}{cc}
            {\rm H}^{1}_{R} & {\rm H}^{2}_{R} \\
            {\rm H}^{3}_{R} & {\rm H}^{4}_{R} \\
          \end{array}
        \right)
$\index{${\rm H}_{R}$}
to represent the matrix form of the $L^{2}-$metric $h^{H^{p}(Z_{R}, F_{R})}_{L^{2}}$ with
respect to the basis $\{(i^{*}_{p})^{-1}[a^{i}_{R}],\,j^{*}_{p}[b^{l}_{R}]|1\leq i\leq h^{(p)}_{1},\,1\leq l\leq h^{(p)}_{2}\}$,
such that for $\{1\leq k, k'\leq h_{1}^{(p)}, 1\leq l, l' \leq h_{2}^{(p)}\}$ we have
\begin{align}\begin{aligned}\label{e.211}
 ({\rm H}^{1}_{R})_{kk'}&:=\langle (i^{*}_{p})^{-1}[a^{k}_{R}], (i^{*}_{p})^{-1}[a^{k'}_{R}]\rangle_{ h^{H^{p}(Z_{R}, F_{R})}_{L^{2}}}=\langle\widetilde{\alpha}_{k, R}, \widetilde{\alpha}_{k', R} \rangle_{L^{2}(Z_{R})}, \\
 ({\rm H}^{2}_{R})_{kl'}&:=\langle (i^{*}_{p})^{-1}[a^{k}_{R}], j^{*}_{p}[b^{l'}_{R}]\rangle_{ h^{H^{p}(Z_{R}, F_{R})}_{L^{2}}}=\langle\widetilde{\alpha}_{k, R}, \widetilde{\beta}_{l', R} \rangle_{L^{2}(Z_{R})}, \\
 ({\rm H}^{3}_{R})_{lk'}&:=\langle j^{*}_{p}[b^{l}_{R}], (i^{*}_{p})^{-1}[a^{k'}_{R}]\rangle_{ h^{H^{p}(Z_{R}, F_{R})}_{L^{2}}}=\langle \widetilde{\beta}_{l, R}, \widetilde{\alpha}_{k', R}\rangle_{L^{2}(Z_{R})}, \\
 ({\rm H}^{4}_{R})_{ll'}&:=\langle j^{*}_{p}[b^{l}_{R}], j^{*}_{p}[b^{l'}_{R}]\rangle_{ h^{H^{p}(Z_{R}, F_{R})}_{L^{2}}}= \langle\widetilde{\beta}_{l, R}, \widetilde{\beta}_{l', R} \rangle_{L^{2}(Z_{R})}.
\end{aligned}\end{align}

Set
$
{\rm G}_{R}=\left(
          \begin{array}{cc}
            {\rm G}^{1}_{R} & {\rm G}^{2}_{R} \\
            {\rm G}^{3}_{R} & {\rm G}^{4}_{R} \\
          \end{array}
        \right)
$\index{${\rm G}_{R}$}, such that for $\{1\leq k, k'\leq h_{1}^{(p)}, 1\leq l, l' \leq h_{2}^{(p)}\}$
\begin{align}\begin{aligned}\label{e.217}
 ({\rm G}^{1}_{R})_{kk'}&:=\langle\widehat{\alpha}_{k, R}, \widehat{\alpha}_{k', R} \rangle_{L^{2}(Z_{R})}, \quad
 &\,\,\,({\rm G}^{2}_{R})_{kl'}:=\langle\widehat{\alpha}_{k, R}, \widehat{\beta}_{l', R} \rangle_{L^{2}(Z_{R})}, \\
 ({\rm G}^{3}_{R})_{lk'}&:=\langle \widehat{\beta}_{l, R}, \widehat{\alpha}_{k', R}\rangle_{L^{2}(Z_{R})}, \quad
 &({\rm G}^{4}_{R})_{ll'}:= \langle\widehat{\beta}_{l, R}, \widehat{\beta}_{l', R} \rangle_{L^{2}(Z_{R})}.
\end{aligned}\end{align}
By (\ref{e.179}), (\ref{e.210}) and (\ref{e.211}), the equation (\ref{e.185}) is equivalent to prove that there exists
$c>0$ such that for $R\rightarrow \infty$
\begin{align}\begin{aligned}\label{e.218}
{\rm H}_{R}={\rm I}_{h^{(p)}\times h^{(p)}}+\mO(e^{-cR}).
\end{aligned}\end{align}
By (\ref{e.241}), (\ref{e.211}) and (\ref{e.217}), we get
\begin{align}\begin{aligned}\label{e.219}
 {\rm H}_{R}={\rm \Theta}_{R}{\rm G}_{R}{\rm \Theta}_{R}^{*},
\end{aligned}\end{align}
so in order to prove (\ref{e.218}) we need to study the asymptotic behaviors of ${\rm G}_{R}$ and ${\rm \Theta}_{R}$ when $R$ goes to infinity.
The following lemma will help us to establish the asymptotic estimate of ${\rm G}_{R}$ for $R\rightarrow \infty$.

\begin{lemma}\label{l.24} There exist $C>0$, $c>0$ and $R_{0}>0$ such that for any $R>R_{0}$, $b\in U$, $1\leq k\leq h^{(p)}_{1}$,
$1\leq l\leq h^{(p)}_{2}$,
\begin{align}\begin{aligned}\label{e.225}
&|\widehat{\alpha}_{k, R}(x)- \alpha_{k, R}(x)|_{\mathscr{C}^{0}}\leq Ce^{-cR}, \quad&\text{for any }\,x\in Z_{1,R} \\
 &|\widehat{\beta}_{l, R}(x)- \beta_{l, R}(x)|_{\mathscr{C}^{0}}\leq Ce^{-cR}, \quad&\text{for any }\,x\in Z_{2,R}.
\end{aligned}\end{align}
Moreover,
\begin{align}\begin{aligned}\label{e.740}
&|\widehat{\alpha}_{k, R}(x)|_{\mathscr{C}^{0}}\leq Ce^{-cR}, \quad&\text{for any }\,x\in Z_{2,R} \\
 &|\widehat{\beta}_{l, R}(x)|_{\mathscr{C}^{0}}\leq Ce^{-cR}, \quad&\text{for any }\,x\in Z_{1,R}.
\end{aligned}\end{align}
\end{lemma}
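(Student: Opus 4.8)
The plan is to treat $f_{1,R}s_{1,k}$ as an \emph{approximate} zero--mode of both $(D^{Z_R})^2$ and $(D^{Z_{1,R}})^2$, to deduce from the uniform spectral gap of Theorem \ref{t.10} that it is $L^2$--close to each of its harmonic projections $\widehat{\alpha}_{k,R}=P^{\{0\}}_R(f_{1,R}s_{1,k})$ and $\alpha_{k,R}=P^{\{0\}}_{1,R}(f_{1,R}s_{1,k})$, and finally to promote these $L^2$--estimates to $\mathscr{C}^0$--estimates by elliptic regularity with constants uniform in $R$. The single structural fact I keep using is that, by the construction of $f_{1,R}$ in Definition \ref{d.13}, one has $f_{1,R}\equiv 0$ on $\{|x_m|\le R/4\}$, hence in a neighbourhood of the cutting hypersurface $X$ inside $Z_{1,R}$ and on all of $Z_{2,R}$.

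First I would record that the error is exponentially small. Since $f_{1,R}s_{1,k}\in\mathscr{W}_{1,R}\subset\mathscr{W}_{R}$ and $\|f_{1,R}s_{1,k}\|_{L^2}\le 2$ by Lemma \ref{l.27}, Lemmas \ref{l.17} and \ref{l.18} give $c,C>0$ with
\begin{align*}
\big\|(D^{Z_R})^2(f_{1,R}s_{1,k})\big\|_{L^2(Z_R)}+\big\|(D^{Z_{1,R}})^2(f_{1,R}s_{1,k})\big\|_{L^2(Z_{1,R})}\le Ce^{-cR}.
\end{align*}
On the transition region $\{R/4\le|x_m|\le R/2\}$ of the cylinder, the only place where $\partial_{x_m}f_{1,R}\neq0$ and where the two Laplacians coincide, this error is the single differential expression $-(\partial_{x_m}^2f_{1,R})s_{1,k}-2(\partial_{x_m}f_{1,R})(\partial_{x_m}s_{1,k})$, and the explicit cylindrical form of $s_{1,k}$ (cf. (\ref{e.315})) together with the elliptic bounds $|s_{1,k}|_{\mathscr{C}^l}\le C_l$ of the type (\ref{e.687}) show it is bounded in $\mathscr{C}^l$ by $C_le^{-cR}$ there.

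Next I would invoke the spectral gap. By Theorem \ref{t.10}, for $R>R_0$ both $(D^{Z_R})^2$ and $(D^{Z_{1,R}})^2$ have spectrum in $\{0\}\cup[c,+\infty)$, so $\|(\mathrm{Id}-P^{\{0\}}_R)v\|_{L^2(Z_R)}\le c^{-1}\|(D^{Z_R})^2v\|_{L^2(Z_R)}$ for $v$ in the domain, and likewise on $Z_{1,R}$. Setting $u_1:=f_{1,R}s_{1,k}-\alpha_{k,R}=(\mathrm{Id}-P^{\{0\}}_{1,R})(f_{1,R}s_{1,k})$ and $u_2:=f_{1,R}s_{1,k}-\widehat{\alpha}_{k,R}=(\mathrm{Id}-P^{\{0\}}_R)(f_{1,R}s_{1,k})$ (see (\ref{e.212}), Definition \ref{d.15}), the first step gives $\|u_1\|_{L^2(Z_{1,R})}\le Ce^{-cR}$ and $\|u_2\|_{L^2(Z_R)}\le Ce^{-cR}$. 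Since $f_{1,R}s_{1,k}$ vanishes near $X$, the form $u_1$ satisfies the absolute boundary conditions along $X$, $(D^{Z_{1,R}})^2u_1$ equals the error above, and $(D^{Z_R})^2u_2$ equals that same error, now on the closed manifold $Z_R$. A standard elliptic bootstrap — interior on $Z_R$, up to the boundary on $Z_{1,R}$ using that the absolute boundary conditions satisfy the Lopatinski--Shapiro condition — then promotes $\|u_i\|_{L^2}\le Ce^{-cR}$ to $|u_i|_{\mathscr{C}^0}\le Ce^{-cR}$, with constants uniform in $R$ because $\dim Z_R=m$ is fixed and all local geometric data are $R$--independent. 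Now (\ref{e.225}) follows on $Z_{1,R}$ from $\widehat{\alpha}_{k,R}-\alpha_{k,R}=u_1-u_2$, and (\ref{e.740}) follows on $Z_{2,R}$ from $\widehat{\alpha}_{k,R}=-u_2$ there (as $f_{1,R}s_{1,k}\equiv0$ on $Z_{2,R}$). The assertions for $\widehat{\beta}_{l,R}$ and $\beta_{l,R}$ are obtained identically, replacing absolute by relative boundary conditions and invoking Lemma \ref{l.18} with $i=2$, Lemma \ref{l.30}, and the spectral gap for $(D^{Z_{2,R}})^2$.

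I expect the genuine obstacle to be this last step: getting $\mathscr{C}^0$--estimates, rather than merely $L^2$--estimates, with constants \emph{uniform} in the stretching parameter $R$. This is feasible because the stretched fibres are of bounded geometry with $R$--independent local data, and because $f_{1,R}s_{1,k}$ is identically zero near $X$, which both makes the two error terms literally the same expression and renders the boundary behaviour of $u_1$ transparent; granting this, the proof is a routine combination of the spectral--gap estimates already established (Theorem \ref{t.10}, Lemmas \ref{l.17}, \ref{l.18}, \ref{l.27}) with standard elliptic theory.
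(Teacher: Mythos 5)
Your argument is correct, but it takes a genuinely different route from the paper. The paper's proof inserts the heat operator at time $t=R$ as an intermediary and splits
\begin{align*}
\widehat{\alpha}_{k,R}-\alpha_{k,R}
&=\big(P^{\{0\}}_R-e^{-R(D^{Z_R})^2}\big)f_{1,R}s_{1,k}
+\big(e^{-R(D^{Z_R})^2}-e^{-R(D^{Z_{1,R}})^2}\big)f_{1,R}s_{1,k}\\
&\qquad+\big(e^{-R(D^{Z_{1,R}})^2}-P^{\{0\}}_{1,R}\big)f_{1,R}s_{1,k};
\end{align*}
the outer two terms are handled via the spectral gap through exponential decay of $e^{-R(D^2)}-P^{\{0\}}$, while the middle term is the genuine point of the proof and uses the off-diagonal heat-kernel estimate (Lemma \ref{l.42}) together with the heat-kernel comparison on compact subsets (Lemma \ref{l.43}), so that in the end (\ref{e.740}) and (\ref{e.225}) come out of pointwise kernel bounds. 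Your proof avoids the heat kernel entirely: you use the uniform spectral gap as a resolvent estimate $\|(\mathrm{Id}-P^{\{0\}})v\|_{L^2}\le c^{-1}\|(D^2)v\|_{L^2}$, combine it with Lemmas \ref{l.17} and \ref{l.18} to see both $u_1$ and $u_2$ are $\mathcal{O}(e^{-cR})$ in $L^2$, and then promote to $\mathscr{C}^0$ by a local elliptic bootstrap. This is more elementary and avoids Lemmas \ref{l.42}--\ref{l.43}, at the cost of making the uniformity of the elliptic and Sobolev constants the central technical burden; this is legitimate, because the stretched fibres $Z_R$ have bounded geometry with $R$-independent local data, so one can run the bootstrap on fixed-radius balls (interior balls inside $Z_R$, and half-balls on $Z_{1,R}$ using that absolute boundary conditions are elliptic, while $u_1$ in fact vanishes in a neighbourhood of $X$ anyway). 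You should, however, make the bootstrap quantitative: the $L^2$ bound on $u_i$ is global with $\mathrm{Vol}(Z_R)=\mathcal{O}(R)$, while the error term $(D^2)(f_{1,R}s_{1,k})$ has $\mathscr{C}^l$-norm $\mathcal{O}(e^{-cR})$ supported in a slab of volume $\mathcal{O}(R)$; restricting to unit balls kills both volume factors, so the local $H^{2k}$ norms are indeed $\mathcal{O}(e^{-cR})$ with constants uniform in $R$, after which Sobolev on a unit ball gives the pointwise bound. With that step spelled out your argument is a clean alternative to the paper's.
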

\begin{proof} We set the $\alpha_{i, R}(x)=0$ for $x\in Z_{2,R}$ and $\beta_{j,R}(x)=0$ for $x\in Z_{1,R}$.
By (\ref{e.212}) and (\ref{e.213}), we have for $x\in Z_{R}$ that
\begin{align}\begin{aligned}\label{e.231}
\big(\widehat{\alpha}_{i, R}&-\alpha_{i, R}\big)(x)=\big(P_{R}^{\{0\}}-e^{-R(D^{Z_{R}})^{2}}\big)f_{1,R} s_{1, k}(x)\\
&+\big(e^{-R(D^{Z_{R}})^{2}}-e^{-R(D^{Z_{1, R}})^{2}}\big)f_{1,R} s_{1, k}(x)
+\big(e^{-R(D^{Z_{1, R}})^{2}}-P_{1, R}^{\{0\}}\big)f_{1,R} s_{1, k}(x).
\end{aligned}\end{align}
 For all $x, y\in Z_{R}$ we have
\begin{align}\begin{aligned}\label{e.232}
\big|(P_{R}^{\{0\}}-e^{-R(D^{Z_{R}})^{2}})(x, y)\big|_{\mathscr{C}^{0}}\leq Ce^{-cR}.
\end{aligned}\end{align}
By Lemma \ref{l.27} and (\ref{e.232}), we get for all $x\in Z_{R}$ that
\begin{align}\begin{aligned}\label{e.233}
\big|\big(P_{R}^{\{0\}}-e^{-R(D^{Z_{R}})^{2}}\big)&f_{1,R} s_{1, k}(x)\big|_{\mathscr{C}^{0}}
\leq Ce^{-cR}\int_{Z_{1, R}}\big|(f_{1,R} s_{1, k})(y) \big|dv_{Z_{b}}(y)\\
&\leq Ce^{-cR}\text{Vol}(Z_{1, R})^{\frac{1}{2}}\|f_{1,R} s_{1, k}\|_{L^{2}(Z_{R})}
\leq C'{e}^{-c'R}.
\end{aligned}\end{align}
In the same way, we get
\begin{align}\begin{aligned}\label{e.234}
\big|\big(e^{-R(D^{Z_{1, R}})^{2}}-P_{1, R}^{\{0\}}\big)f_{1,R}s_{1, k}(x)\big|_{\mathscr{C}^{0}}\leq Ce^{-cR}.
\end{aligned}\end{align}
We treat the second term at the right of (\ref{e.231}) by two cases. First, for $t>0$, $x\in Z_{1}\cup Y_{[-R, -\frac{R}{8}]}$ and
\begin{align}\begin{aligned}\label{e.237}
y\in \text{supp}(f_{1,R} s_{1, k})\subset Z_{1}\cup Y_{[-R, -\frac{R}{4}]},
\end{aligned}\end{align}
 by method of comparing the heat kernel (see Lemma \ref{l.43}) we have
\begin{align}\begin{aligned}\label{e.235}
\big|\big(e^{-t(D^{Z_{R}})^{2}}-e^{-t(D^{Z_{1, R}})^{2}}\big)(x, y)\big|_{\mathscr{C}^{0}}\leq Ce^{-cR^{2}/t}.
\end{aligned}\end{align}
Similar to (\ref{e.233}), we get by Lemma \ref{l.27} and (\ref{e.235}) that for all $x\in Z_{1}\cup Y_{[-R, -\frac{R}{8}]}$
\begin{align}\begin{aligned}\label{e.235}
&\left|\left(\big(e^{-t(D^{Z_{R}})^{2}}-e^{-t(D^{Z_{1, R}})^{2}}\big)f_{1,R} s_{1, k}\right)(x)\right|\\
&\quad\quad\quad\quad\quad\quad\leq Ce^{-cR^{2}/t}\text{Vol}(Z_{1, R})^{\frac{1}{2}}\|f_{1,R}s_{1, k}\|_{L^{2}(Z_{1, R})}
\leq C'R^{\frac{1}{2}}e^{-cR^{2}/t}.
\end{aligned}\end{align}
By taking $t=R$, we find for all $x\in Z_{1}\cup Y_{[-R, -\frac{R}{8}]}$
\begin{align}\begin{aligned}\label{e.236}
\left|\left(\big(e^{-R(D^{Z_{R}})^{2}}-e^{-R(D^{Z_{1, R}})^{2}}\big)f_{1,R} s_{1, k}\right)(x)\right|\leq CR^{\frac{1}{2}}e^{-cR}\leq C'{e}^{-c'R}.
\end{aligned}\end{align}
Second, for $t>0$ and $x\in  Y_{[-\frac{R}{8}, R]}\cup Z_{2}$,
by off-diagonal estimate of heat kernel (see Lemma \ref{l.42}), Lemma \ref{l.27} and (\ref{e.237}), we get for all $t>0$ and
$x\in  Y_{[-\frac{R}{8}, R]}\cup Z_{2}$
\begin{align}\begin{aligned}\label{e.238}
&\left|\left(\big(e^{-t(D^{Z_{R}})^{2}}-e^{-t(D^{Z_{1, R}})^{2}}\big)f_{1,R} s_{1, k}\right)(x)\right|\\
&\leq\int_{Z_{1, R}}\big|e^{-t(D^{Z_{R}})^{2}}(x, x')\big|_{\mathscr{C}^{0}}\big|(f_{1,R}s_{1, k})(x')\big|dv_{Z_{b}}(x')\\
&\qquad\qquad\qquad+\int_{Z_{1, R}}\big|e^{-t(D^{Z_{1, R}})^{2}}(x, x')\big|_{\mathscr{C}^{0}}\big|(f_{1,R} s_{1, k})(x')\big|dv_{Z_{b}}(x')\\
&\leq\int_{Z_{1, R}}Ce^{-cd^{2}(x, x')/t}\big|(f_{1,R}s_{1, k})(x')\big|dv_{Z_{b}}(x')\\
&\qquad\qquad\qquad+\int_{Z_{1, R}}Ce^{-cd^{2}(x, x')/t}\big|(f_{1,R} s_{1, k})(x')\big|dv_{Z_{b}}(x')\\
&\leq 2Ce^{-cR^{2}/t}\text{Vol}(Z_{1, R})^{\frac{1}{2}}\|f_{1,R} s_{1, k}\|_{L^{2}(Z_{1, R})}\leq C'R^{\frac{1}{2}}e^{-cR^{2}/t},
\end{aligned}\end{align}
where $d(x, x')$ denotes the distance function. Let $t=R$, we get for all $x\in Z_{2}\cup Y_{[-\frac{R}{8}, R]}$
(\ref{e.236}) still holds.
Thus we get for all $x\in Z_{R}$ that
\begin{align}\begin{aligned}\label{e.240}
\left|\left(\big(e^{-R(D^{Z_{R}})^{2}}-e^{-R(D^{Z_{1, R}})^{2}}\big)f_{1,R} s_{1, k}\right)(x)\right|_{\mathscr{C}^{0}}\leq Ce^{-cR}.
\end{aligned}\end{align}
Finally, the first inequality of (\ref{e.225}) follows from (\ref{e.231}), (\ref{e.233}), (\ref{e.234}) and (\ref{e.240}).
Following a similar proof, we can get the second inequality of (\ref{e.225}). Using (\ref{e.232}) and off-diagonal estimates,
we get the uniform estimates (\ref{e.740}) by a similar argument.
\end{proof}

\begin{lemma}\label{l.25}
We have for $R\rightarrow \infty$
\begin{align}\begin{aligned}\label{e.226}
 {\rm G}_{R}={\rm I}_{h^{(p)}\times h^{(p)}}+\mO(e^{-cR}).
\end{aligned}\end{align}
\end{lemma}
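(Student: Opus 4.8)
The plan is to compute the entries of $\mathrm{G}_R$ directly from their definitions in \eqref{e.217}, using the asymptotic estimates already established in Lemma \ref{l.24}. Recall that the $\alpha_{i,R}$ are an $L^2$-orthonormal frame of $\mathscr{H}^p(Z_{1,R},F_R)$ (extended by zero to $Z_{2,R}$) and the $\beta_{j,R}$ are an $L^2$-orthonormal frame of $\mathscr{H}^p(Z_{2,R},Y,F_R)$ (extended by zero to $Z_{1,R}$), so that $\langle\alpha_{k,R},\alpha_{k',R}\rangle_{L^2(Z_R)}=\delta_{kk'}$, $\langle\beta_{l,R},\beta_{l',R}\rangle_{L^2(Z_R)}=\delta_{ll'}$, and $\langle\alpha_{k,R},\beta_{l,R}\rangle_{L^2(Z_R)}=0$ trivially because their supports are disjoint up to the hypersurface $X$. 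The idea is that $\widehat{\alpha}_{k,R}$ and $\widehat{\beta}_{l,R}$ differ from $\alpha_{k,R}$ and $\beta_{l,R}$ only by terms that are $\mathscr{C}^0$-small of order $e^{-cR}$ on the relevant pieces of the fiber, so the Gram matrix $\mathrm{G}_R$ is an exponentially small perturbation of the identity.

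First I would estimate the diagonal blocks $\mathrm{G}^1_R$ and $\mathrm{G}^4_R$. Writing $\widehat{\alpha}_{k,R}=\alpha_{k,R}+(\widehat{\alpha}_{k,R}-\alpha_{k,R})$ and expanding the inner product bilinearly, we get
\begin{align}\begin{aligned}\label{e.prop1}
(\mathrm{G}^1_R)_{kk'}=\delta_{kk'}+\langle\widehat{\alpha}_{k,R}-\alpha_{k,R},\alpha_{k',R}\rangle_{L^2(Z_R)}+\langle\alpha_{k,R},\widehat{\alpha}_{k',R}-\alpha_{k',R}\rangle_{L^2(Z_R)}+\langle\widehat{\alpha}_{k,R}-\alpha_{k,R},\widehat{\alpha}_{k',R}-\alpha_{k',R}\rangle_{L^2(Z_R)}.
\end{aligned}\end{align}
Each of the three correction terms is bounded by a product of $L^2$-norms; one factor is uniformly bounded (by Lemma \ref{l.27} and \eqref{e.223}, the norms of $\widehat{\alpha}_{k,R}$ and $\alpha_{k,R}$ are $O(1)$), and the other is controlled by Lemma \ref{l.24}. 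On $Z_{1,R}$ the bound \eqref{e.225} gives $|\widehat{\alpha}_{k,R}-\alpha_{k,R}|_{\mathscr{C}^0}\leq Ce^{-cR}$, while on $Z_{2,R}$ both $\alpha_{k,R}$ (by our zero-extension) and $\widehat{\alpha}_{k,R}$ (by \eqref{e.740}) are bounded by $Ce^{-cR}$, so the difference there is $O(e^{-cR})$ as well. Since $\mathrm{Vol}(Z_{1,R})$ and $\mathrm{Vol}(Z_R)$ grow only linearly in $R$ (Remark \ref{r.1}), the $L^2$-norm of $\widehat{\alpha}_{k,R}-\alpha_{k,R}$ over all of $Z_R$ is $O(R^{1/2}e^{-cR})=O(e^{-c'R})$ for a slightly smaller $c'>0$. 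Hence $(\mathrm{G}^1_R)_{kk'}=\delta_{kk'}+O(e^{-c'R})$, and symmetrically $(\mathrm{G}^4_R)_{ll'}=\delta_{ll'}+O(e^{-c'R})$.

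For the off-diagonal blocks $\mathrm{G}^2_R$ and $\mathrm{G}^3_R$ I would argue that $\langle\widehat{\alpha}_{k,R},\widehat{\beta}_{l,R}\rangle_{L^2(Z_R)}$ is exponentially small by splitting the integral over $Z_R=Z_{1,R}\cup_X Z_{2,R}$: on $Z_{1,R}$ the factor $\widehat{\beta}_{l,R}$ is $O(e^{-cR})$ in $\mathscr{C}^0$ by \eqref{e.740} while $\widehat{\alpha}_{k,R}$ is $O(1)$ in $L^2$, and on $Z_{2,R}$ the roles are reversed by the other half of \eqref{e.740}; in both cases the integrand is pointwise $O(e^{-cR})$ against an $L^2$-bounded factor, and integrating over a region of linearly growing volume yields $(\mathrm{G}^2_R)_{kl'}=O(e^{-c'R})$, and likewise for $\mathrm{G}^3_R$. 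Combining the four block estimates gives \eqref{e.226}. The only mildly delicate point—the main obstacle—is bookkeeping the polynomial-in-$R$ volume factors so that they are absorbed into the exponential decay by passing from $c$ to a strictly smaller constant $c'$; this is routine once one notes that $R^{1/2}e^{-cR}\leq e^{-c'R}$ for all $R$ large and any $0<c'<c$. All the analytic content has already been isolated in Lemmas \ref{l.24} and \ref{l.27}, so this lemma is essentially an exercise in bilinear expansion and crude size estimates.
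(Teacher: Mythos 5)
Your argument is correct and is exactly the computation the paper intends: the paper's proof is a one-liner ("an easy consequence of Lemma \ref{l.24}, (\ref{e.223}) and (\ref{e.217})" together with $\mathrm{Vol}(Z_R)=\mathcal{O}(R)$), and what you have written is the honest expansion of that remark, with the bilinear split, the use of (\ref{e.225}) on the near side and (\ref{e.740}) on the far side, and the absorption of the linear volume factor into a slightly smaller exponent. No gap.
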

\begin{proof}
As we see that the volume of $Z_{R}$ grows of order $\mO(R)$, this lemma is an easy consequence of Lemma \ref{l.24}, (\ref{e.223}) and (\ref{e.217}).
\end{proof}
To prove Lemma \ref{l.16}, we need the following lemma.

\begin{lemma}\label{l.26} We have for $R\rightarrow \infty$
\begin{align}\begin{aligned}\label{e.227}
{\rm \Theta}_{R}
 ={\rm I}_{h^{(p)}\times h^{(p)}}+\mO(e^{-cR}).
\end{aligned}\end{align}
\end{lemma}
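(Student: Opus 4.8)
The plan is to reduce the estimate on ${\rm \Theta}_R$ to the $L^2(Z_R)$-closeness of the two frames, which is essentially already contained in Lemma \ref{l.24}, by going through the de Rham isomorphism $P^\infty_R$ and the Mayer--Vietoris sequence of $Z_R=Z_{1,R}\cup_Y Z_{2,R}$. First I would remove ${\rm G}_R$: taking the $L^2(Z_R)$-inner product of each side of (\ref{e.241}) with $\widehat{\alpha}_{k'',R}$ and with $\widehat{\beta}_{l'',R}$ and using (\ref{e.217}) gives a matrix identity $\Xi_R={\rm \Theta}_R\,{\rm G}_R$, where $\Xi_R$ collects the inner products $\langle\widetilde{\alpha}_{k,R},\widehat{\alpha}_{k'',R}\rangle_{L^2(Z_R)}$, $\langle\widetilde{\alpha}_{k,R},\widehat{\beta}_{l'',R}\rangle_{L^2(Z_R)}$, etc. By Lemma \ref{l.25} one has ${\rm G}_R={\rm I}+\mO(e^{-cR})$, so ${\rm G}_R$ is invertible for $R$ large with ${\rm G}_R^{-1}={\rm I}+\mO(e^{-cR})$ and ${\rm \Theta}_R=\Xi_R{\rm G}_R^{-1}$; hence it suffices to prove $\Xi_R={\rm I}+\mO(e^{-cR})$. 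Using once more the near-orthonormality of $\{\widehat{\alpha}_{k,R},\widehat{\beta}_{l,R}\}$ (Lemma \ref{l.25}, and $\|\widehat{\alpha}_{k,R}\|_{L^2},\|\widehat{\beta}_{l,R}\|_{L^2}\sim1$), this in turn follows once we show
\begin{align*}
\|\widehat{\alpha}_{k,R}-\widetilde{\alpha}_{k,R}\|_{L^2(Z_R)}=\mO(e^{-cR}),\qquad \|\widehat{\beta}_{l,R}-\widetilde{\beta}_{l,R}\|_{L^2(Z_R)}=\mO(e^{-cR}).
\end{align*}

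Second I would compute the de Rham classes. Since $\{\mathfrak{a}_{R,i}\}\cup\{\mathfrak{b}_{R,j}\}$ is a frame of $C_\bullet(\mathbb{K}_{Z_R},F_R^*)$ with dual frame $\{\mathfrak{a}_R^i\}\cup\{\mathfrak{b}_R^j\}$, Definition \ref{d.14} gives $P^\infty_R(\widehat{\alpha}_{k,R})=\sum_i\big(\int_{\mathfrak{a}_{R,i}}\widehat{\alpha}_{k,R}\big)\mathfrak{a}_R^i+\sum_j\big(\int_{\mathfrak{b}_{R,j}}\widehat{\alpha}_{k,R}\big)\mathfrak{b}_R^j$. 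Every simplex of $\mathbb{K}_{Z_R}$ has volume $\mO(R)$ by Remark \ref{r.1}, and $\mathfrak{a}_{R,i}\subset Z_{1,R}$, $\mathfrak{b}_{R,j}\subset Z_{2,R}$; hence Lemma \ref{l.24} yields $\int_{\mathfrak{a}_{R,i}}\widehat{\alpha}_{k,R}=\int_{\mathfrak{a}_{R,i}}\alpha_{k,R}+\mO(Re^{-cR})$ and $\int_{\mathfrak{b}_{R,j}}\widehat{\alpha}_{k,R}=\mO(Re^{-cR})$. Comparing with (\ref{e.222}), the cocycle $P^\infty_R(\widehat{\alpha}_{k,R})$ restricts to $\mathbb{K}_{Z_{1,R}}$-simplices as $a_R^k+\mO(Re^{-cR})$ and to $\mathbb{K}_{Z_{2,R}}$-simplices as $\mO(Re^{-cR})$. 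The mirror statement for $\widehat{\beta}_{l,R}$, with $b_R^l$ and the roles of $Z_{1,R},Z_{2,R}$ interchanged, follows from the second halves of (\ref{e.225}) and (\ref{e.740}). On the other hand $P^\infty_R(\widetilde{\alpha}_{k,R})=(i^*_p)^{-1}[a_R^k]$ and $P^\infty_R(\widetilde{\beta}_{l,R})=j^*_p[b_R^l]$.

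Third I would use the Mayer--Vietoris sequence of $Z_R=Z_{1,R}\cup_Y Z_{2,R}$. Because $H^\bullet(Y,F)=0$ by (\ref{e.356}), the restriction map $(i^*_p,\varrho)\colon H^p(Z_R,F_R)\to H^p(Z_{1,R},F_R)\oplus H^p(Z_{2,R},F_R)$ (with $\varrho$ the restriction to $Z_{2,R}$) is injective; moreover $\varrho\circ(i^*_p)^{-1}=0$ since $(i^*_p)^{-1}=l^*_p(k^*_p)^{-1}$ has image $\mathrm{im}(l^*_p)$, which is represented by cochains vanishing on $\mathbb{K}_{Z_{2,R}}$-simplices (see (\ref{e.181}) and (\ref{e.175})), while $\varrho\circ j^*_p$ is, again by $H^\bullet(Y,F)=0$, the isomorphism $H^p(Z_{2,R},Y,F_R)\cong H^p(Z_{2,R},F_R)$, and $i^*_p\circ j^*_p=0$. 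Applying $i^*_p$ and $\varrho$ to $[P^\infty_R(\widehat{\alpha}_{k,R})]-(i^*_p)^{-1}[a_R^k]$ and to $[P^\infty_R(\widehat{\beta}_{l,R})]-j^*_p[b_R^l]$, Step 2 shows that both images are $\mO(Re^{-cR})$ in the simplicial cochain norm; by injectivity of $(i^*_p,\varrho)$ so are the classes themselves, and applying $(P^\infty_R)^{-1}$ returns the displayed $L^2$-bounds. This gives $\Xi_R={\rm I}+\mO(Re^{-cR})$, hence ${\rm \Theta}_R={\rm I}+\mO(e^{-c'R})$ for any $c'<c$, which is (\ref{e.227}). (The same argument, combined with (\ref{e.219}) and Lemma \ref{l.25}, re-derives Lemma \ref{l.16}/(\ref{e.218}).)

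The hard part will be the uniformity assertions hidden in Step 3, namely passing from ``$\mO(Re^{-cR})$ in the simplicial cochain norm'' to ``$\mO(Re^{-cR})$ in the $L^2$-metric on harmonic forms'': one must control $(P^\infty_R)^{-1}$, the cohomology projections, and the inverse Mayer--Vietoris map uniformly in $R$. This is where the only $R$-dependence lives: the cylinder simplices in $\mathbb{K}_{Y_{[-R,R]}}$ have volume $\mO(R)$ (Remark \ref{r.1}), whereas, via $\phi_R$, the underlying simplicial (co)chain complexes $\mathbb{K}_{Z_R},\mathbb{K}_{Z_{1,R}},\mathbb{K}_{Z_{2,R}},\mathbb{K}_Y$ and all the algebraic maps between them are fixed, $R$-independent finite-dimensional objects. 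One bounds $\|(P^\infty_R)^{-1}\|$ by representing a simplicial cocycle by its associated smooth closed Whitney form on $\mathbb{K}_{Z_R}$, whose $L^2$-norm stays bounded — the long thin cylinder simplices in fact contribute Whitney forms of small $L^2$-norm — so that its harmonic part is bounded; the remaining maps are $R$-independent in the combinatorial bases. Since every such loss is only polynomial in $R$ while Lemma \ref{l.24} furnishes genuine exponential decay, the argument closes.
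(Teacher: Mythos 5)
Your proposal takes a genuinely different route from the paper, and the route has a gap at its crucial analytic step. The paper does not attempt to prove $L^{2}$-closeness of $\{\widetilde{\alpha}_{k,R},\widetilde{\beta}_{l,R}\}$ and $\{\widehat{\alpha}_{k,R},\widehat{\beta}_{l,R}\}$ and does not invert the de Rham map $P^{\infty}_{R}$. Instead it constructs the inverse of ${\rm \Theta}_{R}$ algebraically: the matrix ${\rm A}_{R}$ of integration pairings $\int_{i_{p}a_{R,i}}\widehat{\alpha}_{k,R}$, $\int_{j_{p}^{-1}b_{R,j}}\widehat{\beta}_{l,R}$, etc.\ (see (\ref{e.722})) satisfies ${\rm \Theta}_{R}{\rm A}_{R}={\rm I}$ by the duality computation (\ref{e.723}), because $\{P^{\infty}_{R}\widetilde{\alpha}_{k,R},P^{\infty}_{R}\widetilde{\beta}_{l,R}\}$ is by construction the dual basis of $\{i_{p}a_{R,i},j_{p}^{-1}b_{R,j}\}$. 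Then each block of ${\rm A}_{R}$ is close to the corresponding block of the identity, using only the $\mathscr{C}^{0}$-estimates of Lemma~\ref{l.24} and the polynomial volume bounds (\ref{e.249}) on the homology chains. No uniform control of $(P^{\infty}_{R})^{-1}$ or of any norm on cohomology is ever needed.

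Your Step~1 (reduction to $L^{2}$-closeness via $\Xi_{R}={\rm \Theta}_{R}{\rm G}_{R}$ and Lemma~\ref{l.25}) is fine, and the cochain computations of Step~2 are essentially the computations that underlie the paper's estimates of ${\rm A}^{1}_{R},\dots,{\rm A}^{4}_{R}$. The gap is in Step~3, which you yourself flag as the hard part. The claim that the Whitney form of a cocycle ``has $L^{2}$-norm that stays bounded'' (and the parenthetical that the long cylinder simplices ``contribute Whitney forms of small $L^{2}$-norm'') is incorrect: on a simplex stretched by a factor $\sim R$ in the $x_{m}$-direction, a $p$-form component not containing $dx_{m}$ keeps its pointwise norm while the volume element grows like $R$, so the $L^{2}(g^{TZ_{R}})$-norm of the corresponding Whitney form grows like $\sqrt{R}$; only the $dx_{m}$-components shrink. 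This is harmless for the final conclusion (a polynomial loss is absorbed by the exponential gain from Lemma~\ref{l.24}), but your sketch does not actually establish the needed uniform bound $\|(P^{\infty}_{R})^{-1}\|=\mathcal{O}(R^{N})$, nor does it specify which norm on the finite-dimensional cohomology group is being used when one invokes injectivity of $(i^{*}_{p},\varrho)$ to pass from smallness of the images to smallness of the class (here one must also check that the $R$-dependent identifications and the implicit inverse are uniformly bounded). All of these auxiliary uniformity questions are precisely what the paper's duality argument is designed to sidestep.
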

Before proving this lemma, we see that (\ref{e.218}) follows from (\ref{e.219}), (\ref{e.226}) and (\ref{e.227}),
so in order to prove Lemma \ref{l.16} we only need to prove Lemma \ref{l.26}. The last subsection of this paper will be contributed to prove Lemma
\ref{l.26}.

\subsection{Proof of Lemma \ref{l.26}}\label{ss3.10}

\begin{defn}\label{d.17} For
 $[\sigma^{\bullet}_{R}]\in H^{p}(Z_{R}, F_{R}), \,\big(\text{resp. } H^{p}(Z_{1, R}, F_{R}),\, H^{p}(Z_{2, R}, Y, F_{R})\big)$
 and
 $ [\sigma_{R, \bullet}]\in H_{p}(Z_{R}, F^{*}_{R}),\, \big(\text{resp. } H_{p}(Z_{1, R}, F^{*}_{1, R}),\, H_{p}(Z_{2, R}, Y, F^{*}_{2, R})\big).$
There is a well-defined paring between the cohomology groups and the homology groups induced by the natural paring between the cochain groups and chain groups, that's
\begin{align}\begin{aligned}\label{e.242}
\langle [\sigma^{\bullet}_{R}], [\sigma_{R, \bullet}]\rangle:=\langle \sigma^{\bullet}_{R}, \sigma_{R, \bullet}\rangle.
\end{aligned}\end{align}
This paring between the cohomology groups and the homology groups is non-degenerate.
\end{defn}

Using the following commutative diagram
\begin{align}\begin{aligned}\label{e.252}
\xymatrix{
&&&0\ar[d]&0\ar[ld]\\
&0\ar[d]&&H_{p}(Z_{2, R}, F^{*}_{2, R})\ar[d]_{\wr}^{f_{p}}\ar[ld]_{h_{p}}&\\
0\ar[r]&H_{p}(Z_{1, R}, F^{*}_{1, R})\ar[r]^{i_{p}}\ar[d]^{\wr}_{k_{p}}&H_{p}(Z_{R}, F^{*}_{R})\ar[r]^{j_{p}}\ar[ld]_{l_{p}}&H_{p}(Z_{2, R}, Y, F^{*}_{2, R})\ar[r]\ar[d]&0, \\
& H_{p}(Z_{1, R}, Y, F^{*}_{1, R})\ar[d]\ar[ld] &&0&\\
0&0&&&}
\end{aligned}\end{align}
we define
\begin{align}\begin{aligned}\label{e.256}
j^{-1}_{p}=h_{p}\circ f^{-1}_{p}.\end{aligned}\end{align}
 From the diagram (\ref{e.252}), we read off that
\begin{align}\begin{aligned}\label{e.253}
j_{p}\circ i_{p}=0, \quad l_{p}\circ h_{p}=0.
\end{aligned}\end{align}

Recall that the diffeomorphisms $\phi_{R}:M\longrightarrow M_{R}$ and $\phi_{i, R}:M_{i}\longrightarrow M_{i, R}$ have been constructed respectively
in Lemma \ref{l.1}.

\begin{defn}\label{d.37}Let $\{[\sigma_{i}]|1\leq i\leq h^{(p)}_{1}\}$ (resp. $\{[\tau_{j}]|1\leq j\leq h^{(p)}_{2}\}$) be a local frame of
$H_{p}(Z_{1}, F^{*})$ (resp. $H_{p}(Z_{2}, Y, F^{*})$).
For any $R\geq 0$, we put
 \begin{align}\begin{aligned}\label{e.696}
[\sigma_{R, i}]=\phi_{R}[\sigma_{i}],\quad 1\leq i\leq h^{(p)}_{1}\quad (\text{resp.}\,\,[\tau_{R, j}]=\phi_{R}[\tau_{j}],\quad 1\leq j\leq h^{(p)}_{2}),
\end{aligned}\end{align}
which constitute a local frame of
$H_{p}(Z_{1, R}, F^{*}_{1, R})$ (resp. $H_{p}(Z_{2, R}, Y, F^{*}_{2, R})$).
Then we define for $1\leq j\leq h^{(p)}_{2}$
\begin{align}\begin{aligned}\label{e.254}
[\widetilde{\tau}_{R, j}]=f^{-1}_{p}[\tau_{R, j}],
\end{aligned}\end{align}
which constitute a frame of $H_{p}(Z_{2, R}, F^{*}_{2, R})$.
\end{defn}

By our construction, we know that the coefficients of expansion of $\{[\sigma_{R, i}], [\tau_{R, j}], [\widetilde{\tau}_{R, j}]\}$ in term of the frame
$\{\mathfrak{a}_{R, i}, \mathfrak{b}_{R, j}\}$ are constants with respect to $R$, so we get by Remark \ref{r.1}
\begin{align}\begin{aligned}\label{e.248}
\text{Vol}(\sigma_{R, i})=\mO(R), \quad \text{Vol}(\tau_{R, j})=\mO(R), \quad
\text{Vol}(\widetilde{\tau}_{R, j})=\mO(R).
\end{aligned}\end{align}
Let $\mu_{R}=(\mu_{Rii'})$ be a $h^{(p)}_{1}\times h^{(p)}_{1}$ matrix and $\eta_{R}=(\eta_{Rjj'})$ be a $h^{(p)}_{2}\times h^{(p)}_{2}$ matrix,
defined by
\begin{align}\begin{aligned}\label{e.243}
&\mu_{Rii'}=\big\langle [a^{i}_{R}], [\sigma_{R, i'}]\big\rangle,\quad
\eta_{Rjj'}=\big\langle [b^{j}_{R}], [\tau_{R, j'}]\big\rangle.
\end{aligned}\end{align}
By Lemma \ref{l.23}, we get for $1\leq i,\,i'\leq h^{(p)}_{1},\,1\leq j,\,j'\leq h^{(p)}_{2}$
\begin{align}\begin{aligned}\label{e.244}
\mu_{Rii'}=\mO(R), \quad \eta_{Rjj'}= \mO(R).
\end{aligned}\end{align}
We set $\mu^{-1}_{R}=\big((\mu^{-1}_{R})_{ii'}\big)$ to be the inverse of $\mu_{R}$ and
 $\eta^{-1}_{R}=\big((\eta^{-1}_{R})_{jj'}\big)$ to be that of $\eta_{R}$,
then we define
\begin{align}\begin{aligned}\label{e.246}
 a_{R, i}:=\sum_{i'}(\mu^{-1}_{R})_{ii'}\sigma_{R, i'}, \quad b_{R, j}:=\sum_{j'}(\eta^{-1}_{R})_{jj'}\tau_{R,j'},
\end{aligned}\end{align}
whose homology classes, $[a_{R,i}],\,[b_{R,j}]$, are respectively new frames of $H_{p}(Z_{1, R}, F^{*}_{1, R})$
and $H_{p}(Z_{2, R}, Y, F^{*}_{2, R})$.
Consequently, by (\ref{e.254}) and (\ref{e.246}), we get
\begin{align}\begin{aligned}\label{e.255}
f^{-1}_{p}(b_{R,j})=\sum_{j'}(\eta^{-1}_{R})_{jj'}f^{-1}_{p}(\tau_{R, j'}).
\end{aligned}\end{align}
By (\ref{e.243}) and (\ref{e.246}) we get
\begin{align}\begin{aligned}\label{e.247}
\big(\langle [a^{i}_{R}], [ a_{R, i'}]\rangle\big)_{h^{(p)}_{1}\times h^{(p)}_{1}}={\rm I}_{h^{(p)}_{1}\times h^{(p)}_{1}},
 \quad \big(\langle [b^{j}_{R}], [b_{R, j'}]\rangle\big)_{h^{(p)}_{2}\times h^{(p)}_{2}}={\rm I}_{h^{(p)}_{2}
\times h^{(p)}_{2}}.
\end{aligned}\end{align}
\begin{lemma}\label{l.45} For $R$ sufficiently large, we have for any $1\leq i,\,i'\leq h^{(p)}_{1},\,1\leq j,\,j'\leq h^{(p)}_{2}$,
\begin{align}\begin{aligned}\label{e.444}
(\mu^{-1}_{R})_{ii'}=\mathcal{O}(R), \quad (\eta^{-1}_{R})_{jj'}=\mathcal{O}(R).
\end{aligned}\end{align}
\end{lemma}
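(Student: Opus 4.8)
The plan is to prove that, as $R\to\infty$, the matrix $\mu_R$ converges to a fixed invertible matrix $\mu_\infty$ (and likewise $\eta_R\to\eta_\infty$ invertible); then for $R$ large $\mu_R$ is invertible with $\mu_R^{-1}\to\mu_\infty^{-1}$, so every entry of $\mu_R^{-1}$ stays bounded and a fortiori is $\mathcal{O}(R)$. I treat $\mu_R$; the case of $\eta_R$ is identical after replacing $Z_1,Z_{1,\infty}$ and the absolute boundary conditions by $Z_2,Z_{2,\infty}$ and the relative boundary conditions (using Lemma \ref{l.30} and Proposition \ref{p.14a} in place of Lemma \ref{l.29} and Proposition \ref{p.13}, or invoking the Hodge star).

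First I would pass to cohomology. Since $\mathbb{K}_{Z_{1,R}}=\phi_{1,R}(\mathbb{K}_{Z_1})$ and $[\sigma_{R,i'}]=(\phi_{1,R})_*[\sigma_{i'}]$, the chain $\sigma_{R,i'}$ has the same (constant in $R$) simplicial coefficients as $\sigma_{i'}$, and by (\ref{e.222}) the cochain $a^i_R$ has coefficients $\int_{\phi_{1,R}(\mathfrak{a})}\alpha_{i,R}=\int_{\mathfrak{a}}\phi_{1,R}^*\alpha_{i,R}=P^\infty_1(\phi_{1,R}^*\alpha_{i,R})$; hence
\[
\mu_{Rii'}=\langle[a^i_R],[\sigma_{R,i'}]\rangle=\big\langle\, \phi_{1,R}^*[a^i_R],\,[\sigma_{i'}]\,\big\rangle ,
\]
the de Rham--singular pairing on $Z_1$, with $\phi_{1,R}^*[a^i_R]\in H^p(Z_1,F)$ represented by the closed form $\phi_{1,R}^*\alpha_{i,R}$. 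Let $Z_1^{\circ}:=Z_1\setminus Y_{(-\frac{7\varepsilon}{8},0]}$, a core of $Z_1$ onto which $Z_1$ deformation retracts and on which $\phi_{1,R}$ is (a relabelling of) the identity. I would then establish
\[
\big|\phi_{1,R}^*\alpha_{i,R}-s_{1,i}\big|_{\mathscr{C}^0(Z_1^{\circ})}\le C e^{-cR},
\]
arguing as in the proof of Lemma \ref{l.24}: by (\ref{e.234}) one has $\alpha_{i,R}=e^{-R(D^{Z_{1,R}})^2}(f_{1,R}s_{1,i})+\mathcal{O}(e^{-cR})$ in $\mathscr{C}^0$; the heat--kernel comparison Lemma \ref{l.43}, applied on $Z_1^{\circ}$ away from $\partial Z_{1,R}$, replaces $e^{-R(D^{Z_{1,R}})^2}$ by $e^{-R(D^{Z_{1,\infty}})^2}$ up to $\mathcal{O}(e^{-cR})$; and since $s_{1,i}$ is $(D^{Z_{1,\infty}})^2$--harmonic while $(1-f_{1,R})s_{1,i}$ is supported deep in the cylinder where $s_{1,i}$ decays like $e^{-\sqrt{\mu_1}(\cdot)}$ (Lemma \ref{l.29}), $e^{-R(D^{Z_{1,\infty}})^2}(f_{1,R}s_{1,i})=s_{1,i}+\mathcal{O}(e^{-cR})$ on $Z_1^{\circ}$. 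Since the inclusion $Z_1^{\circ}\hookrightarrow Z_1$ is a homotopy equivalence, it follows that $\phi_{1,R}^*[a^i_R]=[s_{1,i}|_{Z_1}]+\mathcal{O}(e^{-cR})$ in $H^p(Z_1,F)$, whence $\mu_{Rii'}\to (\mu_\infty)_{ii'}:=\langle[s_{1,i}|_{Z_1}],[\sigma_{i'}]\rangle$, exponentially fast and uniformly in $b\in U$.

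It then remains to see that $\mu_\infty$ is invertible. By (\ref{e.212}) and the dimension identity $\dim\Ker_{L^2}(D^{Z_{1,\infty}})^{2,(p)}=h^{(p)}_{1,\infty}=h^{(p)}_1$ of Proposition \ref{p.7}, the $s_{1,i}$ form a basis of $\Ker_{L^2}(D^{Z_{1,\infty}})^{2,(p)}$. The Atiyah--Patodi--Singer identification (\ref{e.202}), together with assumption (\ref{e.356}) and the exact sequence (\ref{e.720}) (which make $H^p(Z_{1,R},Y,F_R)\to H^p(Z_{1,R},F_R)$ an isomorphism), shows that the restriction map gives an isomorphism $\Ker_{L^2}(D^{Z_{1,\infty}})^{2,(p)}\cong H^p(Z_1,F)$, so $\{[s_{1,i}|_{Z_1}]\}$ is a basis of $H^p(Z_1,F)$; likewise $\{[\sigma_{i'}]\}$ is a basis of $H_p(Z_1,F^*)=H^p(Z_1,F)^*$. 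Since the pairing between $H^p(Z_1,F)$ and $H_p(Z_1,F^*)$ is perfect, $\mu_\infty$ is invertible, and the stated bound follows. The same argument, run on $Z_{2,\infty}$ with relative boundary conditions, gives $\eta_R\to\eta_\infty$ invertible and hence $(\eta^{-1}_R)_{jj'}=\mathcal{O}(R)$.

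The main obstacle is the exponential $\mathscr{C}^0$--estimate on $Z_1^{\circ}$ in the second paragraph: one must combine the spectral--gap and heat--kernel machinery of Section \ref{s.2}, the comparison estimates of Section \ref{ss1.6} (Lemma \ref{l.43}), and the cylinder decay of the $L^2$--harmonic forms (Lemma \ref{l.29}). Once this is in hand, the identification of the limit $\mu_\infty$ and the verification of its invertibility are purely topological and short.
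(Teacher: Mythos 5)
Your approach is genuinely different from the paper's, and also yields a stronger conclusion if carried through; let me compare the two.

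The paper's proof is much shorter and uses no convergence statement at all. Starting from the $L^2$-orthonormality $\langle[a^i_R],[a^k_R]\rangle_{L^2}=\delta^i_k$ and the expansion $[a^i_R]=\sum_{i'}\mu_{Rii'}(\phi_R^{-1})^*[\sigma^{i'}]$, one immediately gets $I=\mu_R\Gamma_R\mu_R^*$ where $\Gamma_R$ is the Gram matrix of the $g^{TZ_{1,R}}$-harmonic representatives $\xi^{i'}_R$ of the fixed classes $(\phi_R^{-1})^*[\sigma^{i'}]$. Hence $\|\mu_R^{-1}\|^2_{HS}=\operatorname{tr}\Gamma_R=\sum_k\|\xi^k_R\|^2_{L^2(Z_{1,R})}$, and pulling back by $\phi_R$ and using that the $L^2_R$-harmonic projection is a contraction, $\|\xi^k_R\|^2_{L^2(Z_{1,R})}\le\|\xi^k\|^2_{L^2_R(Z_1)}$, which is polynomially bounded by the volume growth of $g^{TZ_1}_R$ from Lemma \ref{l.44}. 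This is a two-line linear-algebra argument plus one volume estimate, and it is manifestly independent of the choice of orthonormal frame $\{\alpha_{i,R}\}$. Your argument instead proves the stronger statement that (after a suitable normalization) $\mu_R$ converges exponentially to an invertible matrix, so $\mu_R^{-1}=\mathcal{O}(1)$, at the cost of establishing pointwise $\mathscr{C}^0$-convergence of the harmonic forms on a core of $Z_1$ (via Lemma \ref{l.24}-type estimates, the heat-kernel comparison of Lemma \ref{l.43}, and the cylinder decay of Lemma \ref{l.29}) and then a topological invertibility argument via the APS identification (\ref{e.202}).

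One point you should tighten: as the paper sets things up, $\{\alpha_{i,R}\}$ is \emph{any} orthonormal frame of $\mathscr{H}^p(Z_{1,R},F_R)$, and $s_{1,i}$ is then determined (for each $R$) by (\ref{e.212}); both are $R$-dependent with a $U(h^{(p)}_1)$ ambiguity, so ``$\mu_R\to\mu_\infty$ for a fixed matrix $\mu_\infty$'' is not literally well-posed in that setup. The fix is standard but should be stated: either fix a basis $\{s_{1,i}\}$ of $\Ker_{L^2}(D^{Z_{1,\infty}})^{2,(p)}$ once and for all, build $\alpha'_{i,R}:=P^{\{0\}}_{1,R}(f_{1,R}s_{1,i})$ (not orthonormalized), run your convergence argument to show the corresponding matrix $\mu'_R$ converges to the invertible $\langle[s_{1,i}|_{Z_1}],[\sigma_{i'}]\rangle$, and then note $\mu_R=U(R)\mu'_R$ with $U(R)$ and $U(R)^{-1}$ uniformly bounded (because the Gram matrix of $\alpha'_{i,R}$ tends to the identity); or observe directly that the desired bound on $\mu_R^{-1}$ is invariant under left multiplication by a unitary. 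Without this remark the statement about ``convergence of $\mu_R$'' is ambiguous, though the underlying analytic content is correct.
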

\begin{proof} Let $\{[\sigma^{i}]|1\leq i\leq h^{(p)}_{1}\}$ (resp. $\{[\tau^{j}]|1\leq j\leq h^{(p)}_{2}\}$) be a frame of
$H^{p}(Z_{1}, F)$ (resp. $H^{p}(Z_{2}, Y, F)$), such that
\begin{align}\begin{aligned}\label{e.698}
\langle[\sigma^{i}],[\sigma_{i'}]\rangle=\delta^{i}_{i'}\quad (\text{resp.}\,\,\langle[\tau^{j}],[\tau_{j'}]\rangle=\delta^{j}_{j'}\,).
\end{aligned}\end{align}
Let $\{\xi^{i}|\, 1\leq i\leq h^{(p)}_{1}\}$ (resp. $\{\zeta^{j}|\, 1\leq j\leq h^{(p)}_{2}\}$) be a frame of the space of harmonic forms $\mathscr{H}^{p}(Z_{1},F)$ (resp. $\mathscr{H}^{p}(Z_{2},Y,F)$) such that
\begin{align}\begin{aligned}\label{e.699}
P^{\infty}_{1}(\xi^{i})=[\sigma^{i}]\quad (\text{resp.}\,\,P^{\infty}_{2}(\zeta^{j})=[\tau^{j}]\,).
\end{aligned}\end{align}
Let $\{\xi^{i}_{R}|\, 1\leq i\leq h^{(p)}_{1}\}$ (resp. $\{\zeta^{j}_{R}|\, 1\leq j\leq h^{(p)}_{2}\}$) be a frame of $\mathscr{H}^{p}(Z_{1,R},F_{1,R})$ (resp. $\mathscr{H}^{p}(Z_{2,R},Y,F_{2,R})$) such that
\begin{align}\begin{aligned}\label{e.700}
P^{\infty}_{1,R}(\xi^{i}_{R})=(\phi_{R}^{-1})^{*}[\sigma^{i}],\quad (\text{resp.}\,\,P^{\infty}_{2,R}(\zeta^{j}_{R})=(\phi_{R}^{-1})^{*}[\tau^{j}]\,).
\end{aligned}\end{align}
By (\ref{e.696}), (\ref{e.243}), (\ref{e.699}) and (\ref{e.700}), we have
\begin{align}\begin{aligned}\label{e.701}
[a^{i}_{R}]=\sum_{i'}\mu_{Rii'}(\phi_{R}^{-1})^{*}[\sigma^{i'}],\quad [b^{j}_{R}]=\sum_{j'}\eta_{Rjj'}(\phi_{R}^{-1})^{*}[\tau^{j'}].
\end{aligned}\end{align}
Consequently we have
\begin{align}\begin{aligned}\label{e.702}
{\rm I}_{h^{(p)}_{1}\times h^{(p)}_{1}}&=\big\langle[a^{i}_{R}],[a^{k}_{R}]\big\rangle_{h^{H^{p}(Z_{1,R},F_{1,R})}_{L^{2}}}\\
&=\big(\mu_{Rii'}\big)\left(\big\langle(\phi_{R}^{-1})^{*}[\sigma^{i'}], (\phi_{R}^{-1})^{*}[\sigma^{k'}]\big\rangle_{h^{H^{p}(Z_{1,R},F_{1,R})}_{L^{2}}}\right)\big(\mu_{Rkk'}\big)^{*}\\
&=\big(\mu_{Rii'}\big)\left(\big\langle\xi^{i'}_{R},\xi^{k'}_{R} \big\rangle_{L^{2}(Z_{1,R})}\right)\big(\mu_{Rkk'}\big)^{*},
\end{aligned}\end{align}
and similarly
\begin{align}\begin{aligned}\label{e.703}
{\rm I}_{h^{(p)}_{2}\times h^{(p)}_{2}}&=\big\langle[b^{j}_{R}],[b^{l}_{R}]\big\rangle_{h^{H^{p}(Z_{2,R},Y,F_{2,R})}_{L^{2}}}
=\big(\eta_{Rjj'}\big)\left(\big\langle\zeta^{j'}_{R},\zeta^{l'}_{R} \big\rangle_{L^{2}(Z_{2,R})}\right)\big(\eta_{Rll'}\big)^{*}.
\end{aligned}\end{align}
Let $\|\cdot\|_{HS}$ be the Hilbert-Schmidt norm of the matrix, then by (\ref{e.702}) and (\ref{e.703}) we have
\begin{align}\begin{aligned}\label{e.704}
\|\mu^{-1}_{R}\|^{2}_{HS}&=\tr\left(\big\langle\xi^{i'}_{R},\xi^{k'}_{R} \big\rangle_{L^{2}(Z_{1,R})}\right)=\sum_{k=1}^{h_{1}^{(p)}}\|\xi^{k}_{R}\|^{2}_{L^{2}(Z_{1,R})},\\
\|\eta^{-1}_{R}\|^{2}_{HS}&=\tr\left(\big\langle\zeta^{j'}_{R},\zeta^{k'}_{R} \big\rangle_{L^{2}(Z_{2,R})}\right)=\sum_{l=1}^{h_{2}^{(p)}}\|\zeta^{l}_{R}\|^{2}_{L^{2}(Z_{2,R})}.
\end{aligned}\end{align}
For $i=1,2$, let $L^{2}_{R}(Z_{i})$ be the $L^{2}-$metric on $Z_{i}$ with respect to $g^{TZ_{i}}_{R}$ and $h^{F}$. Let
$(P^{\{0\}}_{i})_{R}$ be the orthogonal projection on the space of harmonic forms on $Z_{i}$ with respect to $L^{2}_{R}(Z_{i})$.
By Lemma \ref{l.44} and (\ref{e.704}), we get the first estimate in (\ref{e.444})
\begin{align}\begin{aligned}\label{e.706}
\|\mu^{-1}_{R}\|^{2}_{HS}&=\sum_{k=1}^{h_{1}^{(p)}}\|\phi^{*}_{R}\xi^{k}_{R}\|^{2}_{L^{2}_{R}(Z_{1})}
=\sum_{k=1}^{h_{1}^{(p)}}\|(P^{\{0\}}_{i})_{R}\xi^{k}\|^{2}_{L^{2}_{R}(Z_{1})}\\
&\leq \sum_{k=1}^{h_{1}^{(p)}}\|\xi^{k}\|^{2}_{L^{2}_{R}(Z_{1})}=\mathcal{O}(R^{2}).
\end{aligned}\end{align}
Similarly, by Lemma \ref{l.44} and (\ref{e.704}) we get the second estimate of (\ref{e.444}).
The proof is completed.
\end{proof}

By (\ref{e.248}), (\ref{e.246}), (\ref{e.255}) and (\ref{e.444}), we get
\begin{align}\begin{aligned}\label{e.249}
\text{Vol}(a_{R, i})=\mO(R^{2}), \quad \text{Vol}(b_{R, j})=\mO(R^{2}), \quad \text{Vol}(f^{-1}_{p}b_{R, j})=\mO(R^{2}).
\end{aligned}\end{align}

By (\ref{e.182}), (\ref{e.209}), (\ref{e.256}), (\ref{e.253}) and (\ref{e.247}), we get for
$1\leq i',k\leq h^{(p)}_{1},\,1\leq j',l\leq h^{(p)}_{2}$,
\begin{align}\begin{aligned}\label{e.723}
\int_{i_{p}a_{R, i'}}\widetilde{\alpha}_{k, R}&= \big\langle (i^{*}_{p})^{-1}[a^{k}_{R}], i_{p}[ a_{R, i'}]\big\rangle
=\big\langle [a^{k}_{R}], [ a_{R, i'}]\big\rangle=\delta^{k}_{i'},\\
\int_{j^{-1}_{p}b_{R, j'}} \widetilde{\alpha}_{k, R}&=\big\langle (i^{*}_{p})^{-1}[a^{k}_{R}], j^{-1}_{p}[b_{R, j'}]\big\rangle\\
&=\big\langle (k^{*}_{p})^{-1}[a^{k}_{R}], (l_{p}\circ h_{p})\circ f^{-1}_{p}[b_{R, j'}]\big\rangle=0,\\
\int_{i_{p}a_{R, i'}}\widetilde{\beta}_{l, R}&=\big\langle j^{*}_{p}[b^{l}_{R}], i_{p}[ a_{R, i'}]\big\rangle
=\big\langle [b^{l}_{R}], (j_{p}\circ i_{p})[ a_{R, i'}]\big\rangle=0,\\
\int_{j^{-1}_{p}b_{R, j'}} \widetilde{\beta}_{l, R}&=\big\langle j^{*}_{p}[b^{l}_{R}], j^{-1}_{p}[b_{R, j'}]\big\rangle
=\big\langle [b^{l}_{R}], [b_{R, j'}]\big\rangle=\delta^{l}_{j'}.
\end{aligned}\end{align}
This means that $\{P^{\infty}_{R}\widetilde{\alpha}_{k, R},\,P^{\infty}_{R}\widetilde{\beta}_{l, R}\}$ is the dual basis of $\{i_{p}a_{R, i'},\,j^{-1}_{p}b_{R, j'}\}$.
We set ${\rm A}_{R}=\left(
                      \begin{array}{cc}
                        {\rm A}_{R}^{1} & {\rm A}_{R}^{2} \\
                        {\rm A}_{R}^{3} & {\rm A}_{R}^{4} \\
                      \end{array}
                    \right)
$ such that for $1\leq i,k\leq h^{(p)}_{1},\,1\leq j,l\leq h^{(p)}_{2}$,
\begin{align}\begin{aligned}\label{e.722}
&({\rm A}_{R}^{1})_{ki}= \int_{i_{p}a_{R, i}}\widehat{\alpha}_{k, R} ;
&({\rm A}_{R}^{2})_{kj}= \int_{j^{-1}_{p}b_{R, j}}\widehat{\alpha}_{k, R} ;\\
&({\rm A}_{R}^{3})_{li}= \int_{i_{p}a_{R, i}}\widehat{\beta}_{l, R} ;
&({\rm A}_{R}^{4})_{lj}= \int_{j^{-1}_{p} b_{R, j}}\widehat{\beta}_{l, R} .
\end{aligned}\end{align}
By (\ref{e.241}), (\ref{e.723}) and  (\ref{e.722}), we get
\begin{align}\begin{aligned}\label{e.250}
{\rm \Theta}_{R}\cdot{\rm A}_{R}
=
\left(
 \begin{array}{ll}
  \Big(\int_{i_{p}a_{R, i'}}\widetilde{\alpha}_{k, R} \Big)_{h^{(p)}_{1}\times h^{(p)}_{1}}& \Big(\int_{j^{-1}_{p}b_{R, j'}} \widetilde{\alpha}_{k, R} \Big)_{h^{(p)}_{1}\times h^{(p)}_{2}} \\
  \Big(\int_{i_{p}a_{R, i'}}\widetilde{\beta}_{l, R}  \Big)_{h^{(p)}_{2}\times h^{(p)}_{1}}& \Big(\int_{j^{-1}_{p}b_{R, j'}} \widetilde{\beta}_{l, R} \Big)_{h^{(p)}_{2}\times h^{(p)}_{2}}\\
 \end{array}
\right)={\rm I}_{h^{(p)}\times h^{(p)}}.
\end{aligned}\end{align}

By (\ref{e.250}), to study the asymptotic behavior of ${\rm \Theta}_{R}$ we need to get an estimate for each block of ${\rm A}_{R}$.

\textbf{1)\quad} For ${\rm A}^{1}_{R}$, by Lemma \ref{l.24}, (\ref{e.207}), (\ref{e.247}), (\ref{e.249}) and (\ref{e.722}), we get
\begin{align}\begin{aligned}\label{e.257}
({\rm A}^{1}_{R})_{k'i'}&=\int_{i_{p}a_{R, i'}}\widehat{\alpha}_{k', R}
=\int_{a_{R, i'}}\widehat{\alpha}_{k', R}
=\int_{a_{R, i'}}\alpha_{k', R}+\int_{a_{R, i'}}\big(\widehat{\alpha}_{k', R}-\alpha_{k', R}\big)\\
&=\big\langle [a^{k'}_{R}], [a_{R, i'}]\big\rangle+\int_{a_{R, i'}}\big(\widehat{\alpha}_{k', R}-\alpha_{k', R}\big)
=\delta^{k'}_{i'}+\int_{a_{R, i'}}\big(\widehat{\alpha}_{k', R}-\alpha_{k', R}\big),
\end{aligned}\end{align}
and
\begin{align}\begin{aligned}\label{e.258}
\big|\int_{a_{R, i'}}\big(\widehat{\alpha}_{k', R}-\alpha_{k', R}\big)\big|&\leq Ce^{-cR}\cdot \text{Vol}(a_{R, i'})\leq C'{e}^{-c'R}.
\end{aligned}\end{align}
By (\ref{e.257}) and (\ref{e.258}), we find
\begin{align}\begin{aligned}\label{e.259}
({\rm A}^{1}_{R})_{k'i'}=\delta^{k'}_{i'}+\mO(e^{-cR}).
\end{aligned}\end{align}

\textbf{2)\quad} For ${\rm A}^{2}_{R}$, by (\ref{e.740}), (\ref{e.249}) and (\ref{e.722}), we get
\begin{align}\begin{aligned}\label{e.268}
\big|({\rm A}^{2}_{R})_{k'j'}\big|&= \big|\int_{f^{-1}_{p}b_{R, j'}}h^{*}_{p}\widehat{\alpha}_{k', R}\big|=\big|\int_{f^{-1}_{p}b_{R, j'}}\widehat{\alpha}_{k', R}\big|\\
 &\leq Ce^{-cR}\text{Vol}(f^{-1}_{p}b_{R, j'})\leq C{e}^{-c'R}.
\end{aligned}\end{align}

\textbf{3)\quad } For ${\rm A}^{3}_{R}$, by (\ref{e.740}), (\ref{e.249}) and (\ref{e.722}), we have
\begin{align}\begin{aligned}\label{e.260}
\big|({\rm A}^{3}_{R})_{l'i'}\big|
&=\big|\int_{i_{p}a_{R, i'}}\widehat{\beta}_{l', R}\big|
\leq Ce^{-cR}\cdot \text{Vol}(a_{R, i'})\leq C'{e}^{-c'R}.
\end{aligned}\end{align}

\textbf{4)\quad } For ${\rm A}^{4}_{R}$, we observe that
\begin{align}\begin{aligned}\label{e.262}
b_{R, j'}-f^{-1}_{p}\big(b_{R, j'}\big)\in C_{p}(\mathbb{K}_{Y}, F^{*}).
\end{aligned}\end{align}
Since $\{\beta_{l', R}\in \mathscr{H}^{p}\left(Z_{2, R}, Y, F_{R}\right)|1\leq l' \leq h^{(p)}_{2}\}$
satisfy the relative boundary conditions, so we get from (\ref{e.262}) that
\begin{align}\begin{aligned}\label{e.263}
\int_{f^{-1}_{p}\big(b_{R, j'}\big)} \beta_{l', R}=\int_{b_{R, j'}}\beta_{l', R}=\delta^{l'}_{j'}.
\end{aligned}\end{align}
 By (\ref{e.207}), Lemma \ref{l.24}, (\ref{e.247}), (\ref{e.249}), (\ref{e.722}) and (\ref{e.263}), we get
\begin{align}\begin{aligned}\label{e.264}
 ({\rm A}^{4}_{R})_{l'j'}&= \int_{f^{-1}_{p}b_{R, j'}}h^{*}_{p}\widehat{\beta}_{l', R}=\int_{f^{-1}_{p}b_{R, j'}}\widehat{\beta}_{l', R}\\
& =\int_{f^{-1}_{p}b_{R, j'}}\beta_{l', R}+\int_{f^{-1}_{p}b_{R, j'}}\big(\widehat{\beta}_{l', R}- \beta_{l', R}\big)
 =\delta^{l'}_{j'}+\int_{f^{-1}_{p}b_{R, j'}}\big(\widehat{\beta}_{l', R}- \beta_{l', R}\big)
\end{aligned}\end{align}
and
\begin{align}\begin{aligned}\label{e.265}
\big|\int_{f^{-1}_{p}b_{R, j'}}\big(\widehat{\beta}_{l', R}- \beta_{l', R}\big)\big|\leq Ce^{-cR}\text{Vol}(f^{-1}_{p}b_{R, j'})\leq C'e^{-c'R}.
\end{aligned}\end{align}
Then from (\ref{e.264}) and (\ref{e.265}), we get
\begin{align}\begin{aligned}\label{e.266}
  ({\rm A}^{4}_{R})_{l'j'}=\delta^{l'}_{j}+ \mO(e^{-cR}).
\end{aligned}\end{align}

Finally, by (\ref{e.259}), (\ref{e.268}), (\ref{e.260}) and (\ref{e.266}), we get
\begin{align}\begin{aligned}\label{e.269}
& {\rm A}_{R}={\rm I}_{h^{(p)}\times h^{(p)}} +\mO(e^{-cR}).
\end{aligned}\end{align}
By (\ref{e.250}) and (\ref{e.269}), we have proved Lemma \ref{l.26}, which is the main result of this subsection.\\

\bibliographystyle{plain}

\begin{thebibliography}{10}


\bibitem{Conf03}
Smooth fibre bundles and higher torsion invariants.
\newblock http://www.uni-math.gwdg.de/wm03/, 2003.

\bibitem{APS}
M.~F. Atiyah, V.~K. Patodi, and I.~M. Singer.
\newblock Spectral asymmetry and {R}iemannian geometry. {I}.
\newblock {\em Math. Proc. Cambridge Philos. Soc.}, 77:43--69, 1975.

\bibitem{BGV92}
N.~Berline, E.~Getzler, and M.~Vergne.
\newblock {\em Heat kernels and {D}irac operators}.
\newblock Grundlehren Text Editions. Springer-Verlag, Berlin, 2004.
\newblock Corrected reprint of the 1992 original.

\bibitem{Bismut86}
J.-M. Bismut.
\newblock The {A}tiyah-{S}inger index theorem for families of {D}irac
  operators: two heat equation proofs.
\newblock {\em Invent. Math.}, 83(1):91--151, 1985.

\bibitem{BGo}
J.-M. Bismut and S.~Goette.
\newblock Families torsion and {M}orse functions.
\newblock {\em Ast\'erisque}, (275):x+293, 2001.

\bibitem{BL}
J.-M. Bismut and J.~Lott.
\newblock Flat vector bundles, direct images and higher real analytic torsion.
\newblock {\em J. Amer. Math. Soc.}, 8(2):291--363, 1995.

\bibitem{BisMaZh}
J.-M. Bismut, X.~Ma, and W.~Zhang.
\newblock Op\'erateurs de {T}oeplitz et torsion analytique asymptotique.
\newblock {\em C. R. Math. Acad. Sci. Paris}, 349(17-18):977--981, 2011.

\bibitem{BZ92}
J.-M. Bismut and W.~Zhang.
\newblock An extension of a theorem by {C}heeger and {M}\"uller.
\newblock {\em Ast\'erisque}, (205):235, 1992.
\newblock With an appendix by Fran{\c{c}}ois Laudenbach.

\bibitem{BottTu}
R.~Bott and L.~W. Tu.
\newblock {\em Differential forms in algebraic topology}, volume~82 of {\em
  Graduate Texts in Mathematics}.
\newblock Springer-Verlag, New York, 1982.

\bibitem{BruMa06}
J.~Br{\"u}ning and X.~Ma.
\newblock An anomaly formula for {R}ay-{S}inger metrics on manifolds with
  boundary.
\newblock {\em Geom. Funct. Anal.}, 16(4):767--837, 2006.

\bibitem{BruMa12}
J.~Br{\"u}ning and X.~Ma.
\newblock On the gluing formula for the analytic torsion.
\newblock {\em Math. Z.}, 273(3-4):1085--1117, 2013.

\bibitem{bunke95}
U.~Bunke.
\newblock On the gluing problem for the {$\eta$}-invariant.
\newblock {\em J. Differential Geom.}, 41(2):397--448, 1995.

\bibitem{Bunke00}
U.~Bunke.
\newblock Equivariant higher analytic torsion and equivariant {E}uler
  characteristic.
\newblock {\em Amer. J. Math.}, 122(2):377--401, 2000.

\bibitem{Chg}
J.~Cheeger.
\newblock Analytic torsion and the heat equation.
\newblock {\em Ann. of Math. (2)}, 109(2):259--322, 1979.

\bibitem{DouWoj91}
R.~G. Douglas and K.~P. Wojciechowski.
\newblock Adiabatic limits of the {$\eta$}-invariants. {T}he odd-dimensional
  {A}tiyah-{P}atodi-{S}inger problem.
\newblock {\em Comm. Math. Phys.}, 142(1):139--168, 1991.

\bibitem{DwWeiWill03}
W.~Dwyer, M.~Weiss, and B.~Williams.
\newblock A parametrized index theorem for the algebraic {$K$}-theory {E}uler
  class.
\newblock {\em Acta Math.}, 190(1):1--104, 2003.

\bibitem{Goette01}
S.~Goette.
\newblock Morse theory and higher torsion invariants {I}.
\newblock {\em Preprint (2001)}, arXiv: math/0111222.

\bibitem{Goette03}
S.~Goette.
\newblock Morse theory and higher torsion invariants {II}.
\newblock {\em Preprint (2003)}, arXiv: math/0305287.

\bibitem{Goette08}
S.~Goette.
\newblock Torsion invariants for families.
\newblock {\em Ast\'erisque}, (328):161--206 (2010), 2009.

\bibitem{Has}
A.~Hassell.
\newblock Analytic surgery and analytic torsion.
\newblock {\em Comm. Anal. Geom.}, 6(2):255--289, 1998.

\bibitem{Igusa02}
K.~Igusa.
\newblock {\em Higher {F}ranz-{R}eidemeister torsion}, volume~31 of {\em AMS/IP
  Studies in Advanced Mathematics}.
\newblock American Mathematical Society, Providence, RI, 2002.

\bibitem{Igu05}
K.~Igusa.
\newblock Higher complex torsion and the framing principle.
\newblock {\em Mem. Amer. Math. Soc.}, 177(835):xiv+94, 2005.

\bibitem{Igu08}
K.~Igusa.
\newblock Axioms for higher torsion invariants of smooth bundles.
\newblock {\em J. Topol.}, 1(1):159--186, 2008.

\bibitem{LoRo91}
J.~Lott and M.~Rothenberg.
\newblock Analytic torsion for group actions.
\newblock {\em J. Differential Geom.}, 34(2):431--481, 1991.

\bibitem{Luck93}
W.~L{\"u}ck.
\newblock Analytic and topological torsion for manifolds with boundary and
  symmetry.
\newblock {\em J. Differential Geom.}, 37(2):263--322, 1993.

\bibitem{Ma02}
X.~Ma.
\newblock Functoriality of real analytic torsion forms.
\newblock {\em Israel J. Math.}, 131:1--50, 2002.

\bibitem{MaMa07}
X.~Ma and G.~Marinescu.
\newblock {\em Holomorphic {M}orse inequalities and {B}ergman kernels}, volume
  254 of {\em Progress in Mathematics}.
\newblock Birkh\"auser Verlag, Basel, 2007.

\bibitem{Milnor66}
J.~Milnor.
\newblock Whitehead torsion.
\newblock {\em Bull. Amer. Math. Soc.}, 72:358--426, 1966.

\bibitem{Mu78}
W.~M{\"u}ller.
\newblock Analytic torsion and {$R$}-torsion of {R}iemannian manifolds.
\newblock {\em Adv. in Math.}, 28(3):233--305, 1978.

\bibitem{Mu93}
W.~M{\"u}ller.
\newblock Analytic torsion and {$R$}-torsion for unimodular representations.
\newblock {\em J. Amer. Math. Soc.}, 6(3):721--753, 1993.

\bibitem{PaWo06}
J.~Park and K.~P. Wojciechowski.
\newblock Adiabatic decomposition of the {$\zeta$}-determinant and scattering
  theory.
\newblock {\em Michigan Math. J.}, 54(1):207--238, 2006.

\bibitem{RaySing71}
D.~B. Ray and I.~M. Singer.
\newblock {$R$}-torsion and the {L}aplacian on {R}iemannian manifolds.
\newblock {\em Advances in Math.}, 7:145--210, 1971.

\bibitem{Vishik95}
S.~M. Vishik.
\newblock Generalized {R}ay-{S}inger conjecture. {I}. {A} manifold with a
  smooth boundary.
\newblock {\em Comm. Math. Phys.}, 167(1):1--102, 1995.

\bibitem{Wojcie}
K.~P. Wojciechowski.
\newblock The additivity of the {$\eta$}-invariant: the case of an invertible
  tangential operator.
\newblock {\em Houston J. Math.}, 20(4):603--621, 1994.

\bibitem{Zhang}
W.~Zhang.
\newblock {\em Lectures on {C}hern-{W}eil theory and {W}itten deformations},
  volume~4 of {\em Nankai Tracts in Mathematics}.
\newblock World Scientific Publishing Co. Inc., River Edge, NJ, 2001.

\bibitem{Zhu13}
J.~Zhu.
\newblock On the gluing formula of analytic torsion forms.
\newblock {\em Preprint (2013)}, arXiv:1405.3025.


\end{thebibliography}

\end{document}